\documentclass[openany]{report}
\usepackage{amssymb,amsmath,amsthm,bbm,enumerate,url,hyperref,mathabx,changebar,cancel,mathrsfs,chngcntr,relsize,mathrsfs}

\makeatletter
\newcommand\footnoteref[1]{\protected@xdef\@thefnmark{\ref{#1}}\@footnotemark}
\makeatother

\makeatletter

\newcommand\frontmatter{%
    \cleardoublepage
  \pagenumbering{roman}}

\newcommand\mainmatter{%
    \cleardoublepage
  \pagenumbering{arabic}}

\newcommand\backmatter{%
  \if@openright
    \cleardoublepage
  \else
    \clearpage
  \fi
   }

\def\FF{\mathcal{F}}
\def\AA{\mathcal{A}}
\def\BB{\mathcal{B}}
\def\CC{\mathcal{C}}
\def\GG{\mathcal{G}}
\def\HH{\mathcal{H}}
\def\EE{\mathcal{E}}
\def\II{\mathcal{I}}
\def\PP{\mathsf{P}}
\def\PPP{\mathcal{P}}
\def\QQ{\mathsf{Q}}
\def\cov{\mathrm{cov}}
\def\var{\mathrm{var}}
\def\Exp{\mathrm{Exp}}
\def\pr{\mathrm{pr}}

\def\geom{\mathrm{geom}}
\def\supp{\mathrm{supp}}

\def\DD{\mathcal{D}}
\def\LL{\mathcal{L}}
\def\UU{\mathcal{U}}
\def\VV{\mathcal{V}}
\def\ZZ{\mathbb{Z}}
\def\sgn{\mathrm{sgn}}
\def\cc{\mathsf{c}}
\def\id{\mathrm{id}}
\def\dd{\mathrm{d}}
\def\leb{\mathscr{L}}
\def\deff{$\overset{\mathrm{def}}{\Leftrightarrow}$ }
\def\ee{\mathsf{e}}
\def\ii{\mathsf{i}}
\def\ff{\mathsf{f}}
\def\MM{\mathsf{M}}
\def\LLL{\mathrm{L}}

\theoremstyle{definition}
\newtheorem{definition}{Definition}
\numberwithin{definition}{chapter}
\theoremstyle{theorem}
\newtheorem{proposition}[definition]{Proposition}

\newtheorem{theorem}[definition]{Theorem}
\newtheorem{corollary}[definition]{Corollary}

\numberwithin{equation}{chapter}
\theoremstyle{remark}
\newtheorem*{remark}{Remark}
\newtheorem*{remarks}{Remarks}

\newtheorem{example}[definition]{Example}

\counterwithout*{footnote}{chapter}

\begin{document}
\frontmatter
\begingroup
\renewcommand{\thefootnote}{$\star$} 
\title{Probability with Measure\footnote{Lecture notes. University of Ljubljana.}}
\author{\href{mailto:matija.vidmar@fmf.uni-lj.si}{Matija Vidmar}}
\maketitle
\endgroup

\begingroup
\let\clearpage\relax
\begin{quote}
\begin{center}\textbf{Vademecum} 
\end{center}

This text is meant to give the ``bare minimum'' required to perform ``basic bodily functions'' when dealing with measure-theoretic probability (from the point of view of the author).  It focuses on giving the precise ``no-hand-waving'' results with proofs, the accompanying pictures and comments that build intuition are generally omitted (because the author was too lazy to include/typeset them). Also, some of the examples are really more like exercises. More advanced/further material can be found in the literature to follow. The symbol $\subset$ is used for (non-strict) inclusion; $2^\Omega$ denotes the power set of a set $\Omega$; $\uparrow$ (resp. $\downarrow$, $\uparrow\uparrow$, $\downarrow\downarrow$) means nondecreasing (resp. nonincreasing, strictly increasing, strictly decreasing); for $n\in \mathbb{N}_0$, $[n]:=\{1,\ldots,n\}$ ($=\emptyset$ when $n=0$). 
\end{quote}

\vspace{-0.3cm}

\begin{quote}
\begin{center}
\textbf{Literature}
\end{center}
In no particular order. ($\bullet$) Pollard: A User's Guide to Measure-Theoretic Probability. ($\bullet$) Klenke: Probability Theory.  ($\bullet$)  \c{C}inlar: Probability and Stochastics.  ($\bullet$)  Billingsley: Probability and Measure.  ($\bullet$)  Schilling: Measures, Integrals and Martingales.  ($\bullet$)  Kallenberg: Foundations of Modern Probability.  ($\bullet$)  Athreya \& Lahiri: Measure Theory and Probability Theory. ($\bullet$) Williams: Probability with Martingales. ($\bullet$)  Ash \& Dol\'eans-Dade: Probability and Measure Theory. $(\bullet)$ Dudley: Real Analysis and Probability. $(\bullet)$ Meyer: Probability and Potentials. $(\bullet)$ Chung: A Course in Probability Theory. $(\bullet)$ Bhattacharya \& Waymire: Basic Course in Probability. $(\bullet)$ Varadhan: Probability Theory. ($\bullet$) R\'enyi: Foundations of Probability. $(\bullet)$ Fristedt \& Gray: A Modern Approach to Probability Theory. $(\bullet)$ Stroock: Mathematics of Probability.  $(\bullet)$  Lo\'eve: Probability Theory (vols. I and II). $(\bullet)$ Wise \& Hall: Counterexamples in Probability and Real Analysis. $(\bullet)$ Malliavin: Integration and Probability. $(\bullet$) It\^o: An Introduction to Probability Theory. $(\bullet)$ Gikhman \& Skorokhod: Introduction to the Theory of Random Processes. $(\bullet)$ Fremlin: Measure Theory (vols. 1-4). $(\bullet)$ Bogachev: Measure Theory (vols. I and II). $(\bullet)$ Saks: Theory of the Integral.
\end{quote}
\begin{quote}
\vspace{-0.4cm}
\begin{center}
\textbf{Prerequisites}
\end{center}
 Basic set theory and real analysis; a smidget of general topology does not hurt. Naive probability as a guide to intuition/source of motivation.
\end{quote}
\renewcommand\contentsname{}
\vspace{-0.5cm}
\begin{center}
{\textbf{Contents}}
\end{center}
\vspace{-2.5cm}
\footnotesize
\tableofcontents
\normalsize
\vspace{0.25cm}
\vfill 
A quote ``for the road'':  ``If you would be a real seeker after truth,   it is necessary that at least once in your life you doubt,  as far as possible,   all things.'' (Ren\'e Descartes, Principles of Philosophy.) 

\endgroup
\mainmatter
\part{Measure}

\chapter{Measurability and measures}

\section{Measurable sets}
\emph{$\sigma$-algebras; measurable spaces}

\begin{definition}
Let $\AA\subset 2^\Omega$ (i.e. $\AA\in 2^{2^\Omega}$).

$\AA$ is called closed for: 
\begin{itemize}
	\item $\cc^\Omega$ (i.e. for complements in $\Omega$) \deff for all $A\in \AA$ also $\Omega\backslash A\in \AA$; 
	\item $\cap$ (i.e. for intersections) \deff $A\cap A'\in \AA$ whenever $\{A,A'\}\subset \AA$;
	\item  $\cup$ (i.e. for unions) \deff $A\cup A'\in \AA$ whenever $\{A,A'\}\subset \AA$;
	\item $\backslash$ (i.e. for differences) \deff $A'\backslash A\in \AA$ whenever $\{A,A'\}\subset \AA$;
	\item $\sigma \cap$ (i.e. for denumerable intersections) \deff $\cap_{n\in \mathbb{N}}A_n\in\AA$ whenever $(A_n)_{n\in \mathbb{N}}$ is a sequence in $\AA$;
	\item $\sigma \cup$ (i.e. for denumerable unions) \deff $\cup_{n\in \mathbb{N}}A_n\in\AA$ whenever $(A_n)_{n\in \mathbb{N}}$ is a sequence in $\AA$.
\end{itemize}

$\AA$ is a $\sigma$-algebra (also, $\sigma$-field) on $\Omega$ \deff  $(\Omega,\AA)$ is a measurable space \deff $\emptyset\in\AA$ and $\AA$ is closed for $\cc^\Omega$ and $\sigma\cup$. If $\AA$ is a $\sigma$-field on $\Omega$, then: $A$ is $\AA$-measurable \deff $A\in \AA$; $\BB$ is a sub-$\sigma$-field of $\AA$ \deff $\BB\subset\AA$ and $\BB$ is a $\sigma$-field on $\Omega$.

  $\AA$ is an algebra on $\Omega$ \deff  $\emptyset\in\AA$ and $\AA$ is closed for $\cc^\Omega$ and $\cup$. 
\end{definition}

\begin{remarks}
Closure for $\cup$ and $\cap$ implies automatically, by induction, closure for, respectively, finite unions and finite intersections (in the obvious meaning of these qualifications). Of the closure notions, we have referenced $\Omega$ only in the notation for $\cc^\Omega$ because the other ones clearly do not depend on $\Omega$, only on $\AA$ (formally, if an $\Omega$ is not given a priori, one can take $\Omega=\cup\AA$, and then $\AA\subset 2^\Omega$). If the $\sigma$-field $\AA$ can be gathered from context, then one just says ``measurable'' in lieu of $\AA$-measurable. The reason for one's interest in $\sigma$-fields will shortly become apparent, once we have introduced the notion of a measure on a measurable space. 
\end{remarks}
\begin{example}\label{example:trivial}
	$2^\Omega$ and $\{\emptyset,\Omega\}$ are $\sigma$-fields on $\Omega$. They are called, respectively, the discrete and the trivial $\sigma$-field.
\end{example}
\begin{example}\label{example:generated-by-one}
	For $A\subset \Omega$, $\sigma_\Omega A:=\{\emptyset,A,\Omega\backslash A,\Omega\}$ is a $\sigma$-field on $\Omega$. 
\end{example}
\begin{example}\label{example:cc}
$\sigma^{\mathrm{ccc}}_\Omega:=\{A\in 2^\Omega:A\text{ countable or }\Omega\backslash A\text{ countable}\}$ is a $\sigma$-field on $\Omega$, called the countable--co-countable $\sigma$-field.\footnote{Countable means, here and throughout, finite or countably infinite.} Of course $\sigma^{\mathrm{ccc}}_\Omega=2^\Omega$ unless $\Omega$ is not countable.
\end{example}
\begin{example}\label{example:partition}
	If $\PPP$ is a partition of $\Omega$ (meaning: $\cup \PPP=\Omega$, $\{A,A'\}\subset \PPP$ and $A\ne A'$ implies $A\cap A'=\emptyset$, $\emptyset\notin \PPP$), then $\sigma\PPP:=\{\cup P:P\text{ countable or co-countable subset of }\PPP\}$ is a $\sigma$-field on $\Omega$. If $\emptyset\ne A\subsetneq\Omega$, then $\sigma\{A,\Omega\backslash A\} =\sigma_\Omega(A)$. Besides, $\sigma\{\{\omega\}:\omega\in \Omega\}=\sigma_\Omega^{\mathrm{ccc}}$.
\end{example}
\begin{remark}
In the previous example, mainly the case when $\PPP$ is finite or at most denumerable is useful. Then $\sigma\PPP=\{\cup P:P\in 2^\PPP\}$.
\end{remark}
\begin{proposition}
	Let $\AA\subset 2^\Omega$ be closed for $\cc^\Omega$ and let $\emptyset\in \AA$. Then $\AA$ is a $\sigma$-algebra on $\Omega$ iff $\AA$ is closed for $\sigma\cap$, in which case $\Omega\in \AA$ and $\AA$ is closed for $\cap$, $\cup$ and $\backslash$. 
\end{proposition}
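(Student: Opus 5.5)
The whole argument rests on De Morgan's laws: complementation in $\Omega$ swaps denumerable unions and denumerable intersections. Since $\AA$ is assumed closed for $\cc^\Omega$ and to contain $\emptyset$, being a $\sigma$-algebra amounts exactly to closure for $\sigma\cup$. So the equivalence reduces to showing that, for such $\AA$, closure for $\sigma\cup$ is equivalent to closure for $\sigma\cap$.

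For the forward direction, I assume $\AA$ is closed for $\sigma\cup$ and take a sequence $(A_n)_{n\in\mathbb{N}}$ in $\AA$. Each $\Omega\backslash A_n$ lies in $\AA$, hence so does $\cup_{n}(\Omega\backslash A_n)$. Taking one more complement gives
\[
\Omega\backslash \cup_{n\in\mathbb{N}}(\Omega\backslash A_n)=\cap_{n\in\mathbb{N}}A_n\in\AA .
\]
The converse is symmetric, using $\cup_n A_n=\Omega\backslash\cap_n(\Omega\backslash A_n)$. One small point to keep explicit is that the intersection $\cap_n A_n$ is of subsets of $\Omega$, so the relevant De Morgan identity is the one relative to $\Omega$.

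For the supplementary claims I assume $\AA$ is a $\sigma$-algebra. Then $\Omega=\Omega\backslash\emptyset\in\AA$. For $\{A,A'\}\subset\AA$, the padded sequence $A,A',A',A',\ldots$ has union $A\cup A'$, which is therefore in $\AA$. Padding in the same way and using closure for $\sigma\cap$ (just proved) gives $A\cap A'\in\AA$. Finally,
\[
A'\backslash A=A'\cap(\Omega\backslash A)\in\AA
\]
by closure for $\cc^\Omega$ and $\cap$.

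There is no real obstacle here. The only step needing a moment's care is turning binary operations into denumerable ones by padding with a repeated set. An alternative for unions is to pad with $\emptyset$, and for intersections with $\Omega$; I choose repetition because it needs nothing beyond the two sets themselves.
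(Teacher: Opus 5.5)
Your proof is correct and follows essentially the paper's argument: De Morgan's laws give the equivalence, $\Omega=\Omega\backslash\emptyset$ gives $\Omega\in$ the family, padding turns binary operations into denumerable ones, and $A'\backslash A=A'\cap(\Omega\backslash A)$ handles differences. The only (cosmetic) difference is that you pad with a repeated set, whereas the paper pads with $\emptyset$ for unions and with $\Omega$ for intersections.
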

\begin{proof}
It follows from: de Morgan's laws; the relation $\Omega=\Omega\backslash \emptyset$; finally from the facts that $A'\backslash A=A'\cap (\Omega\backslash A)$, and that $(A,A',\Omega,\Omega,\ldots)$ and $(A,A',\emptyset, \emptyset,\ldots)$  are sequences in $\AA$ whenever $\{A,A'\}\subset \AA$.
\end{proof}

\section{Measures}
\emph{measure as a countably additive nonnegative set-function null at $\emptyset$; first properties of measures}

\begin{definition}\label{definition:measures}
	Let $(\Omega,\FF)$ be a measurable space and $\mu:\FF\to [0,\infty]$. 
	
	$\mu$ is a measure on $(\Omega,\FF)$ (also, on $\FF$)\footnote{There will never be any ambiguity as to which of the two we intend, since an ordered pair cannot at the same time be a $\sigma$-algebra (on any set).} \deff $\mu(\emptyset)=0$ and $\mu$ is countably additive: $\mu(\cup_{n\in \mathbb{N}}A_n)=\sum_{n\in \mathbb{N}}\mu(A_n)$ whenever $(A_n)_{n\in \mathbb{N}}$ is a sequence in $\FF$ of pairwise disjoint sets.

	A measure $\mu$ on $(\Omega,\FF)$: 
	\begin{itemize}
	\item 
	is finite \deff $\mu(\Omega)<\infty$;
	\item  	 is a probability measure (also, a probability (law)) \deff $\mu(\Omega)=1$;
	\item is $\sigma$-finite \deff there is a sequence $(A_n)_{n\in \mathbb{N}}$ in $\FF$ with $\Omega= \cup_{m\in \mathbb{N}}A_m$ and  $\mu(A_n)<\infty$ for all $n\in \mathbb{N}$; 
	   \item 
	   has $\mu(\Omega)$ for its (total) mass. 
	   \end{itemize}
		
	$(\Omega,\FF,\mu)$ is a measure space \deff $\mu$ is a measure on $(\Omega,\FF)$.
	
	If $(\Omega,\FF,\mu)$ is a measure space, then for  $A\in \FF$: 
	\begin{itemize}
	\item $A$ is $\mu$-negligible \deff $\mu(A)=0$; 
	\item $A$ is $\mu$-trivial \deff $A$ or $\Omega\backslash A$ is $\mu$-negligible;
	\item  $A$ carries $\mu$ \deff $\Omega\backslash A$ is $\mu$-negligible. 
	\end{itemize}
	If in addition one has a property (predicate) $P(\omega)$ in $\omega\in A$, then: 
	\begin{itemize}
	\item $P(\omega)$ holds $\mu$-almost everywhere (abbreviated to $\mu$-a.e. or a.e.-$\mu$) in $\omega\in A$ \deff $A_{\neg P}:=\{\omega\in A:\neg P(\omega)\}\in \FF$ and $\mu(A_{\neg P})=0$; 
	\item $P(\omega)$ holds $\mu$-almost surely (abbreviated $\mu$-a.s. or a.s.-$\mu$) in $\omega\in A$ \deff the preceding and in addition $\mu$ is a probability measure. 
	\end{itemize}
	When $A=\Omega$, then:  ``$\in A$'' can be omitted in the preceding (i.e. one can just say ``for $\mu$-a.e. $\omega$'' in lieu of ``for $\mu$-a.e. $\omega\in \Omega$'', etc.). 
	
	Given a map $f:U\to W$ and a property (predicate) $P(v)$ in $v\in W$ we often write: 
	\begin{itemize}
	\item $\{P(f)\}:=\{u\in U:P(f(u))\text{ is true}\}$;
	\item ``$P(f)$ holds'' to mean ``$P(f(u))$ holds true for all $u\in U$'', 
	\end{itemize}
	and correspondingly, when $\mu$ is a measure on a $\sigma$-field of $U$, 
	\begin{itemize}
	\item ``$P(f)$ holds true $\mu$-a.e.'' to mean ``$P(f(u))$ holds true for $\mu$-a.e. $u$'' etc. 
	\end{itemize} (for instance, if $W=\mathbb{R}$, $f\geq 2$ a.e.-$\mu$ means $f(u)\geq 2$ for $\mu$-a.e. $u$, while $\{f\geq 2\}=\{u\in U:f(u)\geq 2\}$). Similarly, if, in addition, $f':U\to W'$ and we have a property $P(w,w')$ in $w\in W$ and $w'\in W'$, then we interpret 
	\begin{itemize}
	\item $\{P(f,f')\}$ and ``$P(f,f')$ holds true (a.e.-$\mu$)''  by considering $(f,f'):U\to W\times W'$;
	\end{itemize} likewise for any family $f_\lambda:U\to W_\lambda$, $\lambda\in \Lambda$. 	Besides, if $E(w)$ is some expression (term) involving $w\in W$ and if $f$ is as above, then
	\begin{itemize}
	 \item $E(f)$ means the map $(U\ni u\mapsto E(f(u)))$; 
	 \end{itemize}
	 if $E(w,w')$ is some expression involving $w\in W$ and $w'\in W'$ and $f,f'$ are as above, then considering again $(f,f'):U\to W\times W'$,
	 \begin{itemize}
	  \item $E(f,f')$ means the map $(U\ni u\mapsto E(f(u),f'(u)))$; 
	  \end{itemize}
	  analogously for any family $f_\lambda:U\to W_\lambda$, $\lambda\in \Lambda$.
\end{definition}

\begin{remarks}
The reason for defining measures on $\sigma$-fields, rather than some other types of subsets of $2^\Omega$ is roughly the following: this structure is on the one hand restrictive enough to accommodate naturally the nice properties that one desires (countable additivity; closure for complements, differences etc.; one can ``measure'' the empty set and the whole space), yet rich enough for non-trivial interesting measures to exist.  From the point of view that measures ``measure'' the sizes of sets (lengths, areas, volumes, probabilities etc.) or probabilities of events the additivity is quite natural (at least finite, if not countable), as is the requirement that the measure of $\emptyset$ be zero. The insistance on countable (vis-\`a-vis just finite) additivity is a luxury that one can (usually) afford, the upshot being that measures then enjoy many nice properties which they would otherwise not.\footnote{\underline{\href{https://www.jstor.org/stable/2319450}{Here}} is an interesting example of an argument for the ``naturality'' of countable additivity. See also \underline{\href{http://www.jehps.net/juin2010/Bingham.pdf}{here}} and  \underline{\href{https://onlinelibrary.wiley.com/doi/abs/10.1002/tht3.60}{here}} for a discussion of (finite vs.) countable additivity. (All are in the context of probabilities.)}   If, ceteris paribus, in the definition of measure we take $\mu$ as mapping into $(-\infty,\infty]$ or $[-\infty,\infty)$ (resp. into $\mathbb{C}$) we get a so-called charge/signed measure (resp. complex measure), but we shall not dwell on these at all. There is developed and useful also a theory of measure with values in a Banach space; one then speaks of a vector-valued measure. Yet another note-worthy object is a projection-valued measure (its values are projections on a Hilbert space). However, the base case of Definition~\ref{definition:measures} is fundamental. A notion much akin to $\sigma$-finiteness is that of s-finiteness, a measure being s-finite iff it is a countable sum of finite measures.  
\end{remarks}
\begin{example}
	The zero measure on $\FF$, i.e. the map $(\FF\ni A\mapsto 0)$, is always a measure on any given $\sigma$-field $\FF$. 
\end{example}
\begin{example}
	If we define $c_\Omega:2^\Omega\to [0,\infty]$ by putting $c_\Omega(A):=\vert A\vert$ if $A$ is a finite subset of $\Omega$ and $c_\Omega(A):= \infty$ if it is an infinite subset of $\Omega$, then $c_\Omega$ is the so-called counting measure on $\Omega$. When $\Omega$ is finite and non-empty, then $c_\Omega/\vert \Omega\vert$ is a probability measure (the ``classical'' (uniform) probability on $\Omega$).
\end{example}

\begin{example}
	If we define $\delta_x:2^\Omega\to [0,\infty]$ for a fixed $x\in \Omega$ by putting,   for $A\in 2^\Omega$, $\delta_x(A):=0$ if $x\notin A$ and $\delta_x(A):=1$ if $x \in A$, then $\delta_x$ is the so-called Dirac measure at $x$. Any subset of $\Omega\backslash \{x\}$ is $\delta_x$-negligible. For a map $f$ with domain $\Omega$, we have that $f(y)=f(x)$ for $\delta_x$-a.e. $y\in \Omega$ (since $\{z\in \Omega:f(z)\ne f(x)\}\subset \Omega\backslash \{x\}$), and in fact $\delta_x$-a.s. in $y\in \Omega$ (because $\delta_x$ is a probability measure); more succinctly, $f=f(x)$ a.s.-$\delta_x$.
\end{example}
\begin{proposition}
	Let $\mu$ be a measure on a measurable space $(\Omega,\FF)$. Then:
	\begin{enumerate}[(i)]
		\item\label{measures:i} $\mu$ is additive: $\mu(A\cup B)=\mu(A)+\mu(B)$ whenever $\{A,B\}\subset \FF$ and $A\cap B=\emptyset$.
		\item\label{measures:ii} $\mu$ is monotone: $\mu(A)\leq \mu(B)$ whenever $\{A,B\}\subset \FF$ and $A\subset B$.
		\item\label{measures:iii} $\mu$ is continuous from below: $\mu(\cup_{n\in \mathbb{N}}A_n)=\uparrow\!\!\text{-}\lim_{n\to\infty}\mu(A_n)$ whenever $(A_n)_{n\in \mathbb{N}}$ is a nondecreasing (w.r.t. inclusion) sequence in $\FF$.
		\item\label{measures:iii'} $\mu$ is countably subadditive: $\mu(\cup_{n\in \mathbb{N}}A_n)\leq \sum_{n\in\mathbb{N}}\mu(A_n)$ whenever $(A_n)_{n\in \mathbb{N}}$ is a sequence in $\FF$.
		\item\label{measures:iv} Suppose $\mu$ is finite. $\mu(\Omega\backslash A)=\mu(\Omega)-\mu(A)$ for all $A\in \FF$. Further, $\mu$ is continuous from above: $\mu(\cap_{n\in \mathbb{N}}A_n)=\downarrow\!\!\text{-}\lim_{n\to\infty}\mu(A_n)$ whenever $(A_n)_{n\in \mathbb{N}}$ is a nonincreasing (w.r.t. inclusion) sequence in $\FF$. 
		\item\label{measures:v} For  $A\in \FF$: put $\FF\vert_A:=\{B\cap A:B\in \FF\}$; then $\mu_A:=\mu\vert_{\FF\vert_A}$ is a measure on $\FF\vert_A$.
	\end{enumerate}
\end{proposition}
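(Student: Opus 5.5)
The plan is to derive everything from countable additivity together with $\mu(\emptyset)=0$, using the closure properties of $\sigma$-fields from the preceding Proposition (closure for $\cap$, $\cup$, $\backslash$, and $\Omega\in\FF$). I treat the items in order, since each one feeds the next.

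For (\ref{measures:i}), apply countable additivity to the sequence $(A,B,\emptyset,\emptyset,\ldots)$. Its members are pairwise disjoint and their union is $A\cup B$; since $\mu(\emptyset)=0$ the infinite sum collapses to $\mu(A)+\mu(B)$.

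For (\ref{measures:ii}), write $B=A\cup(B\backslash A)$ as a disjoint union, where $B\backslash A\in\FF$. By (\ref{measures:i}), $\mu(B)=\mu(A)+\mu(B\backslash A)\geq\mu(A)$, because $\mu\geq 0$; this holds in $[0,\infty]$ with no subtraction needed.

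For (\ref{measures:iii}), disjointify: set $D_1:=A_1$ and $D_n:=A_n\backslash A_{n-1}$ for $n\geq 2$. These are pairwise disjoint members of $\FF$. We have $\cup_{k\leq n}D_k=A_n$ and $\cup_n D_n=\cup_n A_n$. Countable additivity together with finite additivity from (\ref{measures:i}) (by induction) then gives
\[\mu(\cup_n A_n)=\sum_n\mu(D_n)=\lim_n\sum_{k\leq n}\mu(D_k)=\lim_n\mu(A_n),\]
and the limit is nondecreasing by (\ref{measures:ii}).

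For (\ref{measures:iii'}), set $B_n:=\cup_{k\leq n}A_k$, which is nondecreasing. Finite subadditivity $\mu(B_n)\leq\sum_{k\leq n}\mu(A_k)$ follows by induction from $\mu(E\cup F)=\mu(E)+\mu(F\backslash E)\leq\mu(E)+\mu(F)$, using (\ref{measures:i}) and (\ref{measures:ii}). Now let $n\to\infty$ with (\ref{measures:iii}).

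For (\ref{measures:iv}), finiteness gives $\mu(\Omega)=\mu(A)+\mu(\Omega\backslash A)$ with all terms finite, so we may subtract. For continuity from above, let $A_n\downarrow$ and pass to complements: $\Omega\backslash A_n$ is nondecreasing with union $\Omega\backslash\cap_nA_n$ (de Morgan). Applying (\ref{measures:iii}) and the complement formula,
\[\mu(\Omega)-\mu(\cap_nA_n)=\lim_n(\mu(\Omega)-\mu(A_n)).\]
Finiteness of $\mu(\Omega)$ lets us cancel it. This subtraction step is the only point where finiteness is essential; I will remark that it genuinely fails otherwise, e.g. counting measure on $\mathbb{N}$ with $A_n=\{n,n+1,\ldots\}$.

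For (\ref{measures:v}), first note that $\FF\vert_A\subset\FF$, since $B\cap A\in\FF$ by closure for $\cap$. Next check that $\FF\vert_A$ is a $\sigma$-field on $A$:
\begin{itemize}
\item $\emptyset=\emptyset\cap A$;
\item the complement in $A$ of $B\cap A$ is $(\Omega\backslash B)\cap A$;
\item $\cup_n(B_n\cap A)=(\cup_nB_n)\cap A$.
\end{itemize}
Then $\mu_A$ is the restriction of $\mu$ to a sub-collection of $\FF$ that is closed under countable disjoint unions. Hence $\mu_A(\emptyset)=0$, and countable additivity is inherited directly from $\mu$. The only subtlety here is reading ``measure on $\FF\vert_A$'' as a measure on the measurable space $(A,\FF\vert_A)$, which is why the $\sigma$-field check is carried out on $A$ rather than on $\Omega$.

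None of the steps is a real obstacle. The points needing care are the disjointification in (\ref{measures:iii}) and avoiding $\infty-\infty$ in (\ref{measures:iv}).
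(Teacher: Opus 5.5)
Your proof is correct and follows the paper's argument essentially item by item: padding with $\emptyset$'s for additivity, $B=A\cup(B\backslash A)$ for monotonicity, the same disjointification for continuity from below, complements for continuity from above, and checking that $\FF\vert_A$ is a $\sigma$-field on $A$ contained in $\FF$. The only inessential deviation is in (iv): the paper disjointifies the arbitrary sequence directly via $A_n\backslash(A_1\cup\cdots\cup A_{n-1})$ and uses countable additivity plus monotonicity, whereas you go through finite subadditivity on the partial unions and then continuity from below.
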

\begin{remarks}
	Of course \eqref{measures:i} generalizes by induction at once to ``finite additivity''. Besides, because of \eqref{measures:v}, in \eqref{measures:iv} one need not assume that $\mu$ is finite, and still continuity from above prevails as long as $\mu(A_n)<\infty$ for some $n\in \mathbb{N}$, while $\mu(B\backslash A)=\mu(B)-\mu(A)$ whenever $A\subset B$ are both from $\FF$ and $\mu(B)<\infty$ (it is in fact true even if merely $\mu(A)<\infty$). More generally, because of \eqref{measures:v}, if all the sets in question are subsets of a measurable set of finite measure, one can use results for finite measures even if the measure is not finite to begin with, simply by restricting the measure to said set of finite measure.
\end{remarks}
\begin{definition}
$\mu_A:=\mu\vert_{\FF\vert_A}$ from \eqref{measures:v}  is	 called the restriction of $\mu$ to $A$. 
\end{definition}
\begin{proof}
	\eqref{measures:i}. $(A,B,\emptyset,\emptyset,\ldots)$ is a sequence in $\FF$ of pairwise disjoint sets. \eqref{measures:ii}. $B=A\cup (B\backslash A)$; apply \eqref{measures:i}. \eqref{measures:iii}. $(A_1,A_2\backslash A_1,A_3\backslash A_2,\ldots)$ is a sequence in $\FF$ of parwise disjoint sets with union $\cup_{n\in \mathbb{N}}A_n$. Apply countable additivity followed by finite additivity. \eqref{measures:iii'}. $(A_1,A_2\backslash A_1,A_3\backslash (A_1\cup A_2),\ldots)$ is a sequence in $\FF$ of parwise disjoint sets with union $\cup_{n\in \mathbb{N}}A_n$. Apply countable additivity followed by monotonicity.  \eqref{measures:iv}. The first statement follows from \eqref{measures:i} upon taking for $A$, $A$, and for $B$, $\Omega\backslash A$. The second statement then follows from \eqref{measures:iii} by applying it to $(\Omega\backslash A_n)_{n\in \mathbb{N}}$ in lieu of $(A_n)_{n\in \mathbb{N}}$. \eqref{measures:v}. One checks that $\FF\vert_A$ is a $\sigma$-field on $A$ (we will also see this independently in Corollary~\ref{proposition:restrictions-of-fields}) equal to $2^A\cap  \FF$; then the claim is immediate. 
\end{proof} 

\begin{example}[Borel-Cantelli I]\label{Borel-Cantelli-I}
Let $(X,\FF,\mu)$ be a measure space and $(A_n)_{n\in \mathbb{N}}$ a sequence in $\FF$ satisfying $\sum_{n\in \mathbb{N}}\mu(A_n)<\infty$. Then $\mu(\limsup_{n\to\infty}A_n)=0$.
\end{example}

\begin{example}
If $\PP$ is a probability on $(\Omega,\FF)$, then $\PP^{-1}(\{0,1\})$ is a sub-$\sigma$-field of $\FF$, the so-called $\PP$-trivial $\sigma$-field.
\end{example}
\section{Measurable maps and generated $\sigma$-fields}
\emph{generated $\sigma$-fields;  initial and final structures; measurable maps; traces of $\sigma$-fields; compositions and restrictions of measurable maps}

\begin{definition}\label{definition:generated-1}
Let $\AA\subset 2^\Omega$; then 
\begin{equation*}
\sigma_\Omega(\AA):=\cap\{\FF\in 2^{2^\Omega}:\FF\text{ a $\sigma$-field on }\Omega\text{ and }\AA\subset \FF\}
\end{equation*}
is called the $\sigma$-field generated on $\Omega$ by $\AA$ [remark that $2^\Omega$ is certainly a $\sigma$-field on $\Omega$ that contains $\AA$ so the intersection is of a non-empty family].  For two $\sigma$-fields $\BB_1$ and $\BB_2$ on $\Omega$ set $\BB_1\lor \BB_2:=\sigma_\Omega(\BB_1\cup \BB_2)$ and call it the join of $\BB_1$ and $\BB_2$. More generally, for a family $(\BB_\lambda)_{\lambda\in \Lambda}$ of $\sigma$-fields on $\Omega$ we set $\lor_{\lambda\in \Lambda}\BB_\lambda:=\sigma_\Omega(\cup_{\lambda\in\Lambda}\BB_\lambda)$ and call it the join of said family.
\end{definition}

\begin{remarks}
One reason why generated $\sigma$-fields are so important in measure theory is because only seldom can we explicitly list/name/describe all the elements of a $\sigma$-field, which is to our liking, but we often can explicitly provide its generators, viz. the elements of $\AA$ in $\sigma_\Omega(\AA)$.  A closely allied notion is that of the $\sigma$-field generated by a map.
\end{remarks}

\begin{definition}\label{definition:generated-2}
	Let $f:\Omega\to \Omega'$. 
	
	Given a $\sigma$-field $\FF'$ on $\Omega'$ we define 
	\begin{equation*}
		\sigma^{\FF'}(f):=f^{-1}(\FF'):=\{f^{-1}(A'):A'\in \FF'\},
	\end{equation*}
the initial structure for $f$ relative to $\FF'$ (or the $\sigma$-field generated by $f$ relative to $\FF'$, also the pull-back of $\FF'$ by $f$). 

Given a $\sigma$-field $\FF$ on $\Omega$ we define 
\begin{equation*}
	\sigma_{\FF}^{\Omega'}(f):=\{A'\in 2^{\Omega'}:f^{-1}(A')\in \FF\},
\end{equation*}
the final structure on $\Omega'$ for $f$ relative to $\FF$ (or the push-forward to $\Omega'$ of $\FF$ by $f$).

Given a $\sigma$-field $\FF'$ on $\Omega'$  and a $\sigma$-field $\FF$ on $\Omega$, we say that $f$ is $\FF/\FF'$-measurable \deff $f^{-1}(A')\in \FF$ for all $A'\in \FF'$.
	\end{definition}
\begin{remarks}
	In the notations $\sigma_\Omega(\AA)$ and $\sigma^{\FF'}(f)$,	one tends to variously omit $\Omega$ or $\FF'$  if they can be gathered from context, writing simply $\sigma(\AA)$ and $\sigma(f)$. In particular if the range of $f$  is countable and no $\FF'$  (resp. or even no $\Omega'$) is provided, then one takes (resp. $\Omega'= \text{range of }f$ and) $\FF'=2^{\Omega'}$. Of the two objects pertaining to $f$ introduced in Definition~\ref{definition:generated-2}, $\sigma^{\FF'}(f)$  and $\sigma_{\FF}^{\Omega'}(f)$, the first is by far the more important one. (One instance in which one does meet final structures naturally is with quotient maps of equivalence relations.) The notation $f^{-1}(\FF')$ is more suggestive and succinct than $\sigma^{\FF'}(f)$ but can in principle be confused with the preimage of $\FF'$ under $f$, however it seems unlikely such confusion would ever arise in practice. The notion of a measurable map is key to subsequent developments; it is to measure theory what a continuous map is to topology/analysis. If $\FF$ and $\FF'$ can be gathered from context, then one just says ``measurable'' in lieu of $\FF/\FF'$-measurable. In probability theory (resp. temporally indexed families of) sub-$\sigma$-fields are used to model (resp. the flow of) information; the notion of generated $\sigma$-fields is of paramount importance in such contexts. 
\end{remarks}

\begin{definition}
Given $\sigma$-fields $\FF$ on $\Omega$ and $\FF'$ on $\Omega'$, we define $\FF/\FF':=\{g\in {\Omega'}^{\Omega}:g\text{ is }\FF/\FF'\text{-measurable}\}$.
\end{definition}

\begin{example}
A constant function is always measurable, no matter what the $\sigma$-fields. For any $\sigma$-field $\FF$ on $ \Omega$, $\mathrm{ id}_\Omega\in \FF/\FF$.
	\end{example}

\begin{definition}
	For $A\subset \Omega$  define $\mathbbm{1}_{A_\Omega}:\Omega\to \{0,1\}$ by putting $$\mathbbm{1}_{A_\Omega}(x):=\begin{cases} 1,&x\in A\\ 0,& x\in \Omega\backslash A.\end{cases}.$$ This is the indicator function of $A$ with underlying space $\Omega$. In deference to standard practice  we shall usually just write $\mathbbm{1}_A$ in lieu of $\mathbbm{1}_{A_\Omega}$ assuming $\Omega$ can be gathered from context. 
\end{definition}

\begin{example}\label{example:indicator-measurable}
	Let $A\subset \Omega$. Then $\sigma^{2^{\{0,1\}}}(\mathbbm{1}_A)=\sigma_\Omega A$ in the notation of Example~\ref{example:generated-by-one}. If further $\FF$ is a $\sigma$-field on $\Omega$, then $\mathbbm{1}_A\in \FF/2^{\{0,1\}}$ iff $A\in \FF$. 
\end{example}

\begin{proposition}\label{proposition:compositions}
	Let $\FF$, $\GG$, $\HH$ be $\sigma$-fields (each on their own set). Suppose $f\in \FF/\GG$ and $g\in \GG/\HH$. Then $g\circ f\in \FF/\HH$.
\end{proposition}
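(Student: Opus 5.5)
The plan is to verify the defining condition of measurability for $g\circ f$ directly. The only tool needed is the set-theoretic identity for preimages under a composition.

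Let $\FF$, $\GG$, $\HH$ be $\sigma$-fields on $\Omega$, $\Omega'$, $\Omega''$ respectively, so that $f:\Omega\to\Omega'$ and $g:\Omega'\to\Omega''$. Fix an arbitrary $C\in\HH$. I will first record the identity
\begin{equation*}
(g\circ f)^{-1}(C)=f^{-1}\bigl(g^{-1}(C)\bigr),
\end{equation*}
which holds because, for $x\in\Omega$, both sides are characterized by the same condition: $x$ is in the left side iff $g(f(x))\in C$, iff $f(x)\in g^{-1}(C)$, iff $x$ is in the right side.

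Next I will use the two hypotheses in turn. Since $g\in\GG/\HH$, the set $B:=g^{-1}(C)$ belongs to $\GG$. Since $f\in\FF/\GG$, it follows that $f^{-1}(B)\in\FF$. By the identity above, $(g\circ f)^{-1}(C)\in\FF$. Because $C\in\HH$ was arbitrary, $g\circ f$ is $\FF/\HH$-measurable, i.e. $g\circ f\in\FF/\HH$.

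There is no real obstacle here. The only point needing care is to keep track of which $\sigma$-field lives on which underlying set, so that the two measurability hypotheses are applied in the correct order (first $g$, then $f$).
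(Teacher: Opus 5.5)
Your proof is correct and is essentially the paper's own argument, which consists solely of the identity $(g\circ f)^{-1}(C)=f^{-1}(g^{-1}(C))$ for $C\in\HH$ combined with the two measurability hypotheses. Your version just spells out the verification of that identity and the order in which the hypotheses are used.
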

\begin{remark}
	In words, compositions of measurable maps are measurable. 
\end{remark}
\begin{proof}
	$(g\circ f)^{-1}(H)=f^{-1}(g^{-1}(H))$ for $H\in\HH$. 
\end{proof}

\begin{proposition}\label{proposition:maps}
	Let $f:\Omega\to \Omega'$. 
	\begin{enumerate}[(i)]
		\item\label{maps:i} Given a $\sigma$-field $\FF'$ on $\Omega'$, $\sigma^{\FF'}(f)$ is a $\sigma$-field on $\Omega$; it is the smallest (w.r.t. inclusion) $\sigma$-field $\GG$ on  $\Omega$ such that $f\in \GG/\FF'$. 
		\item\label{maps:ii} Given a $\sigma$-field $\FF$ on $\Omega$, $\sigma_{\FF}^{\Omega'}(f)$ is a $\sigma$-field on $\Omega'$; it is the largest (w.r.t. inclusion) $\sigma$-field $\GG'$ on  $\Omega'$ such that $f\in \FF/\GG'$. 
		\item\label{maps:iii} Given a $\sigma$-field $\FF'$ on $\Omega'$  and a $\sigma$-field $\FF$ on $\Omega$, then 	$f\in \FF/\FF'\Leftrightarrow \sigma^{\FF'}(f)\subset \FF \Leftrightarrow \sigma^{\Omega'}_\FF(f)\supset \FF'$.
				\item\label{maps:iv} Let $\AA'\subset 2^{\Omega'}$. $\sigma_{\Omega'}(\AA')$ is the smallest (w.r.t. inclusion) $\sigma$-field on $\Omega'$ that has $\AA'$ as its subset. Given a $\sigma$-field $\FF$ on $\Omega$, then $f\in \FF/\sigma_{\Omega'}(\AA')$ iff ($f^{-1}(A')\in \FF$ for all $A'\in \AA'$). In particular, $\sigma^{\sigma_{\Omega'}(\AA')}(f)=\sigma_\Omega(\{f^{-1}(A'):A'\in \AA'\})$. 
			\end{enumerate}
	\end{proposition}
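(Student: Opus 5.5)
The approach is to verify everything directly from the definitions, using only that preimages commute with complements and countable unions: $f^{-1}(\Omega'\backslash A')=\Omega\backslash f^{-1}(A')$, $f^{-1}(\cup_n A_n')=\cup_n f^{-1}(A_n')$ and $f^{-1}(\emptyset)=\emptyset$. Item \eqref{maps:iii} serves as the pivot, and \eqref{maps:i}, \eqref{maps:ii} and \eqref{maps:iv} are organised around it.

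For \eqref{maps:i}, I first note $\emptyset=f^{-1}(\emptyset)\in\sigma^{\FF'}(f)$. Closure for $\cc^\Omega$ and $\sigma\cup$ then follows from the preimage identities above together with the closure of $\FF'$ for the same operations. For \eqref{maps:ii}, $\emptyset\in \sigma^{\Omega'}_\FF(f)$ since $f^{-1}(\emptyset)=\emptyset\in\FF$. If $f^{-1}(A')\in\FF$, then $f^{-1}(\Omega'\backslash A')=\Omega\backslash f^{-1}(A')\in\FF$, and countable unions are handled in the same way.

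Next I prove \eqref{maps:iii}, which is a direct unwinding of definitions. $f\in\FF/\FF'$ means $f^{-1}(A')\in\FF$ for every $A'\in\FF'$. This says exactly $\sigma^{\FF'}(f)\subset\FF$, and equally exactly that every $A'\in\FF'$ lies in $\sigma^{\Omega'}_\FF(f)$. The extremality claims of \eqref{maps:i} and \eqref{maps:ii} follow at once. Taking $\FF=\GG:=\sigma^{\FF'}(f)$ in \eqref{maps:iii} gives $f\in\GG/\FF'$, and any other $\GG$ with $f\in\GG/\FF'$ contains it. Dually, taking $\FF'=\GG':=\sigma^{\Omega'}_\FF(f)$ gives $f\in\FF/\GG'$, and any $\GG'$ with $f\in\FF/\GG'$ is contained in it.

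For \eqref{maps:iv}, I first observe that an intersection of a nonempty family of $\sigma$-fields on $\Omega'$ is again a $\sigma$-field, since each closure property passes to intersections. Hence $\sigma_{\Omega'}(\AA')$ is a $\sigma$-field containing $\AA'$, and it is contained in every such $\sigma$-field by construction. For the measurability criterion, "only if" is trivial since $\AA'\subset\sigma_{\Omega'}(\AA')$. For "if", this is the key step, and it is the only genuinely non-tautological point of the proposition. The hypothesis says $\AA'\subset\sigma^{\Omega'}_\FF(f)$, which is a $\sigma$-field by \eqref{maps:ii}. Minimality then gives $\sigma_{\Omega'}(\AA')\subset\sigma^{\Omega'}_\FF(f)$, and by \eqref{maps:iii} this is $f\in\FF/\sigma_{\Omega'}(\AA')$. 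This "good sets" use of the final structure is what avoids needing to describe the elements of $\sigma_{\Omega'}(\AA')$.

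Finally, I prove the equality $\sigma^{\sigma_{\Omega'}(\AA')}(f)=\sigma_\Omega(\{f^{-1}(A'):A'\in\AA'\})$ by double inclusion. The inclusion "$\supset$" holds because the left side is a $\sigma$-field by \eqref{maps:i} and contains each $f^{-1}(A')$. For "$\subset$", I apply the criterion just proven with $\FF:=\sigma_\Omega(\{f^{-1}(A'):A'\in\AA'\})$. This gives $f\in\FF/\sigma_{\Omega'}(\AA')$, and then \eqref{maps:iii} yields $\sigma^{\sigma_{\Omega'}(\AA')}(f)\subset\FF$.
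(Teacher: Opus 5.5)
Your proof is correct and follows essentially the same route as the paper: (i)--(iii) come from preimages commuting with complements and countable unions, sufficiency in (iv) comes from $\AA'\subset\sigma^{\Omega'}_\FF(f)$ plus minimality and (iii), and the final identity is proved by double inclusion, applying that criterion with $\FF=\sigma_\Omega(\{f^{-1}(A'):A'\in\AA'\})$ together with (i). The only difference is that you spell out how the extremality claims in (i) and (ii) follow from (iii), which the paper leaves implicit.
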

\begin{remarks}
In plain tongue \eqref{maps:iv} tells us that it is enough to prove the measurability property on a set of generators. Another way of writing  $\sigma^{\sigma_{\Omega'}(\AA')}(f)=\sigma_\Omega(\{f^{-1}(A'):A'\in \AA'\})$ is as $f^{-1}(\sigma_{\Omega'}(\AA'))=\sigma_\Omega(f^{-1}(\AA'))$, which we may read as the operations of taking pre-images and generated $\sigma$-fields ``commuting''.
%
%
\end{remarks}%
\begin{proof}
\eqref{maps:i} and \eqref{maps:ii} follow from the fact that $f^{-1}$ (the taking of preimages) ``commutes'' with unions and complementation (and from $f^{-1}(\emptyset)=\emptyset$). \eqref{maps:iii} is direct from the definitions. Let us prove  \eqref{maps:iv}. The first statement follows from the fact that the intersection of $\sigma$-fields on $\Omega$ is again a $\sigma$-field on $\Omega$. As for the equivalence, the condition is clearly necessary. It is sufficient because it entails that  $\sigma^{\Omega'}_{\FF}(f)\supset \AA'$, so that $\sigma^{\Omega'}_{\FF}(f)\supset \sigma_{\Omega'}(\AA')$, which in view of \eqref{maps:iii}  renders $f\in \FF/\sigma_{\Omega'}(\AA')$. The final statement of \eqref{maps:iv} follows on taking $\FF=\sigma_\Omega(\{f^{-1}(A'):A'\in \AA'\})$ in the second statement of this same item, which gives $f^{-1}(\sigma_{\Omega'}(\AA'))\subset \sigma_\Omega(f^{-1}(\AA'))$; and from \eqref{maps:i}, which, through the evident inclusion $f^{-1}(\AA')\subset f^{-1}(\sigma_{\Omega'}(\AA'))$ delivers $f^{-1}(\sigma_{\Omega'}(\AA'))\supset \sigma_\Omega(f^{-1}(\AA'))$.
	\end{proof}
%
%
\begin{definition}
	We put
	\begin{equation*}
	\AA\vert_A:=\{A'\cap A:A'\in\AA\}
	\end{equation*}
	for the trace of $\AA$ on $A$.\footnote{Not to be confused with the restriction of a function $\AA$ to a subset $A$ of its domain! Let us agree that if it is given a priori that $\AA$ is a function and $A$ a subset of its domain, then the latter (not the trace) meaning of $\AA\vert_A$ prevails (this is still wrong, but hopefully enough to prevent confusion). Of course this is not the only notational shenanigan that we have hitherto seen, another screaming example being $f^{-1}(x)$ which can refer to the preimage of $x$ under $f$ or the pull-back of $x$ under $f$, yet another $2^\Omega$ (is it the power set of $\Omega$, or $\{0,1\}^\Omega$ -- the maps from $\Omega$ into $2=\{0,1\}$?); somehow we trust that it can be gathered from context which of these is intended.} 
\end{definition}

\begin{example}
	If $\FF$ is closed under $\cap$ (in particular if it is a $\sigma$-algebra) and $A\in \FF$, then plainly $\FF\vert_A=\FF\cap 2^A$. 
\end{example}

\begin{corollary}\label{proposition:restrictions-of-fields}
	Let $\AA\subset 2^\Omega$. If further $A\subset \Omega$, then \begin{equation}\label{equation:restrictions}
	\sigma_\Omega(\AA)\vert_A=\sigma_A(\AA\vert_A);
	\end{equation}
	in particular, if $\AA$ is a $\sigma$-field on $\Omega$, then $\AA\vert_A$ is a $\sigma$-field on $A$.

	Besides, $\sigma_\Omega(\AA)=\cup \{\sigma_\Omega(\BB):\BB\text{ countable}\subset \AA\}$. 
	 
\end{corollary}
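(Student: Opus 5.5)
Since it is called a corollary, the trace identity \eqref{equation:restrictions} should come out of Proposition~\ref{proposition:maps}\eqref{maps:iv} applied to the inclusion map. Let $\iota:A\to\Omega$, $\iota(a):=a$. For any $B\subset\Omega$ we have $\iota^{-1}(B)=B\cap A$, so $\iota^{-1}(\AA)=\AA\vert_A$ and $\iota^{-1}(\sigma_\Omega(\AA))=\sigma_\Omega(\AA)\vert_A$. The last statement of Proposition~\ref{proposition:maps}\eqref{maps:iv}, with $f=\iota$, $\Omega'=\Omega$, $\AA'=\AA$ and $A$ in the role of the domain, says
\[
\iota^{-1}(\sigma_\Omega(\AA))=\sigma_A(\iota^{-1}(\AA)),
\]
and this is exactly \eqref{equation:restrictions}.

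For the ``in particular'' part, suppose $\AA$ is a $\sigma$-field on $\Omega$. Then $\sigma_\Omega(\AA)=\AA$, because $\AA$ is one of the $\sigma$-fields being intersected in Definition~\ref{definition:generated-1} and it is contained in all the others. Hence $\AA\vert_A=\sigma_A(\AA\vert_A)$, which is a $\sigma$-field on $A$ (equivalently, apply Proposition~\ref{proposition:maps}\eqref{maps:i} to $\iota$).

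For the countable-generation claim, let $\mathcal{S}:=\cup\{\sigma_\Omega(\BB):\BB\text{ countable}\subset\AA\}$.
\begin{enumerate}[(a)]
\item $\mathcal{S}\subset\sigma_\Omega(\AA)$: if $\BB\subset\AA$ then $\sigma_\Omega(\BB)\subset\sigma_\Omega(\AA)$, by minimality in Proposition~\ref{proposition:maps}\eqref{maps:iv}.
\item $\AA\subset\mathcal{S}$: take $\BB=\{A'\}$ for each $A'\in\AA$.
\item $\mathcal{S}$ is a $\sigma$-field on $\Omega$:
\begin{itemize}
\item $\emptyset\in\sigma_\Omega(\emptyset)\subset\mathcal{S}$.
\item Closure under complements holds because each $\sigma_\Omega(\BB)$ is closed under complements.
\item For a sequence $(S_n)_{n\in\mathbb{N}}$ in $\mathcal{S}$, pick countable $\BB_n\subset\AA$ with $S_n\in\sigma_\Omega(\BB_n)$. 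Then $\BB:=\cup_n\BB_n$ is a countable subset of $\AA$. Monotonicity of $\sigma_\Omega$ puts every $S_n$ in $\sigma_\Omega(\BB)$, so $\cup_nS_n\in\sigma_\Omega(\BB)\subset\mathcal{S}$.
\end{itemize}
\end{enumerate}
By (b), (c) and minimality, $\sigma_\Omega(\AA)\subset\mathcal{S}$, and (a) gives the reverse inclusion.

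The only delicate point is the countable-union step in (c). It needs the countable axiom of choice to pick the $\BB_n$ and the fact that a countable union of countable sets is countable; both are standard.
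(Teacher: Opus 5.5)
Your proof is correct and is essentially the paper's. The paper also works with the inclusion $\mathrm{id}_A$ of $A$ into $\Omega$, but instead of citing the last claim of Proposition~\ref{proposition:maps}\eqref{maps:iv} it re-derives the two inclusions directly from items \eqref{maps:i} and \eqref{maps:ii}, which is exactly how that claim is proved. For the countable-generation statement the paper only remarks that countable unions of countable sets are countable; your check that $\mathcal{S}$ is a $\sigma$-field containing $\AA$ spells that argument out in full.
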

\begin{proof}
By Proposition~\ref{proposition:maps}\eqref{maps:i}, $\sigma_\Omega(\AA)\vert_A=\sigma^{\sigma_\Omega(\AA)}(\mathrm{id}_A)$ is a $\sigma$-field on $A$ that contains $\AA\vert_A$; therefore $\sigma_\Omega(\AA)\vert_A\supset \sigma_A(\AA\vert_A)$. 
By Proposition~\ref{proposition:maps}\eqref{maps:ii}, $\CC:=\{C\in 2^\Omega:C\cap A\in \sigma_A(\AA\vert_A)\}=\sigma^\Omega_{\sigma_A(\AA\vert_A)}(\id_A)$ is a $\sigma$-field on $\Omega$ that contains $\AA$; therefore $\sigma_\Omega(\AA)\subset \CC$, so $\sigma_\Omega(\AA)\vert_A\subset \sigma_A(\AA\vert_A)$. 

  The final observation follows from the fact that countable unions of countable sets are countable (coupled with more elementary considerations).  
\end{proof}
\begin{example}
		In the context of Example~\ref{example:generated-by-one}, $\sigma_\Omega A=\sigma_\Omega(\{A\})$.
\end{example}
\begin{example}
	In the context of Example~\ref{example:cc}, $\sigma^{\mathrm{ccc}}_\Omega=\sigma_\Omega(\{\{\omega\}:\omega\in \Omega\})$.
	\end{example}
\begin{example}
	In the context of  Example~\ref{example:partition},  $\sigma\PPP=\sigma_\Omega(\PPP)$. 
\end{example}


\begin{remark}
	In general, how to identify $\sigma_\Omega(\AA)$? Answer: start with $\AA$ and add anything that has to be in $\sigma_\Omega(\AA)$ because of the closure properties of $\sigma$-fields: all the complements, countable unions, $\emptyset$ and $\Omega$, countable unions of those, complements of those etc. etc. Stop when you ``feel'' it is enough. Then (if your feeling was right) prove that what you have is a $\sigma$-algebra. This procedure can be made rigorous, but in general the adding of the complements, and the taking of countable unions has to be effected the first uncountable ordinal-many times, in order to ensure that a $\sigma$-field is produced\footnote{See e.g. \underline{\href{https://math.dartmouth.edu/archive/m103f08/public_html/borel-sets-soln.pdf}{this link}}.}.
\end{remark}
\begin{example}
	Let $\{E,F\}\subset 2^\Omega$. Then  $\sigma_\Omega(\{E,F\})$ must include what we might call ``the induced partition'' $$\mathcal{P}:=\{E\cap F,E\backslash F,F\backslash E,\Omega\backslash (E\cup F)\}\backslash \{\emptyset\}.$$ Therefore (since clearly $\{E,F\}\subset \sigma_\Omega(\mathcal{P})$) it follows that $$\sigma_\Omega(\{E,F\})=\sigma_\Omega(\mathcal{P})=\sigma\mathcal{P},$$ which in turn is identified explicitly, as we have seen (Example~\ref{example:partition}). This can easily be generalized to identify $\sigma_\Omega(\AA)$ whenever $\AA$ is a finite (but not beyond) subset of $2^\Omega$.
	 
	Here is how to do it. For $A\in \AA$ and $i\in \{0,1\}$ let $A^0:=A$ and $A^1:=\Omega\backslash A$. Then (because $\AA$ is finite) $\sigma_\Omega(\AA)$ must include the (finite, which does not really matter)  induced partition $\mathcal{P}:=\{\cap_{A\in \AA}A^{F(A)}:F\in \{0,1\}^\AA\}\backslash \{\emptyset\}$  (we dissect $\Omega$ using the elements of $\AA$ and their complements as finely as possible; the empty set is taken away because [by definition] partitions do not include the empty set). At this point $\AA$ could have been denumerable and all said would be true except that $\mathcal{P}$ would not be finite (not even countable). On the other hand $\sigma_\Omega(\mathcal{P})\supset \AA$ (here the finiteness of $\AA$ is essential). 	Therefore $\sigma_\Omega(\AA)=\sigma_\Omega(\mathcal{P})=\sigma\mathcal{P}$. 
	 
\end{example}
\begin{proposition}\label{proposition:restrictions}
	Let $f:\Omega\to \Omega'$ and let  there be given a $\sigma$-field $\FF$ on $\Omega$ and a $\sigma$-field $\FF'$ on $\Omega'$. 
	\begin{enumerate}[(a)]
		\item\label{restrict:a} If $A'\subset\Omega'$ is such that $f:\Omega\to A'$, then $f\in \FF/\FF'$ iff $f\in \FF/(\FF'\vert_{A'})$. 
		\item\label{restrict:b} For $A\subset \Omega$, if $f\in \FF/\FF'$, then $f\vert_A\in (\FF\vert_A)/\FF'$.
		\item\label{restrict:c} If $(A_i)_{i\in \mathbb{N}}$ is a sequence in $\FF$ with $\Omega=\cup_{i\in \mathbb{N}}A_i$ then: $f\in \FF/\FF'$ $\Leftrightarrow$ ($f\vert_{A_i}\in \FF\vert_{A_i}/\FF'$ for all $i\in \mathbb{N}$). 
	\end{enumerate}
\end{proposition}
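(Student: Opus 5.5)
All three items reduce to computing preimages. The tools are the trace identity $\FF\vert_A=\FF\cap 2^A$ for $A\in\FF$ and the description of restrictions as compositions with inclusion maps, together with Proposition~\ref{proposition:compositions}.

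\textbf{Item (a).} Write $f_{A'}:\Omega\to A'$ for $f$ with codomain restricted to $A'$. For any $B'\in\FF'$ we have
\[
f^{-1}(B')=f^{-1}(B'\cap A')=f_{A'}^{-1}(B'\cap A'),
\]
because $f$ takes values in $A'$. As $B'$ ranges over $\FF'$, the set $B'\cap A'$ ranges over all of $\FF'\vert_{A'}$. Hence
\[
\{f^{-1}(B'):B'\in\FF'\}=\{f_{A'}^{-1}(C'):C'\in\FF'\vert_{A'}\},
\]
and one family lies in $\FF$ iff the other does. That is exactly the claimed equivalence.

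\textbf{Item (b).} Note that $f\vert_A=f\circ\mathrm{id}_A$, where $\mathrm{id}_A:A\to\Omega$ is the inclusion. By definition of the trace, $\mathrm{id}_A^{-1}(B)=B\cap A$, so $\mathrm{id}_A\in(\FF\vert_A)/\FF$. (The proof of Corollary~\ref{proposition:restrictions-of-fields} already observed $\FF\vert_A=\sigma^{\FF}(\mathrm{id}_A)$.) Proposition~\ref{proposition:compositions} then gives $f\vert_A\in(\FF\vert_A)/\FF'$.

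\textbf{Item (c).} Necessity ($\Rightarrow$) is item (b) applied to each $A_i$. For sufficiency, fix $B'\in\FF'$ and decompose
\[
f^{-1}(B')=\bigcup_{i\in\mathbb{N}}\bigl(f^{-1}(B')\cap A_i\bigr)=\bigcup_{i\in\mathbb{N}}(f\vert_{A_i})^{-1}(B'),
\]
using $\Omega=\cup_i A_i$. By hypothesis each $(f\vert_{A_i})^{-1}(B')$ lies in $\FF\vert_{A_i}$. Since $A_i\in\FF$, the earlier example gives $\FF\vert_{A_i}=\FF\cap 2^{A_i}\subset\FF$. A countable union of members of $\FF$ is in $\FF$, so $f^{-1}(B')\in\FF$.

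The only point needing care is in (c): it is the assumption $A_i\in\FF$ that makes $\FF\vert_{A_i}\subset\FF$, and countability that allows the final union to stay in $\FF$. Everything else is bookkeeping of preimages.
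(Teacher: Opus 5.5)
Your proof is correct and takes essentially the same approach as the paper. Items (a) and (c) use the same preimage identities $f^{-1}(F')=f^{-1}(F'\cap A')$ and $f^{-1}(F')=\cup_{i\in\mathbb{N}}(f\vert_{A_i})^{-1}(F')$ with $\FF\vert_{A_i}\subset\FF$. In (b), factoring $f\vert_A=f\circ\mathrm{id}_A$ through the inclusion is only a repackaging of the paper's direct identity $(f\vert_A)^{-1}(F')=A\cap f^{-1}(F')$.
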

\begin{remarks}
In common language \eqref{restrict:a} and \eqref{restrict:b} mean that measurability behaves well under restrictions (the notion of the trace is aligned well with that of the restriction of a map). Item \eqref{restrict:c} tells us that measurability can be checked ``locally''; some of the $A_i$, $i\in \mathbb{N}$, may be empty, so a particular case occurs when we can cover $\Omega$ with finitely many measurable sets and check measurability on those.
 
Since for a $\sigma$-field $\AA$ on a set $A$ and $B\supset A$, $\BB:=\{C\in 2^B:C\cap A\in \AA\}=\sigma_\AA^B(\mathrm{id}_A)$ is a $\sigma$-field on $B$ satisfying $\BB\vert_A=\AA$, a $\GG/\AA$-measurable map $g$ can always be viewed also as a $\GG/\BB$ measurable map. 
 
\end{remarks}
\begin{proof}
	\eqref{restrict:a}.  $f^{-1}(F')=f^{-1}(F'\cap A')$ for $F'\in \FF'$. \eqref{restrict:b}.  $(f\vert_A)^{-1}(F')=A\cap f^{-1}(F')$ for $F'\in \FF'$.  \eqref{restrict:c}. Necessity is by \eqref{restrict:b}. Sufficiency: $f^{-1}(F')=\cup_{i\in \mathbb{N}}\underbrace{(f\vert_{A_i})^{-1}(F')}_{\in \FF\vert_{A_i}\subset\FF}$ for $F'\in \FF'$. 
\end{proof}

\section{Borel sets of the extended real line and Borel measurability of numerical functions}
\emph{the Borel $\sigma$-field on $[-\infty,\infty]$;  $[-\infty,\infty]$-valued Borel measurable maps}

\begin{definition}\label{definition:conventions}
$[-\infty,\infty]:=\mathbb{R}\cup \{-\infty,\infty\}:=$ the extended real line. 

The orderings $\leq$ and $<$ are extended to $[-\infty,\infty]$ from $\mathbb{R}$ in the natural way: $-\infty \leq a\leq \infty$ for all $a\in [-\infty,\infty]$; $-\infty<a$ iff $a\in (-\infty,\infty]=\mathbb{R}\cup \{\infty\}$ and $a<\infty$ iff $a\in [-\infty,\infty)=\{-\infty\}\cup \mathbb{R}$. 

We introduce the intervals $[-\infty,a]:=\{-\infty\}\cup (-\infty,a]$ for $a\in [-\infty,\infty)$ (with $(-\infty,-\infty]:=\emptyset$) etc. in the obvious way.

	We specify $0\cdot (\pm\infty):=0=:(\pm\infty)\cdot 0$ and $\infty+(-\infty):=0=:(-\infty)+\infty$.  The remainder of the ``arithmetic'' in $[-\infty,\infty]$ is (then) defined in the ``natural'' way, e.g. $a\cdot \infty=\sgn(a)\infty$ for $a\in [-\infty,\infty]\backslash\{0\}$, $a+\infty=\infty$ for $a\in (-\infty,\infty]$, $\infty-\infty:=\infty+(-\infty)=0$, $\vert \pm\infty\vert=\infty$ etc. 
	
	We endow $[-\infty,\infty]$ with the standard topology corresponding e.g. to the metric $d$ given by $d(x,y):=\vert \arctan(x)-\arctan(y)\vert$ for $\{x,y\}\subset [-\infty,\infty]$ ($\arctan(\infty):=\pi/2$ and $\arctan(-\infty):=-\pi/2$).
	
	 A map with values in $[-\infty,\infty]$ will be said to be numerical\footnote{For our purposes here. In general one would also consider complex-valued functions to be numerical, but we shall not be concerned with those, except for mentioning them in passing here and there.}.
\end{definition}

\begin{definition}
 $\mathcal{B}_{[-\infty,\infty]}:=\sigma_{[-\infty,\infty]}(\{[-\infty,a]:a\in\mathbb{R}\})$. For $A\subset [-\infty,\infty]$, $\mathcal{B}_A:=\mathcal{B}_{[-\infty,\infty]}\vert_A$  is called the Borel $\sigma$-field on $A$, its elements the Borel sets of $A$. 

\end{definition}
\begin{remarks}
We will tend to only need $\BB_A$ for a handful of $A$: $[-\infty,\infty]$, $\mathbb{R}$, $[0,1]$, $[0,\infty]$, $[0,\infty)$. For those that know topology, $\mathcal{B}_A$ coincides with the smallest $\sigma$-field on $A$ that contains all the open sets of $A$ (in the relative standard topology of $A$), and this is what, in general, the Borel $\sigma$-field on a  topological space is defined to be. When looking at measurable maps, the reason we like $\mathcal{B}_{[-\infty,\infty]}$ on the codomain is, roughly speaking, that it is big enough to ensure that measurable maps are sufficiently well-behaved, yet small enough  as to make (most) interesting maps measurable w.r.t. it. Anticipating what follows, we like it also because on $(\mathbb{R},\mathcal{B}_\mathbb{R})$ we can define a nice --- non-trivial translation-invariant ---, so-called Lebesgue measure. One can also argue that to say that a numerical map $f$ is measurable (is interesting from the point of view of measure), one should be able to at least measure the sets $\{f\leq a\}:=f^{-1}([-\infty,a])$ for each $a\in \mathbb{R}$ (in particular, anticipating a little bit the content of the second part of these notes, for a random variable $X$, one should want to be able to say what the probability of the events $\{X\leq a\}$, $a\in \mathbb{R}$, is).
\end{remarks}

\begin{example}
	All the intervals and countable subsets of $[-\infty,\infty]$ belong to $\mathcal{B}_{[-\infty,\infty]}$. All open and all closed sets of $[-\infty,\infty]$ belong to $\mathcal{B}_{[-\infty,\infty]}$. If $A\subset [-\infty,\infty]$ is countable, then $\BB_A=2^A$.
\end{example}

\begin{definition}
	If a function $f$ maps into $[-\infty,\infty]$ (i.e. is numerical), then: $\sigma(f):=\sigma^{\mathcal{B}_{[-\infty,\infty]}}(f)$; for a $\sigma$-field $\FF$ on the domain of $f$, $f$ is $\FF$-Borel measurable \deff $f\in \FF/\mathcal{B}_{[-\infty,\infty]}$; if $g:D_f\to [-\infty,\infty]$, $g\land f:=\min\{g,f\}$ and $g\lor f:=\max\{g,f\}$; $f^+:=\max\{0,f\}$ and $f^-:=\max\{0,-f\}$. 
\end{definition}
\begin{remarks}
	$\FF$ in ``$\FF$-Borel measurable'' is omitted if it can be gathered from context. In particular when the domain of $f$ is an $A\subset [-\infty,\infty]$ we always take $\FF=\mathcal{B}_A$ unless otherwise indicated. Anticipating again what follows, $[-\infty,\infty]$-valued Borel measurable functions are those for which an extremely well-behaved notion of an integral against a measure can be successfully defined. Plainly, $f=f^+-f^-$, $f^-=(-f)^+$ and $\vert f\vert=f^++f^-$.
\end{remarks}

\begin{example}
	By \eqref{equation:restrictions}, $\mathcal{B}_\mathbb{R}=\sigma_\mathbb{R}(\{(-\infty,a]:a\in \mathbb{R}\})$. Consequently, by Propositions~\ref{proposition:maps}\eqref{maps:iv} and~\ref{proposition:restrictions}\eqref{restrict:a}, for a $\sigma$-field $\FF$ on $\Omega$ and $f:\Omega\to\mathbb{R}$, $f$ is $\FF$ Borel measurable iff $f\in \FF/\mathcal{B}_\mathbb{R}$ iff $\{f\leq a\}:=f^{-1}((-\infty,a])\in \FF$ for all $a\in \mathbb{R}$. 
\end{example}

\begin{proposition} 
	If $A\subset [-\infty,\infty]$ and $f:A\to [-\infty,\infty]$ is continuous, then $f\in \mathcal{B}_A/\mathcal{B}_{[-\infty,\infty]}$. If $\{f,g\}\subset \FF/\mathcal{B}_{[-\infty,\infty]}$ for a $\sigma$-field $\FF$, then $\{f+g,fg\}\subset \FF/\mathcal{B}_{[-\infty,\infty]}$ and $\{\{f=g\},\{f\leq g\},\{f<g\}\}\subset \FF$.  
\end{proposition}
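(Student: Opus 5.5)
For the first assertion I will use Proposition~\ref{proposition:maps}\eqref{maps:iv} with the generators $[-\infty,a]$, $a\in\mathbb{R}$. It then suffices to show $f^{-1}([-\infty,a])\in\BB_A$. Since $[-\infty,a]$ is closed in $[-\infty,\infty]$ and $f$ is continuous, $f^{-1}([-\infty,a])$ is relatively closed in $A$. So it has the form $C\cap A$ with $C$ closed in $[-\infty,\infty]$. By the preceding example $C\in\BB_{[-\infty,\infty]}$, hence $C\cap A\in\BB_A$ by definition of the trace. For readers avoiding topology I would also note the key inclusion: every open $U\subset[-\infty,\infty]$ is a countable union of intervals with rational (or infinite) endpoints, and all such intervals lie in $\BB_{[-\infty,\infty]}$.

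Next I prove the sets $\{f<g\}$, $\{f\le g\}$, $\{f=g\}$ are in $\FF$. I write
\[
\{f<g\}=\bigcup_{q\in\mathbb{Q}}\bigl(\{f<q\}\cap\{q<g\}\bigr),
\]
which holds by density of $\mathbb{Q}$ in $[-\infty,\infty]$. Each $\{f<q\}=\cup_n\{f\le q-1/n\}$ and each $\{q<g\}=\Omega\backslash\{g\le q\}$ lie in $\FF$. Then $\{f\le g\}=\Omega\backslash\{g<f\}$ and $\{f=g\}=\{f\le g\}\cap\{g\le f\}$.

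For $f+g$ I do not use a continuity argument for addition on $[-\infty,\infty]^2$, since addition is discontinuous at $(\infty,-\infty)$ under the convention $\infty-\infty=0$. Instead I partition $\Omega$ into the measurable pieces
\[
\Omega_0:=\{f\in\mathbb{R}\}\cap\{g\in\mathbb{R}\}
\]
together with the finitely many pieces on which $f$ or $g$ equals $\pm\infty$. These pieces are in $\FF$ because $\{\pm\infty\}$ and $\mathbb{R}$ are Borel. On each infinite piece $f+g$ is constant (one of $\pm\infty$ or $0$), hence measurable. By Proposition~\ref{proposition:restrictions}\eqref{restrict:c} it then suffices to treat $f+g$ on $\Omega_0$, where $f,g$ are real-valued and measurable w.r.t.\ the trace $\sigma$-field by Proposition~\ref{proposition:restrictions}\eqref{restrict:a},\eqref{restrict:b}. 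There
\[
\{f+g<a\}=\bigcup_{q\in\mathbb{Q}}\bigl(\{f<q\}\cap\{g<a-q\}\bigr)\in\FF\vert_{\Omega_0}.
\]
Generators $[-\infty,a)$ suffice, since $[-\infty,a]=\cap_n[-\infty,a+1/n)$ shows they generate $\BB_{[-\infty,\infty]}$.

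For $fg$ on $\Omega_0$ I use the identity
\[
fg=\tfrac14\bigl((f+g)^2-(f-g)^2\bigr).
\]
Here $-g$ is measurable as the composition with the continuous map $x\mapsto -x$, by Proposition~\ref{proposition:compositions} and the first part. Squaring is continuous, so squares of measurable maps are measurable, and sums were handled above. Scaling by $1/4$ is again continuous. On the infinite pieces, $fg$ takes values determined by the signs of the real factor. I refine those pieces further by $\{f>0\},\{f=0\},\{f<0\}$ and similarly for $g$, so that $fg$ is constant on each refined piece. Then I glue again with Proposition~\ref{proposition:restrictions}\eqref{restrict:c}.

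The main obstacle is this bookkeeping with infinities and the arithmetic conventions. I handle it uniformly by the finite measurable partition plus local checking, rather than appealing to joint continuity.
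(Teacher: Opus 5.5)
Your proof is correct. The first claim and the handling of $\{f<g\}$, $\{f\le g\}$, $\{f=g\}$ match the paper's: pull back closed sets under continuous maps, then use rational separation, complements and intersections.

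For $f+g$ the paper does not partition. It writes a single explicit identity for $\{f+g>t\}$, in which the infinite cases appear as separate measurable summands, including $\{f=\infty,g=-\infty,t<0\}$, which reflects $\infty-\infty=0$. These sit next to the rational-separation term on $\{f\in\mathbb{R},g\in\mathbb{R}\}$. Your partition into nine pieces together with Proposition~\ref{proposition:restrictions}\eqref{restrict:c} is the same case analysis, organised as a gluing argument rather than a set identity.

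The genuine divergence is the product. The paper first takes $f,g>0$ and writes $fg=\exp(\log f+\log g)$. It uses continuity of $\exp$ and $\log$ on the extended half-lines $(-\infty,\infty]$ and $(0,\infty]$, so infinite values of positive factors need no separate treatment. It then dismisses the reduction of the general case, which involves signs and zeros where $0\cdot\infty=0$ matters, as ``easy''. You instead use $fg=\frac14((f+g)^2-(f-g)^2)$ on the finite part, which needs only sums, negation, squaring and scaling, all already in hand. You dispose of every infinite or zero case by noting that $fg$ is constant on each piece of the sign partition.

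Your route makes explicit exactly the sign/zero bookkeeping the paper suppresses, and it avoids checking extended continuity of $\exp$ and $\log$. The paper's route is shorter on the page and treats infinite positive values in one stroke.
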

 
\begin{proof}
For the first claim  recall for instance that inverse images of closed sets under continuous maps are closed. Let now  $\{f,g\}\subset \FF/\mathcal{B}_{[-\infty,\infty]}$. To show that $f+g\in \FF/\mathcal{B}_{[-\infty,\infty]}$,  observe that for all $t\in \mathbb{R}$, $\{f+g>t\}=(\{f=\infty\}\cap \{g>-\infty\})\cup (\{g=\infty\}\cap \{f>-\infty\})\cup \{f=-\infty,g=\infty,t< 0\}\cup \{f=\infty,g=-\infty,t< 0\}\cup (\{f\in \mathbb{R},g\in \mathbb{R}\}\cap (\cup_{r\in \mathbb{Q}}\{r<f\}\cap \{g>t-r\}))$. To see that $fg\in \FF/\mathcal{B}_{[-\infty,\infty]}$, assume first $f> 0$ and $g> 0$,  write $fg=\exp(\log (f)+\log (g))$ and check that by continuity $\exp\in  \mathcal{B}_{(-\infty,\infty]}/ \mathcal{B}_{(0,\infty]}$ and $\log\in  \mathcal{B}_{(0,\infty]}/ \mathcal{B}_{(-\infty,\infty]}$ (with $\exp\infty:=\infty$ and $\log \infty:=\infty$); this yields $fg\in \FF/\mathcal{B}_{(0,\infty]}$ in the particular case of strictly positive maps, from which the general case follows easily. Finally, note that $\{f<g\}=\cup_{r\in \mathbb{Q}}(\{f<r\}\cap \{r<g\})\in \FF$, taking complements $\{g\leq f\}\in \FF$, hence also $\{f\leq g\}\in \FF$ and $\{f=g\}=\{f\leq g\}\cap \{g\leq f\}\in \FF$.
\end{proof}

\begin{example}
	The map $f:\mathbb{R}\to \mathbb{R}$  given by $f(x)=\mathbbm{1}_{[0,1]}(x)\sin(x+\mathbbm{1}_{[0,2]}(x))/(1+x^2)$, $x\in \mathbb{R}$, belongs to  $\mathcal{B}_\mathbb{R}/\mathcal{B}_\mathbb{R}$ (in other words, is Borel measurable).
	\end{example}

\begin{proposition}
	Let $\FF$ be a $\sigma$-field and let $(f_n)_{n\in \mathbb{N}}$ be a sequence in $\FF/\mathcal{B}_{[-\infty,\infty]}$. Then $$\left\{\sup_{n\in \mathbb{N}}f_n,\inf_{n\in \mathbb{N}}f_n,\limsup_{n\to\infty}f_n,\liminf_{n\to\infty}f_n\right\}\subset \FF/\mathcal{B}_{[-\infty,\infty]}.$$ 
Besides, $$\left(\sum_{n\in \mathbb{N}}f_n\text{ on }\left\{\sum_{n\in \mathbb{N}}f_n\text{ converges}\right\}\right)\in \FF\vert_{\{\sum_{n\in \mathbb{N}}f_n\text{ converges}\}}/\mathcal{B}_{[-\infty,\infty]};$$ in particular, 	if $f_n\geq 0$ for all $n\in \mathbb{N}$, then $\sum_{n\in \mathbb{N}}f_n\in \FF/\BB_{[0,\infty]}$.
\end{proposition}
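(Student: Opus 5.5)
Since $\BB_{[-\infty,\infty]}$ is generated by $\{[-\infty,a]:a\in\mathbb{R}\}$, Proposition~\ref{proposition:maps}\eqref{maps:iv} reduces every measurability claim to checking preimages of these generators. Equivalently, by taking complements, it suffices to check the sets $(a,\infty]$, $a\in\mathbb{R}$. For the supremum I would write $\{\sup_n f_n\leq a\}=\cap_{n\in\mathbb{N}}\{f_n\leq a\}$, which lies in $\FF$ because $\FF$ is closed for $\sigma\cap$. For the infimum I would use $\{\inf_n f_n<a\}=\cup_n\{f_n<a\}\in\FF$. Then $\{\inf_n f_n\leq a\}=\cap_{k\in\mathbb{N}}\{\inf_n f_n<a+1/k\}\in\FF$. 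Alternatively, $\inf_n f_n=-\sup_n(-f_n)$, and $-f_n$ is measurable as the composition of $f_n$ with the continuous map $x\mapsto -x$, using Propositions~\ref{proposition:compositions} and the preceding continuity result. For the limits I would use $\limsup_n f_n=\inf_{m}\sup_{n\geq m}f_n$ and $\liminf_n f_n=\sup_m\inf_{n\geq m}f_n$, and apply the previous two cases twice, since each tail sequence is again a sequence in $\FF/\BB_{[-\infty,\infty]}$.

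For the series, let $s_N:=\sum_{n=1}^N f_n$. Each $s_N$ is measurable by induction using the preceding proposition on $f+g$. The convergence set is $C:=\{\liminf_N s_N=\limsup_N s_N\}$. I read ``converges'' as convergence in $[-\infty,\infty]$, that is, the limit exists in the metric $d$ of Definition~\ref{definition:conventions}. Since $d$ induces the order topology, this is the same as equality of $\liminf$ and $\limsup$. By the first part both are measurable, so $C\in\FF$ by the preceding proposition (the set $\{f=g\}$). On $C$ the sum equals $\limsup_N s_N$ restricted to $C$, which is $\FF\vert_C/\BB_{[-\infty,\infty]}$-measurable by Proposition~\ref{proposition:restrictions}\eqref{restrict:b}.

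When $f_n\geq 0$, the partial sums are nondecreasing, so $C=\Omega$ and $\sum_n f_n=\sup_N s_N$. This is measurable into $[-\infty,\infty]$ and takes values in $[0,\infty]$, so Proposition~\ref{proposition:restrictions}\eqref{restrict:a} gives measurability into $\BB_{[0,\infty]}$.

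The main obstacle is the series. One subtle point is the convention $\infty+(-\infty)=0$: the partial sums are still measurable, but the meaning of ``converges'' must be fixed, and I would state it explicitly as convergence in $([-\infty,\infty],d)$. The other is justifying that convergence in $d$ is equivalent to $\liminf=\limsup$. This holds because $\arctan$ is an order-preserving homeomorphism onto $[-\pi/2,\pi/2]$, which transports the claim to the familiar real case.
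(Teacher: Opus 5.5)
Your proof is correct and follows the same route as the paper. You treat the supremum via $\{\sup_n f_n\le a\}=\cap_n\{f_n\le a\}$ and measurability on generators, the infimum by passing to $-f_n$ (or your direct strict-inequality argument), $\limsup$ and $\liminf$ as iterated $\inf$ and $\sup$, and the series as the limit of its measurable partial sums. Where the paper is terse, you spell out the details: the convergence set is $\{\liminf_N s_N=\limsup_N s_N\}\in\FF$, restricting to it preserves measurability, and in the nonnegative case the partial sums increase to their supremum, so the codomain can be shrunk to $[0,\infty]$.
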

\begin{proof}
	For $a\in \mathbb{R}$, $\{\sup_{n\in \mathbb{N}}f_n\leq a\}=\cap_{n\in \mathbb{N}}\{f_n\leq a\}$. Apply Proposition~\ref{proposition:maps}\eqref{maps:iv} to find that $\sup_{n\in \mathbb{N}}f_n\in \FF/\mathcal{B}_{[-\infty,\infty]}$. To see it for the $\inf$, replace the $f$s, by $-f$s. The inferior and superior limits are just combinations of taking suprema and then infima or vice versa. Finally, a series is the limit of its partial sums (when convergent). 
\end{proof}

\begin{corollary}
Let $\FF$ be a $\sigma$-field. Then $\{f\lor g,f\land g,f^+,f^-,\vert f\vert\}\subset \FF/\mathcal{B}_{[-\infty,\infty]}$ whenever $\{f,g\}\subset \FF/\mathcal{B}_{[-\infty,\infty]}$. Also, $\{\{f_n\text{ converges as $n\to\infty$}\},\{f_n\text{ converges to a value in $\mathbb{R}$ as $n\to\infty$}\},\{\lim_{n\to\infty}f_n=f_0\}\}\subset \FF$ whenever $(f_n)_{n\in \mathbb{N}_0}$ is a sequence in $\FF/\mathcal{B}_{[-\infty,\infty]}$.\qed 
		\end{corollary}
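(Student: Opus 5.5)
The first group of claims reduces to the preceding Proposition by writing each map as a supremum or infimum of a sequence. For $f\lor g$ I take the sequence $(f,g,g,g,\ldots)$, whose supremum is $\max\{f,g\}$; likewise $f\land g$ is the infimum of the same sequence. Constants are measurable, so $0$ is measurable. The map $-f=(-1)\cdot f$ is measurable by the earlier product result, or directly because $x\mapsto -x$ is continuous on $[-\infty,\infty]$ and hence Borel, and compositions of measurable maps are measurable (Proposition~\ref{proposition:compositions}). Then $f^+=f\lor 0$ and $f^-=(-f)\lor 0$ are measurable. Finally $\vert f\vert=f^++f^-$ is measurable by the earlier sum result, or alternatively because $\vert\cdot\vert$ is continuous.

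For the sets, I use that $\limsup_n f_n$ and $\liminf_n f_n$ are in $\FF/\BB_{[-\infty,\infty]}$ by the preceding Proposition. A sequence in $[-\infty,\infty]$ converges in the topology of Definition~\ref{definition:conventions} exactly when its $\liminf$ and $\limsup$ agree. So
\[
\{f_n\text{ converges}\}=\Bigl\{\liminf_{n\to\infty}f_n=\limsup_{n\to\infty}f_n\Bigr\},
\]
and this set is in $\FF$ by the earlier result that $\{f=g\}\in\FF$ for measurable $f,g$. Intersecting with
\[
\Bigl\{\limsup_{n\to\infty}f_n\in\mathbb{R}\Bigr\}=\Bigl(\limsup_{n\to\infty}f_n\Bigr)^{-1}(\mathbb{R}),
\]
which is the preimage of a Borel set, gives the second set. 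For the third set I write
\[
\Bigl\{\lim_{n\to\infty}f_n=f_0\Bigr\}=\{f_n\text{ converges}\}\cap\Bigl\{\limsup_{n\to\infty}f_n=f_0\Bigr\},
\]
which is again in $\FF$ by the $\{f=g\}$ result. Here the $\limsup$ is taken over $n\in\mathbb{N}$, i.e. along the tail $(f_n)_{n\ge 1}$.

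The only point needing care is the characterization of convergence in $[-\infty,\infty]$ via $\liminf=\limsup$. Because $\arctan$ is an order-preserving homeomorphism onto $[-\pi/2,\pi/2]$, this transfers from the compact interval, where it is standard. Everything else is direct bookkeeping.
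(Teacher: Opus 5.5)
Your proof is correct and follows the route the paper intends. The paper states the corollary without proof, as an immediate consequence of the preceding Proposition (measurability of $\sup$, $\inf$, $\limsup$, $\liminf$) and the earlier one (measurability of $f+g$, $fg$, and $\{f=g\}\in\FF$); your argument combines exactly these, together with the fact that a sequence converges in $[-\infty,\infty]$ precisely when $\liminf=\limsup$.
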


\begin{remark}
Roughly speaking, any subset of $[-\infty,\infty]$ and any $[-\infty,\infty]$-valued map one meets in practice is Borel measurable. Still, it is often not trivial to prove that a map or a set is measurable (and even more non-trivial to prove that it is not!). There exist non-Borel measurable subsets of $\mathbb{R}$, i.e. $\mathcal{B}_\mathbb{R}\subsetneq 2^\mathbb{R}$.
\end{remark}

\section{Monotone class arguments}
\emph{(approximation with) simple functions,  monotone class theorem; Doob-Dynkin/factorization lemma; $\pi$-systems, $\lambda$-systems, Dynkin's lemma; measures that agree on a $\sigma$-localizing generating $\pi$-system agree everywhere}\footnote{The content of this section is by no means exhaustive; see \underline{\href{https://www.fmf.uni-lj.si/~vidmarm/Dynkin_and_pi_systems.pdf}{here}} for many more results of similar flavor, especially the so-called functional monotone class.}

\begin{definition}
	Let $\FF$ be a $\sigma$-field on $\Omega$ and $f:\Omega\to [0,\infty)$. $f$ is $\FF$-simple \deff $f\in \FF/\mathcal{B}_{[0,\infty)}$ and the range of $f$ is finite. 
\end{definition}
\begin{remark}
	A caveat: it is not standard, but we insist all simple functions are nonnegative, which will save us from adding the qualifier ``nonnegative'' in what follows.
\end{remark}

\begin{proposition}\label{proposition:approx}
	Let $(\Omega,\FF)$ be a measurable space and $f:\Omega\to[0,\infty]$. Then $f$ is $\FF$-simple iff $f=\sum_{i=1}^nc_i\mathbbm{1}_{A_i}$ for some $(c_i)_{i=1}^n$ from $[0,\infty)$, $(A_i)_{i=1}^n$ from $\FF$ and $n\in \mathbb{N}$. Furthermore, if $f\in \FF/\mathcal{B}_{[0,\infty]}$, then $((2^{-n}\lfloor 2^nf\rfloor )\land n)_{n\in \mathbb{N}}$ is a sequence of $\FF$-simple functions that is $\uparrow f$ (even uniformly on every set on which $f$ is bounded).\footnote{We take $\lfloor\infty\rfloor:=\infty$, of course.}\qed
\end{proposition}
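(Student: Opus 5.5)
For the characterization of simple functions, I would prove the two directions separately. Suppose $f$ is $\FF$-simple, with finite range $\{c_1,\ldots,c_n\}\subset[0,\infty)$; note $n\geq 1$ if $\Omega\neq\emptyset$, and if $\Omega=\emptyset$ take $n=1$, $c_1=0$, $A_1=\emptyset$. Put $A_i:=f^{-1}(\{c_i\})$. Singletons are Borel sets (closed sets), so $A_i\in\FF$, and clearly $f=\sum_{i=1}^n c_i\mathbbm{1}_{A_i}$. Conversely, let $f=\sum_{i=1}^n c_i\mathbbm{1}_{A_i}$. Each $\mathbbm{1}_{A_i}$ is $\FF/2^{\{0,1\}}$-measurable by Example~\ref{example:indicator-measurable}. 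Since $\BB_{\{0,1\}}=2^{\{0,1\}}$ and by Proposition~\ref{proposition:restrictions}\eqref{restrict:a}, it is therefore $\FF$-Borel measurable. Products with the constants $c_i$ and finite sums stay Borel measurable by the proposition on $f+g$ and $fg$. The range of $f$ lies in the finite set $\{\sum_{i\in S}c_i:S\subset[n]\}\subset[0,\infty)$, and another application of Proposition~\ref{proposition:restrictions}\eqref{restrict:a} gives $f\in\FF/\BB_{[0,\infty)}$.

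For the approximation, set $f_n:=(2^{-n}\lfloor 2^nf\rfloor)\land n$. First I would show measurability: $\lfloor\cdot\rfloor$ is Borel on $[0,\infty]$, because $\{\lfloor x\rfloor\leq a\}=[0,\lfloor a\rfloor+1)$ for $a\geq 0$, and then $f_n$ is a composition (Proposition~\ref{proposition:compositions}) with scaling and a minimum. Alternatively, and more transparently, one can write
\[
f_n=\sum_{k=1}^{n2^n} 2^{-n}\mathbbm{1}_{\{f\geq k2^{-n}\}},
\]
which makes $f_n$ visibly $\FF$-simple by the first part, since each $\{f\geq k2^{-n}\}\in\FF$. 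In any case the range of $f_n$ lies in $\{k2^{-n}:0\leq k\leq n2^n\}$, which is finite and contained in $[0,\infty)$.

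Monotonicity is the step needing a small check. First, $2^{-n}\lfloor 2^n x\rfloor\leq 2^{-(n+1)}\lfloor 2^{n+1}x\rfloor$, because $2\lfloor y\rfloor\leq\lfloor 2y\rfloor$ (the left side is an integer $\leq 2y$); this also holds for $x=\infty$. Combined with $n\leq n+1$, this gives $f_n\leq f_{n+1}$.

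For convergence, fix $\omega$. If $f(\omega)<\infty$, then for $n\geq f(\omega)$ we have $f_n(\omega)=2^{-n}\lfloor 2^nf(\omega)\rfloor$, because this is at most $f(\omega)\leq n$, and $0\leq f(\omega)-f_n(\omega)<2^{-n}$. If $f(\omega)=\infty$, then $f_n(\omega)=n\to\infty$. Uniformity on a set where $f\leq M$ follows from the same estimate: for $n\geq M$, $\sup(f-f_n)\leq 2^{-n}$. No step is a genuine obstacle; the most care is needed in the floor inequality and in the $\infty$ conventions.
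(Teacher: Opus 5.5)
Your proof is correct. The paper gives no argument for this proposition (it is closed at once as routine), and what you write is the standard verification it leaves implicit; the identity $f_n=\sum_{k=1}^{n2^n}2^{-n}\mathbbm{1}_{\{f\geq k2^{-n}\}}$ is a particularly clean way to get simplicity of $f_n$ from the first part, and the floor inequality $2\lfloor y\rfloor\leq\lfloor 2y\rfloor$, the estimate $0\leq f-f_n<2^{-n}$ for $n\geq f$, and your treatment of the value $\infty$ and of $\Omega=\emptyset$ all check out.
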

\begin{remark}
There is nothing ``canonical'' about the choice of $(2^{-n}\lfloor 2^nf\rfloor )\land n$ in the preceding; what matters is only the existence of an approximating sequence with the indicated properties.
\end{remark}

\begin{corollary}[Monotone class theorem]\label{corollary:monotone-class}
Let $\FF$ be a $\sigma$-field on $\Omega$ and $\mathcal{M}\subset \FF/\mathcal{B}_{[0,\infty]}$. If $\mathbbm{1}_A\in \mathcal{M}$ for all $A\in \FF$, if $\mathcal{M}$ is a convex cone (i.e. closed under nonnegative linear combinations\footnote{Meaning: $af+bg\in \mathcal{M}$ whenever $\{a,b\}\subset [0,\infty)$ and $\{f,g\}\subset \mathcal{M}$.}), and if $\mathcal{M}$ is closed under nondecreasing limits\footnote{Meaning: $\lim_{n\to\infty}f_n\in \mathcal{M}$ whenever $(f_n)_{n\in \mathbb{N}}$ is a nondecreasing sequence of functions from $\mathcal{M}$.}, then $\mathcal{M}=\FF/\mathcal{B}_{[0,\infty]}$. 
\end{corollary}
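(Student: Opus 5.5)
The inclusion $\mathcal{M}\subset \FF/\mathcal{B}_{[0,\infty]}$ holds by hypothesis, so only the reverse inclusion needs proof. I would show it in two steps: first every $\FF$-simple function lies in $\mathcal{M}$, then every $f\in\FF/\mathcal{B}_{[0,\infty]}$ does, using monotone approximation by simple functions. Proposition~\ref{proposition:approx} supplies both ingredients.

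\emph{Step 1 (simple functions).} Let $f$ be $\FF$-simple. By the first part of Proposition~\ref{proposition:approx}, $f=\sum_{i=1}^n c_i\mathbbm{1}_{A_i}$ with $c_i\in[0,\infty)$ and $A_i\in\FF$. By hypothesis each $\mathbbm{1}_{A_i}\in\mathcal{M}$. Since $\mathcal{M}$ is a convex cone, an induction on $n$ gives $f\in\mathcal{M}$. In the inductive step, $\sum_{i=1}^{k+1}c_i\mathbbm{1}_{A_i}=1\cdot\sum_{i=1}^k c_i\mathbbm{1}_{A_i}+c_{k+1}\mathbbm{1}_{A_{k+1}}$. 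In the base case, $c_1\mathbbm{1}_{A_1}=c_1\mathbbm{1}_{A_1}+0\cdot\mathbbm{1}_{A_1}$.

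\emph{Step 2 (general nonnegative measurable functions).} Let $f\in\FF/\mathcal{B}_{[0,\infty]}$. By the second part of Proposition~\ref{proposition:approx}, $f_n:=(2^{-n}\lfloor 2^nf\rfloor)\land n$ defines a nondecreasing sequence of $\FF$-simple functions with $f_n\uparrow f$ pointwise. By Step 1 each $f_n\in\mathcal{M}$. Closure of $\mathcal{M}$ under nondecreasing limits then gives $f=\lim_{n\to\infty} f_n\in\mathcal{M}$. Hence $\FF/\mathcal{B}_{[0,\infty]}\subset\mathcal{M}$, and the two sets are equal.

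There is no real obstacle, since the work is carried by Proposition~\ref{proposition:approx}. The only point needing care is the pointwise convergence $f_n\uparrow f$ at points where $f=\infty$. There, with the convention $\lfloor\infty\rfloor=\infty$, we get $f_n=n\uparrow\infty$, and this case is covered by that proposition as stated.
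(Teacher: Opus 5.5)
Your proof is correct and follows the paper's argument exactly. The convex-cone property together with the indicators gives all $\FF$-simple functions, and then Proposition~\ref{proposition:approx} together with closure under nondecreasing limits finishes the proof; your treatment of the base case and of points where $f=\infty$ only spells out details the paper leaves implicit.
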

\begin{proof}
Because  $\mathcal{M}$ is a convex cone containing the indicators of $\FF$ it contains all $\FF$-simple  functions. Then the claim follows from Proposition~\ref{proposition:approx} and the fact that $\mathcal{M}$ is closed under nondecreasing limits. 
\end{proof}
\begin{remark}
	The importance of this result will only become fully apparent later on. In a nutshell it allows us to ``raise'' a claim from knowing that it holds for indicators of measurable sets, to all nonnegative measurable maps (using arguments of linearity and monotone convergence), and then to all measurable numerical maps (again by an argument of linearity, writing a function as the difference of its positive and negative part). A first application along these lines is given in (the proof of) the proposition to follow. 
\end{remark}
\begin{proposition}[Doob-Dynkin factorization lemma]\label{prop:doob-dynkin}
Let $X:\Omega\to A$, $(A,\AA)$ a measurable space. Then
$$Y\in \sigma^\AA(X)/\mathcal{B}_{[-\infty,\infty]} \text{ iff } \left(\exists h\in \AA/\mathcal{B}_{[-\infty,\infty]}\text{ with }Y=h(X)\right).$$
\end{proposition}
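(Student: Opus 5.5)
The sufficiency direction is immediate. If $Y=h(X)=h\circ X$ with $h\in \AA/\mathcal{B}_{[-\infty,\infty]}$, then $X\in \sigma^\AA(X)/\AA$ by Proposition~\ref{proposition:maps}\eqref{maps:i}. Proposition~\ref{proposition:compositions} then gives $Y\in \sigma^\AA(X)/\mathcal{B}_{[-\infty,\infty]}$.

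For necessity I will follow the monotone class route suggested by the remark after Corollary~\ref{corollary:monotone-class}, but carry it out by hand rather than invoking the corollary verbatim.

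\emph{Step 1: indicators.} Let $Y=\mathbbm{1}_B$ with $B\in\sigma^\AA(X)$. Then $B=X^{-1}(A')$ for some $A'\in\AA$, so $Y=\mathbbm{1}_{A'}(X)$. Here $h=\mathbbm{1}_{A'}$ is $\AA$-Borel measurable by Example~\ref{example:indicator-measurable}.

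\emph{Step 2: simple functions.} By Proposition~\ref{proposition:approx}, every $\sigma^\AA(X)$-simple $Y$ has the form $\sum_{i=1}^n c_i\mathbbm{1}_{B_i}$. Summing the representations from Step 1 gives $Y=h(X)$ with $h=\sum_i c_i\mathbbm{1}_{A_i'}$, which is measurable because sums of measurable maps are measurable. I would also arrange $h\geq 0$.

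\emph{Step 3: nonnegative $Y$.} This is the step needing care, since the limit of the $h_n$ need not exist off the range of $X$. Let $Y\in\sigma^\AA(X)/\mathcal{B}_{[0,\infty]}$. Proposition~\ref{proposition:approx} gives simple $Y_n\uparrow Y$, and Step 2 gives $Y_n=h_n(X)$ with $h_n\geq0$ measurable. Rather than taking $\lim h_n$, I set $h:=\sup_n h_n$. This is $\AA$-Borel measurable by the proposition on suprema, and on $X(\Omega)$ we have $h(X)=\sup_n h_n(X)=\sup_n Y_n=Y$. Using $\sup$ (or $\limsup$) sidesteps the convergence issue entirely.

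\emph{Step 4: general $Y$.} Write $Y=Y^+-Y^-$, where $Y^\pm$ are measurable by the corollary on $f^\pm$. Step 3 gives $Y^\pm=h^\pm(X)$. Put $h:=h^+-h^-$, which is measurable since sums and products (with $-1$) of measurable maps are measurable. Then $h(X)=Y^+-Y^-=Y$.

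The only subtlety here is the convention $\infty-\infty=0$. It causes no harm because at each point at least one of $Y^\pm$ vanishes, hence so does the corresponding $h^\pm(X)$, and the identity holds pointwise on $X(\Omega)$. Values of $h$ off $X(\Omega)$ are irrelevant.
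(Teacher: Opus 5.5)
Your proof is correct and follows the paper's argument. The paper applies the monotone class theorem (Corollary~\ref{corollary:monotone-class}) to the class of nonnegative $\sigma^\AA(X)$-measurable $Y$ that admit such a factorization, then passes to general $Y$ via $Y^\pm$; your Steps 1--3 simply carry out that corollary by hand, and your choice $h:=\sup_n h_n$ spells out what the paper leaves implicit when it asserts that this class is closed under nondecreasing limits, namely that the $h_n$ need not be monotone or convergent off $X(\Omega)$.
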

\begin{remarks}
In the preceding $h(X)$ is $h\circ X$. This is an (and will be a standard) abuse of notation. Note that for measurable spaces $(A,\AA)$ and $(C,\CC)$, for $f:A\to B$ and $g:B\to C$ we have that 
$$f\in \AA/\sigma^\CC(g)\text{ iff } g\circ f\in \AA/\CC\text{ iff }g\in \sigma_\AA^B(f)/\CC;$$
understanding measurability in an initial structure on the codomain (and in a final structure on the domain) is easy. The present proposition aims to understand it when it features on the domain, at least if the codomain is the extended real line with its Borel $\sigma$-field (the latter could be relaxed, but we do not do it here).

A trivial consequence of the Doob-Dynkin lemma is that for a measurable space $(\Omega,\FF)$ and ${\Omega'}\subset \Omega$, $$f\in (\FF\vert_{\Omega'})/\BB_{[-\infty,\infty]} \text{ iff }\exists g\in \FF/\BB_{[-\infty,\infty]}\text{ such that }f=g\vert_{\Omega'}$$ (indeed we have only to consider $X=\mathrm{id}_{\Omega'}$, $(A,\AA)=(\Omega,\FF)$ in Proposition~\ref{prop:doob-dynkin}).
%
\end{remarks}
\begin{proof}
Certainly $X\in  \sigma^\AA(X)/\AA$. The condition is then sufficient because compositions of measurable maps are measurable. For the converse, let $\mathcal{M}:=\{Y\in\sigma^\AA(X)/\mathcal{B}_{[0,\infty]}:  \exists h\in \AA/\mathcal{B}_{[0,\infty]}\text{ with }Y=h(X)\}$. Then $\mathcal{M}$ is a convex cone closed under nondecreasing limits and, by the very definition of $\sigma^\AA(X)$, it contains all the indicators of the members of $\sigma^\AA(X)$. By monotone class $\mathcal{M}=\sigma^\AA(X)/\mathcal{B}_{[0,\infty]}$. If finally $Y\in \sigma^\AA(X)/\mathcal{B}_{[-\infty,\infty]}$ then by what we have just proven $Y^+=h_+(X)$ and $Y^-=h_-(X)$ for some $h_+$ and $h_-$ from  $\AA/\mathcal{B}_{[0,\infty]}$. Then $Y=Y^+-Y^-=(h_+-h_-)(X)$ and $h_+-h_-\in \AA/\mathcal{B}_{[-\infty,\infty]}$.
\end{proof}
\begin{definition}
	Let $\DD\subset 2^\Omega$.
	
	 $\DD$ is  a Dynkin system (also, a $\lambda$-system) on $\Omega$ \deff $\Omega\in \DD$ and ($B\backslash A\in \DD$ provided $\DD\ni A\subset B\in \DD$) and (whenever $(A_i)_{i\in \mathbb{N}}$ is a sequence in $\DD$ with $A_i\subset A_{i+1}$ for all $i\in\mathbb{N}$, then $\cup_{i\in \mathbb{N}}A_i\in\DD$). 
	
		 $\DD$ is a $\pi$-system \deff $\DD$ is closed for $\cap$. 
\end{definition}
\begin{remark}
In words, $\DD$ is a Dynkin system on $\Omega$ if it contains $\Omega$, it is closed for differences of elements that are comparable w.r.t. inclusion (``comparable differences'') and also for countable nondecreasing unions. Clearly $\emptyset$ belongs to any Dynkin system.
\end{remark}
\begin{example}
	$\{(-\infty,a]:a\in \mathbb{R}\}$ is a $\pi$-system. 
\end{example}
\begin{proposition}
	Let $\DD\subset 2^\Omega$. 	 $\DD$ is a Dynkin system on $\Omega$ iff
	
	\begin{quote} $\Omega\in \DD$,  $\DD$ is closed for $\cc^\Omega$, and whenever $(A_i)_{i\in\mathbb{N}}$ is a sequence in $\DD$ with $A_i\cap A_j=\emptyset$ for all $i\ne j$ from $\mathbb{N}$, then $\cup_{i\in \mathbb{N}}A_i\in\DD$.\end{quote} 	$\DD$ is a $\sigma$-field on $\Omega$ iff it is a  $\pi$-system and a $\lambda$-system on $\Omega$. 
\end{proposition}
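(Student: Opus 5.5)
The plan is to verify the two characterizations directly from the definitions, each implication separately.

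\emph{First equivalence, ``only if''.} Let $\DD$ be a Dynkin system. We have $\Omega\in\DD$ by definition. Closure for $\cc^\Omega$ follows by taking $B=\Omega$ in the comparable-differences property: for $A\in\DD$, $\Omega\backslash A\in\DD$. For pairwise disjoint $(A_i)_{i\in\mathbb{N}}$ in $\DD$, put $B_n:=\cup_{i\le n}A_i$; these increase to $\cup_i A_i$, so it suffices to show $B_n\in\DD$ by induction. Since $\emptyset=\Omega\backslash\Omega\in\DD$, we have $B_0=\emptyset\in\DD$. For the step, use disjointness: $A_{n+1}\subset \Omega\backslash B_n$. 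Hence $(\Omega\backslash B_n)\backslash A_{n+1}\in\DD$, and this set equals $\Omega\backslash B_{n+1}$. Taking complements gives $B_{n+1}\in\DD$.

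\emph{First equivalence, ``if''.} Assume the three listed properties. For $A\subset B$ both in $\DD$, write $B\backslash A=\Omega\backslash\bigl(A\cup(\Omega\backslash B)\bigr)$. Here $A$ and $\Omega\backslash B$ are disjoint members of $\DD$, so padding with $\emptyset$ (which is in $\DD$ as $\Omega\backslash\Omega$) their union lies in $\DD$, and taking the complement gives $B\backslash A\in\DD$. For a nondecreasing sequence $(A_i)$, the sets $A_1,A_2\backslash A_1,A_3\backslash A_2,\ldots$ are pairwise disjoint. Each lies in $\DD$ by the comparable differences just established, and their union is $\cup_iA_i$.

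\emph{Second equivalence.} A $\sigma$-field is closed for $\cap$ and comparable differences by the earlier Proposition on $\sigma$-algebras. It contains $\Omega$ and is closed for $\sigma\cup$, so it is both a $\pi$-system and a $\lambda$-system. Conversely, let $\DD$ be both. Then $\emptyset\in\DD$, and $\DD$ is closed for $\cc^\Omega$ by the first part. For closure under $\cup$, write $A\cup A'=\Omega\backslash\bigl((\Omega\backslash A)\cap(\Omega\backslash A')\bigr)$, using the $\pi$-property. For an arbitrary sequence $(A_n)$, the finite unions $\cup_{i\le n}A_i$ then lie in $\DD$ and are nondecreasing, so their union $\cup_nA_n$ is in $\DD$ by the $\lambda$-property.

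The only mildly delicate point is the induction for disjoint finite unions in the ``only if'' direction, which requires the right choice of comparable difference, namely removing $A_{n+1}$ from $\Omega\backslash B_n$. Everything else is routine bookkeeping with de Morgan's laws.
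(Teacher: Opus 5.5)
Your proof is correct and follows the paper's argument. The paper uses exactly your two identities: $A\cup B=\Omega\backslash((\Omega\backslash A)\backslash B)$ for disjoint $A,B$, and $B\backslash A=\Omega\backslash(A\cup(\Omega\backslash B))$ for $A\subset B$. You additionally write out the steps the paper dismisses as trivial: the induction to finite disjoint unions, the disjointification of nondecreasing sequences, and the de Morgan argument for the second equivalence.
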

 
\begin{proof}
Consider the first statement.	If $\DD$ is a Dynkin system, then for $\{A,B\}\subset \DD$ with $A\cap B=\emptyset$, $B\subset \Omega\backslash A$ and so $A\cup B=\Omega\backslash ((\Omega\backslash A)\backslash B)\in \DD$. Conversely, if the stated conditions hold,  then for $\{A,B\}\subset \DD$ with $A\subset B$, $A\cap (\Omega\backslash B)=\emptyset$ and so  $B\backslash A=\Omega\backslash (A\cup (\Omega\backslash B))\in \DD$. The remainder of the argument is (even more) trivial.
\end{proof} 
 
\begin{definition}
	Given  $\LL\subset 2^\Omega$ we put $$\lambda_\Omega(\LL):=\cap\{\DD\in 2^{2^\Omega}:\DD\text{ is a Dynkin system on }\Omega\text{ and }\LL\subset \DD\}.$$
\end{definition}

 \begin{remark}
 	Just as for $\sigma$-fields, $\lambda_\Omega(\LL)$ is the smallest (w.r.t inclusion) $\lambda$-system on $\Omega$ that has $\LL$ for a subset.
 \end{remark}

\begin{proposition}
	Let $\LL$ be a $\pi$-system and $\LL\subset 2^\Omega$. Then $\sigma_\Omega(\LL)=\lambda_\Omega(\LL)$. 
\end{proposition}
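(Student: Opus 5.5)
The plan is the standard Dynkin $\pi$–$\lambda$ argument. Put $\DD:=\lambda_\Omega(\LL)$.

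\emph{Easy inclusion.} Since $\sigma_\Omega(\LL)$ is a $\sigma$-field, the preceding proposition shows it is a Dynkin system on $\Omega$, and it contains $\LL$. Hence $\DD\subset\sigma_\Omega(\LL)$.

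\emph{Reduction.} For the reverse inclusion it suffices to show that $\DD$ is a $\sigma$-field on $\Omega$. It then contains $\LL$ and so contains $\sigma_\Omega(\LL)$. By the characterization in the preceding proposition (a $\sigma$-field is exactly a $\pi$-system that is also a $\lambda$-system), it is enough to prove that $\DD$ is closed for $\cap$.

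\emph{Closure under intersections, in two bootstrap steps.} For $A\subset\Omega$ set
$$\DD_A:=\{B\in 2^\Omega: A\cap B\in \DD\}.$$

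\emph{Step 1: $\DD_A$ is a Dynkin system whenever $A\in\DD$.} We have $\Omega\in\DD_A$ because $A\cap\Omega=A\in\DD$. If $B\subset C$ are both in $\DD_A$, then $A\cap(C\backslash B)=(A\cap C)\backslash(A\cap B)$ is a comparable difference of members of $\DD$, so it lies in $\DD$. If $(B_i)_{i\in\mathbb{N}}$ is nondecreasing in $\DD_A$, then $A\cap\cup_i B_i=\cup_i(A\cap B_i)$ is a nondecreasing union of members of $\DD$, so it lies in $\DD$.

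\emph{Step 2: $\LL\subset\DD_L$ for every $L\in\LL$.} If $L\in\LL$, then $\LL\subset\DD_L$ because $\LL$ is a $\pi$-system and $\LL\subset\DD$. Since $\DD_L$ is a Dynkin system (Step 1, as $L\in\DD$), minimality gives $\DD\subset\DD_L$. So $L\cap D\in\DD$ for all $L\in\LL$ and $D\in\DD$.

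\emph{Step 3: $\DD\subset\DD_D$ for every $D\in\DD$.} By Step 2, $\LL\subset\DD_D$. Since $\DD_D$ is a Dynkin system, minimality again gives $\DD\subset\DD_D$. This says exactly that $D\cap D'\in\DD$ for all $D,D'\in\DD$, i.e.\ $\DD$ is a $\pi$-system, which completes the argument.

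The only point needing care is the two-stage bootstrap: the second stage must swap the roles of the fixed set, using the first stage to get $\LL\subset\DD_D$. The rest is routine verification of the Dynkin axioms via the set identities above.
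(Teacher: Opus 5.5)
Your proof is correct and takes the same route as the paper. The paper also reduces to showing that $\lambda_\Omega(\LL)$ is a $\pi$-system and bootstraps in two stages, via $\UU$ (sets whose intersection with every member of $\LL$ stays in $\lambda_\Omega(\LL)$) and then $\VV$ (the same with every member of $\lambda_\Omega(\LL)$); you fix one set at a time in $\DD_A$ instead, which is equivalent, and you additionally check the Dynkin axioms that the paper leaves to the reader.
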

 
\begin{proof}
	Since every $\sigma$-algebra on $\Omega$ is a $\lambda$-system on $\Omega$, the inclusion $\sigma_\Omega(\LL)\supset \lambda_\Omega(\LL)$ is manifest. For the reverse inclusion, it will be sufficient (and, indeed, necessary) to show that $\lambda_\Omega(\LL)$ is a $\sigma$-algebra on $\Omega$. Then it will be sufficient to check that $\lambda_\Omega(\LL)$ is a $\pi$-system. Define $\UU:=\{A\in \lambda_\Omega(\LL): A\cap B\in \lambda_\Omega(\LL)\text{ for all }B\in \LL\}$: $\UU$ is a $\lambda$-system, containing $\LL$, so $\lambda_\Omega(\LL)\subset \UU$. Now define $\VV:=\{A\in \lambda_\Omega(\LL):A\cap B\in \lambda_\Omega(\LL)\text{ for all }B\in \lambda_\Omega(\LL)\}$. Again $\VV$ is a $\lambda$-system, containing, by what we have just shown, $\LL$. But then $\lambda_\Omega(\LL)\subset \VV$, and we conclude.
\end{proof}
 
\begin{corollary}[$\pi$-$\lambda$--theorem/Dynkin's lemma/Sierpinski class theorem]\label{corollary:dynkin}
	Let $\LL$ be a $\pi$-system, $\DD$ a Dynkin system on $\Omega$, $\LL\subset \DD$. Then $\sigma_\Omega(\LL)\subset \DD$. \qed
\end{corollary}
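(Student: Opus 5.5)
This is immediate from the preceding proposition, which states $\sigma_\Omega(\LL)=\lambda_\Omega(\LL)$ whenever $\LL\subset 2^\Omega$ is a $\pi$-system.

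The plan has two steps. First, since $\DD$ is a Dynkin system on $\Omega$ with $\LL\subset\DD$, the family $\DD$ is one of the sets over which the intersection defining $\lambda_\Omega(\LL)$ is taken. Hence $\lambda_\Omega(\LL)\subset\DD$; this is just the minimality remark following the definition of $\lambda_\Omega$. Second, $\LL$ is a $\pi$-system and a subset of $2^\Omega$ (because $\LL\subset\DD\subset 2^\Omega$), so the preceding proposition applies and gives $\sigma_\Omega(\LL)=\lambda_\Omega(\LL)$. Chaining the two, $\sigma_\Omega(\LL)=\lambda_\Omega(\LL)\subset\DD$.

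There is no real obstacle. The only point to verify is that the hypotheses of the preceding proposition are met, namely that $\LL$ is a subset of $2^\Omega$. The substantive work, showing that $\lambda_\Omega(\LL)$ is closed under $\cap$ via the auxiliary classes $\UU$ and $\VV$, has already been done in that proposition's proof.
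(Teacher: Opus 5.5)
Your proof is correct and follows the paper's route exactly. The corollary is marked \qed as an immediate consequence of the preceding proposition $\sigma_\Omega(\LL)=\lambda_\Omega(\LL)$, combined with the minimality of $\lambda_\Omega(\LL)$ among Dynkin systems on $\Omega$ that contain $\LL$.
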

\begin{remark}
	The significance of the above result stems mainly from the following observation. Let $\AA$ be a $\sigma$-field, and $\LL\subset 2^\Omega$ be a $\pi$-system. Suppose $\AA=\sigma_\Omega(\LL)$. Often in measure theory (and in probability theory, in particular) we wish to show that some property (typically involving  measures), $P(A)$ in $A\in \AA$, holds true of all the  members $A$ of $\AA$, and we wish to do so, having already been given, or having already established beforehand, that $P(A)$ holds true for all $A\in \LL$. Now, it is usually easy (because measures behave nicely under disjoint/nondecreasing countable unions, and because finite\footnote{But not arbitrary measures; for this reason the result is most often applicable in settings involving finite/probability or at best (via some localization) $\sigma$-finite measures.} measures also behave nicely under ``comparable differences''/complements) to check directly that the collection $\DD:=\{A\in \sigma_\Omega(\LL):P(A)\}$ is a $\lambda$-system, but it is usually not so easy  (because measures do not behave so nicely under arbitrary countable unions) to check directly that $\DD$ is a $\sigma$-field. The $\pi$/$\lambda$--lemma establishes an extremely useful shortcut of doing so indirectly. Its applicability is further strengthened by the fact that it is typically not difficult to find a generating $\pi$-system on which the property $P$ ``obviously'' holds true. The next result is got along the lines indicated above and we shall find repeated use of it in the remainder of this text.
\end{remark}

\begin{proposition}[Measures agreeing on a $\sigma$-localizing generating $\pi$-system]\label{proposition:equality of measures}
Let $\mu$ and $\nu$ be two measures on a measurable space $(E,\Sigma)$, $\LL\subset \Sigma$ a $\pi$-system with $\sigma_E(\LL)=\Sigma$. Suppose $\mu\vert_{\LL}=\nu\vert_{\LL}$ and that there exists a sequence $(L_n)_{n\in \mathbb{N}}$ in $\LL$ that consists of   $\uparrow$ or pairwise disjoint sets and is such that $\mu(L_n)=\nu(L_n)<\infty$ for each $n\in \mathbb{N}$, while $\cup_{n\in \mathbb{N}}L_n=E$  [a ``localizing'' sequence]. Then $\mu=\nu$. 
\end{proposition}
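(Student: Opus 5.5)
The plan is to localize to each $L_n$, apply Dynkin's lemma (Corollary~\ref{corollary:dynkin}) there, and then glue the pieces back together using the localizing sequence.

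\emph{Step 1 (localization).} Fix $n\in\mathbb{N}$ and put
\[
\DD_n:=\{A\in\Sigma:\mu(A\cap L_n)=\nu(A\cap L_n)\}.
\]
I will check that $\DD_n$ is a Dynkin system on $E$.

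First, $E\in\DD_n$ because $\mu(L_n)=\nu(L_n)$.

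Second, take $A\subset B$, both in $\DD_n$. Then $(B\backslash A)\cap L_n=(B\cap L_n)\backslash(A\cap L_n)$, and all the sets involved have finite measure since they lie inside $L_n$. Subtractivity (the remark after the basic properties of measures, via restriction to $L_n$) then gives $\mu((B\backslash A)\cap L_n)=\mu(B\cap L_n)-\mu(A\cap L_n)$. The same holds for $\nu$, so the two sides agree. This is exactly where finiteness of $\mu(L_n)=\nu(L_n)$ is used.

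Third, for a nondecreasing sequence in $\DD_n$, continuity from below of both measures gives closure under nondecreasing unions.

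\emph{Step 2 (Dynkin's lemma).} $\LL\subset\DD_n$, since for $A\in\LL$ we have $A\cap L_n\in\LL$ by the $\pi$-property, and $\mu,\nu$ agree on $\LL$. By Corollary~\ref{corollary:dynkin}, $\Sigma=\sigma_E(\LL)\subset\DD_n$. Hence $\mu(A\cap L_n)=\nu(A\cap L_n)$ for all $A\in\Sigma$ and all $n$.

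\emph{Step 3 (gluing).} Let $A\in\Sigma$.
\begin{itemize}
\item If the $L_n$ are pairwise disjoint with union $E$, countable additivity gives $\mu(A)=\sum_n\mu(A\cap L_n)=\sum_n\nu(A\cap L_n)=\nu(A)$.
\item If the $L_n$ are nondecreasing, then $A\cap L_n\uparrow A$, and continuity from below gives $\mu(A)=\lim_n\mu(A\cap L_n)=\lim_n\nu(A\cap L_n)=\nu(A)$.
\end{itemize}

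The only delicate step is Step 1's comparable-differences clause. It fails without finiteness, which is why the argument must localize to each $L_n$ rather than apply Dynkin's lemma to $\{A:\mu(A)=\nu(A)\}$ directly. Everything else is routine.
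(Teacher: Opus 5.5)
Your proof is correct and follows the same route as the paper: localize to each $L_n$, apply Dynkin's lemma to the class on which the two measures agree, and glue via countable additivity or continuity from below. The only cosmetic difference is that you build the Dynkin system $\{A\in\Sigma:\mu(A\cap L_n)=\nu(A\cap L_n)\}$ directly on $E$. The paper instead passes to the restricted measures $\mu_{L_n},\nu_{L_n}$ on the trace $\Sigma\vert_{L_n}$, which requires the identity $\sigma_{L_n}(\LL\vert_{L_n})=\sigma_E(\LL)\vert_{L_n}$; your version avoids needing it.
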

\begin{remark}
In words: two measures that agree on a ``$\sigma$-localizing'' generating $\pi$-system agree everywhere. If $\mu$ and $\nu$ are finite measures with the same total mass (in particular, if $\mu$ and $\nu$ are probabilities), then the existence of the sequence $(L_n)_{n\in \mathbb{N}}$ becomes superfluous (for one can replace $\LL$ with $\LL\cup \{E\}$ and take $L_n=E$ for each $n\in\mathbb{N}$).
\end{remark}
\begin{proof}
According as the sequence $(L_n)_{n\in \mathbb{N}}$ consists of   $\uparrow$ or pairwise disjoint sets it is enough by continuity from below or by countable additivity of $\mu$ and $\nu$ to argue that $\mu_{L_n}=\nu_{L_n}$ for each $n\in \mathbb{N}$: indeed, if we have argued so, then we see that, for each $A\in \Sigma$,  $\mu(A)=\lim_{n\to\infty}\mu(A\cap L_n)=\lim_{n\to\infty}\nu(A\cap L_n)=\nu(A)$ or $\mu(A)=\sum_{n\in \mathbb{N}}\mu(A\cap L_n)=\sum_{n\in \mathbb{N}}\nu(A\cap L_n)=\nu(A)$.\footnote{One calls the preceding a ``localization argument''/``reduction by localization'' (the localization being to finite measure sets).} Therefore, because $\mu_{L_n}(L_n)=\mu(L_n)=\nu(L_n)=\nu_{L_n}(L_n)<\infty$ and because $\LL\vert_{L_n}$ is  a $\pi$-system with $\sigma_{L_n}(\LL\vert_{L_n})=\sigma_E(\LL)\vert_{L_n}=\Sigma\vert_{L_n}$ and with $\mu_{L_n}\vert_{\LL\vert_{L_n}}=\nu_{L_n}\vert_{\LL\vert_{L_n}}$ ($\because$ $\LL\vert_{L_n}\subset \LL$) for each $n\in \mathbb{N}$, we may just as well, and do assume that $\mu$ and $\nu$ are finite measures of the same mass to begin with.
 Then from the hypotheses and from the basic properties of measures, the collection of sets $\mathcal{D}:=\{A\in \Sigma:\mu(A)=\nu(A)\}$ is a $\lambda$-system on $E$ containing $\LL$: $\Omega\in \mathcal{D}$ because $\mu$ and $\nu$ have the same mass; if $\{A,B\}\subset \DD$ with $A\subset B$, then $\mu(B\backslash A)=\mu(B)-\mu(A)=\nu(B)-\nu(A)=\nu(B\backslash A)$, therefore $B\backslash A\in \mathcal{D}$; finally, if $(A_n)_{n\in\mathbb{N}}$ is a $\uparrow$ sequence in $\mathcal{D}$, then $\mu(\cup_{n\in \mathbb{N}}A_n)=\lim_{n\to\infty}\mu(A_n)=\lim_{n\to\infty}\nu(A_n)=\nu(\cup_{n\in \mathbb{N}}A_n)$, hence $\cup_{n\in \mathbb{N}}A_n\in\DD$; the fact that $\LL\subset\DD$ is just the assumption that $\mu\vert_{\LL}=\nu\vert_{\LL}$. It remains to apply Dynkin's lemma, which yields $\mathcal{D}\supset \sigma_E(\LL)=\Sigma$, that is to say $\mu=\nu$.
\end{proof}

\section{Lebesgue-Stieltjes measures}
\emph{measure attached to a nondecreasing right-continuous function; Lebesgue measure}
\begin{theorem}[Lebesgue-Stieltjes measures]\label{theorem:lebesgue-stieltjes}
Let $F:\mathbb{R}\to \mathbb{R}$ be right-continuous and nondecreasing. Then there exists a unique measure $\mu$ on $\mathcal{B}_\mathbb{R}$ such that $\mu((a,b])=F(b)-F(a)$ for all real $a\leq b$. $\blacktriangle$
\end{theorem}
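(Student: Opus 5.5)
Uniqueness follows from Proposition~\ref{proposition:equality of measures}. Take $\LL:=\{(a,b]:a\leq b\text{ real}\}\cup\{\emptyset\}$ (note $(a,a]=\emptyset$). It is a $\pi$-system contained in $\BB_\mathbb{R}$. It generates $\BB_\mathbb{R}$, because $(-\infty,a]=\cup_{n\in\mathbb{N}}(a-n,a]$ and, by the Example after the definition of Borel sets, $\BB_\mathbb{R}=\sigma_\mathbb{R}(\{(-\infty,a]:a\in\mathbb{R}\})$. The localizing sequence is $L_n:=(-n,n]$, which is $\uparrow$ with union $\mathbb{R}$ and $\mu(L_n)=F(n)-F(-n)<\infty$. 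Hence any two measures with the stated property coincide.

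For existence I will follow the Carathéodory route. Let $\mathcal{R}$ be the collection of finite disjoint unions of intervals $(a,b]$, $(-\infty,b]$, $(a,\infty)$, $\mathbb{R}$ and $\emptyset$; it is an algebra on $\mathbb{R}$. On $\mathcal{R}$ define $\mu_0$ by summing $F(b)-F(a)$ over the pieces, with $F(\pm\infty):=\lim_{x\to\pm\infty}F(x)\in[-\infty,\infty]$. Well-definedness and finite additivity follow from the telescoping of $F$ over common refinements. I will then define the outer measure
\[
\mu^*(A):=\inf\Big\{\sum_{n}\mu_0(I_n):(I_n)_{n\in\mathbb{N}}\text{ in }\LL,\ A\subset\cup_n I_n\Big\},\qquad A\subset \mathbb{R},
\]
and check that it is monotone, countably subadditive and null at $\emptyset$. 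Next I will show that the Carathéodory class $\mathcal{M}:=\{A:\mu^*(E)=\mu^*(E\cap A)+\mu^*(E\backslash A)\ \forall E\subset\mathbb{R}\}$ is a $\sigma$-field on which $\mu^*$ is countably additive. This is the standard argument: $\mathcal{M}$ is closed for $\cc^\mathbb{R}$ and $\cup$ directly, and for countable disjoint unions by an induction plus subadditivity. Each half-line $(a,\infty)$ lies in $\mathcal{M}$: for a covering interval $I=(c,d]$ one has $\mu_0(I)=\mu_0(I\cap(a,\infty))+\mu_0(I\backslash(a,\infty))$, and summing over a cover of $E$ gives the nontrivial inequality. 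Consequently $\BB_\mathbb{R}\subset\mathcal{M}$, and $\mu:=\mu^*\vert_{\BB_\mathbb{R}}$ is a measure.

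The main obstacle is showing $\mu^*((a,b])=F(b)-F(a)$, i.e.\ that $\mu_0$ is countably subadditive on intervals: if $(a,b]\subset\cup_n(a_n,b_n]$ then $F(b)-F(a)\leq\sum_n(F(b_n)-F(a_n))$. Here right-continuity enters together with compactness. Fix $\varepsilon>0$. By right-continuity choose $a'\in(a,b)$ with $F(a')-F(a)<\varepsilon$, and for each $n$ choose $b_n'>b_n$ with $F(b_n')-F(b_n)<\varepsilon2^{-n}$. Then $[a',b]\subset\cup_n(a_n,b_n')$, so by Heine--Borel finitely many of these open intervals cover $[a',b]$. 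For a finite cover by intervals of this kind, induction on the number of intervals (ordering them so that consecutive ones overlap) gives $F(b)-F(a')\leq\sum_{n\leq N}(F(b_n')-F(a_n))$. Combining, $F(b)-F(a)\leq\sum_n(F(b_n)-F(a_n))+2\varepsilon$, and letting $\varepsilon\downarrow0$ gives the claim. The inequality $\mu^*((a,b])\leq F(b)-F(a)$ is immediate from the trivial cover, which completes existence.
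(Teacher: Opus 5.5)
Your proof is correct. Uniqueness matches the paper's argument; you localize with the increasing sequence $(-n,n]$ rather than the disjoint $(n,n+1]$, $n\in\mathbb{Z}$, and Proposition~\ref{proposition:equality of measures} allows either.

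For existence you use the same two ingredients as the paper: Carath\'eodory's outer measure, and a compactness argument that turns right-continuity of $F$ into countable (sub)additivity. They are packaged differently, though.

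The paper first localizes to $(0,1]$ and patches over the intervals $(n,n+1]$. It then works with an algebra on the compact space $[0,1]$ and gets countable additivity of the premeasure from the general Proposition~\ref{prop:tsirelson}. That proposition combines inner approximation by sets $B$ with $\overline{B}\subset A$ and the finite intersection property for a $\downarrow$ sequence. Finally the paper quotes Theorem~\ref{theorem:caratheodory}.

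You instead work on all of $\mathbb{R}$ at once. You check the Carath\'eodory criterion only for half-lines, which is all that is needed to get $\mathcal{B}_\mathbb{R}\subset\mathcal{M}$. You prove $\mu^*((a,b])=F(b)-F(a)$ by the dual device: shrink $(a,b]$ to the compact $[a',b]$, enlarge the cover to open intervals, and take a Heine--Borel finite subcover.

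Given its general tools, the paper's route is only a few lines. It also follows the template (Carath\'eodory fed by a continuity-at-$\emptyset$ compactness argument) that the paper reuses later for infinite products and projective limits. Yours is self-contained and avoids the patching step, which the paper leaves to the reader.

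A minor point: you define $\mu^*$ through covers by $\mathcal{L}$ only. So the algebra $\mathcal{R}$, the values $F(\pm\infty)$ and the finite additivity of $\mu_0$ on $\mathcal{R}$ play no role, and you can drop them.
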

\begin{proof}[Proof of uniqueness]  Apply Proposition~\ref{proposition:equality of measures} with the $\pi$-system $\{(a,b]:a\leq b\text{ real}\}$ that generates $\mathcal{B}_\mathbb{R}$ (a localizing sequence is got for instance from $(n,n+1]$, $n\in \mathbb{Z}$).
\end{proof}
\begin{remarks}
One typically proves existence via the extension theorem of Carath\'eodory. 
Clearly, ceteris paribus, for $\mu$ with the prescribed property to exist, it is necessary that $F$ is non-decreasing and right-continuous; thus the result is, in a sense, ``best possible''. It is in fact clear that any measure $\mu$ on $(\mathbb{R},\BB_\mathbb{R})$ that is finite on bounded sets (``locally finite'') is equal to to $\dd F$ for a suitable (unique up to an additive constant) $F$, namely $F(x)=\mu((0,x])$ for $x\in \mathbb{R}$ (here we interpret $\mu((0,x]):=-\mu((x,0])$ for $x\in (-\infty,0)$).
\end{remarks}
 
\begin{proof}[Proof of existence] 
By restricting to each $(n,n+1]$, $n\in \ZZ$, and then patching back together we may restrict everything to the interval $(0,1]$. Consider the algebra $\AA$ on $[0,1]$ consisting of all the finite disjoint unions of $\{0\}$ and of intervals of the form $(a,b]$, where $0\leq a\leq b\leq 1$ are real numbers. We define a set-function $\nu$ on $\AA$ by insisting that $\nu(\{0\})=0$, that  $\nu((a,b])=F(b)-F(a)$ for real $0\leq a\leq b\leq 1$, and then extending to $\AA$ by additivity. By Theorem~\ref{theorem:caratheodory} to follow (which will be proved independently of this argument, of course) it now suffices to establish that $\nu$ is countably additive on $\AA$. But by right-continuity of $F$ this clearly falls out of Proposition~\ref{prop:tsirelson} (which again will be proved independently of this section). 
\end{proof}

\begin{definition}
	We denote the $\mu$ of Theorem~\ref{theorem:lebesgue-stieltjes} by $\dd F$ and call it the measure associated to $F$ in the Lebesgue-Stieltjes sense. In the special case $F=\id_\mathbb{R}$, it is called the Lebesgue measure and we will denote it with the symbol $\leb:=\dd(\id_\mathbb{R})$.
\end{definition}
\begin{remark}
	Coming back to the need for $\sigma$-fields: there is no extension of $\leb$ to a measure on $2^\mathbb{R}$ (though, there is a finitely additive translation-invariant extension of $\leb$ to $2^\mathbb{R}$, but such extension is not unique).
\end{remark}
\begin{proposition}\label{proposition:lebesgue-stieltjes}
Let $F:\mathbb{R}\to \mathbb{R}$ be right-continuous and nondecreasing. Then $\dd F$ is: $\sigma$-finite; finite iff $F$ is bounded; a probability iff  $\lim_\infty F-\lim_{-\infty}F=1$. For $x\in \mathbb{R}$, $\dd F(\{x\})=F(x)-F(x-)$. \qed
\end{proposition}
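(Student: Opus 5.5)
All four claims follow from the defining property $\dd F((a,b])=F(b)-F(a)$ of Theorem~\ref{theorem:lebesgue-stieltjes}, together with continuity of measures from below and above (the Proposition on first properties of measures). We write $\mu:=\dd F$.

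\emph{$\sigma$-finiteness.} The sets $(n,n+1]$, $n\in\ZZ$, cover $\mathbb{R}$. Each has measure $\mu((n,n+1])=F(n+1)-F(n)<\infty$, since $F$ is real-valued. Enumerating $\ZZ$ gives the required sequence.

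\emph{Finiteness and total mass.} Since $(-n,n]\uparrow\mathbb{R}$, continuity from below gives
$$\mu(\mathbb{R})=\lim_{n\to\infty}\bigl(F(n)-F(-n)\bigr).$$
Because $F$ is nondecreasing, the limits $\lim_\infty F\in(-\infty,\infty]$ and $\lim_{-\infty}F\in[-\infty,\infty)$ exist. Hence $\mu(\mathbb{R})=\lim_\infty F-\lim_{-\infty}F$, and no $\infty-\infty$ ambiguity arises, since the first limit is $>-\infty$ and the second $<\infty$.
\begin{itemize}
\item $\mu(\mathbb{R})<\infty$ iff both limits are finite. For a monotone real function this is equivalent to $F$ being bounded.
\item $\mu$ is a probability iff $\lim_\infty F-\lim_{-\infty}F=1$, which reads off directly from the displayed formula.
\end{itemize}

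\emph{Atoms.} Fix $x\in\mathbb{R}$. The intervals $(x-1/n,x]$ form a nonincreasing sequence with intersection $\{x\}$, and each has finite measure. By continuity from above (see the Remarks after that Proposition: finiteness of one term suffices),
$$\mu(\{x\})=\lim_{n\to\infty}\bigl(F(x)-F(x-1/n)\bigr)=F(x)-F(x-).$$
Here $F(x-)$ exists by monotonicity and equals the limit along the sequence $x-1/n$.

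Nothing here is a genuine obstacle. The only points needing care are the extended-real arithmetic in the total-mass formula and invoking the localized form of continuity from above rather than the finite-measure version.
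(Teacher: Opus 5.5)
Your proof is correct and takes the same route the paper intends. The paper gives no proof, treating the proposition as an immediate consequence of Theorem~\ref{theorem:lebesgue-stieltjes}, and your argument supplies exactly that routine check: evaluate $\dd F$ on $(n,n+1]$ and on $(-n,n]$ using continuity from below, and on $(x-1/n,x]$ using the localized continuity from above. Your handling of the extended-real arithmetic in $\dd F(\mathbb{R})=\lim_\infty F-\lim_{-\infty}F$ is adequate.
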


\begin{example}
$\leb$ is $\sigma$-finite and $\leb(A)=0$ for any countable $A\subset \mathbb{R}$.
\end{example}

\begin{example}[The Cantor set]
Consider the following map $V$ defined on the finite disjoint unions of closed subintervals of $[0,1]$: $V(A)$ removes from every connected component  of $A$ (from each closed interval that makes up $A$) the open middle third. Thus $V([0,1])=[0,1/3]\cup[2/3,1]$, $V^2([0,1])=[0,1/9]\cup [2/9,1/3]\cup [2/3,7/9]\cup [8/9,1]$ etc. Then $C:=\cap_{n\in \mathbb{N}}V^n([0,1])$ is an uncountable (in fact of cardinality continuum) compact ($\therefore$ Borel) set with $\leb(C)=0$.
\end{example}
 
\begin{example}
Let $r:\mathbb{N}\to\mathbb{Q}$ be surjective. For $\epsilon\in (0,\infty)$ the set $\cup_{n\in \mathbb{N}}(r_n-\frac{\epsilon}{2^{n+1}},r_n+\frac{\epsilon}{2^{n+1}})$ has Lebesgue measure $\leq \epsilon$ and yet its complement is nowhere dense in the real line.
\end{example}
\section{Completeness of measures}
\emph{complete measures and completions; measurability in the completion}


\begin{definition}
Let $\mu$ be a measure on $\FF$, where $\FF$ is  a $\sigma$-field on $X$. $N$ is $\mu$-null \deff   $N\subset M$ for some $M\in \FF$ with $\mu(M)=0$. We put $\mathcal{N}_\mu:=\{N\in 2^X:N\text{ is $\mu$-null}\}$ and $\overline{\FF}^\mu:=\sigma_X(\FF\cup \mathcal{N}_\mu)$.  $\mu$ is complete \deff $\mathcal{N}_\mu\subset \FF$. 
\end{definition}
\begin{remark}
Referencing $\FF$ in $\overline{\FF}^\mu$ is actually superfluous.
\end{remark}
\begin{proposition}
Let $\mu$ be a measure on $(X,\FF)$ and $f:X\to [-\infty,\infty]$. Then $f\in \overline{\FF}^\mu/\mathcal{B}_{[-\infty,\infty]}$ iff $g\leq f\leq h$ for some $\{g,h\}\subset \FF/\mathcal{B}_{[-\infty,\infty]}$ satisfying $\mu(h>g)=0$. Furthermore, one may insist in the condition that $g$, $h$ are nonnegative (resp. indicators), when $f$ is nonnegative (resp. an indicator).
\end{proposition}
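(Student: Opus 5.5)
Sufficiency is the easy half. Suppose $g\leq f\leq h$ with $\{g,h\}\subset \FF/\mathcal{B}_{[-\infty,\infty]}$ and $\mu(h>g)=0$. Put $N:=\{h>g\}$; it lies in $\FF$ and is $\mu$-negligible, and on $X\backslash N$ we have $f=g$. Then for $a\in\mathbb{R}$,
$$\{f\leq a\}=(\{g\leq a\}\backslash N)\cup(\{f\leq a\}\cap N).$$
The first piece is in $\FF$. The second is a subset of $N$, hence belongs to $\mathcal{N}_\mu$. Both pieces therefore lie in $\overline{\FF}^\mu$. Since $\{[-\infty,a]:a\in\mathbb{R}\}$ generates $\mathcal{B}_{[-\infty,\infty]}$, Proposition~\ref{proposition:maps}\eqref{maps:iv} gives $f\in \overline{\FF}^\mu/\mathcal{B}_{[-\infty,\infty]}$.

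For necessity, the first and main step is to describe $\overline{\FF}^\mu$ explicitly. Let
$$\GG:=\{A\subset X:\exists\, B,C\in\FF,\ B\subset A\subset C,\ \mu(C\backslash B)=0\}.$$
I claim $\GG=\overline{\FF}^\mu$. Clearly $\FF\cup\mathcal{N}_\mu\subset\GG$: take $B=C=A$ for $A\in\FF$, and $B=\emptyset$, $C=M$ for $N\subset M$ with $\mu(M)=0$. Conversely $\GG\subset\overline{\FF}^\mu$, because $A=B\cup(A\backslash B)$ and $A\backslash B\subset C\backslash B$ is $\mu$-null. It remains to check that $\GG$ is a $\sigma$-field.
\begin{itemize}
\item $\emptyset\in\GG$.
\item Complements: use the sandwich $X\backslash C\subset X\backslash A\subset X\backslash B$.
\item Countable unions: take $\cup B_n\subset\cup A_n\subset\cup C_n$ and note $\cup C_n\backslash\cup B_n\subset\cup(C_n\backslash B_n)$. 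Countable subadditivity then gives measure zero.
\end{itemize}
This identification is where the real content lies; the rest is monotone approximation.

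Next I treat indicators. If $f=\mathbbm{1}_A$ with $A\in\overline{\FF}^\mu=\GG$, choose $B,C$ as above and set $g=\mathbbm{1}_B$, $h=\mathbbm{1}_C$. Then $\{h>g\}=C\backslash B$ is negligible, which also settles the indicator clause of the refinement.

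For nonnegative $f\in\overline{\FF}^\mu/\mathcal{B}_{[0,\infty]}$, I apply Proposition~\ref{proposition:approx} to $f$ relative to $\overline{\FF}^\mu$. This gives $\overline{\FF}^\mu$-simple $s_n\uparrow f$, and we may write $s_n-s_{n-1}=\sum_i c_{n,i}\mathbbm{1}_{A_{n,i}}$ with finitely many $c_{n,i}\geq 0$ and $A_{n,i}\in\overline{\FF}^\mu$. Sandwich each $A_{n,i}$ between $B_{n,i}\subset A_{n,i}\subset C_{n,i}$ from $\FF$, and set
$$g:=\sum_{n,i}c_{n,i}\mathbbm{1}_{B_{n,i}},\qquad h:=\sum_{n,i}c_{n,i}\mathbbm{1}_{C_{n,i}}.$$
These are nonnegative and $\FF$-measurable by the proposition on series with nonnegative terms, and $g\leq f\leq h$. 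Moreover $\{h>g\}\subset\cup_{n,i}(C_{n,i}\backslash B_{n,i})$, which is negligible.

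Finally, for general $f$, write $f=f^+-f^-$ and get $g_\pm\leq f^\pm\leq h_\pm$ with negligible exceptional sets. Put $g:=g_+-h_-$ and $h:=h_+-g_-$. These are measurable, and off the negligible union of exceptional sets we have $g_\pm=h_\pm=f^\pm$, hence $g=f=h$ there. The one subtlety is the convention $\infty-\infty=0$, which could break $g\leq f\leq h$ where infinities appear. I would handle it by replacing $g$ by $g\mathbbm{1}_{E}-\infty\mathbbm{1}_{X\backslash E}$ and $h$ by $h\mathbbm{1}_E+\infty\mathbbm{1}_{X\backslash E}$, where $E:=\{g_+=h_+,\ g_-=h_-\}\in\FF$ and $X\backslash E$ is negligible. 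On $E$ the values of $g$, $f$, $h$ coincide, so the modified pair satisfies $g\leq f\leq h$ everywhere, stays measurable, and still has $\{h>g\}\subset X\backslash E$ negligible.
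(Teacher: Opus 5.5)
Your proof is correct, but it takes a different route from the paper's.

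\textbf{The paper's route.} It never identifies $\overline{\FF}^\mu$ explicitly. It notes that the sandwich condition holds trivially for indicators of the generating $\pi$-system $\FF\cup\mathcal{N}_\mu$. It then checks that the condition survives nondecreasing limits, via $\limsup_n g_n\leq \lim_n f_n\leq \liminf_n h_n$ and $\mu(\liminf_n h_n>\limsup_n g_n)\leq \mu(\limsup_n\{h_n>g_n\})=0$. Finally it invokes Dynkin's lemma and the monotone class theorem.

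\textbf{Your route.} You show directly that the class $\GG$ of sandwiched sets is a $\sigma$-field. This is cheaper than it might seem, because sandwiches pass through arbitrary countable unions (no disjointness or monotonicity is needed), so Dynkin's lemma is unnecessary. It also gives you up front the explicit description of $\overline{\FF}^\mu$ on which the subsequent definition of $\overline{\mu}$ rests; the paper only obtains this a posteriori, from the indicator clause. At the function level, your series decomposition $f=\sum_{n,i}c_{n,i}\mathbbm{1}_{A_{n,i}}$, sandwiched term by term, is a concrete stand-in for the monotone class step. You also prove the sufficiency direction, which the paper leaves to the reader.

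\textbf{A minor point.} In the last step your modification of $g,h$ off $E$ is valid but not needed. As the paper observes, $g_+-h_-\leq f\leq h_+-g_-$ already holds everywhere under the convention $\infty-\infty=0$. Indeed, $g_+=\infty$ forces $f=\infty$, and $g_-=\infty$ forces $f=-\infty$, so the value $0$ produced by $\infty-\infty$ never violates the inequalities.
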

\begin{proof}
Only the necessity of the condition  (involving $g$ and $h$) is not immediately clear. Trivially the condition is true for $f\in \overline{\FF}^\mu/\mathcal{B}_{[-\infty,\infty]}$ which are  indicators of sets from the $\pi$-system $\FF\cup \mathcal{N}_\mu$ (that generates $\overline{\FF}^\mu$), and one may even insist that in this case $g$ and $h$ are themselves both indicators of sets (from $\FF$). Note also that if $(f_n)_{n\in \mathbb{N}}$ is a $\uparrow$ sequence in $\overline{\FF}^\mu/\mathcal{B}_{[0,\infty]}$  and if $g_n\leq f_n\leq h_n$, $\{g_n,h_n\}\subset \FF/\mathcal{B}_{[0,\infty]}$, $\mu(h_n>g_n)=0$ for all $n\in \mathbb{N}$, then $\limsup_{n\to\infty} g_n\leq \lim_{n\to\infty}f_n\leq \liminf_{n\to\infty}h_n$, $\{\limsup_{n\to\infty} g_n,\liminf_{n\to\infty} h_n\}\subset \FF/\mathcal{B}_{[0,\infty]}$, $\mu(\liminf_{n\to\infty} h_n>\limsup_{n\to\infty} g_n)\leq \mu(\limsup_{n\to\infty}\{h_n>g_n\})=0$. This, and a couple of easier considerations, imply easily by Dynkin's lemma and monotone class that the condition obtains for all $f\in  \overline{\FF}^\mu/\mathcal{B}_{[0,\infty]}$, and one may even insist that the $g$ and $h$ are nonnegative (resp. indicators) when $f$ is nonnegative (resp. an indicator). Finally let us take a general $f\in \overline{\FF}^\mu/\mathcal{B}_{[-\infty,\infty]}$. Then we find  $\{g_+,h_+,g_-,h_-\}\subset \FF/\mathcal{B}_{[0,\infty]}$ satisfying $\mu(h_+>g_+)=0=\mu(h_->g_-)$ and $g_+\leq f^+\leq h_+$, $g_-\leq f^-\leq h_-$. We see that $g_+-h_-\leq f\leq h_+-g_-$ (it works with the convention $\infty-\infty=0$), $\{g_+-h_-,h_+-g_-\}\subset  \overline{\FF}^\mu/\mathcal{B}_{[-\infty,\infty]}$ and $\mu(g_+-h_-<h_+-g_-)\leq \mu(\{h_+>g_+\}\cup \{h_->g_-\})=0$ (because there is even inclusion of sets; it still works with the convention $\infty-\infty=0$(!)). Now the proof is complete.
\end{proof}

\begin{definition}
Let $\mu$ be a measure on the $\sigma$-field $\FF$. We define $\overline{\mu}:\overline{\FF}^\mu\to [0,\infty]$ by putting $$\overline{\mu}(F):=\mu(B),\quad F\in \overline{\FF}^\mu,$$  where $B\in \FF$ is such that there is $A\in \FF$ with $A\subset F\subset B$ and $\mu(B\backslash A)=0$. We call $\overline{\mu}$ the completion of $\mu$.
\end{definition}
\begin{remark}
It is easy to check that the preceding definition is without ambiguity, i.e. if also  $B'\in \FF$ is such that there is $A'\in \FF$ with $A'\subset F\subset B'$ and $\mu(B'\backslash A')=0$, then $\mu(B')=\mu(B)$. The existence of $A$ and $B$ is guaranteed by the preceding proposition.
\end{remark}
\begin{proposition}\label{proposition:capture}
If $\mu$ is a measure on $\FF$, where $\FF$ is a $\sigma$-field on $X$, then $\overline{\mu}$ is  the unique extension of $\mu$ to a measure on $\overline{\FF}^\mu$ and is the minimal extension of $\mu$ to a complete measure on a $\sigma$-field of $X$.\qed
\end{proposition}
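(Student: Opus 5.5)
The plan is to verify four things in order: that $\overline{\mu}$ is a measure, that it extends $\mu$, that it is complete, and then uniqueness and minimality. The preceding proposition (with indicators) says every $F\in\overline{\FF}^\mu$ admits $A,B\in\FF$ with $A\subset F\subset B$ and $\mu(B\backslash A)=0$. The remark says $\overline{\mu}$ is well defined: if $A',B'$ is another such pair, then $\mu(B)=\mu(A\cup A')\le\mu(B')$ and symmetrically, using $A'\subset F\subset B$ and monotonicity. I would note along the way that $\overline{\mu}(F)=\mu(A)=\mu(B)$.

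\textbf{Extension and measure property.} For $F\in\FF$, take $A=B=F$; this gives $\overline{\mu}\vert_\FF=\mu$, and in particular $\overline{\mu}(\emptyset)=0$. For countable additivity, let $(F_n)$ be pairwise disjoint in $\overline{\FF}^\mu$ with sandwiches $A_n\subset F_n\subset B_n$. Then $\cup A_n\subset\cup F_n\subset\cup B_n$, and $\mu(\cup B_n\backslash\cup A_n)\le\sum\mu(B_n\backslash A_n)=0$ by countable subadditivity. The $A_n$ are pairwise disjoint, so $\overline{\mu}(\cup F_n)=\mu(\cup A_n)=\sum\mu(A_n)=\sum\overline{\mu}(F_n)$.

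\textbf{Completeness.} Suppose $N\subset M$ with $M\in\overline{\FF}^\mu$ and $\overline{\mu}(M)=0$. Take a sandwich $A\subset M\subset B$; then $\mu(B)=0$, so $N$ is $\mu$-null. Hence $N\in\mathcal{N}_\mu\subset\overline{\FF}^\mu$.

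\textbf{Uniqueness and minimality.} For uniqueness, let $\rho$ be any measure on $\overline{\FF}^\mu$ extending $\mu$. For $F$ with sandwich $A\subset F\subset B$, monotonicity gives $\mu(A)\le\rho(F)\le\mu(B)$, and both bounds equal $\overline{\mu}(F)$. For minimality, let $\rho$ be a complete measure on a $\sigma$-field $\GG\supset\FF$ of $X$ extending $\mu$. Every $N\in\mathcal{N}_\mu$ lies inside some $M\in\FF$ with $\rho(M)=\mu(M)=0$, so completeness of $\rho$ gives $\mathcal{N}_\mu\subset\GG$. Hence $\overline{\FF}^\mu=\sigma_X(\FF\cup\mathcal{N}_\mu)\subset\GG$, and by uniqueness $\rho\vert_{\overline{\FF}^\mu}=\overline{\mu}$. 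So $\overline{\mu}$ is the smallest such extension, in the sense of restriction.

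The only step with real content is the sandwich characterisation, which the previous proposition already supplies. Beyond that, the step that needs care is countable additivity, specifically checking that the union of the sandwiches is again a sandwich; subadditivity handles this.
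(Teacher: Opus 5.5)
Your proof is correct and follows the route the paper intends. The paper states this proposition without proof, relying on exactly the tools you use: the indicator form of the preceding proposition, which gives a sandwich $A\subset F\subset B$ with $A,B\in\FF$ and $\mu(B\backslash A)=0$, and the well-definedness remark. Your reading of minimality, that every complete extension of $\mu$ contains $\overline{\FF}^\mu$ and restricts to $\overline{\mu}$ there, is also how the paper later uses this result in its proof of Carath\'eodory's theorem.
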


\begin{corollary}
Let $\mu$ be a measure on $(X,\FF)$ and $f:X\to [-\infty,\infty]$. Then $f\in \overline{\FF}^\mu/\mathcal{B}_{[-\infty,\infty]}$ iff $f=j$ a.e.-$\overline{\mu}$ for some $j\in  \FF/\mathcal{B}_{[-\infty,\infty]}$. \qed
\end{corollary}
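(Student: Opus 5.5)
The plan is to derive both directions from the preceding proposition, which characterizes $\overline{\FF}^\mu$-measurability by $\FF$-measurable sandwiches, together with Proposition~\ref{proposition:capture}, which says that $\overline{\mu}$ extends $\mu$.

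\emph{Necessity.} Let $f\in \overline{\FF}^\mu/\mathcal{B}_{[-\infty,\infty]}$. By the preceding proposition there are $\{g,h\}\subset \FF/\mathcal{B}_{[-\infty,\infty]}$ with $g\leq f\leq h$ and $\mu(h>g)=0$. I will take $j:=g$. Then $\{f\neq j\}\subset\{h>g\}$, since off $\{h>g\}$ we have $g\geq h$, and combined with $g\leq f\leq h$ this forces $f=g$. Now $\{f\ne j\}$ belongs to $\overline{\FF}^\mu$, because both $f$ and $j$ are $\overline{\FF}^\mu$-measurable (as $\FF\subset\overline{\FF}^\mu$) and the earlier proposition gives $\{f=g\}\in\overline{\FF}^\mu$. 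By monotonicity, $\overline{\mu}(f\neq j)\leq \overline{\mu}(h>g)=\mu(h>g)=0$, using that $\overline{\mu}$ extends $\mu$. This is exactly ``$f=j$ a.e.-$\overline{\mu}$'' in the sense of Definition~\ref{definition:measures}, which requires the exceptional set to be measurable.

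\emph{Sufficiency.} Suppose $j\in\FF/\mathcal{B}_{[-\infty,\infty]}$ and $N:=\{f\neq j\}\in\overline{\FF}^\mu$ with $\overline{\mu}(N)=0$. I first show that $N$ is $\mu$-null. By the definition of $\overline\mu$, take $A\subset N\subset B$ with $A,B\in\FF$ and $\mu(B\backslash A)=0$; then $\mu(B)=\overline{\mu}(N)=0$, so $N\in\mathcal{N}_\mu$. Then for $a\in\mathbb{R}$,
\[
\{f\leq a\}=(\{j\leq a\}\backslash N)\cup(\{f\leq a\}\cap N).
\]
The first piece lies in $\overline{\FF}^\mu$. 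The second is a subset of $N\subset B$ with $\mu(B)=0$, so it is $\mu$-null and hence in $\overline{\FF}^\mu$. By Proposition~\ref{proposition:maps}\eqref{maps:iv}, $f$ is $\overline{\FF}^\mu$-Borel measurable.

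There is no real obstacle. The one point needing care is the bookkeeping that $\{f\ne j\}$ is measurable for the right $\sigma$-field, and that $\overline{\mu}$-negligible sets are genuinely $\mu$-null, i.e.\ contained in an $\FF$-set of $\mu$-measure zero. This follows from the well-definedness of $\overline{\mu}$ noted in the remark after its definition.
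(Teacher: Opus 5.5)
Your proof is correct and follows the route the paper intends; the corollary is stated there without proof, as an immediate consequence of the sandwich characterization just before it together with Proposition~\ref{proposition:capture}. Taking $j=g$ gives necessity, and for sufficiency you correctly note that a $\overline{\mu}$-negligible set lies in $\mathcal{N}_\mu$, so $f$ differs from $j$ only on a $\mu$-null set.
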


\begin{remarks}
In some sense $\overline{\mu}$ is a most natural extension of $\mu$: being trapped in-between two measurable sets, whose difference is of zero measure, ``morally'' implies measurability, even if the latter is not there to begin with. Often completions also ``save the day'' in terms of measurability of maps.  They  are not almighty, though. For instance, it is still the case that the domain  $\mathfrak{L}$ of the completed Lebesgue measure $\overline{\leb}$ (its  members are called Lebesgue-measurable sets)  is not the whole of $2^\mathbb{R}$ (the existence of non-Lebesgue-measurable sets actually hinges on the axiom of choice, cf. the Vitali sets). On the other hand, it is also the case that $\mathcal{B}_\mathbb{R}\subsetneq\mathfrak{L}$, in fact the two families differ in cardinality: $\vert \mathcal{B}_\mathbb{R}\vert=\vert\mathbb{R}\vert$, while $\vert \mathfrak{L}\vert=2^{\vert \mathbb{R}\vert}$. 
\end{remarks}

\section{Outer measures and Carath\'eodory's extension theorem}
\emph{outer measures; extending a countably additive set-function on an algebra to a measure}

\begin{definition}
 $\mu$ is an outer measure on $X$ \deff $\mu:2^X\to [0,\infty]$ and $\mu(\emptyset)=0$ and [monotonicity] $\mu(A)\leq\mu(B)$ whenever $A\subset B\subset X$ and [countable subadditivity] $\mu(\cup_{n\in \mathbb{N}}A_n)\leq \sum_{n\in \mathbb{N}}\mu(A_n)$ for any sequence $(A_n)_{n\in \mathbb{N}}$ in $2^X$.
\end{definition}
\begin{remark}
Because an outer measure does not charge the empty set, countable subaddivity implies finite subadditivity in the obvious meaning of these qualifications.
\end{remark}

\begin{definition}
Let $\mu$ be an outer measure on $X$ and $E\subset X$. $E$ is $\mu$-Carath\'eodory--measurable \deff $\mu(A)=\mu(A\cap E)+\mu(A\backslash E)$ for all $A\subset X$. We put $\MM(\mu):=\{E\in 2^X:E\text{ is $\mu$-Carath\'eodory--measurable}\}$.
\end{definition}
\begin{remark}
In the definition of $\mu$-Carath\'eodory--measurability the ``$\leq$-inequality'' is automatic by subadditivity.
\end{remark}

\begin{proposition}
Let $\mu$ be an outer measure on $X$. Then $\MM(\mu)$ is a $\sigma$-field on $X$ and $\mu\vert_{\MM(\mu)}$ is a complete measure thereon.
\end{proposition}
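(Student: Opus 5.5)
The plan is the standard Carathéodory argument, carried out in the order: algebra, then finite additivity on traces, then countable unions, then completeness.

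\emph{Step 1: $\MM(\mu)$ is an algebra.} By the remark after the definition, only the inequality $\mu(A)\geq \mu(A\cap E)+\mu(A\backslash E)$ needs checking, and it is trivial when $\mu(A)=\infty$.
\begin{itemize}
\item $\emptyset\in\MM(\mu)$, since $\mu(\emptyset)=0$.
\item $\MM(\mu)$ is closed for $\cc^X$, because the defining condition is symmetric in $E$ and $X\backslash E$.
\item For closure under $\cup$, take $E,F\in\MM(\mu)$ and $A\subset X$. Testing $E$ against $A$, then $F$ against $A\backslash E$, gives
\[\mu(A)=\mu(A\cap E)+\mu((A\backslash E)\cap F)+\mu(A\backslash(E\cup F)).\]
The first two sets have union $A\cap(E\cup F)$, so subadditivity bounds their measures below by $\mu(A\cap (E\cup F))$, which gives the required inequality.
\end{itemize}

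\emph{Step 2: additivity on traces.} If $E_1,\ldots,E_n\in\MM(\mu)$ are pairwise disjoint, then for every $A\subset X$,
\[\mu\Big(A\cap \bigcup_{i=1}^n E_i\Big)=\sum_{i=1}^n\mu(A\cap E_i).\]
This follows by induction on $n$: test $E_n$ against $A\cap\bigcup_{i\le n}E_i$. Taking $A=X$ yields finite additivity of $\mu$ on $\MM(\mu)$.

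\emph{Step 3: countable unions (the main obstacle).} It suffices to treat pairwise disjoint sequences. Indeed, any sequence $(F_n)$ in $\MM(\mu)$ can be disjointified via $E_n:=F_n\backslash\bigcup_{i<n}F_i$, which stays in $\MM(\mu)$ because it is an algebra. So let $(E_n)$ be pairwise disjoint in $\MM(\mu)$, and put $S_n:=\bigcup_{i\le n}E_i$ and $S:=\bigcup_n E_n$.
\begin{itemize}
\item For any $A\subset X$, using $S_n\in\MM(\mu)$, Step 2, and monotonicity (since $A\backslash S\subset A\backslash S_n$),
\[\mu(A)=\mu(A\cap S_n)+\mu(A\backslash S_n)\geq \sum_{i\le n}\mu(A\cap E_i)+\mu(A\backslash S).\]
\item Let $n\to\infty$ and apply countable subadditivity to $A\cap S=\bigcup_i (A\cap E_i)$:
\[\mu(A)\geq\sum_{i}\mu(A\cap E_i)+\mu(A\backslash S)\geq \mu(A\cap S)+\mu(A\backslash S).\]
\end{itemize}
Hence $S\in\MM(\mu)$, and $\MM(\mu)$ is a $\sigma$-field.

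\emph{Countable additivity.} Put $A=S$ in the display of Step 3. This gives $\mu(S)\geq\sum_i\mu(E_i)$, and countable subadditivity gives the reverse inequality. Together with $\mu(\emptyset)=0$, $\mu\vert_{\MM(\mu)}$ is a measure.

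\emph{Completeness.} Let $N\subset M$ with $M\in\MM(\mu)$ and $\mu(M)=0$. By monotonicity $\mu(N)=0$, so for every $A\subset X$,
\[\mu(A\cap N)+\mu(A\backslash N)\leq 0+\mu(A).\]
This is the nontrivial inequality, so $N\in\MM(\mu)$, and $\mu\vert_{\MM(\mu)}$ is complete.
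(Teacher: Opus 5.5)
Your proof is correct and follows the paper's argument essentially step for step. Both go through closure under finite unions, the inductive trace identity $\mu(A\cap S_n)=\sum_{i\le n}\mu(A\cap E_i)$, and the limiting inequality for arbitrary $A$, specialized to $A=S$ for countable additivity; completeness comes from monotonicity. The differences are cosmetic: you use a three-term rather than the paper's four-term splitting for $E\cup F$, and you make explicit the disjointification that the paper leaves implicit.
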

\begin{proof}
It is clear that $X\in \MM(\mu)$ and that $\MM(\mu)$ is closed for $\cc^X$. Let $\{E,F\}\subset \MM(\mu)$. Then for any $A\subset X$, from the subadditivity of $\mu$, from $F\in \MM(X)$ and from $E\in \MM(X)$ (in this order), we obtain $\mu(A\cap (E\cup F))+\mu(A\backslash (E\cup F))\leq \mu((A\cap E)\cap F)+\mu((A\cap E)\backslash  F)+\mu((A\backslash E)\cap  F)+\mu((A\backslash E)\backslash F)= \mu(A\cap E)+\mu(A\backslash E)=\mu(A)$. Therefore $E\cup F\in \MM(X)$. Now suppose $(E_n)_{n\in \mathbb{N}}$ is a sequence of pairwise disjoint sets in $\MM(\mu)$. Set $F_n:=\cup_{i=1}^nE_i$ for $n\in \mathbb{N}$ and $F:=\cup_{i\in \mathbb{N}}E_i$. Then for $A\subset X$ and $n\in \mathbb{N}$: $\mu(A\cap F_n)=\mu(A\cap F_n\cap E_n)+\mu((A\cap F_n)\backslash E_n)=\mu(A\cap E_n)+\mu(A\cap F_{n-1})$; inductively $\mu(A\cap F_n)=\sum_{j=1}^n\mu(A\cap E_j)$; besides, $F_n\in \MM(\mu)$, hence $\mu(A)=\mu(A\cap F_n)+\mu(A\backslash F_n)\geq \sum_{j=1}^n\mu(A\cap E_j)+\mu(A\backslash F)$ by monotonicity of $\mu$. Letting $n\to\infty$ in the preceding, by subaddivity of $\mu$, $\mu(A)\geq \sum_{j=1}^\infty\mu(A\cap E_j)+\mu(A\backslash F)\geq  \mu(A\cap F)+\mu(A\backslash F)\geq \mu(A)$. Therefore these inequalities are in fact equalities, $F\in \MM(\mu)$, $\MM(\mu)$ is a $\sigma$-field on $X$, and taking $A=F$ in the preceding yields at once also the countable additivity of $\mu$ on $\MM(\mu)$. Finally, the completeness assertion follows from the observation that $\mu^{-1}(\{0\})\subset \MM(\mu)$ and the monotonicity of $\mu$.
\end{proof}

\begin{definition}
Let $\AA\subset 2^X$ and $\mu:\AA\to [0,\infty]$. Define $\mu^*_X:2^X\to [0,\infty]$  by $$\mu^*_X(A):=\inf \left\{\sum_{j\in \mathbb{N}}\mu(A_j):(A_j)_{j\in \mathbb{N}}\text{ a sequence in $\AA$ with }\cup_{j\in \mathbb{N}}A_j\supset A\right\},\quad A\in 2^X.$$
\end{definition}

\begin{proposition}
Let $\AA\subset 2^X$, $\mu:\AA\to [0,\infty]$, $\mu(\emptyset)=0$. Then $\mu^*_X$ is an outer measure on $X$. \qed
\end{proposition}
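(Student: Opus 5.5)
We verify the three defining properties of an outer measure directly from the definition of $\mu^*_X$; the convention here is that $\inf\emptyset=\infty$, which is harmless, since $\mu^*_X$ is then still $[0,\infty]$-valued. Nonnegativity is clear, as $\mu$ takes values in $[0,\infty]$.

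\emph{Null at $\emptyset$.} Since $\mu(\emptyset)=0$ and $\emptyset\in\AA$ (implicit in the hypothesis $\mu(\emptyset)=0$), the constant sequence $(\emptyset,\emptyset,\ldots)$ is a cover of $\emptyset$ by members of $\AA$. Its sum is $0$, so $\mu^*_X(\emptyset)=0$.

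\emph{Monotonicity.} If $A\subset B\subset X$, every sequence in $\AA$ whose union contains $B$ also has union containing $A$. So the set over which the infimum defining $\mu^*_X(A)$ is taken contains the one for $\mu^*_X(B)$, whence $\mu^*_X(A)\leq\mu^*_X(B)$.

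\emph{Countable subadditivity.} This is the only step needing care, and it is the standard $\epsilon2^{-n}$ argument. Let $(A_n)_{n\in\mathbb{N}}$ be a sequence in $2^X$. If $\sum_n\mu^*_X(A_n)=\infty$ there is nothing to prove, so assume every $\mu^*_X(A_n)<\infty$, and fix $\epsilon\in(0,\infty)$. By the definition of the infimum, for each $n$ choose a sequence $(A_{n,j})_{j\in\mathbb{N}}$ in $\AA$ with $\cup_jA_{n,j}\supset A_n$ and $\sum_j\mu(A_{n,j})\leq\mu^*_X(A_n)+\epsilon2^{-n}$ (countable choice). Enumerate the doubly indexed family $(A_{n,j})_{(n,j)\in\mathbb{N}^2}$ as a single sequence via a bijection $\mathbb{N}\to\mathbb{N}^2$. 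Its union contains $\cup_nA_n$, and because sums of nonnegative terms are invariant under rearrangement and may be iterated, its sum is $\sum_n\sum_j\mu(A_{n,j})\leq\sum_n\mu^*_X(A_n)+\epsilon$. Hence $\mu^*_X(\cup_nA_n)\leq\sum_n\mu^*_X(A_n)+\epsilon$, and letting $\epsilon\downarrow0$ gives the claim.

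The only genuine obstacle is this last step, specifically the rearrangement of the double series of nonnegative extended reals into a single sequence. This is elementary (the sum of a nonnegative family is the supremum of its finite partial sums, independent of ordering), and everything else is immediate from the definitions.
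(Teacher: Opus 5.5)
Your proof is correct: the paper states this proposition without proof, and what you give (the cover by $\emptyset$'s for nullity at $\emptyset$, monotonicity via inclusion of the families of covers, and the $\epsilon 2^{-n}$ argument with a reindexing $\mathbb{N}\to\mathbb{N}^2$ for countable subadditivity) is the standard verification being left to the reader. Your explicit use of the convention $\inf\emptyset=\infty$ is worth keeping, since $\AA$ need not cover $X$, so some sets may admit no cover from $\AA$ at all.
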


\begin{theorem}[Carath\'eodory]\label{theorem:caratheodory} 
Let $\AA$ be an algebra on $X$ and $\mu:\AA\to [0,\infty]$ satisfy $\mu(\emptyset)=0$ and [countable additivity] $\mu(\cup_{n\in \mathbb{N}}A_n)=\sum_{n\in \mathbb{N}}\mu(A_n)$ for every sequence $(A_n)_{n\in\mathbb{N}}$ in $\AA$ consisting of pairwise disjoint sets for which $\cup_{n\in \mathbb{N}}A_n\in \AA$. Then $\sigma_X(\AA)\subset \MM(\mu^*_X)$ and $\mu^*_X\vert_{\sigma_X(\AA)}$ is an extension of $\mu$ to a measure on $\sigma_X(\AA)$. If furthermore [$\sigma$-finiteness] there exists a sequence $(A_n)_{n\in \mathbb{N}}$ in $\AA$ with $\cup_{n\in \mathbb{N}}A_n=X$ and with  $\mu(A_n)<\infty$ for all $n\in \mathbb{N}$, then the extension of $\mu$ to a measure on $\sigma_X(\AA)$ is unique and $\mu^*_X\vert_{\MM(\mu^*_X)}=\overline{\mu^*_X\vert_{\sigma_X(\AA)}}$.
\end{theorem}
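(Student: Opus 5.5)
The plan has four steps: put $\AA$ inside $\MM(\mu^*_X)$, show $\mu^*_X$ extends $\mu$, get uniqueness from Proposition~\ref{proposition:equality of measures}, and identify $\mu^*_X$ on $\MM(\mu^*_X)$ with the completion.

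\emph{Step 1: $\AA\subset \MM(\mu^*_X)$.} Since $\MM(\mu^*_X)$ is a $\sigma$-field (preceding proposition), this gives $\sigma_X(\AA)\subset\MM(\mu^*_X)$. Fix $E\in\AA$ and $A\subset X$; only the inequality $\mu^*_X(A\cap E)+\mu^*_X(A\backslash E)\leq \mu^*_X(A)$ is needed, and we may assume $\mu^*_X(A)<\infty$. Take a cover $(A_j)_{j\in\mathbb{N}}$ of $A$ from $\AA$ with $\sum_j\mu(A_j)\leq\mu^*_X(A)+\epsilon$. Then $(A_j\cap E)_j$ and $(A_j\backslash E)_j$ are covers from $\AA$ of $A\cap E$ and $A\backslash E$. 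Finite additivity of $\mu$ on $\AA$ (countable additivity applied with trailing $\emptyset$'s) gives $\mu(A_j)=\mu(A_j\cap E)+\mu(A_j\backslash E)$. Summing and letting $\epsilon\downarrow0$ concludes.

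\emph{Step 2: $\mu^*_X=\mu$ on $\AA$.} This is the crux, and the only place countable additivity on $\AA$ enters. Clearly $\mu^*_X(A)\leq\mu(A)$ for $A\in\AA$, via the cover $(A,\emptyset,\emptyset,\ldots)$. Conversely, let $(A_j)$ be any cover of $A$ from $\AA$. Disjointify by setting $B_j:=A\cap A_j\backslash\cup_{i<j}A_i\in\AA$; these are pairwise disjoint with union $A\in\AA$. The hypothesis then gives $\mu(A)=\sum_j\mu(B_j)\leq\sum_j\mu(A_j)$, using monotonicity of $\mu$ on $\AA$, which follows from finite additivity. Hence $\mu(A)\leq\mu^*_X(A)$. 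Together with Step 1 and the preceding proposition, $\mu^*_X\vert_{\sigma_X(\AA)}$ is a measure extending $\mu$.

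\emph{Step 3: uniqueness under $\sigma$-finiteness.} $\AA$ is a $\pi$-system generating $\sigma_X(\AA)$. Any two extensions agree on $\AA$. Replacing the given $A_n$ by $A_n\backslash\cup_{i<n}A_i\in\AA$ produces a pairwise disjoint localizing sequence of finite measure. Proposition~\ref{proposition:equality of measures} then yields uniqueness.

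\emph{Step 4: $\mu^*_X\vert_{\MM(\mu^*_X)}=\overline{\mu^*_X\vert_{\sigma_X(\AA)}}$.} Write $\nu:=\mu^*_X\vert_{\sigma_X(\AA)}$. The measure $\mu^*_X\vert_{\MM(\mu^*_X)}$ is a complete extension of $\nu$, so by Proposition~\ref{proposition:capture} it contains $\overline{\nu}$. It remains to show $\MM(\mu^*_X)\subset\overline{\sigma_X(\AA)}^{\nu}$; the measures then agree there by the uniqueness in Proposition~\ref{proposition:capture}.

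The key observation is that for every $E\subset X$ there is $B\in\sigma_X(\AA)$ with $B\supset E$ and $\nu(B)=\mu^*_X(E)$. To get it, take near-optimal covers with error $1/k$, form their unions $B_k$, and set $B:=\cap_kB_k$.

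Now let $E\in\MM(\mu^*_X)$. By localization on the disjoint $A_n$ (a countable union of null sets is null) we may assume $E\subset A_n$, so $\mu^*_X(E)<\infty$. Take such a $B\supset E$. Since $E$ is Carath\'eodory-measurable, $\mu^*_X(B\backslash E)=\mu^*_X(B)-\mu^*_X(E)=0$. Applying the observation to $B\backslash E$ gives $C\in\sigma_X(\AA)$ with $C\supset B\backslash E$ and $\nu(C)=0$. Then $B\backslash C\subset E\subset B$ and $\nu(B\backslash(B\backslash C))=\nu(B\cap C)=0$, so $E\in\overline{\sigma_X(\AA)}^{\nu}$. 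This step is the most delicate bookkeeping in the proof.
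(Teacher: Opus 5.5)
Your proof is correct and follows the paper's argument essentially step for step. You use the same cover-splitting argument for $\AA\subset\MM(\mu^*_X)$, countable subadditivity and monotonicity of $\mu$ on $\AA$ to get $\mu^*_X\vert_{\AA}=\mu$, Proposition~\ref{proposition:equality of measures} for uniqueness, and, for the completion, the same $\sigma_X(\AA)$-hull $\cap_k B_k$ of near-optimal covers followed by covering the $\mu^*_X$-null remainder with a $\nu$-null set from $\sigma_X(\AA)$. You also make explicit two points the paper passes over quickly: that $\mu^*_X(B\backslash E)=0$ comes from testing the Carath\'eodory condition of $E$ on $B$ (using $\mu^*_X(E)<\infty$), and that the localizing sequence must first be disjointified to meet the hypotheses of Proposition~\ref{proposition:equality of measures}.
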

\begin{proof}
Write $\mu^*:=\mu^*_X$ and $\MM:=\MM(\mu^*)$ for short.

It is easy to see that $\mu$ is countably subadditive and monotone, from which it follows  that $\mu^*$ extends $\mu$. For the first claim it therefore remains to check that $\sigma_X(\AA)\subset \MM$ and this will follow from $\AA\subset \MM$. Let then $E\in \AA$, $A\subset X$ and $(A_n)_{n\in \mathbb{N}}$ be a cover from $\AA$ of $A$. Then  $(A_n\cap E)_{n\in \mathbb{N}}$ is a cover from $\AA$ of $A\cap E$ and likewise  $(A_n\backslash  E)_{n\in \mathbb{N}}$ is a cover from $\AA$ of $A\backslash E$. Therefore 
$\mu^*(A\cap E)+\mu^*(A\backslash E)\leq \sum_{j\in \mathbb{N}}\mu(A_j\cap E)+\sum_{j\in \mathbb{N}}\mu(A_j\backslash E)=\sum_{j\in \mathbb{N}}\mu(A_j)$ by additivity of $\mu$, which entails that $\mu^*(A\cap E)+\mu^*(A\backslash E)\leq \mu^*(A)$, and hence that indeed $E\in \MM$.

Suppose now the $\sigma$-finitness condition holds true. Uniqueness of the extension follows  from Proposition~\ref{proposition:equality of measures} ($\sigma$-finiteness gives a localizing sequence). Next, the inclusion (set-theoretic notation) $\mu^*\vert_{\MM}\supset \overline{\mu^*\vert_{\sigma_X(\AA)}}$ follows from the two facts: (i) $\overline{\mu^*\vert_{\sigma_X(\AA)}}$  is the minimal extension to a complete measure of the measure  $\mu^*\vert_{\sigma_X(\AA)}$; and (ii) $\mu^*\vert_{\MM}$ is a complete measure extending the measure $\mu^*\vert_{\sigma_X(\AA)}$. For the reverse inclusion, let $M\in \MM$. It will now be sufficient to establish that $M$ belongs to the completed $\sigma$-field $\overline{\sigma_X(\AA)}^{\mu^*\vert _{\sigma_X(\AA)}}$ (since $\overline{\mu^*\vert_{\sigma_X(\AA)}}$ extends $\mu^*\vert_{\sigma_X(\AA)}$ uniquely to $\overline{\sigma_X(\AA)}^{\mu^*\vert _{\sigma_X(\AA)}}$). By the $\sigma$-finiteness of $\mu$ (and the definition of $\mu^*$) we may assume that $\mu^*(M)<\infty$. From the very definition of $\mu^*$, for each $n\in \mathbb{N}$, there is then a sequence $(A^n_i)_{i\in \mathbb{N}}$ from $\AA$, with $M\subset A^n:=\cup_{i\in \mathbb{N}}A^n_i$ and $\mu^*(M)>\sum_{i=1}^\infty\mu(A^n_i)-\frac{1}{n}\geq \mu^*(A^n)-\frac{1}{n}\geq \mu^*(A)-\frac{1}{n}$ where $A:=\cap_{n\in \mathbb{N}} A^n\in \sigma_X(\AA)$. Passing to the limit $n\to\infty$, we find that $\mu^*(A\backslash M
)=0$. Hence it will be sufficient to establish that every $\mu^*$-negligible set is contained in a $\mu^*\vert_{\sigma_X(\AA)}$-negligible set. But this also follows at once by what we have just proven.
\end{proof}
\begin{remark}
The preceeding proof of the existence of the extension goes through with ``only'' countable subadditivity, additivity and monotonicity of $\mu$ in lieu of countable additivity. Nevertheless, for the conclusion (existence of extension) to prevail, it is clearly necessary that $\mu$ be countably additive. In fact,  countable subaddivity, additivity and monotonicity of $\mu$ imply its countable additivity as is easy to check, so there is nothing strange going on here.
\end{remark}

\begin{proposition}\label{prop:tsirelson}
Let $X$ be a compact topological space, $\AA$ an algebra on $X$ and let $\mu:\AA\to [0,\infty]$ be additive (meaning: $\mu(A\cup B)=\mu(A)+\mu(B)$ for all disjoint $A$ and $B$ from $\AA$). Suppose the following condition holds true:  for every $A\in \AA$ and for every $\epsilon\in (0,\infty)$ there is a $B\in \AA$ such that $\overline{B}\subset A$ and $ \mu(A\backslash B)\leq\epsilon$. Then $\mu$ is countably additive and $\mu(\emptyset)=0$. 
\end{proposition}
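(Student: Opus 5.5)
The plan is to read off $\mu(\emptyset)=0$ directly from the hypothesis, get the inequality $\mu(\cup_n A_n)\geq\sum_n\mu(A_n)$ from finite additivity and monotonicity, and get the reverse inequality from compactness by covering the closure of an inner approximant with finitely many open neighbourhoods of the pieces $A_n$.

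\emph{Step 1: $\mu(\emptyset)=0$.} Additivity applied to $\emptyset\cup\emptyset$ only gives $\mu(\emptyset)=2\mu(\emptyset)$, i.e. $\mu(\emptyset)\in\{0,\infty\}$. Instead apply the hypothesis with $A=\emptyset$: any $B\in\AA$ with $\overline{B}\subset\emptyset$ must be $B=\emptyset$. Hence $\mu(\emptyset)=\mu(\emptyset\backslash\emptyset)\leq\epsilon$ for every $\epsilon>0$, so $\mu(\emptyset)=0$.

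\emph{Step 2: elementary consequences of additivity on the algebra.}
\begin{itemize}
\item Monotonicity: for $C\subset D$ in $\AA$, $\mu(D)=\mu(C)+\mu(D\backslash C)\geq\mu(C)$.
\item Finite subadditivity: $\mu(C\cup D)=\mu(C)+\mu(D\backslash C)\leq\mu(C)+\mu(D)$, and then induct.
\end{itemize}
Now let $(A_n)_{n\in\mathbb{N}}$ be pairwise disjoint in $\AA$ with $A:=\cup_nA_n\in\AA$. For each $N$, finite additivity and monotonicity give $\sum_{n\leq N}\mu(A_n)=\mu(\cup_{n\leq N}A_n)\leq\mu(A)$. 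Letting $N\to\infty$ gives $\sum_n\mu(A_n)\leq\mu(A)$.

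\emph{Step 3: the reverse inequality, via compactness.} This is the main step. Fix $\epsilon>0$ and do the following.
\begin{enumerate}[(a)]
\item Apply the hypothesis to $A$: get $B\in\AA$ with $\overline{B}\subset A$ and $\mu(A\backslash B)\leq\epsilon$.
\item For each $n$, apply the hypothesis to the complement $X\backslash A_n\in\AA$: get $B_n\in\AA$ with $\overline{B_n}\subset X\backslash A_n$ and $\mu((X\backslash A_n)\backslash B_n)\leq\epsilon2^{-n}$.
\item Set $U_n:=X\backslash\overline{B_n}$. This is open, contains $A_n$, and is contained in $X\backslash B_n\in\AA$. Since $(X\backslash B_n)\backslash A_n=(X\backslash A_n)\backslash B_n$, additivity gives $\mu(X\backslash B_n)\leq\mu(A_n)+\epsilon2^{-n}$. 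This holds even when some measures are infinite, which is why I approximate the complements rather than requiring finiteness of anything.
\item $\overline{B}$ is closed in the compact space $X$, hence compact, and $\overline{B}\subset A\subset\cup_nU_n$. So there is $N$ with $B\subset\cup_{n\leq N}U_n\subset\cup_{n\leq N}(X\backslash B_n)$.
\item By monotonicity and finite subadditivity,
\[
\mu(B)\leq\sum_{n\leq N}\mu(X\backslash B_n)\leq\sum_{n\in\mathbb{N}}\mu(A_n)+\epsilon .
\]
\item Then $\mu(A)=\mu(B)+\mu(A\backslash B)\leq\sum_n\mu(A_n)+2\epsilon$. Let $\epsilon\downarrow0$.
\end{enumerate}

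The only delicate point is obtaining open neighbourhoods of the $A_n$ that are controlled by sets in $\AA$. Approximating the complements $X\backslash A_n$ from inside, as in (b)–(c), resolves this. Compactness then reduces the countable cover to a finite one, so that only finite subadditivity is needed.
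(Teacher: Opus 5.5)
Your proof is correct. It differs from the paper's mainly in how compactness is used.

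After obtaining $\mu(\emptyset)=0$ exactly as you do, the paper reduces countable additivity to continuity at $\emptyset$. It shows $\mu(A_n)\downarrow 0$ for every sequence $(A_n)_{n\in\mathbb{N}}$ in $\AA$ with $A_n\downarrow\emptyset$. It inner-approximates each $A_k$ by a $B_k$ with $\overline{B_k}\subset A_k$ and $\mu(A_k\backslash B_k)\leq 2^{-k}\epsilon$. It then uses compactness in its finite-intersection form: $\cap_k\overline{B_k}=\emptyset$ forces $\cap_{k\leq n}B_k=\emptyset$ for some $n$. Since $A_n\subset\cup_{k\leq n}(A_k\backslash B_k)$, this gives $\mu(A_n)<\epsilon$.

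You instead prove both inequalities directly for the disjoint union, using compactness in its open-cover form. To get open neighbourhoods $U_n\supset A_n$ controlled by sets of $\AA$, you apply the hypothesis to the complements $X\backslash A_n$ as well as to $A$. In effect you use the condition both as inner regularity and, through complements, as outer regularity.

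The paper's route is shorter: one approximation per set, with no separate superadditivity step and no neighbourhood construction. Yours is more self-contained, because it handles possibly infinite values of $\mu$ explicitly at every step. The paper leaves implicit why reducing to $\downarrow\emptyset$ sequences is legitimate for a $[0,\infty]$-valued $\mu$. It is legitimate, since the estimate $\mu(A_n)<\epsilon$ needs no finiteness and can be applied to $A\backslash\cup_{k\leq n}A_k$.
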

\begin{proof}
 The conclusion $\mu(\emptyset)=0$ is straightforward (apply the condition with $A=\emptyset$). 
  By additivity of $\mu$, in order to establish countable additivity, it will be enough to show that 
for any sequence $(A_n)_{n\in \mathbb{N}}$ in $\AA$ that is $\downarrow \emptyset$, one has $\mu(A_n)\downarrow 0$ as $n\to\infty$. Let $\epsilon\in (0,\infty)$. For each $k\in \mathbb{N}$ there is $B_k\in \AA$ such that $\overline{B_k}\subset A_k$ and $\mu(A_k\backslash B_k)\leq 2^{-k}\epsilon$. Now $\cap_{k\in \mathbb{N}}\overline{B_k}\subset \cap_{k\in \mathbb{N}}A_k=\emptyset$, hence by compactness there is $n\in \mathbb{N}$ such that $\cap_{k=1}^n\overline{B_k}=\emptyset$ and a fortiori $\cap_{k=1}^nB_k=\emptyset$. Therefore by subadditivity of $\mu$, we obtain $\mu(A_n)\leq \mu(A_n\backslash B_n)+\cdots +\mu(A_1\backslash B_1)<\epsilon$, and the proof is complete.
\end{proof}

\chapter{Integration on measurable spaces}

\begin{remarks}
	Recall the ``arithmetic'' of $[-\infty,\infty]$ form Definition~\ref{definition:conventions}. The convention  $\infty\cdot 0=0=0\cdot \infty$  is ``the only natural one'' in the measure-theoretic context. Among other desiderata, this is because it ensures that $a_n\cdot b \uparrow a\cdot b$  whenever $0\leq a_n\uparrow a$ and $b\in [0,\infty]$, even if $a=\infty$ and $b=0$, which is one of the reasons why  monotone convergence (see below) prevails in the generality that it does. Another aspect of it is that it is quite natural to have: (i) the integral of the zero function over any set be zero, even if the set in question has infinite measure; (ii) the integral of any function over a set of zero mesure be zero, even if the function in question is infinite-valued (again see below).  The stipulation that $\infty+(-\infty)=0=(-\infty)+\infty$, on the other hand, is (quite a bit more) arbitrary. But, 
	the expressions $\infty+(-\infty)$ and $(-\infty)+\infty$ will appear in what follows only on sets of measure zero, which are ``not seen'' by the integral (as we will see), or else in cases that would otherwise have been a priori excluded (had we left  $\infty+(-\infty)$ and $(-\infty)+\infty$ undefined). 
	 Hence, to an overriding extent, 	the mentioned arbitrariness is inconsequential. 
\end{remarks}

\section{The Lebesgue integral}
\emph{definition of Lebesgue integration and first properties; examples of integrals; Riemann-Darboux vs. Lebesgue}

\begin{definition}
	Let $(\Omega,\FF,\mu)$ be a measure space and $f\in \FF/\mathcal{B}_{[-\infty,\infty]}$. 
	\begin{enumerate}[(a)]
		\item\label{integral:a} If $f$ is $\FF$-simple, we put $\int f \dd\mu:=\sum_{a\in \mathrm{range}(f)}a\mu(f^{-1}(\{a\}))$ \big($=\sum_{a\in \mathrm{range}(f)}a\mu(f=a)$\big).\footnote{It may happen that $\mathrm{range}(f)=\emptyset$; of course this occurs iff $\Omega=\emptyset$. We always put $\sum_\emptyset:=0$. (And $\prod_\emptyset:=1$.)} 
		\item\label{integral:b} If $f\geq 0$, but $f$ is not $\FF$-simple, then we put 
		\begin{equation*}
		\int f \dd \mu:=\sup\left\{\int g\dd\mu: g\text{ is $\FF$-simple and }g\leq f\right\}.
		\end{equation*}
		\item\label{integral:c} If $f$ is not nonnegative, then we put $\int f\dd \mu:=\int f^+\dd\mu-\int f^-\dd \mu$.
	\end{enumerate}

$\int f\dd \mu$ is called the (Lebesgue\footnote{We shall tend to omit this qualification.}) integral (also, expectation if $\mu$ is a probability measure) of $f$ under $\mu$. We write variously $\mu[f]:=\mu^x[f(x)]:=\int f(x)\mu(\dd x):=\int f\dd\mu$, whichever is the more convenient. If further $A\in \FF$, we put $\mu[f;A]:=\mu^x[f(x);A]:=\mu^x[f(x);x\in A]:=\int_Af(x)\mu(\dd x):=\int_{x\in A}f(x)\mu(\dd x):=\int_A f\dd \mu:=\int f\mathbbm{1}_A\dd \mu$.

The integral of $f$ against $\mu$ is well-defined \deff  $\int f^+\dd \mu\land \int f^-\dd \mu<\infty$.

$f$ is $\mu$-integrable \deff $\int f^+ \dd\mu\lor \int f^-\dd\mu<\infty$. 
\end{definition}

\begin{remarks}
Given that \eqref{integral:b} and~\eqref{integral:c} (but not \eqref{integral:a}) can handle a non-measurable  $f$  equally as they can handle a measurable one, why consider integrals of only measurable maps (at least in the first instance)? The reason lies in the fact that measurable maps are precisely those that can be approximated by (pointwise monotone) limits of simple functions. As a consequence the integral is most well-behaved, and naturally defined by the preceding definition precisely on this class. Note also that, by definition, $\int f\dd \mu=\infty-\infty=0$ when $\int f\dd\mu$ is not well-defined; however, this case is included for definiteness/convenience only, and no non-trivial consequences should, or will be attached to this convention.

The notation $\mu[f]$ for the integral, apart from being very concise, conveys also the idea that the integral be viewed as an extension of the map $\mu$ from $\FF$ --- identified with $\{\mathbbm{1}_A:A\in \FF\}$ --- to a map defined on the whole of $\FF/\mathcal{B}_{[-\infty,\infty]}$ (or some ``rich'' subset thereof); this is the perspective of the (functional-analytic) Daniell integration. 

It is elementary to note that, for $A\in \FF$, $\mu[f;A]=\mu_A[f\vert_A]$, the left-hand side being well-defined iff the right-hand side is so. It is less obvious, but nevertheless true (it will follow from monotone convergence [presented below in Theorem~\ref{theorem:convergence}\eqref{theorem:convergence:Levi}], and from the monotone class theorem [Corollary~\ref{corollary:monotone-class}]) that for a sub-$\sigma$-field $\GG$ of $\FF$ and an $f\in \GG/\mathcal{B}_{[-\infty,\infty]}$, $\mu[f]=(\mu\vert_\GG)[f]$, the left-hand side being well-defined iff the right-hand side is so. Thus, the integral behaves ``naturally'' under restrictions.
 
	\end{remarks}

\begin{definition}
	Let $(\Omega,\FF,\mu)$ be a measure space. $\LL^1(\mu):=\{f\in \FF/\mathcal{B}_{\mathbb{R}}:f\text{ is }\mu\text{-integrable}\}$. For $g:\Omega\to \mathbb{C}$ with $\{\Re g,\Im g\}\subset \LL^1(\mu)$ one defines $\int g\dd \mu:=\int \Re g\dd\mu+\mathrm{i}\int \Im g\dd\mu$ along with the usual variations in notation ($\mu^x[g(x)]$ etc.).
\end{definition}

\begin{remark}
We will keep to integrating $[-\infty,\infty]$-valued maps, leaving aside the complex case to which one can usually extend the properties of the Lebesgue integral in a  straightforward fashion, by linearity and/or some other trick (e.g. writing $\vert x\vert=\sup_{\theta\in \mathbb{R}}\Re (e^{\ii\theta}x)$ for $x\in \mathbb{C}$). 
\end{remark}

\begin{theorem}\label{theorem:int:basics}
		Let $(\Omega,\FF,\mu)$ be a measure space. The integral enjoys the following properties.
		\begin{enumerate}[(i)]
			\item\label{lebesgue:additivity} Additivity: $\mu[ f+g ]=\mu[f]+\mu[g]$ whenever $\{f,g\}\subset \FF/\mathcal{B}_{[-\infty,\infty]}$ and $\mu[f^-]\lor \mu[g^-]<\infty$.
			\item\label{properties:indicator} Integral of indicator: $\int \mathbbm{1}_A\dd \mu=\mu(A)$ for all $A\in \FF$ (in particular $\int 0\dd \mu=0$, hence $\int f\dd\mu=\int f^+\dd\mu-\int f^-\dd\mu$ for all $f\in \FF/\mathcal{B}_{[-\infty,\infty]}$). 
			\item\label{properties:v} Integrals vanishing, integrals finite: For $f\in \FF/\mathcal{B}_{[0,\infty]}$, $\int f\dd \mu=0$ iff $f=0$ a.e.-$\mu$, while $\int f\dd \mu<\infty$ implies that $f<\infty$ a.e.-$\mu$.  
			\item\label{properties:iv} Triangle inequality: $\vert \int f\dd\mu\vert\leq \int \vert f\vert \dd\mu$ for all $f\in \FF/\mathcal{B}_{[-\infty,\infty]}$.
			\item\label{properties:vi} Measure zero sets are ``invisible'': $\int f\dd\mu=\int g\dd \mu$ whenever $\{f,g\}\subset \FF/\mathcal{B}_{[-\infty,\infty]}$ and $f=g$ a.e.-$\mu$. 
					\item\label{properties:iii}  Monotonicity: $\int g\dd\mu\leq\int f\dd \mu$ whenever  $\{g,f\}\subset \FF/\mathcal{B}_{[-\infty,\infty]}$, $g\leq f$ and $\int g^-\dd\mu <\infty$.
			\item\label{properties:ii} Homogeneity: $\int cf\dd\mu=c\int f\dd\mu$ for all $f\in \FF/\mathcal{B}_{[-\infty,\infty]}$ for which $\int (cf)^-\dd \mu\land \int (cf)^+\dd\mu<\infty$,  for all $c\in [-\infty,\infty]$.	
		\end{enumerate}
		Furthermore, the integrals appearing in  \eqref{lebesgue:additivity}-\eqref{properties:indicator}-\eqref{properties:v} \& \eqref{properties:iii} are all well-defined. The same is true for \eqref{properties:ii} except  when $c=0$  and $\mu[f^+]= \mu[f^-]=\infty$. Finally, in \eqref{properties:vi}, $\int f\dd\mu$ is well-defined iff $\int g\dd\mu$ is well-defined. 
\end{theorem}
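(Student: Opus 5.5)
The plan is to build the properties up in layers: first for $\FF$-simple functions, then for nonnegative measurable functions, then for general numerical ones.

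\emph{Simple functions.} First I will show that if $f=\sum_{i=1}^n c_i\mathbbm{1}_{A_i}$ with the $A_i$ pairwise disjoint and in $\FF$, then $\int f\dd\mu=\sum_i c_i\mu(A_i)$. This follows by grouping the $A_i$ according to the value $c_i$ and using finite additivity of $\mu$. Passing to a common refinement (the induced partition of finitely many sets) then gives, for simple $f$ and $g$ and $a,b\in[0,\infty)$:
\begin{itemize}
\item additivity and homogeneity;
\item monotonicity;
\item $\int\mathbbm{1}_A\dd\mu=\mu(A)$.
\end{itemize}
Monotonicity on simple functions makes definitions \eqref{integral:a} and \eqref{integral:b} consistent, since the supremum over simple $g\le f$ is attained at $g=f$ when $f$ itself is simple. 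Item \eqref{properties:indicator} follows, and monotonicity for nonnegative measurable functions is immediate from the supremum definition.

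\emph{Nonnegative functions.} The key tool is a monotone convergence statement for the sup-definition: if $0\le f_n\uparrow f$ with the $f_n$ simple, then $\int f_n\dd\mu\uparrow\int f\dd\mu$. To prove it, fix a simple $g\le f$ and $t\in(0,1)$, and set $E_n:=\{f_n\ge tg\}\uparrow\Omega$. Then $\int f_n\dd\mu\ge t\int g\mathbbm{1}_{E_n}\dd\mu=t\sum_a a\,\mu(\{g=a\}\cap E_n)$, and continuity from below applied to each level set gives $\lim_n\int f_n\dd\mu\ge t\int g\dd\mu$. Letting $t\uparrow1$ and taking the supremum over $g$ gives the claim.

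With this in hand, additivity for nonnegative $f,g$ follows by approximating each with the sequences of Proposition~\ref{proposition:approx}; the sums of the approximants are simple and increase to $f+g$. Homogeneity with $c\in[0,\infty)$ goes the same way; $c=\infty$ is handled via $\infty f=\lim_n nf$, using the convention $0\cdot\infty=0$.

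For \eqref{properties:v}, Markov's bound $a\,\mu(f\ge a)\le\int f\dd\mu$ (from monotonicity applied to $a\mathbbm{1}_{\{f\ge a\}}\le f$) gives both claims:
\begin{itemize}
\item if $\int f\dd\mu=0$, then $\mu(f\ge 1/n)=0$ for all $n$, so $\mu(f>0)=0$ by continuity from below;
\item if $\int f\dd\mu<\infty$, then $\mu(f=\infty)\le\mu(f\ge n)\le n^{-1}\int f\dd\mu\to0$.
\end{itemize}
Conversely, if $f=0$ a.e., every simple $g\le f$ vanishes off a null set, so $\int g\dd\mu=0$. For nonnegative $f=g$ a.e., take $N=\{f\ne g\}$ (which is in $\FF$) and write $f=f\mathbbm{1}_{N^c}+f\mathbbm{1}_N$. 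Additivity together with $\int f\mathbbm{1}_N\dd\mu=0$ gives $\int f\dd\mu=\int f\mathbbm{1}_{N^c}\dd\mu=\int g\mathbbm{1}_{N^c}\dd\mu=\int g\dd\mu$.

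\emph{General functions.}
\begin{itemize}
\item For \eqref{properties:vi}, apply the nonnegative case to $f^\pm$ and $g^\pm$; this also yields the equivalence of well-definedness.
\item For \eqref{properties:iii}, note $g\le f$ gives $f^+\ge g^+$ and $f^-\le g^-$, and the hypothesis $\int g^-\dd\mu<\infty$ makes both integrals well-defined.
\item For \eqref{properties:iv}, use $|\int f^+\dd\mu-\int f^-\dd\mu|\le\int f^+\dd\mu+\int f^-\dd\mu$ and additivity.
\item For \eqref{properties:ii}, split according to the sign of $c$, using $(cf)^\pm=cf^\pm$ for $c\ge0$ and $(cf)^\pm=|c|f^\mp$ for $c<0$. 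The exceptional case is $c=0$ with $\mu[f^+]=\mu[f^-]=\infty$, where the right-hand side $0\cdot 0$ from the convention still equals $0$.
\end{itemize}

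\emph{Main obstacle: general additivity \eqref{lebesgue:additivity}.} Let $h=f+g$ with $\mu[f^-]\vee\mu[g^-]<\infty$. By \eqref{properties:v}, $f,g>-\infty$ a.e., so the $\infty-\infty$ convention only bites on a null set, which \eqref{properties:vi} lets me remove; I assume $f,g>-\infty$ everywhere. Then $h^+-h^-=f^+-f^-+g^+-g^-$, which rearranges to
\[
h^++f^-+g^-=h^-+f^++g^+ ,
\]
an identity between nonnegative functions. Integrating with nonnegative additivity gives
\[
\int h^+\dd\mu+\int f^-\dd\mu+\int g^-\dd\mu=\int h^-\dd\mu+\int f^+\dd\mu+\int g^+\dd\mu .
\]
Since $h^-\le f^-+g^-$, we have $\int h^-\dd\mu<\infty$, so $\int h\dd\mu$ is well-defined. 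All the terms that are subtracted are finite, so one may subtract them and conclude. The care needed here is in the case analysis of infinite values and in checking the well-definedness claims.
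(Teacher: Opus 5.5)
Your proof is correct and follows the paper's route for the substantive part. That route is: additivity on simple functions via a common partition, monotone convergence via the sets $\{f_n\ge t g\}$, additivity for nonnegative functions by simple approximation, and general additivity from $h^++f^-+g^-=h^-+f^++g^+$. Your preliminary removal of $\{f=-\infty\}\cup\{g=-\infty\}$ is harmless but unnecessary, since that identity holds pointwise under the paper's conventions. Your proofs of monotonicity (directly from $g^+\le f^+$, $f^-\le g^-$) and of null-set invisibility (splitting along $\{f\ne g\}$) are more direct than the paper's, which derives both from signed additivity.

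The one slip is homogeneity with $c=\infty$. You proved monotone convergence only for increasing sequences of simple functions, so $nf\uparrow\infty f$ is not covered. Instead use $ns_n\uparrow\infty f$, with $s_n$ the simple approximants of $f$; this gives $\int\infty f\dd\mu=\lim_n n\int s_n\dd\mu=\infty\int f\dd\mu$.
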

\begin{remarks}
	The proof of the additivity of the integral is not so easy; one shows it usually first for nonnegative functions, the above extension is then ``immediate''. If one uses both additivity and homogeneity of the integral in an argument, then one refers to it simply as linearity (of the integral). The condition ``$\int (cf)^-\dd \mu\land \int (cf)^+\dd\mu<\infty$'' of \eqref{properties:ii} is certainly met when $c\in \mathbb{R}$ and $\int f\dd\mu$ is well-defined. Because of the homogeneity property (resp. because an integral does not see sets of measure zero), many statements concerning integrals get complemented if one changes any integrand(s) to its (their) negative (resp. can  be reinforced by weakening the hypotheses with an ``a.e.'' qualifier). One should keep this in mind; we will not stress it further.
\end{remarks}
 
\begin{proof}[Proof of \eqref{lebesgue:additivity} for simple functions]
The key is to prove the following particular case first: if $f$ is $\FF$-simple and if $f=\sum_{i=1}^na_i\mathbbm{1}_{A_i}$ for some sequence $(a_i)_{i\in [n]}$ in $[0,\infty)$ and some sequence $(A_i)_{i\in [n]}$ in $\FF$, some $n\in \mathbb{N}$, then $\int f\dd \mu=\sum_{i=1}^na_i\mu(A_i)$. To see this, let 
$\mathcal{Q}$ be the partition generated on $\Omega$ by $(A_i)_{i\in [n]}$ (i.e. $\mathcal{Q}$ is the coarsest partition of $\Omega$ such that each $A_i$, $i\in [n]$, is the union of some (perhaps none) of the members of $\mathcal{Q}$). It is clear that $\mathcal{Q}\subset \mathcal{F}$, that for each $a\in \text{range}(f)$ the set $\{f=a\}$ is the union of some (perhaps none) of the $Q\in \mathcal{Q}$, and that, for each $Q\in \mathcal{Q}$ with $Q\subset \{f=a\}$, $a=\sum_{i\in [n],A_i\cap Q\ne \emptyset}a_i$. By additivity of $\mu$ and because $\mu(\emptyset)=0$ we therefore see that $\sum_{i=1}^na_i\mu(A_i)=\sum_{i=1}^na_i\sum_{Q\in \mathcal{Q}}\mu(A_i\cap Q)=\sum_{Q\in \mathcal{Q}}\sum_{i=1}^na_i\mu(A_i\cap Q)=\sum_{a\in \text{range}(f)}\sum_{Q\in \mathcal{Q},Q\subset \{a=f\}}\sum_{i=1}^na_i\mu(A_i\cap Q)=\sum_{a\in \text{range}(f)}\sum_{Q\in \mathcal{Q},Q\subset \{a=f\}}\sum_{i=1,A_i\cap Q\ne \emptyset}^na_i\mu(Q)=\sum_{a\in \text{range}(f)}a\sum_{Q\in \mathcal{Q},Q\subset \{a=f\}}\mu(Q)=\sum_{a\in \text{range}(f)}a\mu(f=a)=\int f\dd\mu$. Now that this has been established, additivity for simple functions follows at once. To extend to nonnegative functions  we will use monotone convergence for nonnegative functions (Theorem~\ref{theorem:convergence}\eqref{theorem:convergence:Levi} with $g=0$ to follow). To safeguard against  a circulus vitiosus we present its proof already here.
\end{proof}
\begin{proof}[Proof of monotone convergence for nonnegative functions]\label{Levi-nonnegative}
We prove Theorem~\ref{theorem:convergence}\eqref{theorem:convergence:Levi} in case $g=0$. The fact that the sequence $(\int f_n\dd\mu)_{n\in \mathbb{N}}$ is $\uparrow$ as well as the inequality $\lim_{n\to\infty}\int f_n\dd \mu\leq \int \lim_{n\to\infty}f_n\dd \mu$ are trivial (for instance by additivity and hence monotonicity of the integral over simple functions). To prove the reverse inequality let $s$ be an $\FF$-simple function minorizing $\lim_{n\to\infty}f_n$. Let $\alpha\in [0,1)$ be arbitrary and define $E^\alpha_n:=\{f_n\geq \alpha s\}$ for $n\in \mathbb{N}$. Because $s\leq \lim_{n\to\infty}f_n$ we see that $\cup_{n\in \mathbb{N}}E^\alpha_n=\Omega$. Besides,  for each $n\in \mathbb{N}$, one has $\int f_n\dd\mu\geq \int_{E_n^\alpha} f_n\dd\mu\geq \int_{E_n^\alpha}\alpha s \dd\mu=\alpha\int_{E_n^\alpha} s\dd\mu$ , which is $\uparrow \alpha\int s\dd\mu$ as $n\to\infty$ (because $\mu$ is continuous from below; recall $s$ is simple); so $\lim_{n\to\infty}\int f_n\dd\mu\geq \alpha\int s\dd\mu$, which is finally $\uparrow \int s\dd\mu$ as $\alpha\uparrow 1$. Now the sought-for reverse inequality follows from the very definition of the Lebesgue integral.
\end{proof}

\begin{proof}[Proof of additivity in the general case]
Additivity for nonnegative functions follows by approximation (Proposition~\ref{proposition:approx}) and  by monotone convergence for nonnegative functions (Theorem~\ref{theorem:convergence}\eqref{theorem:convergence:Levi} with $g=0$). For the general  case note that $(f+g)^++f^-+g^-=(f+g)^-+f^++g^+$. Apply to the latter additivity of the integral for nonnegative functions, and subtract finite quantities, noting that always $\int h\dd\mu=\int h^+\dd\mu-\int h^-\dd\mu$ for $h\in \FF/\mathcal{B}_{[-\infty,\infty]}$ (because $\int 0\dd\mu=0$). 
\end{proof}

\begin{proof}[Proof of \eqref{properties:indicator} through \eqref{properties:ii} (assuming \eqref{lebesgue:additivity})] 
	\eqref{properties:indicator}. If $\Omega=\emptyset$ then $A=\emptyset$ and by the definition of the integral of an $\FF$-simple function $\int  \mathbbm{1}_A\dd \mu=\sum_\emptyset=0=\mu(A)$. Assume $\Omega\ne\emptyset$. Again by definition of the integral of an $\FF$-simple function: $\int \mathbbm{1}_A\dd \mu=0\cdot\mu(\Omega\backslash A)+1\cdot \mu(A)=\mu(A)$, unless $A=\emptyset$ or $A=\Omega$; but $\int\mathbbm{1}_\emptyset \dd\mu=\int 0\dd\mu=0\cdot \mu(\Omega)=0=\mu(\emptyset)$, while $\int\mathbbm{1}_\Omega \dd\mu=\int 1\dd\mu=1\cdot \mu(\Omega)=\mu(\Omega)$.  
	
	\eqref{properties:v}. Suppose first $f=0$ a.e.-$\mu$. Then if $g$ is $\FF$-simple and $g\leq f$, for all $a\in \mathrm{range}(g)$, $a>0$ implies $\mu(g=a)=0$, whence $\int g\dd\mu=\sum_{a\in\mathrm{range}(g)}a\mu(g=a)=0$. So, by definition of the integral $\int f\dd\mu=0$. Conversely, suppose $\int f\dd \mu=0$. If $f$ is $\FF$-simple we obtain at once from the definition of the integral that $f=0$ a.e.-$\mu$. Otherwise suppose per absurdum that $\mu(f>0)>0$. Then by continuity from below $\mu(f\geq 1/n)>0$ for some $n\in \mathbb{N}$, and  by definition of the integral  $\int f\dd \mu\geq \int \frac{1}{n}\mathbbm{1}_{\{f\geq 1/n\}}\dd \mu=\frac{1}{n}\mu(f\geq 1/n)>0$, a contradiction. Finally, assume $\int f\dd \mu<\infty$ and, per absurdum, that $\mu(f=\infty)>0$. Then, for $n\in \mathbb{N}$, again by definition of the integral, $\int f\dd\mu\geq \int n\mathbbm{1}_{\{f=\infty\}}  \dd \mu=n \mu(f=\infty)\uparrow \infty$ as $n\to\infty$, a contradiction.
	
	\eqref{properties:iv}. If $f\geq 0$ there is nothing to prove, because the integral of a nonnegative function is anyway nonnegative. Otherwise, by definition of the integral $\int f\dd \mu=\int f^+\dd \mu-\int f^-\dd \mu$, so $\vert \int f\dd \mu\vert\leq \int f^+\dd \mu+\int f^-\dd\mu=\mu[f^++f^-]=\int \vert f\vert \dd \mu$, where we have used  \eqref{lebesgue:additivity}.
	
	\eqref{properties:vi}. We may assume $f$ and $g$ are both $\geq 0$. Indeed the assumption $\mu(f\ne g)=0$ entails that a fortiori $\mu(f^+\ne g^+)=0$, $\mu(f^-\ne g^-)=0$, while by \eqref{properties:indicator}, $\mu[f]=\mu[f^+]-\mu[f^-]$ and $\mu[g]=\mu[g^+]-\mu[g^-]$. 
	Therefore, assuming now that $f$ and $g$ are both $\geq 0$, by \eqref{lebesgue:additivity}, $\mu[f]=\mu[f\mathbbm{1}_{\{f<\infty\}}+f\mathbbm{1}_{\{f=\infty\}}]=\mu[f\mathbbm{1}_{\{f<\infty\}}]+\mu[f\mathbbm{1}_{\{f=\infty\}}]=\mu[f\mathbbm{1}_{\{f<\infty\}}]+\mu[\infty\mathbbm{1}_{\{f=\infty\}}]=\mu[f\mathbbm{1}_{\{f<\infty\}}]+\infty\mu(f=\infty)=\mu[f\mathbbm{1}_{\{f<\infty\}}]+\infty\mu(f=\infty,f=g)+\infty\mu(f=\infty,f\ne g)=\mu[f\mathbbm{1}_{\{f<\infty\}}]+\infty\mu(f=\infty,f=g)+\infty\cdot 0=\mu[f\mathbbm{1}_{\{f<\infty\}}]+\infty\mu(f=\infty,f=g)$ (we have used the fact that $\mu[\infty;A]=\infty\mu(A)$ for  $A\in \FF$, but this is plain from \eqref{properties:v}); similarly for $g$. Hence, since  $\mu(f\ne g)=0$  implies that a fortiori $\mu(f\mathbbm{1}_{\{f<\infty\}}\ne g\mathbbm{1}_{\{g<\infty\}})=0$,  we may also assume that $f$ and $g$ are both finite. In such case,  by \eqref{properties:v} we see that $\int \vert g-f\vert\dd \mu=\int (g-f)^+\dd \mu=\int (g-f)^-\dd \mu=\mu[ g-f]=0$ and thus,  by  \eqref{lebesgue:additivity}, $\mu[f]=\mu[f]+\mu[g-f]=\mu[f+(g-f)]=\mu[g]$. 
	
	\eqref{properties:iii}. Consider first the  case when $\{g=-\infty\}=\emptyset$. By  \eqref{lebesgue:additivity} we then see that $\int f\dd \mu=\mu[g+(f-g)]=\mu[g]+\mu[f-g]\geq \int g\dd\mu$, because the integral of a nonnegative function is always nonnegative by definition.  For the general case, by \eqref{properties:v} $g>-\infty$ a.e.-$\mu$; so $g=g\mathbbm{1}_{\{g>-\infty\}}=:g'$ a.e.-$\mu$ and  $f=f\mathbbm{1}_{\{g>-\infty\}}=:f'$ a.e.-$\mu$. Now $-\infty<g'\leq f'$; apply the case we have  proved already and  \eqref{properties:vi}.
	
	\eqref{properties:ii}. We use the previous items without special mention. For $c\in (0,\infty)$ the relation is immediate from the definitions. Also for $c=0$. For $c=\infty$, the assumption $\int (cf)^-\dd \mu\land \int (cf)^+\dd\mu<\infty$ entails that $f\geq 0$ a.e.-$\mu$ or $f\leq 0$ a.e.-$\mu$. If the former occurs, $f=f^+$ a.e.-$\mu$ and: if $\int f^+\dd \mu>0$, then $\mu(\infty f^+=\infty)=\mu(f^+>0)>0$, hence $\int \infty f\dd \mu=\int \infty f^+\dd \mu=\infty=\infty \int f^+\dd \mu=\infty \int f\dd \mu$; if $\int f^+\dd \mu=0$, then $f^+=0$ a.e.-$\mu$, hence also $\infty f^+=0$ a.e.-$\mu$, and so $\int \infty f\dd \mu=\int \infty f^+\dd \mu=0=\infty\int f^+\dd \mu=\infty\int f\dd \mu$. Otherwise, if the latter prevails,  by what we have just shown $\int \infty f\dd \mu=-\int  (\infty f)^-\dd\mu =-\int \infty f^-\dd\mu=-\infty \int  f^-\dd\mu=\infty (-\int f^-\dd \mu)=\infty \int f\dd \mu$. Besides, for $c=-1$: 
	 $\int (-f)\dd\mu=\int (-f)^+\dd\mu-\int (-f)^-\dd\mu=\int f^-\dd\mu-\int f^+\dd\mu=-(\int f^+\dd\mu-\int f^-\dd\mu)=-\int f\dd\mu$. The claim follows.
	
	The last string of assertions of the theorem is immediate.
\end{proof}

%
%

\begin{example}
Let $x\in \Omega$ and $f:\Omega\to [-\infty,\infty]$. Then $\int f\dd \delta_x$ is well-defined and $\int f\dd\delta_x=f(x)$. 
\end{example}

\begin{example}
	For $f:\Omega\to [0,\infty]$: put $\sum f:=\sup\{\sum_{x\in F}f(x):F\text{ finite}\subset \Omega\}$; note, if $\sum f<\infty$, then $\{f\geq \epsilon\}$ is finite for all $\epsilon>0$ and so $\{f>0\}=\cup_{n\in \mathbb{N}}\{f\geq \frac{1}{n}\}$ is countable. Then,  for $f:\Omega\to [-\infty,\infty]$,  $\int f\dd c_\Omega$ is well-defined iff $\sum f^+\land \sum f^-<\infty$, in which case $\int f\dd c_\Omega=\sum f^+-\sum f^-$ (in particular if $\Omega=\mathbb{N}$, then $\int f\dd c_\Omega=\sum_{n\in \mathbb{N}}f(n)$ /in the sense of the convergence, in $[-\infty,\infty]$, of the partial sums, as always/).
\end{example}

\begin{proposition}\label{proposition:riemann-darboux}
	Suppose $a\leq b$ are real numbers and $f:[a,b]\to \mathbb{R}$. If $f$ is continuous, then $f\mathbbm{1}_{[a,b]}$ is $\leb$-integrable and \begin{equation}
	\int_{[a,b]}f\dd \leb=\int_a^bf(x)\dd x,
	\end{equation}
	where the integral on the right-hand side is in the Riemann-Darboux sense.

	Moreover, $f$ is Riemann-Darboux integrable iff it is continuous $\overline{\leb}$-a.e. and bounded, in which case $f$ is $\overline{\leb}$-integrable and  $
	\int_{[a,b]}f\dd\overline{ \leb}=\int_a^bf(x)\dd x$. 
\end{proposition}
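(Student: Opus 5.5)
The plan is to work with Darboux step functions and squeeze the Lebesgue integral between lower and upper sums. Throughout, "integral" on $[a,b]$ means the $\leb$-integral of the function times $\mathbbm{1}_{[a,b]}$. The case $a=b$ is trivial, since $\leb(\{a\})=0$, so assume $a<b$.

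For a partition $P=\{a=x_0<x_1<\cdots<x_n=b\}$ and a bounded $f$, put $m_i:=\inf_{[x_{i-1},x_i]}f$ and $M_i:=\sup_{[x_{i-1},x_i]}f$. Define step functions
\[
\ell_P:=\sum_{i=1}^n m_i\mathbbm{1}_{(x_{i-1},x_i]}+f(a)\mathbbm{1}_{\{a\}},\qquad u_P:=\sum_{i=1}^n M_i\mathbbm{1}_{(x_{i-1},x_i]}+f(a)\mathbbm{1}_{\{a\}}.
\]
These are Borel and bounded, satisfy $\ell_P\le f\le u_P$ on $[a,b]$, and by Theorem~\ref{theorem:int:basics} (linearity, indicators, $\leb((s,t])=t-s$, $\leb(\{a\})=0$) their integrals are exactly the lower and upper Darboux sums $L(f,P)$ and $U(f,P)$.

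\emph{Continuous case.} A continuous $f$ is Borel measurable (continuous maps are measurable, then restrict to $[a,b]$), and it is bounded on the compact interval, so $f\mathbbm{1}_{[a,b]}$ is $\leb$-integrable because $|f|\mathbbm{1}_{[a,b]}\le \|f\|_\infty\mathbbm{1}_{[a,b]}$. By monotonicity, $L(f,P)\le\int_{[a,b]}f\,\dd\leb\le U(f,P)$ for every $P$. Uniform continuity makes $U-L\to0$ along refinements, which gives the equality.

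\emph{General case.} Take a sequence of partitions $P_k$, each refining the previous one, with mesh tending to $0$. Then $\ell_{P_k}\uparrow\ell$ and $u_{P_k}\downarrow u$ pointwise, where $\ell,u$ are Borel and $\ell\le f\le u$. Since $f$ is bounded and $\leb([a,b])<\infty$, dominated convergence (or monotone convergence applied to $u_{P_1}-u_{P_k}$) gives $\int\ell=\lim_k L(f,P_k)$ and $\int u=\lim_k U(f,P_k)$. The key pointwise fact is this: for $x\in(a,b)$ not among the partition points $D:=\cup_kP_k$, one has $u(x)=\ell(x)$ if and only if $f$ is continuous at $x$. Indeed, $u(x)-\ell(x)$ is the oscillation of $f$ at $x$, because the mesh tends to $0$ and $x$ lies in the interior of the cells of the $P_k$ containing it. The set $D\cup\{a,b\}$ is countable, hence $\leb$-null.

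\emph{The equivalence.} If $f$ is Riemann--Darboux integrable, boundedness is part of the definition, and we can choose the $P_k$ so that $U(f,P_k)-L(f,P_k)\to0$ (refining any given partition keeps this). Then $\int(u-\ell)\,\dd\leb=0$ with $u-\ell\ge0$, so $u=\ell$ $\leb$-a.e. by Theorem~\ref{theorem:int:basics}\eqref{properties:v}, and therefore $f$ is continuous off an $\leb$-null set. Conversely, if $f$ is bounded and continuous $\overline{\leb}$-a.e., then $u=\ell$ $\overline\leb$-a.e. (the discontinuity set is contained in an $\leb$-null Borel set), so $\int u=\int\ell$. This holds for every refining sequence with mesh $\to0$, so $\sup_PL=\inf_PU$, which is Riemann--Darboux integrability. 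In either direction, $\ell\le f\le u$ with $\leb(u>\ell)=0$ gives $f\in\overline{\mathfrak{B}}$-measurability via the proposition characterising measurability in the completion: $f\mathbbm{1}_{[a,b]}$ is $\overline{\leb}$-measurable, equals $\ell$ $\overline\leb$-a.e., and is therefore $\overline\leb$-integrable with integral $\int\ell\,\dd\leb=\int_a^bf(x)\,\dd x$, using that $\overline\leb$ extends $\leb$ (Proposition~\ref{proposition:capture}).

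The main obstacle is the pointwise identification of $u(x)-\ell(x)$ with the oscillation of $f$ at $x$. It needs care with points lying on partition boundaries; the plan is to exclude the countable set $D$ to avoid this.
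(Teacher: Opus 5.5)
Your proof is correct and follows essentially the paper's route. Both arguments use lower and upper step functions along a refining sequence of partitions (the paper takes dyadic ones) and their monotone limits $\ell\le f\le u$. Riemann--Darboux integrability is shown equivalent to $\leb(u>\ell)=0$, hence to continuity a.e., because off the countable set of partition points $u-\ell$ is the oscillation of $f$; completeness then gives $\overline{\leb}$-integrability. The only differences are cosmetic: you treat the continuous case directly by a uniform-continuity squeeze (the paper lets it fall out of the general case), and you spell out the oscillation identification, which the paper phrases as $g_n,h_n$ being continuous at $x$.
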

\begin{proof}
We write $\int f$ for the Riemann-Darboux integral. If $f$ is Riemann-Darboux integrable, then it is bounded, so we may and do assume $f$ is bounded. For each $n\in \mathbb{N}$ define $I_k^n:=(k2^{-n},(k+1)2^{-n}]\cap [a,b]$ for $k\in \ZZ$  and the functions, mapping $[a,b]\to\mathbb{R}$: $$g_n:=\sum_{k\in \ZZ}\left(\inf_{I_k^n} f\right)\mathbbm{1}_{I_k^n}\text{ and }h_n:= \sum_{k\in \ZZ}\left(\sup_{I_k^n} f\right)\mathbbm{1}_{I_k^n};$$ then $g_n\leq f\leq h_n$, and $(g_n)_{n\in \mathbb{N}}$ is $\uparrow$ to some $g$ and $(h_n)_{n\in \mathbb{N}}$ is $\downarrow$ to some $h$, in particular $g\leq f\leq h$.

Suppose first that $f$ is Riemann-Darboux integrable. Then  $\int g_n\dd\leb=\int g_n\to\int f$ and $\int  h_n\dd \leb=\int h_n\to \int f$ as $n\to\infty$. 
By monotone (or dominated) convergence (see Theorem~\ref{theorem:convergence} to follow, whose proof will of course not depend on this argument) $\int g\dd \leb=\int f=\int h\dd \leb$. This forces $g=h$ a.e.-$\leb$, and hence $f$ is $\overline{\leb}$-integrable with $\int f\dd\overline{\leb}=\int f$. Also, for $\overline{\leb}$-a.e. $x\in [a,b]$ we have that: $g_n(x)\uparrow f(x)$ and $h_n(x)\downarrow f(x)$ as $n\to\infty$ with $g_n$ and $h_n$ continuous at $x$ for every $n\in \mathbb{N}$; this altogether entails continuity of $f$ at $x$.

For the converse, assume $f$ is continuous a.e.-$\overline{\leb}$. Because $g_n\uparrow f$ and $h_n\downarrow f$ as $n\to\infty$ at every continuity point of $f$, by monotone (or dominated) convergence $\int h_n-\int g_n=\int h_n\dd\leb-\int g_n\dd\leb\to \int f\dd \leb-\int f\dd \leb=0$ as $n\to\infty$. By Archimedes-Riemann it follows that $f$ is Riemann-Darboux integrable.
\end{proof}
 
\begin{remarks}
For a map $f$ defined at least on $A$ we understand $f\mathbbm{1}_A$ as $=0$ outside of $A$ even if $f$ is not defined on (all of) the complement of $A$ (``outside of $A$'' means ``on $\Omega\backslash A$'', where $\Omega$ is the ambient set (the domain) of $\mathbbm{1}_A$ that must be gathered from context). Similarly an expression of the form $R(x)\mathbbm{1}_A(x)$ in $x\in \Omega$ is taken as $=0$ for $x\in \Omega\backslash A$ even when $R(x)$ is not defined  for (all) $x\in \Omega\backslash A$.
	
	The Lebesgue integral is seen to subsume the proper Riemann-Darboux integral (of functions on compact intervals) assuming one takes for the measure the completion of the Lebesgue measure. We prefer the former to the latter because it is vastly more general, and because of the extremely nice properties that it enjoys (we have seen some already, and will see many more in the next sections). A caveat though: $\frac{\sin(x)}{x}$ is Riemann integrable on $x\in (0,\infty)$ in the improper Riemann-Darboux sense, but its Lebesgue integral is not well-defined, because both the integral of its positive as well as the integral of its negative part diverge. 
\end{remarks}

\begin{example}
$\mathbbm{1}_{[0,1]\cap \mathbb{Q}}$ is not Riemann-Darboux integrable on $[0,1]$, while $\int \mathbbm{1}_{[0,1]\cap \mathbb{Q}}\dd \leb$ is well-defined and $=0$. 
\end{example}
 
\begin{remark}
The different types of integrals are a bit of a zoo. We have mentioned thus far those of Riemann-Darboux,  of Lebesgue and Daniell. Of the more important ones, we may add to such a list integration of forms on (smooth) manifolds, spectral integration (against projection-valued measures), the Bochner integral (of Banach space-valued maps)  and stochastic integrals (It\^o, Stratonovich, Paley-Wiener).
\end{remark}

\section{Convergence theorems with some immediate corollaries}
\emph{monotone (Levi) and dominated (Lebesgue) convergence; semicontinuity (Fatou's lemma); integral of a series of functions and  against a series of measures; push-forwards and the image measure theorem; differentiation under the integral sign}

\begin{theorem}\label{theorem:convergence}
Let  $(\Omega,\FF,\mu)$ be a measure space and $(f_n)_{n\in \mathbb{N}}$ be a sequence in $\FF/\mathcal{B}_{[-\infty,\infty]}$. 
\begin{enumerate}[(i)]
	\item Suppose for some $g\in \FF/\mathcal{B}_{[0,\infty]}$ with $\mu[g]<\infty$, $f_n^-\leq g$ for all $n\in \mathbb{N}$. Then we have as follows.
	\begin{enumerate}[(a)]
		\item Lower semi-continuity (Fatou[{}'s lemma]): $\mu[(\liminf_{n\to\infty}f_n)^-]<\infty$ and 
\begin{equation}		
		\int \liminf_{n\to\infty}f_n\dd \mu\leq \liminf_{n\to\infty}\int f_n\dd\mu.
		\end{equation}
		\item\label{theorem:convergence:Levi} Monotone convergence (Levi): If  $f_n\leq f_{n+1}$ for all $n\in \mathbb{N}$, then $\mu[(\lim_{n\to\infty}f_n)^-]<\infty$ and 
		\begin{equation}
		\int \lim_{n\to\infty}f_n\dd \mu= \uparrow\!\!\text{-}\!\!\lim_{n\to\infty}\int f_n\dd\mu.
		\end{equation}
	\end{enumerate}
\item Dominated convergence (Lebesgue): If there is a $\mu$-integrable $g\in \FF/\mathcal{B}_{[0,\infty]}$ with $\vert f_n\vert\leq g$ for all $n\in \mathbb{N}$ and if $\lim_{n\to\infty}f_n$ exists (everywhere), then one has  $\mu[\vert \lim_{m\to\infty}f_m\vert]<\infty$ and $\lim_{n\to\infty}\int \vert f_n-\lim_{m\to\infty}f_m\vert \dd\mu=0$ and (hence)
\begin{equation}
\int \lim_{n\to\infty}f_n\dd \mu= \lim_{n\to\infty}\int f_n\dd\mu.
\end{equation}
	\end{enumerate}
	All the integrals appearing above are well-defined. $\blacktriangle$
\end{theorem}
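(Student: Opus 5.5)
The plan is to reduce everything to the case $g=0$ of monotone convergence, which has already been proved above (the proof of monotone convergence for nonnegative functions given alongside Theorem~\ref{theorem:int:basics}). We then obtain Fatou from it and dominated convergence from Fatou, using the basic properties of Theorem~\ref{theorem:int:basics} throughout.

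\emph{Step 1: monotone convergence with a lower bound $-g$.} Since $\mu[g]<\infty$, Theorem~\ref{theorem:int:basics}\eqref{properties:v} gives $g<\infty$ a.e.-$\mu$. By \eqref{properties:vi} we may therefore replace $g$ by $g\mathbbm{1}_{\{g<\infty\}}$ and all $f_n$ by $f_n\mathbbm{1}_{\{g<\infty\}}$ without altering any integral or any of the hypotheses; so assume $g$ is finite. Then $f_n\geq -g>-\infty$, and $h_n:=f_n+g\geq 0$ is $\uparrow$ with limit $\lim_n f_n+g$. The nonnegative case yields $\mu[h_n]\uparrow\mu[\lim_n f_n+g]$. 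Next, $(\lim_n f_n)^-\leq f_1^-\leq g$, so $\mu[(\lim_n f_n)^-]<\infty$ by monotonicity \eqref{properties:iii}. Additivity \eqref{lebesgue:additivity}, applicable since the negative parts have finite integrals, gives $\mu[h_n]=\mu[f_n]+\mu[g]$ and $\mu[\lim f_n+g]=\mu[\lim f_n]+\mu[g]$. Subtracting the finite number $\mu[g]$ finishes this step. Well-definedness follows because each $f_n^-\leq g$ has finite integral.

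\emph{Step 2: Fatou.} Put $k_n:=\inf_{m\geq n}f_m$. These are measurable, $\uparrow\liminf_n f_n$, and satisfy $k_n^-\leq\sup_{m\geq n}f_m^-\leq g$. Apply Step 1 to $(k_n)$; this gives $\mu[(\liminf f_n)^-]<\infty$ and $\mu[\liminf f_n]=\lim_n\mu[k_n]$. Monotonicity \eqref{properties:iii} gives $\mu[k_n]\leq\mu[f_m]$ for all $m\geq n$, hence $\mu[k_n]\leq\inf_{m\geq n}\mu[f_m]$, and letting $n\to\infty$ yields the inequality.

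\emph{Step 3: dominated convergence.} Let $f:=\lim_n f_n$. Then $|f|\leq g$, so $\mu[|f|]<\infty$ by monotonicity. As in Step 1, we may also assume $g<\infty$ everywhere, so all $f_n$ and $f$ are real-valued and there are no $\infty-\infty$ issues. Set $u_n:=2g-|f_n-f|\geq 0$. Fatou applied to $(u_n)$ with lower bound $0$ gives
\[
\mu[2g]\leq\liminf_n\mu[2g-|f_n-f|]=\mu[2g]-\limsup_n\mu[|f_n-f|],
\]
using $\liminf_n u_n=2g$ together with additivity and homogeneity (all integrals here are finite). Since $\mu[2g]<\infty$, we get $\limsup_n\mu[|f_n-f|]=0$. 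Finally, the triangle inequality \eqref{properties:iv} and linearity give $|\mu[f_n]-\mu[f]|\leq\mu[|f_n-f|]\to0$.

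\emph{Main obstacle.} The only delicate point is the bookkeeping with $\pm\infty$ values: making sure each use of additivity is licensed, which requires finite negative-part integrals, and that $f_n+g$ is not of the form $\infty-\infty$. The a.e.\ modification of $g$ to be finite via \eqref{properties:v} and \eqref{properties:vi} is what handles this, and it is used in Steps 1 and 3.
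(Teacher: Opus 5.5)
Your proof is correct and follows the paper's route. Like the paper, you obtain Levi with a lower bound $-g$ by adding $g$ and invoking the nonnegative case, Fatou by applying Levi to $\inf_{m\geq n}f_m$, and dominated convergence by applying Fatou to a shift of $-\vert f_n-f\vert$. Your explicit a.e.\ reduction to a finite $g$ fills in the $\infty-\infty$ bookkeeping that the paper leaves implicit.
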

\begin{example}
	$\int_{[0,\infty)}e^{-x}\leb(\dd x)=\lim_{n\to\infty}\int_{[0,n]}e^{-x}\leb(dx)=\lim_{n\to\infty}\int_0^ne^{-x}\dd x=\int_0^\infty e^{-x}\dd x=1$. 
\end{example}
\begin{remarks}
A special case of dominated convergence occurs when $\mu$ is finite and $g$ bounded; it is then called bounded convergence. Usually one uses monotone convergence and Fatou's lemma for nonnegative functions (in which case one can take $g=0$). Monotone convergence is perhaps the single most important property of Lebesgue integration. The way to remember which way the inequality appears in Fatou's lemma is to consider e.g. the sequence $(\mathbbm{1}_{(n,n+1]})_{n\in \mathbb{N}}$ and the Lebesgue measure $\leb$.   A ``grown-up'' version of (extension to) dominated convergence is due to Vitali. It is worth noting what happens to Fatou if one applies it to the negatives of the functions under consideration (under the relevant integrability assumptions, of course): the inferior limits change to the superior limits and the inequality is reversed (``reverse Fatou''). As a final point, ``embedded'' in the convergence theorems are sufficient conditions for the interchange of limiting operations on double real sequences: if $(a_{mn})_{(m,n)\in \mathbb{N}^2}$ is a double sequence in $\mathbb{R}$ such that $\lim_{m\to\infty}a_{mn}$ exists in $\mathbb{R}$ for each $n\in \mathbb{N}$, then, setting $a_{m0}:=0$ for $m\in \mathbb{N}$, we have the string of equalities $\lim_{m\to\infty}\lim_{n\to\infty}a_{mn}=\lim_{m\to\infty}\sum_{n\in \mathbb{N}}a_{mn}-a_{m(n-1)}=\lim_{m\to\infty}\int a_{mn}-a_{m(n-1)}c_\mathbb{N}(\dd n)=\int \lim_{m\to\infty } a_{mn}-a_{m(n-1)} c_\mathbb{N}(\dd n)=\int \lim_{m\to\infty } a_{mn}-\lim_{m\to\infty}a_{m(n-1)} c_\mathbb{N}(\dd n)=\lim_{n\to\infty}\lim_{m\to\infty}a_{mn}$ by monotone (resp. by dominated) convergence \emph{provided} $a_{mn}$ is $\uparrow $ in $n\in\mathbb{N}$ at fixed $m\in\mathbb{N}$ and $a_{mn}-a_{m(n-1)}$ is $\uparrow $ in $m\in\mathbb{N}$  at fixed $n\in\mathbb{N}$ (resp. \emph{provided} $\sum_{n\in \mathbb{N}}\sup_{m\in \mathbb{N}}\vert a_{mn}-a_{m(n-1)}\vert<\infty$).  
\end{remarks}
\begin{proof}[Proof that Levi implies Fatou and Lebesgue] To prove Fatou's lemma from Levi's monotone convergence result note that $(\inf_{m\in \mathbb{N}_{\geq n}}f_m)_{n\in \mathbb{N}}$ is a nondecreasing sequence in $\FF/\mathcal{B}_{[-\infty,\infty]}$. 
Lebesgue's conclusion then follows by applying Fatou to $(-\vert f_n-\lim_{m\to\infty}f_m\vert )_{n\in \mathbb{N}}$ (the fact that $\mu[\vert \lim_{m\to\infty}f_m\vert]<\infty$ is just by monotonicity of the integral, since $\vert \lim_{m\to\infty}f_m\vert\leq g$).
\end{proof}
\begin{remark}
	The proof of monotone convergence is not completely trivial. One shows it first  for nonnegative functions  and this requires a certain amount of  ``muscle power'' (the above slight extension is then trivial). 
\end{remark}
 
\begin{proof}[Proof of Levi]
Considering the sequence $(f_n+g)_{n\in\mathbb{N}}$, by additivity of the integral we reduce at once to the case when $g=0$, and we have already seen the proof of that on p.~\pageref{Levi-nonnegative}. 
\end{proof}
 
\begin{corollary}
Let  $(\Omega,\FF,\mu)$ be a measure space and $(f_n)_{n\in \mathbb{N}}$ be a sequence in $\FF/\mathcal{B}_{[0,\infty]}$. Then 
\begin{equation}
\int \sum_{n\in \mathbb{N}}f_n\dd\mu=\sum_{n\in \mathbb{N}}\int f_n\dd \mu,
\end{equation}
the integrals being well-defined.\qed
\end{corollary}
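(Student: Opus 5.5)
The plan is to reduce to monotone convergence (Theorem~\ref{theorem:convergence}\eqref{theorem:convergence:Levi} with $g=0$) applied to the partial sums. For $N\in\mathbb{N}$ put $S_N:=\sum_{n=1}^Nf_n$. Since each $f_n$ is in $\FF/\mathcal{B}_{[0,\infty]}$, each $S_N$ is nonnegative and measurable, because finite sums of measurable numerical maps are measurable. Nonnegativity of the $f_n$ gives $S_N\leq S_{N+1}$, and $S_N\uparrow \sum_{n\in\mathbb{N}}f_n$ pointwise in $[0,\infty]$. Since all summands are nonnegative, no $\infty-\infty$ ambiguity arises, and $\sum_{n\in\mathbb{N}}f_n\in\FF/\mathcal{B}_{[0,\infty]}$ by the proposition on suprema and series of measurable maps.

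First, I will show $\int S_N\dd\mu=\sum_{n=1}^N\int f_n\dd\mu$ by induction on $N$, using additivity (Theorem~\ref{theorem:int:basics}\eqref{lebesgue:additivity}). Its hypothesis $\mu[f^-]\lor\mu[g^-]<\infty$ holds trivially because the negative parts vanish and $\int 0\dd\mu=0$.

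Next, I will apply monotone convergence with $g=0$, which is admissible since $f_n^-=0\leq 0$ and $\mu[0]=0<\infty$. This yields
\[
\int\sum_{n\in\mathbb{N}}f_n\dd\mu=\lim_{N\to\infty}\int S_N\dd\mu=\lim_{N\to\infty}\sum_{n=1}^N\int f_n\dd\mu=\sum_{n\in\mathbb{N}}\int f_n\dd\mu,
\]
where the last series of nonnegative terms converges in $[0,\infty]$.

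Finally, well-definedness holds because every integrand involved is nonnegative, so its negative part has zero integral. There is no real obstacle here; the only point requiring care is checking the hypotheses of additivity and Levi in the extended-valued setting, which nonnegativity handles.
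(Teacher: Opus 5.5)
Your proof is correct. It is exactly the argument the paper intends; the corollary is stated with \qed as an immediate consequence of finite additivity and monotone convergence (with $g=0$), applied to the nondecreasing partial sums $S_N=\sum_{n=1}^N f_n$, just as you do.
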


\begin{corollary}\label{corollary:integral-against-sum}
	Let $(\mu_n)_{n\in \mathbb{N}}$ be a sequence of measures on a measurable space $(\Omega,\FF)$. Then $\sum_{n\in \mathbb{N}}\mu_n$ is a measure on $(\Omega,\FF)$. Furthermore, for all $f\in \FF/\mathcal{B}_{[-\infty,\infty]}$, $\int f\dd(\sum_{n\in \mathbb{N}}\mu_n)$ is well-defined iff  $(\sum_{n\in \mathbb{N}} \int f^+\dd\mu_n)\land \left(\sum_{n\in \mathbb{N}} \int f^-\dd\mu_n\right)<\infty$, and then
	\begin{equation}\label{eq:integral-against-sum}
	\left(\sum_{n\in \mathbb{N}}\mu_n\right)[f]=\sum_{n\in \mathbb{N}} \int f\dd\mu_n. 
	\end{equation}
\end{corollary}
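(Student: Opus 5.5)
First I check that $\nu:=\sum_{n\in\mathbb{N}}\mu_n$ (defined setwise, $\nu(A):=\sum_{n\in\mathbb{N}}\mu_n(A)$ for $A\in\FF$) is a measure. Clearly $\nu(\emptyset)=0$. If $(A_k)_{k\in\mathbb{N}}$ is a pairwise disjoint sequence in $\FF$, then
\[
\nu\Bigl(\cup_{k\in\mathbb{N}}A_k\Bigr)=\sum_{n\in\mathbb{N}}\sum_{k\in\mathbb{N}}\mu_n(A_k)=\sum_{k\in\mathbb{N}}\sum_{n\in\mathbb{N}}\mu_n(A_k)=\sum_{k\in\mathbb{N}}\nu(A_k).
\]
The interchange of the two sums of nonnegative terms is justified either elementarily or via the preceding corollary. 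For the latter, apply it on $(\mathbb{N},2^{\mathbb{N}},c_\mathbb{N})$ to the functions $n\mapsto\mu_n(A_k)$, $k\in\mathbb{N}$, and use the example identifying $\int\cdot\,\dd c_\mathbb{N}$ with $\sum$.

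Next I prove \eqref{eq:integral-against-sum} for $f\in\FF/\mathcal{B}_{[0,\infty]}$, by the monotone class theorem (Corollary~\ref{corollary:monotone-class}). Let $\mathcal{M}$ be the set of $f\in\FF/\mathcal{B}_{[0,\infty]}$ with $\nu[f]=\sum_{n}\mu_n[f]$.
\begin{itemize}
\item Indicators belong to $\mathcal{M}$ by Theorem~\ref{theorem:int:basics}\eqref{properties:indicator} and the definition of $\nu$.
\item $\mathcal{M}$ is a convex cone. This follows from additivity and homogeneity of each integral (Theorem~\ref{theorem:int:basics}\eqref{lebesgue:additivity},\eqref{properties:ii}), applicable since everything is nonnegative, together with linearity of series of nonnegative terms in $[0,\infty]$.
\item $\mathcal{M}$ is closed under nondecreasing limits. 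If $f_k\uparrow f$ with $f_k\in\mathcal{M}$, then monotone convergence (Theorem~\ref{theorem:convergence}\eqref{theorem:convergence:Levi} with $g=0$) gives $\nu[f]=\lim_k\nu[f_k]=\lim_k\sum_n\mu_n[f_k]$. I then need $\lim_k\sum_n\mu_n[f_k]=\sum_n\lim_k\mu_n[f_k]=\sum_n\mu_n[f]$. This is an interchange of a nondecreasing limit with a series of nonnegative terms. It follows from monotone convergence on $(\mathbb{N},2^\mathbb{N},c_\mathbb{N})$ applied to $n\mapsto\mu_n[f_k]$, which is nondecreasing in $k$.
\end{itemize}
Hence $\mathcal{M}=\FF/\mathcal{B}_{[0,\infty]}$.

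Finally, take a general $f$. Applying the nonnegative case to $f^\pm$ gives $\nu[f^\pm]=\sum_n\mu_n[f^\pm]$. So $\nu[f]$ is well-defined iff $\bigl(\sum_n\mu_n[f^+]\bigr)\land\bigl(\sum_n\mu_n[f^-]\bigr)<\infty$, which is the stated criterion.

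Suppose now, say, $\sum_n\mu_n[f^-]<\infty$. Then each $\mu_n[f^-]<\infty$, so each $\int f\dd\mu_n=\mu_n[f^+]-\mu_n[f^-]$ is well-defined and lies in $(-\infty,\infty]$. The series $\sum_n\int f\dd\mu_n$ then converges in $(-\infty,\infty]$ and equals $\sum_n\mu_n[f^+]-\sum_n\mu_n[f^-]$. Indeed, the partial sums split as a difference of nondecreasing partial sums, the second of which converges to a finite limit. This difference is $\nu[f^+]-\nu[f^-]=\nu[f]$. The case $\sum_n\mu_n[f^+]<\infty$ is symmetric, for instance by applying the first case to $-f$ with homogeneity.

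The only delicate point is this last bookkeeping step: making sure no $\infty-\infty$ arises termwise or in the limit. Finiteness of the one series handles it.
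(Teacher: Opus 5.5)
Your proof is correct and follows essentially the paper's route: write the sums as integrals against $c_\mathbb{N}$ to get countable additivity, run the monotone class argument using linearity and monotone convergence (for both $\nu$ and $c_\mathbb{N}$), then pass to $f^+$ and $f^-$. You also spell out the final bookkeeping that rules out $\infty-\infty$ termwise and in the limit, which the paper leaves implicit.
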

\begin{remark}
		Because one always has access to the zero measure, the preceding includes also the case of finite sums of measures, in particular the sum of two measures is a measure. 
		 
		We may then  note the allied fact that for a measure $\mu$ on $(\Omega,\FF)$ and an $\alpha\in [0,\infty]$, $\alpha\mu$ is again a measure on $(\Omega,\FF)$ (which is trivial for $\alpha=0$,  evident for $\alpha\in (0,\infty)$,  and almost immediate for $\alpha=\infty$); when $\alpha\in (0,\infty)$, then further for $f\in \FF/\BB_{[-\infty,\infty]}$, $(\alpha\mu)[f]=\alpha\cdot \mu[f]$, the left-hand side being well-defined iff the right-hand side is so.
		 
\end{remark}
\begin{proof}
We may write $(\sum_{n\in \mathbb{N}}\mu_n)(A)=\int \mu_n(A) c_\mathbb{N}(\dd n)$ for all $A\in \FF$. This gives the countable additivity of $\sum_{n\in \mathbb{N}}\mu_n$ (for instance). Continuing to see sums as integrals against the counting measure, by a monotone class argument (exploiting linearity and monotone convergence of the integral) \eqref{eq:integral-against-sum}, being on true on indicators of sets from $\FF$, prevails for all $f\in \FF/\mathcal{B}_{[0,\infty]}$. It extends to arbitrary $f\in \FF/\mathcal{B}_{[-\infty,\infty]}$ by considering $f^+$ and $f^-$, and taking the difference (provided $(\sum_{n\in \mathbb{N}}\mu_n)[f^+]\land (\sum_{n\in \mathbb{N}}\mu_n)[f^-]<\infty$). 
\end{proof}

\begin{definition}
	Let $(\Omega,\FF,\mu)$ be a measure space, $(\Omega',\FF')$ a measurable space and $f\in \FF/\FF'$. Then we define the push-forward (also, image) of $\mu$ by $f$ on $\FF'$, denoted $f_{\star_{\FF'}}\mu$ (also $\mu{\circ_{\FF'}} f^{-1}$, $\mu_{f_{\FF'}}$), as the map $f_{\star_{\FF'}}\mu:\FF'\to [0,\infty]$ given by $$(f_{\star_{\FF'}}\mu)(A'):=\mu(f^{-1}(A')),\quad A'\in \FF'.$$ If $\mu$ is a probability measure, then  $f_{\star_{\FF'}}\mu$ is also called the law or distribution of $f$ relative to $\FF'$ under $\mu$.
\end{definition}
\begin{remark}
	One tends to, and we shall almost always suppress $\FF'$ in the notation, assuming that it can be gathered from context, writing just $f_{\star}\mu=\mu{\circ} f^{-1}=\mu_{f}$. 
	  It is immediate from the definition that $g_\star(f_\star\mu)=(g\circ f)_\star \mu$ for a measure $\mu$ and suitably measurable maps $g$ and $f$.
	 
\end{remark}

\begin{corollary}[Image measure theorem]\label{proposition:image-measure-theorem}
	Let $(\Omega,\FF,\mu)$ be a measure space, $(\Omega',\FF')$ a measurable space and $f\in \FF/\FF'$. Then $f_\star \mu$ is a measure on $\FF'$, finite or probability accordingly as  $\mu$ is so. If further $g\in \FF'/\mathcal{B}_{[-\infty\infty]}$, then 
	\begin{equation}\label{eq:image}
	\int g\dd(f_\star\mu)=\int g\circ f\dd\mu,
	\end{equation}
	the integral on the left-hand side being well-defined iff 	the integral on the right-hand side is well-defined.
\end{corollary}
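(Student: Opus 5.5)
The plan is to first verify that $f_\star\mu$ is a measure, and then extend \eqref{eq:image} from indicators to all measurable numerical $g$ by a monotone class argument. The text immediately after Corollary~\ref{corollary:monotone-class} describes exactly this template.

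For the measure property, $(f_\star\mu)(\emptyset)=\mu(f^{-1}(\emptyset))=\mu(\emptyset)=0$. If $(A'_n)_{n\in\mathbb{N}}$ is a sequence of pairwise disjoint sets in $\FF'$, then $(f^{-1}(A'_n))_{n\in\mathbb{N}}$ is a sequence of pairwise disjoint sets in $\FF$, because $f\in\FF/\FF'$ and preimages preserve disjointness. Moreover $f^{-1}(\cup_n A'_n)=\cup_n f^{-1}(A'_n)$, so countable additivity of $\mu$ transfers directly. Since $(f_\star\mu)(\Omega')=\mu(f^{-1}(\Omega'))=\mu(\Omega)$, finiteness and being a probability are inherited.

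For \eqref{eq:image}, let $\mathcal{M}$ be the set of $g\in\FF'/\mathcal{B}_{[0,\infty]}$ for which \eqref{eq:image} holds; note that $g\circ f\in\FF/\mathcal{B}_{[0,\infty]}$ by Proposition~\ref{proposition:compositions}.
\begin{itemize}
\item \emph{Indicators.} For $A'\in\FF'$ we have $\mathbbm{1}_{A'}\circ f=\mathbbm{1}_{f^{-1}(A')}$. By Theorem~\ref{theorem:int:basics}\eqref{properties:indicator}, both sides of \eqref{eq:image} equal $\mu(f^{-1}(A'))$, so $\mathbbm{1}_{A'}\in\mathcal{M}$.
\item \emph{Convex cone.} $\mathcal{M}$ is closed under nonnegative linear combinations, using additivity and homogeneity (Theorem~\ref{theorem:int:basics}\eqref{lebesgue:additivity},\eqref{properties:ii}) on both sides together with $(ag+bh)\circ f=a(g\circ f)+b(h\circ f)$.
\item \emph{Nondecreasing limits.} If $g_n\uparrow g$ with each $g_n\in\mathcal{M}$, then $g_n\circ f\uparrow g\circ f$. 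Applying monotone convergence (Theorem~\ref{theorem:convergence}\eqref{theorem:convergence:Levi}) on each side gives $g\in\mathcal{M}$.
\end{itemize}
Corollary~\ref{corollary:monotone-class} then yields $\mathcal{M}=\FF'/\mathcal{B}_{[0,\infty]}$.

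For general $g\in\FF'/\mathcal{B}_{[-\infty,\infty]}$, observe that $(g\circ f)^\pm=g^\pm\circ f$. The nonnegative case applied to $g^+$ and $g^-$ gives $\int g^\pm\dd(f_\star\mu)=\int (g\circ f)^\pm\dd\mu$. Hence the well-definedness conditions ($\min$ of the two integrals finite) coincide on both sides. When they hold, subtracting gives \eqref{eq:image}; when they fail, both sides equal $0$ by convention.

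There is no real obstacle in this argument. The one point needing care is the convention, in the nonnegative step, that homogeneity is applied with $a,b\in[0,\infty)$, where it is unproblematic; everything else is routine bookkeeping.
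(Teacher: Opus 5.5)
Your proposal is correct and follows the paper's proof essentially step for step. The measure property comes from preimages commuting with (disjoint) unions, and \eqref{eq:image} holds for indicators by the definition of $f_\star\mu$. It then extends to $\FF'/\mathcal{B}_{[0,\infty]}$ by the monotone class theorem (linearity plus monotone convergence), and to general $g$ via $g^+$ and $g^-$, where the identity $(g\circ f)^\pm=g^\pm\circ f$ also gives the equivalence of well-definedness on the two sides.
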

\begin{proof}
	The fact that $f_\star\mu$ is  a measure (probability or finite according as to whether $\mu$ is so) follows from the fact that preimages commute with unions (besides the even more trivial observations). Formula~\eqref{eq:image} holds for $g=\mathbbm{1}_{F'}$, $F'\in \FF'$, by the very definition of $\mu\circ f^{-1}$. It extends to $\FF'/\mathcal{B}_{[0,\infty]}$ by a monotone class argument (exploiting linearity and monotone convergence of the integral); finally to all of $\FF'/\mathcal{B}_{[-\infty\infty]}$ by considering $g^+$ and $g^-$ and then taking differences.
\end{proof}
 
\begin{remark}
So measures can always be ``pushed forward'' through  a measurable map. Can they be ``pulled back''? This is a little trickier. Let $f:A\to B$, $\BB$ a $\sigma$-field on $B$ and $\mu$ a measure thereon. Then it easy to see that [there exists a measure $\nu$ on $\sigma^\BB(f)$ s.t. $\mu=f_\star\nu$] iff [$\mu(B_1)=\mu(B_2)$ whenever $f^{-1}(B_1)=f^{-1}(B_2)$ and $\{B_1,B_2\}\subset \BB$] iff [$\mu$ vanishes on $\{B'\in \BB:f^{-1}(B')=\emptyset\}$] iff  [$\mu_\star(B\backslash \text{range}f):=\sup \{\mu(B'):B'\in \BB,B'\cap \text{range}(f)=\emptyset\}=0$, i.e. the $\mu$-inner measure of the complement of the range of $f$, is equal to zero]. When it does, let us denote it by $f^\star \mu$ and call it the pull-back of $\mu$ along $f$, so that $f_\star(f^\star \mu)=\mu$. If $\nu$  is a measure on $\sigma^\BB(f)$, then the pull-back of the push-forward of $\nu$ exists and $f^\star(f_\star\nu)=\nu$. Though quite natural, this notion of pull-back is apparently not particularly useful.
\end{remark}
 
	\begin{corollary}[Differentiation under the integral sign]
Let $(X,\Sigma,\mu)$ be a measure space, $O$ an open subset of the reals and let $F:X\times O\to 
\mathbb{R}$ be such that:
\begin{itemize}
\item for all $t\in O$, $F(\cdot, t)\in \LL^1(\mu)$ and 
\item for all $x\in X$, $F(x,\cdot)$ is differentiable.
\end{itemize}
Assume that there exists a $g\in \Sigma/\mathcal{B}_{[0,\infty]}$ with $\int g\dd\mu<\infty$, such that 
$\vert \frac{\partial F}{\partial t}(x,t)\vert \leq g(x)$ for all $(x,t)\in X\times O$. Then: 
\begin{enumerate}[(a)]
\item for each $t\in O$, $(X\ni x\mapsto \frac{\partial F}{\partial t}(x,t))$ is in $\LL^1(\mu)$,
\item $(O\ni t\mapsto \int F(x,t)\mu(\dd x))$ is differentiable and
\item one has that 
\begin{equation}
\frac{\dd}{\dd t}\int F(x,t)\mu(\dd x)=\int \frac{\partial F}{\partial t}(x,t)\mu(\dd x)
\end{equation}
for all $t\in O$. 
\end{enumerate}
\end{corollary}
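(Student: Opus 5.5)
The plan is to fix $t\in O$ and an arbitrary sequence $(h_n)_{n\in\mathbb{N}}$ of nonzero reals with $h_n\to 0$ and $t+h_n\in O$ for all $n$. Such sequences exist since $O$ is open, and discarding finitely many terms we may assume $[t-\vert h_n\vert,t+\vert h_n\vert]\subset O$ for all $n$. We then study the difference quotients
$$D_n(x):=\frac{F(x,t+h_n)-F(x,t)}{h_n},\quad x\in X.$$
Each $D_n$ is a linear combination of the members $F(\cdot,t+h_n)$ and $F(\cdot,t)$ of $\LL^1(\mu)$, hence lies in $\Sigma/\mathcal{B}_\mathbb{R}$ and is $\mu$-integrable. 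By linearity of the integral (Theorem~\ref{theorem:int:basics}, additivity and homogeneity, with everything finite),
$$\frac{\int F(x,t+h_n)\mu(\dd x)-\int F(x,t)\mu(\dd x)}{h_n}=\int D_n\dd\mu .$$
By differentiability of $F(x,\cdot)$, $D_n(x)\to \frac{\partial F}{\partial t}(x,t)$ for every $x\in X$.

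This gives (a) at once. The map $x\mapsto\frac{\partial F}{\partial t}(x,t)$ is the everywhere pointwise limit of the measurable $D_n$, so it is measurable by the proposition on $\limsup/\liminf$ of measurable maps (it equals $\limsup_n D_n$). It is real-valued, and it is bounded in absolute value by $g$, so it is integrable by monotonicity.

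The key step, and the only real obstacle, is a uniform integrable domination of the $D_n$. For this I would use the mean value theorem: for fixed $x$, the function $F(x,\cdot)$ is differentiable on the interval between $t$ and $t+h_n$, which lies in $O$. Hence $D_n(x)=\frac{\partial F}{\partial t}(x,\xi_{n,x})$ for some $\xi_{n,x}$ strictly between $t$ and $t+h_n$, and therefore $\vert D_n(x)\vert\le g(x)$ for all $x$ and $n$. The point needing care is that the mean value theorem only needs differentiability (not continuity of the derivative) on the open interval and continuity on the closed one, both of which follow from differentiability of $F(x,\cdot)$ on $O$. Also, the bound is pointwise in $x$ with no measurability of $\xi_{n,x}$ required, since we only use the inequality $\vert D_n\vert\le g$.

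With this bound in hand, dominated convergence (Theorem~\ref{theorem:convergence}) applied to $(D_n)_{n\in\mathbb{N}}$ with the integrable dominating function $g$ yields
$$\int D_n\dd\mu\to\int \frac{\partial F}{\partial t}(x,t)\mu(\dd x).$$
Since the sequence $(h_n)$ was arbitrary, the sequential characterization of limits gives that the difference quotient of $t\mapsto\int F(x,t)\mu(\dd x)$ at $t$ converges to this integral. That proves (b) and (c) simultaneously.
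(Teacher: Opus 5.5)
Your proof is correct and follows the paper's own argument essentially step for step. Both obtain measurability of $\frac{\partial F}{\partial t}(\cdot,t)$ as a pointwise limit of difference quotients, dominate those quotients by $g$ via the mean value theorem on $\mathrm{conv}(\{t,t+h_n\})\subset O$, and apply dominated convergence along an arbitrary sequence $h_n\to 0$. The only cosmetic difference is that you get integrability in (a) directly from the bound $\vert\frac{\partial F}{\partial t}\vert\le g$ by monotonicity, whereas the paper reads it off from dominated convergence.
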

\begin{remark}
Informally: one may differentiate under the integral sign, if the resulting derivative is bounded in absolute value, locally uniformly in the differentiating variable, by an integrable function of the integrating variable. 
\end{remark}
\begin{proof}
Fix $t\in O$. Measurability of the partial derivative at $t$ follows from the fact that, for some sequence of small enough $0\ne h_n\to 0$ (small enough, in the sense that for all $n\in \mathbb{N}$, $t+h_n\in O$), and then for all $x\in X$, $\frac{\partial F}{\partial 
t}(x,t)=\lim_{n\to\infty}\frac{F(x,t+h_n)-F(x,t)}{h_n}$. Next, for, now any, 
sequence of small enough $0\ne h_n\to 0$ (small enough, in the sense that for all $n\in \mathbb{N}$, $\text{conv}(\{t,t+h_n\})\subset O$):
$$\frac{1}{h_n}\left(\int F(x,t+h_n)\mu(\dd x)-\int F(x,t)\mu(\dd x)\right)= \int\frac{F(x,t+h_n)-F(x,t)}{h_n}\mu(\dd x)\to \int \frac{\partial F}{\partial t}(x,t)\mu(\dd x)\text{ as }n\to\infty$$
by dominated convergence, since by the mean value theorem, for given $n\in\mathbb{N}$ and $x\in X$,  for some $t_{x,n}$ between $t$ and $t+h_n$, $\frac{F(x,t
+h_n)-F(x,t)}{h_n}=\frac{\partial F}{\partial t}(x,t_{x,n})$ (automatically also the integrability of the partial derivative follows by dominated convergence).
\end{proof}
\begin{remark}
In some cases monotone convergence can be used in lieu of dominated convergence in the preceding proof, and then the integrability conditions can be relaxed.
\end{remark}


\section{Results concerning the interchange of the order of integration}
	\emph{product of measurable spaces and product of measures; Tonelli and Fubini}
	\begin{definition}
	Let $(\Omega,\FF)$ and $(\Omega',\FF')$ be measurable spaces. We set $\FF\otimes \FF':=\sigma_{\Omega\times\Omega'}(\{A\times A':(A,A')\in \FF\times \FF'\})$ for the  product of $\FF$ and $\FF'$. We put $\mathcal{B}_{\mathbb{R}^2}:=\mathcal{B}_\mathbb{R}\otimes \mathcal{B}_\mathbb{R}$ for the Borel $\sigma$-field on $\mathbb{R}^2$ and then $\mathcal{B}_A:=\mathcal{B}_{\mathbb{R}^2}\vert_A$ for $A\subset \mathbb{R}^2$.
	\end{definition}
\begin{proposition}
	If $A\subset \mathbb{R}^2$ and $f:A\to [-\infty,\infty]$ is continuous, then $f\in \mathcal{B}_A/\mathcal{B}_{[-\infty,\infty]}$. 
\end{proposition}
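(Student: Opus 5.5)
Since $\mathcal{B}_{[-\infty,\infty]}$ is generated by $\{[-\infty,a]:a\in\mathbb{R}\}$, Proposition~\ref{proposition:maps}\eqref{maps:iv} reduces the claim to showing $f^{-1}([-\infty,a])\in\mathcal{B}_A$ for all real $a$. By continuity, $f^{-1}([-\infty,a])$ is relatively closed in $A$, so its complement $f^{-1}((a,\infty])$ is relatively open in $A$, i.e.\ of the form $U\cap A$ with $U\subset\mathbb{R}^2$ open. Since $\mathcal{B}_A=\mathcal{B}_{\mathbb{R}^2}\vert_A$ and traces of $\sigma$-fields are $\sigma$-fields (Corollary~\ref{proposition:restrictions-of-fields}), closed under complements in $A$, it suffices to show that every open $U\subset\mathbb{R}^2$ belongs to $\mathcal{B}_{\mathbb{R}^2}=\mathcal{B}_\mathbb{R}\otimes\mathcal{B}_\mathbb{R}$.

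For this key step I write $U$ as a countable union of open rectangles with rational endpoints. Given $(x,y)\in U$, pick $\epsilon>0$ with the open ball (in the Euclidean, or equivalently max, metric) of radius $\epsilon$ around $(x,y)$ inside $U$; then choose rationals $p<x<q$, $r<y<s$ with $q-p,s-r<\epsilon/2$, say, so that $(p,q)\times(r,s)\subset U$. Hence
\begin{equation*}
U=\bigcup\{(p,q)\times(r,s):\ p,q,r,s\in\mathbb{Q},\ (p,q)\times(r,s)\subset U\},
\end{equation*}
a countable union. Each $(p,q)$ and $(r,s)$ is an interval, hence in $\mathcal{B}_\mathbb{R}$ (intervals are Borel by the example following the definition of $\mathcal{B}_{[-\infty,\infty]}$, and traces onto $\mathbb{R}$), so each rectangle is a generator of $\mathcal{B}_\mathbb{R}\otimes\mathcal{B}_\mathbb{R}$, and closure for $\sigma\cup$ yields $U\in\mathcal{B}_{\mathbb{R}^2}$.

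Putting it together: $f^{-1}([-\infty,a])=A\backslash (U\cap A)$ with $U\cap A\in\mathcal{B}_A$, so it lies in $\mathcal{B}_A$, and $f\in\mathcal{B}_A/\mathcal{B}_{[-\infty,\infty]}$. The only real content is the rectangle decomposition (second countability of $\mathbb{R}^2$); the rest is bookkeeping with traces, where one should take care that ``continuous on $A$'' refers to the relative topology on $A$ inherited from $\mathbb{R}^2$ and the topology on $[-\infty,\infty]$ from Definition~\ref{definition:conventions}, in which $(a,\infty]$ is open.
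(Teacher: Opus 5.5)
Your proof is correct and is essentially the paper's argument: the paper notes that, by second countability of $\mathbb{R}^2$, $\mathcal{B}_{\mathbb{R}^2}$ coincides with the $\sigma$-field generated by the open sets, whence continuous maps are Borel measurable. You prove exactly the inclusion that is needed (every open $U\subset\mathbb{R}^2$ is a countable union of rational rectangles, hence lies in $\mathcal{B}_\mathbb{R}\otimes\mathcal{B}_\mathbb{R}$), and you carry out explicitly the reduction to the generators $[-\infty,a]$ and the passage to the trace on $A$, both of which the paper leaves implicit.
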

\begin{remarks}
	By induction, or directly (but equivalently) in terms of ``measurable rectangles'', one can define, in the obvious way, the product of  a finite sequence of $\sigma$-fields (and $\mathcal{B}_{\mathbb{R}^n}$ for $n\in \mathbb{N}_{\geq 3}$ in particular, for which there is then an analogue of the previous proposition). We leave this ultimately trivial extension aside. Again for those that know topology: one can show that, for $A\subset \mathbb{R}^n$, $\mathcal{B}_{\mathbb{R}^n}\vert_A$ is also generated by all the open (for the relative standard topology) subsets of $A$. This is then also where the preceding proposition comes from (plainly continuous functions are measurable for the Borel $\sigma$-fields).
\end{remarks}
 
\begin{proof}
One shows that $\mathcal{B}_{\mathbb{R}^2}$ coincides with the Borel $\sigma$-field on $\mathbb{R}^2$, i.e. the $\sigma$-field generated by the standard topology on $\mathbb{R}^2$. It is a consequence of the fact that the latter is the product standard topology of the real line, which is second countable. 
\end{proof}
 
\begin{proposition}
	Let $(\Omega,\FF)$ and $(\Omega',\FF')$ be measurable spaces. Then $\FF\otimes \FF'$ is the smallest $\sigma$-field $\GG$ on $\Omega\times\Omega'$ such that $(\Omega\times\Omega'\ni (\omega,\omega')\mapsto \omega)\in \GG/\FF$ and $(\Omega\times\Omega'\ni (\omega,\omega')\mapsto \omega')\in \GG/\FF'$. Furthermore, if $f\in (\FF\otimes\FF')/\mathcal{B}_{[-\infty,\infty]}$, then $f(x,\cdot)\in \FF'/\mathcal{B}_{[-\infty,\infty]}$ for all $x\in \Omega$ and $f(\cdot,x')\in \FF/\mathcal{B}_{[-\infty,\infty]}$ for all $x'\in \Omega'$. Finally, let $(G,\GG)$ be a measurable space and $f:G\to\Omega$, $f':G\to\Omega'$; then  $(f,f')\in \GG/(\FF\otimes \FF')$ iff $f\in \GG/\FF$ and $f'\in \GG/\FF'$. 
	\end{proposition}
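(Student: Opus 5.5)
Write $\pi:=(\Omega\times\Omega'\ni(\omega,\omega')\mapsto\omega)$ and $\pi':=(\Omega\times\Omega'\ni(\omega,\omega')\mapsto\omega')$ for the two canonical projections. The plan is to prove the three assertions in turn, each time reducing to Proposition~\ref{proposition:maps}.

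\textbf{First assertion.} I would show that $\FF\otimes\FF'=\sigma^{\FF}(\pi)\lor\sigma^{\FF'}(\pi')$.

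For $\supset$: $\pi^{-1}(A)=A\times\Omega'$ and $\pi'^{-1}(A')=\Omega\times A'$ are measurable rectangles. Hence $\sigma^{\FF}(\pi)\cup\sigma^{\FF'}(\pi')\subset\FF\otimes\FF'$, and taking generated $\sigma$-fields gives the inclusion.

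For $\subset$: each generator satisfies $A\times A'=(A\times\Omega')\cap(\Omega\times A')$, which lies in the join. Since the join is a $\sigma$-field, it contains $\FF\otimes\FF'$.

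Now let $\GG$ be any $\sigma$-field on $\Omega\times\Omega'$. By Proposition~\ref{proposition:maps}\eqref{maps:iii}, $\pi\in\GG/\FF$ and $\pi'\in\GG/\FF'$ iff $\sigma^{\FF}(\pi)\cup\sigma^{\FF'}(\pi')\subset\GG$. This in turn holds iff the join is contained in $\GG$, i.e.\ iff $\FF\otimes\FF'\subset\GG$. That is exactly the minimality claim.

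\textbf{Third assertion.} Since $\pi\circ(f,f')=f$ and $\pi'\circ(f,f')=f'$, necessity follows from Proposition~\ref{proposition:compositions} together with the measurability of $\pi,\pi'$ just established. For sufficiency I would use Proposition~\ref{proposition:maps}\eqref{maps:iv}: it suffices to check preimages of the generating rectangles. We have $(f,f')^{-1}(A\times A')=f^{-1}(A)\cap f'^{-1}(A')\in\GG$, which finishes this part.

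\textbf{Second assertion (sections).} Fix $x\in\Omega$ and define the section map $\iota_x:\Omega'\to\Omega\times\Omega'$ by $\iota_x(x'):=(x,x')$. Its components are the constant map $x$ and $\mathrm{id}_{\Omega'}$, both measurable (into $\FF$, resp.\ $\FF'$). By the third assertion, applied with $(G,\GG)=(\Omega',\FF')$, we get $\iota_x\in\FF'/(\FF\otimes\FF')$. Then $f(x,\cdot)=f\circ\iota_x$ is measurable by Proposition~\ref{proposition:compositions}. The other section is handled symmetrically.

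The only mildly delicate point is the identification of $\FF\otimes\FF'$ with the join of the initial structures. Once that is in place, every step is a direct appeal to Proposition~\ref{proposition:maps}. Ordering the proof as first, third, then second assertion avoids any monotone class or $\pi$-$\lambda$ argument.
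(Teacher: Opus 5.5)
Your proof is correct. For the first assertion your identification $\FF\otimes\FF'=\sigma^{\FF}(\pi)\lor\sigma^{\FF'}(\pi')$ just spells out what the paper calls ``immediate from the definitions''. The third assertion is argued exactly as in the paper: generators for sufficiency, composition with the projections for necessity.

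The genuine difference is in the second assertion.
\begin{itemize}
\item The paper first shows that $\mathcal{Q}:=\{M\in\FF\otimes\FF':\mathbbm{1}_M(x,\cdot)\in\FF'/\mathcal{B}_{[-\infty,\infty]}\}$ is a $\sigma$-field containing the rectangles, hence equals $\FF\otimes\FF'$. It then lifts from indicators to nonnegative functions by the monotone class theorem, and to general $f$ by taking differences.
\item You instead prove the third assertion first and apply it to the section map $\iota_x$. This gives $\iota_x\in\FF'/(\FF\otimes\FF')$ at once, and you conclude by composition.
\end{itemize}

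Since $\mathbbm{1}_M(x,\cdot)=\mathbbm{1}_{\iota_x^{-1}(M)}$, the paper's set-level step is precisely the measurability of $\iota_x$. So its subsequent monotone-class lifting is, strictly speaking, redundant.

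Your ordering removes that step and also gives a stronger statement. For an arbitrary measurable space $(E,\EE)$, you get $f(x,\cdot)\in\FF'/\EE$ whenever $f\in(\FF\otimes\FF')/\EE$, not only for numerical $f$.

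What the paper's route buys is pedagogical. As the remark following its proof says, it serves as the template of the ``$\pi$-$\lambda$--monotone-class argument'' that is invoked repeatedly afterwards.
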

	 
\begin{proof}
The first claim is immediate from the definitions. 

As concerns the second claim, let us prove only the first part; the second then follows ``by analogy''. But it is clear for $f$ of the form $\mathbbm{1}_{A\times A'}$, where $(A,A')\in \FF\times \FF'$. Then the class $\mathcal{Q}:=\{M\in \FF\otimes \FF':\mathbbm{1}_M(x,\cdot)\in \FF'/\mathcal{B}_{[-\infty,\infty]}\}$ is a $\sigma$-algebra containing the system $\{A\times A':(A,A')\in \FF\times \FF'\}$ that generates $\FF\otimes \FF'$. It follows that $\mathcal{Q}=\FF\otimes \FF'$. The extension to $(\FF\otimes \FF')/\mathcal{B}_{[0,\infty]}$ follows by a monotone class argument: the class $\mathcal{M}:=\{f\in \FF\otimes\FF'/\mathcal{B}_{[0,\infty]}:f(x,\cdot)\in  \FF'/\mathcal{B}_{[-\infty,\infty]}\}$ is a convex cone closed under nondecreasing limits (because nonnegative linear combinations and nondecreasing limits of Borel measurable maps are Borel measurable) and containing the indicators of $\FF\otimes \FF'$ (we have just proven it). In full generality it follows because differences of Borel measurable maps are Borel measurable.

For the third claim, in the sufficiency direction, one checks the measurability property on the set of generators defining $\FF\otimes \FF'$. In the necessity direction one uses the measurability of the projection maps and the fact that compositions of measurable maps are measurable.
\end{proof}

\begin{remark}
The preceding proof of the second claim is absolutely typical of measure theory. With some practice, one usually just says that the claim follows by a ``$\pi$-$\lambda$--monotone-class argument'', performing the details in the head. We have seen less involved versions of the argument already in the proofs of Corollaries~\ref{corollary:integral-against-sum} and~\ref{proposition:image-measure-theorem}. 
\end{remark}

\begin{theorem}\label{thm:tonelli-fubini}
		Let $(\Omega,\FF,\mu)$ and $(\Omega',\FF',\mu')$ be $\sigma$-finite measure spaces. Then we have as follows.
		\begin{enumerate}[(i)]
			\item\label{product:i} There exists a unique measure $\nu$ on $\FF\otimes \FF'$, denoted $\mu\times \mu'$,\footnote{The $\times$ in $\mu\times\mu'$ is not the cartesian product; lest we introduce nonstandard notation,  we cannot  avoid denoting different objects with the same symbol (as reprehensible and dangerous as this practice may be).} such that $\nu(A\times A')=\mu(A)\mu'(A')$ for all $(A,A')\in \FF\times \FF'$.
			\item\label{product:ii} Let $f\in (\FF\otimes \FF')/\mathcal{B}_{[-\infty,\infty]}$ and suppose
			
						\begin{enumerate}
				\item\label{product:ii:a} either $f\geq 0$ (Tonelli) or else
				\item\label{product:ii:b} $(\mu\times\mu')[\vert f\vert]<\infty$ (Fubini) or else
				\item\label{product:ii:c}  $\int\int f^-(x,x')\mu(\dd x)\mu'(\dd x')\land \int\int f^-(x,x')\mu'(\dd x')\mu(\dd x)<\infty$ (Tonelli-Fubini).
			\end{enumerate}
			 Then 
			$$\left(\Omega'\ni x'\mapsto \int f(x,x')\mu(\dd x)\right)\in \FF'/\mathcal{B}_{[- \infty,\infty]}\text{ and } \left(\Omega\ni x\mapsto \int f(x,x')\mu'(\dd x')\right)\in \FF/\mathcal{B}_{[-\infty,\infty]},$$
			$$\int f^-(x,x')\mu'(\dd x')<\infty\text{ for $\mu$-a.e. $x\in \Omega$ and }\int f^-(x,x')\mu(\dd x)<\infty\text{ for $\mu'$-a.e. $x'\in \Omega'$},$$ and 
		\begin{equation}\label{interchange}
			\int f\dd(\mu\times \mu')=\int\int f(x,x')\mu(\dd x)\mu'(\dd x')=\int\int 
			f(x,x')\mu'(\dd x')\mu(\dd x).\\
			\end{equation}
			\end{enumerate}
			Furthermore, all the outer integrals appearing in \eqref{interchange} are well-defined. 
\end{theorem}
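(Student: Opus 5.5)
The plan is to reduce first to finite measures by localization, settle sections of sets there with a Dynkin argument, define $\mu\times\mu'$ through iterated integrals of indicators, and then lift to functions by monotone class.

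\textbf{Finite case, sets.} Using $\sigma$-finiteness, choose nondecreasing $(E_n)_{n\in\mathbb{N}}$ in $\FF$ and $(E'_n)_{n\in\mathbb{N}}$ in $\FF'$ of finite measure with unions $\Omega$ and $\Omega'$. Assume first that $\mu$ and $\mu'$ are finite. Let $\DD$ be the class of $M\in\FF\otimes\FF'$ such that $x\mapsto\mu'(M_x)$ is $\FF$-measurable, $x'\mapsto\mu(M^{x'})$ is $\FF'$-measurable, and $\int\mu'(M_x)\mu(\dd x)=\int\mu(M^{x'})\mu'(\dd x')$. Here $M_x:=\{x':(x,x')\in M\}$ and $M^{x'}:=\{x:(x,x')\in M\}$, and these sections are measurable by the preceding proposition. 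The measurable rectangles form a $\pi$-system generating $\FF\otimes\FF'$ and lie in $\DD$, because both sides equal $\mu(A)\mu'(A')$. The class $\DD$ is a $\lambda$-system:
\begin{itemize}
\item comparable differences are handled by subtracting finite quantities, which is exactly where finiteness is used;
\item nondecreasing unions are handled by continuity from below together with monotone convergence.
\end{itemize}
Dynkin's lemma (Corollary~\ref{corollary:dynkin}) then gives $\DD=\FF\otimes\FF'$.

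\textbf{General $\sigma$-finite case and part \eqref{product:i}.} Apply the finite case to the restrictions to $E_n\times E'_n$, and pass to the limit in $n$ by continuity from below and monotone convergence. This shows that $\nu(M):=\int\mu'(M_x)\mu(\dd x)=\int\mu(M^{x'})\mu'(\dd x')$ is well defined for every $M\in\FF\otimes\FF'$, with the section maps measurable. Countable additivity of $\nu$ follows from the corollary on integrating a series of nonnegative functions, applied twice. Since $\nu(A\times A')=\mu(A)\mu'(A')$, this proves existence in \eqref{product:i}. For uniqueness, apply Proposition~\ref{proposition:equality of measures} to the $\pi$-system of rectangles with the localizing sequence $(E_n\times E'_n)_{n\in\mathbb{N}}$.

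\textbf{Part \eqref{product:ii}.} For Tonelli \eqref{product:ii:a}, let $\mathcal{M}$ be the class of $f\in(\FF\otimes\FF')/\mathcal{B}_{[0,\infty]}$ for which both inner integrals are measurable and \eqref{interchange} holds. The set case shows that $\mathcal{M}$ contains all indicators. It is a convex cone by linearity of the integral and the measurability of sums. It is closed under nondecreasing limits by monotone convergence, applied first to the inner integrals and then to the outer ones. The monotone class theorem (Corollary~\ref{corollary:monotone-class}) therefore gives Tonelli.

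For \eqref{product:ii:c}, and hence for \eqref{product:ii:b} as a special case, apply Tonelli to $f^+$ and $f^-$. By Tonelli the iterated integrals of $f^-$ in either order both equal $(\mu\times\mu')[f^-]$, so the hypothesis of \eqref{product:ii:c} is the same as $(\mu\times\mu')[f^-]<\infty$. Theorem~\ref{theorem:int:basics}\eqref{properties:v} then gives $\int f^-(x,x')\mu'(\dd x')<\infty$ for $\mu$-a.e. $x$, and symmetrically in the other variable. On that full-measure set, $\int f(x,\cdot)\dd\mu'=\int f^+(x,\cdot)\dd\mu'-\int f^-(x,\cdot)\dd\mu'$, and this function is measurable as a difference of measurable maps. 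Off the set, the convention $\infty-\infty=0$ still leaves a measurable map, since the exceptional set is itself measurable. Integrating, with additivity from Theorem~\ref{theorem:int:basics}\eqref{lebesgue:additivity} and the invisibility of null sets, gives $\int f^+\dd(\mu\times\mu')-\int f^-\dd(\mu\times\mu')$ for both iterated integrals. All outer integrals are well defined because the negative parts are integrable.

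\textbf{Main obstacle.} I expect the main obstacle to be the measurability of $x\mapsto\mu'(M_x)$ for non-rectangular $M$. This needs finiteness for the difference step of the $\lambda$-system, and then a careful localization to pass to $\sigma$-finite measures. The bookkeeping in \eqref{product:ii:c}, where differences are taken only off null sets, is the second delicate point.
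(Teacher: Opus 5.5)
Your proposal is correct and follows essentially the same route as the paper: a localization--$\pi$-$\lambda$--monotone-class argument for the measurability of the section integrals, $\mu\times\mu'$ defined as an iterated integral of indicators, uniqueness from Proposition~\ref{proposition:equality of measures} with a localizing sequence of rectangles, Tonelli by the monotone class theorem, and \eqref{product:ii:b}--\eqref{product:ii:c} by applying Tonelli to $f^+$ and $f^-$ and subtracting. The only difference is cosmetic: you build the equality of the two iterated integrals directly into your Dynkin class, whereas the paper defines the two measures $\nu$ and $\nu'$ (one for each order of integration) and gets $\nu=\nu'$ from the uniqueness in \eqref{product:i}, which is the same Dynkin argument packaged inside Proposition~\ref{proposition:equality of measures}.
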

\begin{definition}\label{definition:product-measure}
	The measure $\mu\times \mu'$ from Theorem~\ref{thm:tonelli-fubini}\eqref{product:i} is called the product of $\mu$ and $\mu'$. $\leb^2:=\leb\times\leb$ is Lebesgue measure on $(\mathbb{R}^2,\mathcal{B}_{\mathbb{R}^2})$. 
\end{definition}
 \begin{remarks}
 	One extends by induction at once to ``finite products'' (but in general not beyond, at least not in any obvious way; though, there is a very natural extension to arbitrary products for probability measures) and given a $\sigma$-finite measure $\mu$ one writes $\mu^n:=\underbrace{\mu\times\cdots\times \mu}_{\text{$n$-times}}$ for the $n$-fold product of $\mu$ with itself, $n\in \mathbb{N}_0$ (for $n=0$ it means $\delta_\emptyset$ on the measurable space $(\{\emptyset\},\{\emptyset,\{\emptyset\}\})$). The $\sigma$-finiteness condition is essential (otherwise there is no uniqueness of a product measure, and the equality of the iterated integrals may fail). Just like $\leb$ in dimension $1$, $\leb^2$ also extends to a finitely additive function on $2^{\mathbb{R}\times \mathbb{R}}$ that is invariant under isometries. The analogous claim is no longer true in dimension $3$ (cf. the Banach-Tarski paradox).

Applying Fubini's result to both $f$ and $-f$ we see that under assumption \eqref{product:ii:b} of Theorem~\ref{thm:tonelli-fubini} one has $f(x,\cdot)\in \LL^1(\mu')$ for $\mu$-a.e. $x$ and $f(\cdot,x')\in \LL^1(\mu)$ for $\mu$-a.e. $x'$.
  
 \end{remarks}
  
\begin{proof}
As concerns uniqueness of $\nu$ in \eqref{product:i} one need only apply Proposition~\ref{proposition:equality of measures} with the $\pi$-system $\Pi:=\{A\times A':(A,A')\in \FF\times \FF'\}$, the existence of a ``localizing'' sequence following from the $\sigma$-finiteness of $\mu$ and $\nu$: there is a sequence $(F_n)_{n\in \mathbb{N}}$ in $\FF$ of pairwise disjoint sets covering $\Omega$ and such that $\mu(F_n)<\infty$ for all $n\in \mathbb{N}$, and there is a sequence $(F_n')_{n\in \mathbb{N}}$ in $\FF'$ of pairwise disjoint sets covering $\Omega'$ and such that $\mu'(F_n')<\infty$ for all $n\in \mathbb{N}$; therefore the countable system $\{F_n\times F_{n'}': (n,n')\in \mathbb{N}^2\}$ does the trick. 

The measurabilities asserted in \eqref{product:ii} follow by a localization--$\pi$-$\lambda$--monotone-class argument. 

Then, for the existence part in \eqref{product:i}, we may simply set $$\nu(M):=\int\int \mathbbm{1}_M(x,x')\mu(\dd x)\mu'(\dd x'),\quad M\in \FF\otimes \FF'.$$ Nothing was to prevent us of course from having taken for $\nu$ instead 
$$\nu'(M):=\int\int \mathbbm{1}_M(x,x')\mu'(\dd x')\mu(\dd x),\quad M\in \FF\otimes \FF';$$ hence by uniqueness of $\nu$ [we get $\nu'=\nu$ and] \eqref{interchange} follows for $f$ which are indicators of sets from $\FF\otimes \FF'$. By a monotone class argument it extends at once to $(\FF\otimes \FF')/\mathcal{B}_{[0,\infty]}$, which is Tonelli's result. The extension to \eqref{product:ii:c}, hence \eqref{product:ii:b},  is by applying \eqref{product:ii:a} to $f^+$ and $f^-$ and then subtracting the results.
\end{proof}
 	\begin{remarks}
(i) 	Even if $\mu$ and $\mu'$ are complete, $\mu\times \mu'$ need not be; this happens for instance with $\mu=\mu'=\overline{\leb}$. The completion of $\mu\times \mu'$ is given by Carath\'eodory's theorem when applied to $\mu\times\mu'$. There are versions of Tonelli-Fubini for $\overline{\mu\times \mu'}$. (ii) A generalization of product measures deals with kernels; such an approach has also a kind-of converse in the so-called disintegration of measures, but these are beyond the intended scope of these notes (though, it is a very relevant topic with implications for conditioning in probability; we touch breifly on it in Proposition~\ref{proposition:regular}). (iii) 
We have seen in the preceding the construction of the product of two (and, by induction, any finite number of) $\sigma$-finite measure spaces and we have already remarked that for probability spaces a notion of product extends to arbitrary families thereof. Is there a corresponding operation of ``sum'' of measure spaces? Yes, however it is much more straightforward. Let indeed $(X_\alpha,\Sigma_\alpha,\mu_\alpha)$, $\alpha\in \mathcal{A}$, be a family of measure spaces. Put $X:=\sum_{\alpha\in \mathcal{A}}X_\alpha:=\cup_{\alpha\in \mathcal{A}}X_\alpha\times \{\alpha\}$ for the disjoint union. For each $\alpha\in \mathcal{A}$ let $i_\alpha:X_\alpha\to \sum_{\alpha\in \mathcal{A}}X_\alpha$ be the canonical inclusion map (it sends $x_\alpha\in X_\alpha$ to $(x_\alpha,\alpha)$). We define $\Sigma:=\oplus_{\alpha\in \mathcal{A}}\Sigma_\alpha$ as the largest $\sigma$-field on $X$ w.r.t. which each $i_\alpha$ is $\Sigma_\alpha$-measurable, $\alpha\in \mathcal{A}$ (think about how you can describe this $\sigma$-field; you will then be able to note that for a measurable space $(Y,\mathcal{Y})$ and a map $f:\sum_{\alpha\in \mathcal{A}}X_\alpha\to Y$, one has $f\in \oplus_{\alpha\in \mathcal{A}}\Sigma_\alpha/\mathcal{Y}$ iff $f\circ \iota_\alpha\in \mathcal{X}_\alpha/\mathcal{Y}$ for all $\alpha\in\mathcal{A}$, which you might later compare with Proposition~\ref{proposition:arbitrary-products}\eqref{product--arbit:ii}). Then define $\mu=:+_{\alpha\in \mathcal{A}}\mu_\alpha:\Sigma\to [0,\infty]$ by putting $\mu(A):=\sum_{\alpha\in \mathcal{A}}\mu_\alpha(i_\alpha^{-1}(A))=\int \mu_\alpha(i_\alpha^{-1}(A))c_\mathcal{A}(\dd\alpha)$ for $A\in \Sigma$; $(X,\Sigma,\mu)$ is a measure space, the direct sum of the measure spaces $(X_\alpha,\Sigma_\alpha,\mu_\alpha)$, $\alpha\in \mathcal{A}$. Basically, on each ``plot'' $X_\alpha\times \{\alpha\}$ ``lives'' $\mu_\alpha$. If $(\Omega,\FF,\gamma)$ is a given measure space and $\Omega$ is the disjoint  union of  sets $\Omega_\alpha\in \FF$ as  $\alpha$ runs over some countable index set $\mathcal{A}$, then the direct sum of $(\Omega_\alpha,\FF\vert_{\Omega_\alpha},\gamma_{\Omega_\alpha})$ recovers  $(\Omega,\FF,\gamma)$ up to the ``identification'' map that sends each $\omega\in\Omega_\alpha$ to $(\omega,\alpha)$ for $\alpha\in \mathcal{A}$, and which ``preserves everything in sight''.
 	 \end{remarks}

\begin{example}\label{example:mean-to-lebesgue}
Let $f:[0,\infty)\to[0,\infty)$ be continuously differentiable with nonnegative derivative on $(0,\infty)$, also let it be continuous on $[0,\infty)$ with $f(0)=0$ (e.g. $([0,\infty)\ni x\mapsto (1-e^{-\lambda x}))$ for a given $\lambda\in [0,\infty)$, the $p$-th power for a $p\in(0,\infty)$, are all possible choice), and let $\nu$ be a $ \sigma$-finite measure on $\mathcal{B}_{[0,\infty)}$. Then we may compute $\nu[f]=\int \int_0^xf'(y)\dd y\nu(\dd x)=\int \int \mathbbm{1}_{(0,x)}(y)f'(y)\leb(\dd y)\nu(\dd x)=\int \int \mathbbm{1}_{(0,x)}(y)f'(y)\nu(\dd x)\leb(\dd y)=\int \mathbbm{1}_{(0,\infty)}(y)f'(y)\int \mathbbm{1}_{(y,\infty)}(x)\nu(\dd x)\leb(\dd y)=\int_{(0,\infty)}f'(y)\nu((y,\infty))\leb(\dd y)$. In particular, for a probability $\PP$ on a $\sigma$-field $\FF$ and an $X\in \FF/\BB_{[0,\infty)}$ (a nonnegative random variable in the terminology to follow) $\PP[f(X)]=(X_\star \PP)[f]=\int_{(0,\infty)}f'(y)(X_\star\PP)((y,\infty))\leb(\dd y)=\int_{(0,\infty)}f'(y)\PP(X>y)\leb(\dd y)$; especially $\PP[X]=\PP[\mathrm{id}_{[0,\infty)}(X)]=\int_{(0,\infty)}\PP(X>y)\leb(\dd y)$.

In lieu of the conditions on $f$ stated above it suffices that $f:[0,\infty)\to [0,\infty]$ is given as the indefinite integral $f= \int_{(0,\cdot)} g\dd\leb$ of some $g\in \mathcal{B}_{(0,\infty)}/\mathcal{B}_{[0,\infty]}$ (with the same argumentation, $g$ replacing the role of $f$). The $\sigma$-finiteness condition on $\nu$ can be dispensed with (as a little more thought reveals). Besides, the improper Riemann-Darboux integral may replace integration against $\leb$ in the preceding when the integrands are finite, provided such $g$ is finite and continuous a.e. w.r.t. $\overline{\mathscr{L}_{(0,\infty)}}$ and provided $\nu((y,\infty))$ is finite for all $y\in (0,\infty)$. 
 
\end{example}

\begin{example}
We have that $\pr_i\in\mathcal{B}_{\mathbb{R}^2}/\mathcal{B}_\mathbb{R}$, $i\in \{1,2\}$; therefore  $\pr_1-\pr_2\in \mathcal{B}_{\mathbb{R}^2}/\mathcal{B}_\mathbb{R}$. Hence $D_\mathbb{R}:=\{(x,x):x\in \mathbb{R}\}=(\pr_1-\pr_2)^{-1}(\{0\})\in \mathcal{B}_{\mathbb{R}^2}\subset  \mathcal{B}_{[-\infty,\infty]}\otimes \mathcal{B}_{[-\infty,\infty]}$. Consequently also $D_{[-\infty,\infty]}:=\{(x,x):x\in [-\infty,\infty]\}=D_\mathbb{R}\cup \{(-\infty,-\infty)\}\cup\{(\infty,\infty)\}=D_\mathbb{R}\cup (\{-\infty\}\times\{-\infty\})\cup(\{\infty\}\times\{\infty\})\in \mathcal{B}_{[-\infty,\infty]}\otimes \mathcal{B}_{[-\infty,\infty]}$.
\end{example}
\begin{proposition}\label{proposition:uniquness-image-meausure}
	Let $(\Omega,\FF,\mu)$ be a measure space, $(\Omega',\FF')$ a measurable space, $X\in \FF/\FF'$, $(A,\AA)$ another measurable space with $D_A:=\{(x,x):x\in A\}\in \AA\otimes \AA$  and $\{f,g\}\subset \FF'/\AA$. Then $f(X)=g(X)$ a.e.-$\mu$ iff $f=g$ a.e.-$X_\star\mu$.
\end{proposition}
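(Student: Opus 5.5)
The plan is to reduce both sides of the equivalence to the measure of one and the same set. I will consider the map $(f,g)\colon \Omega'\to A\times A$. By the last claim of the proposition on products of measurable spaces (a pair of maps is $\FF'/(\AA\otimes\AA)$-measurable iff each component is), $(f,g)\in \FF'/(\AA\otimes\AA)$. Since $D_A\in\AA\otimes\AA$ by hypothesis, the set
$$E':=\{f\ne g\}=(f,g)^{-1}\big((A\times A)\backslash D_A\big)$$
belongs to $\FF'$. This measurability is exactly what is needed for ``a.e.-$X_\star\mu$'' to make sense in the sense of Definition~\ref{definition:measures}, which demands that the exceptional set be measurable.

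Next I will identify the exceptional set upstairs. Pointwise, $f(X(\omega))\ne g(X(\omega))$ iff $X(\omega)\in E'$, so
$$\{f(X)\ne g(X)\}=X^{-1}(E').$$
Because $X\in\FF/\FF'$, this set lies in $\FF$. (Equivalently, $(f,g)\circ X\in \FF/(\AA\otimes\AA)$ by Proposition~\ref{proposition:compositions}.)

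Finally, the definition of the push-forward gives $\mu(\{f(X)\ne g(X)\})=\mu(X^{-1}(E'))=(X_\star\mu)(E')$. Hence one side vanishes iff the other does, and both exceptional sets are measurable, so the two a.e.\ statements are equivalent.

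The only step with any content is the first: securing $\{f\ne g\}\in\FF'$. This is precisely where the hypothesis $D_A\in\AA\otimes\AA$ enters, combined with the joint measurability of $(f,g)$. The rest is unwinding definitions.
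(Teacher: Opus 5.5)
Your proof is correct and is essentially the paper's own argument. Joint measurability of $(f,g)$ into $\AA\otimes\AA$ together with $D_A\in\AA\otimes\AA$ gives $\{f\ne g\}\in\FF'$, and then $\mu(f(X)\ne g(X))=\mu(X^{-1}(\{f\ne g\}))=(X_\star\mu)(f\ne g)$; you also correctly note that this measurability is what the paper's definition of ``a.e.'' requires on both sides.
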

 
\begin{remark}
For a $\sigma$-field  $\FF$ on $\Omega$, we have as follows. On the one hand, if there is a countable $\AA\subset \FF$, which separates the points of $\Omega$ (we say that $\FF$ is countably separated), then (the diagonal of $\Omega)=D_\Omega=\Omega\backslash \cup_{A\in \AA}[(A\times (\Omega\backslash A))\cup (\Omega\backslash A)\times A]\in \FF\otimes \FF$. Conversely, suppose $D_\Omega\in \FF\otimes \FF$. Then (think, why) $D_\Omega\in \FF^0\otimes \FF^0$ for a sub-$\sigma$-field $\FF^0$ of $\FF$ that is countably generated (it means existence of a countable $\CC\subset \FF$ for which $\sigma_\Omega(\CC)=\FF^0$) and $\FF^0$, therefore $\CC$, must separate the points of $\Omega$. For if this fails, then there are $x\ne y$ from $\Omega$ such that $\mathbbm{1}_F(x)=\mathbbm{1}_F(y)$ for all $F\in \FF^0$. Consequently, $\FF^0\otimes \FF^0\subset \{R\in 2^{X\times X}:R\text{ contains  all or none of $(x,x)$, $(y,y)$, $(x,y)$, $(y,x)$}\}\notni D_\Omega$, which is a contradiction. In short, $D_ \Omega\in \FF\otimes \FF$ if and only if $\FF$ is countably separated.

We may also mention that when $X$ has cardinality greater than that of the continuum, then $2^X\otimes 2^X$ does not contain the diagonal $D_X$ and in particular $2^X\otimes 2^X\subsetneq 2^{X\times X}$ (\underline{\href{https://www.drmaciver.com/2006/04/journal-of-obscure-results-1-nedomas-pathology/}{Nedoma's pathology}}).
\end{remark}
 
\begin{proof}
$\mu(f(X)\ne g(X))=\mu(X\in \{f\ne g\})=X_\star \mu(f\ne g)$, because $(f,g)\in \FF'/(\AA\otimes \AA)$ and hence $\{f\ne g\}=(f,g)^{-1}((A\times A)\backslash D_A)\in \FF'$.
\end{proof}
	\section{Indefinite integration and absolute continuity}
	\emph{indefinite integration and absolute continuity; integrable measurable functions are determined a.e. by their indefinite integrals; the Radon-Nikodym theorem, derivative of one measure w.r.t. another}
	
%

\begin{definition}\label{definition:indefinite-integral}
Let $(X,\AA,\mu)$ be a measure space, $f\in \AA/\mathcal{B}_{[-\infty,\infty]}$ with the integral of $f$ against $\mu$ well-defined. The map $f\cdot \mu:=(\AA\ni A\mapsto \mu[f;A])$ is called the $\mu$-indefinite integral of $f$ or also the indefinite integral of $f$ against $\mu$. 
\end{definition}

	\begin{definition}
		Given two measures $\mu$ and $\nu$ on a $\sigma$-field $\FF$ we say that: $\mu$ is absolutely continuous w.r.t. $\nu$ (and write $\mu\ll \nu$) \deff for all $A\in \FF$, $\mu(A)=0$ whenever $\nu(A)=0$; $\mu$ and $\nu$ are equivalent (and write $\mu\sim\nu$) \deff $\mu\ll\nu$ and $\nu\ll\mu$. 
	\end{definition}
 
\begin{remark}
It may be interesting to note in passing that absolute continuity ($\therefore$ equivalence) is preserved under push-forwards, i.e. $\mu\ll\nu$ implies $f_\star\mu\ll f_\star\nu$, which is trivial, and also under products, i.e. $\mu_1\ll\mu_2$ and $\nu_1\ll\nu_2$, all $\sigma$-finite, implies $\mu_1\times \nu_1\ll \mu_2\times \nu_2$, which follows via Tonelli. 
\end{remark}
 
\begin{proposition}\label{proposition:density}
		Let $(\Omega,\FF,\mu)$ be a measure space and $f\in \FF/\mathcal{B}_{[0,\infty]}$. Then $f\cdot \mu$ is a measure on $\FF$ that is absolutely continuous w.r.t. $\mu$; furthermore, 
		\begin{equation}\label{eq:density}
		\int g\dd (f\cdot \mu)=\int gf\dd \mu
		\end{equation} 
	for all $g\in \FF/\mathcal{B}_{[-\infty,\infty]}$, the integral on the left-hand side of \eqref{eq:density} being well-defined iff the integral on the right-hand side of \eqref{eq:density} is well-defined, in which case (associativity of indefinite integration) $g\cdot (f\cdot \mu)=(gf)\cdot \mu$. If $f>0$ a.e.-$\mu$, then $f\cdot \mu\sim \mu$.
\end{proposition}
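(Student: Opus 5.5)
The plan follows the same pattern as the proofs of Corollary~\ref{corollary:integral-against-sum} and the image measure theorem (Corollary~\ref{proposition:image-measure-theorem}): first the measure property, then \eqref{eq:density} by a monotone class argument, then the two signs separately.

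\emph{Measure property and absolute continuity.} Since $f\geq 0$, the integral $\mu[f;A]=\mu[f\mathbbm{1}_A]$ is always well-defined and lies in $[0,\infty]$, so $f\cdot\mu:\FF\to[0,\infty]$ makes sense. We have $(f\cdot\mu)(\emptyset)=\mu[0]=0$. For pairwise disjoint $(A_n)_{n\in\mathbb{N}}$ in $\FF$ one has $f\mathbbm{1}_{\cup_n A_n}=\sum_n f\mathbbm{1}_{A_n}$. The corollary on integrating series of nonnegative functions (monotone convergence) then gives countable additivity. If $\mu(A)=0$, then $f\mathbbm{1}_A=0$ a.e.-$\mu$, so Theorem~\ref{theorem:int:basics}\eqref{properties:vi} gives $(f\cdot\mu)(A)=0$. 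Hence $f\cdot\mu\ll\mu$.

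\emph{Formula \eqref{eq:density}.} Let $\mathcal{M}$ be the set of $g\in\FF/\mathcal{B}_{[0,\infty]}$ for which \eqref{eq:density} holds.
\begin{itemize}
\item For $g=\mathbbm{1}_A$, $A\in\FF$, the formula is exactly the definition of $f\cdot\mu$, using Theorem~\ref{theorem:int:basics}\eqref{properties:indicator}.
\item $\mathcal{M}$ is a convex cone, by additivity and homogeneity (Theorem~\ref{theorem:int:basics}\eqref{lebesgue:additivity},\eqref{properties:ii}), applied on both sides; everything is nonnegative, so no well-definedness issue arises.
\item $\mathcal{M}$ is closed under nondecreasing limits, by monotone convergence applied twice: once to $g_n\uparrow g$ under $f\cdot\mu$, and once to $g_nf\uparrow gf$ under $\mu$. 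The latter holds because $0\cdot\infty=0$ conventions make $a_nb\uparrow ab$ for nonnegative $a_n\uparrow a$.
\end{itemize}
The monotone class theorem (Corollary~\ref{corollary:monotone-class}) then yields \eqref{eq:density} for all $g\geq0$.

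For general $g$, note that $(gf)^\pm=g^\pm f$ because $f\geq 0$. This holds even where $f=\infty$ or $g=\pm\infty$, with $0\cdot\infty=0$; this pointwise check is the one small step needing care. So the nonnegative case gives $(f\cdot\mu)[g^\pm]=\mu[(gf)^\pm]$. Consequently the two integrals are well-defined simultaneously and are equal upon subtracting.

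\emph{Associativity and equivalence.} Apply the identity just proved to $g\mathbbm{1}_A$ in place of $g$, for each $A\in\FF$. This shows that $g\cdot(f\cdot\mu)$ and $(gf)\cdot\mu$ are defined simultaneously and agree setwise. Finally, suppose $f>0$ a.e.-$\mu$ and $(f\cdot\mu)(A)=0$. Then $f\mathbbm{1}_A=0$ a.e.-$\mu$ by Theorem~\ref{theorem:int:basics}\eqref{properties:v}. So $A\subset\{f\mathbbm{1}_A=0\}$, and $A\cap\{f>0\}$ is $\mu$-negligible while $\{f=0\}$ is too, whence $\mu(A)=0$. Together with the first step this gives $f\cdot\mu\sim\mu$.

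The main obstacle is only bookkeeping: the extended-arithmetic conventions in the positive/negative-part step and in the monotone limit. Otherwise the argument is a routine $\pi$-$\lambda$--monotone-class pattern.
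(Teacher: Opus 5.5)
Your proof is correct and follows the paper's argument essentially step for step. You use the interchange of integral and nonnegative series for countable additivity, the invisibility of null sets for $f\cdot\mu\ll\mu$, indicators plus the monotone class theorem for nonnegative $g$, the identity $(gf)^\pm=g^\pm f$ for general $g$, and $g\mathbbm{1}_A$ in place of $g$ for associativity. One slip in the last paragraph: the inclusion $A\subset\{f\mathbbm{1}_A=0\}$ is false as written (what you actually know is that $A\cap\{f>0\}=\{f\mathbbm{1}_A\neq 0\}$ is $\mu$-negligible), but the conclusion you then draw from $A\cap\{f>0\}$ and $\{f=0\}$ both being negligible is right.
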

 
\begin{remark}
Note also the formula $f\cdot (h_\star \mu)=h_\star ((f\circ h)\cdot \mu)$ under the ``obvious'' conditions on the maps $f,h$ and measure $\mu$. 
\end{remark}
 
\begin{proof}
The observation that $f\cdot \mu$ is a measure on $\FF$ follows from the fact that integrals and nonnegative series can be interchanged (among more trivial considerations); absolute continuity is a consequence of the fact that integrals do not see sets of measure zero (again among more trivial considerations). Formula~\eqref{eq:density} is true for $g=\mathbbm{1}_A$ with $A\in \FF$ by the very definition of $f\cdot \mu$. It extends to $\FF/\mathcal{B}_{[0,\infty]}$ by a monotone class argument. It follows in full generality by applying it to $g^+$ and $g^-$; then taking the difference. To see $ g\cdot (f\cdot \mu)=(gf)\cdot \mu$ just apply \eqref{eq:density} to $g\mathbbm{1}_A$ in lieu of $g$ as $A$ runs over $\FF$.  The final claim is immediate.
\end{proof}
\begin{example}\label{lemma:cts/diff/density}
Suppose $G:\mathbb{R}\to \mathbb{R}$ is $\uparrow$ and right-continuous. If $G$ is continuously differentiable, then by the fundamental theorem of calculus $(G'\cdot \leb)((a,b])=\leb[G';(a,b]]=G(b)-G(a)$ for all real $a\leq b$; it follows that $G'\cdot \leb=\dd G$, therefore $\dd G\ll \leb$ and $\dd G[g]=\leb[gG']$ for all $g\in \mathcal{B}_{\mathbb{R}}/\mathcal{B}_{[-\infty,\infty]}$, the integral on the l.h.s. being well-defined iff the one on the r.h.s.  is.
\end{example}
	\begin{remark}
	There is a converse to Proposition~\ref{proposition:density}, whose proof is non-trivial and will be given below as Theorem~\ref{thm:RN}. In preparation thereof, Corollary~\ref{corollary} to follow is  a kind of reverse to Theorem~\ref{theorem:int:basics}\eqref{properties:vi}. We give a  more general statement beforehand.
\end{remark}

\begin{proposition}\label{lemma}
Let $(X,\AA,\mu)$ be a measure space, $\{f,g\}\subset \AA/\mathcal{B}_{[-\infty,\infty]}$. 
\begin{enumerate}[(a)]
\item\label{lemma:a} Suppose $\int_Af^+\dd\mu\lor \int_A g^-\dd\mu<\infty$ and  $\int_{A} f\dd\mu\leq \int_{A}g\dd\mu$ for all $A\in \AA$ (or just for $A=\{f>g\}$). Then $f\leq g$ a.e.-$\mu$. 
\item \label{lemma:b} Suppose $\mu$ is $\sigma$-finite, $(\int f^-\dd\mu\land \int f^+\dd\mu)\lor (\int g^-\dd\mu\land \int g^+\dd\mu)<\infty$, and $\int_A f\dd\mu\leq \int_A g\dd\mu$ for all $A\in \AA$. Then again $f\leq g$ a.e.-$\mu$. 
\end{enumerate}
\end{proposition}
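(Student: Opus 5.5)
For (a) I would work directly with $A:=\{f>g\}$, which lies in $\AA$ by the earlier proposition on comparison sets of Borel measurable maps. The hypothesis gives $\int_A f^+\dd\mu<\infty$ and $\int_A g^-\dd\mu<\infty$, so both $\int_A f\dd\mu$ and $\int_A g\dd\mu$ are well-defined and satisfy $\int_A f\dd\mu<\infty$ and $\int_A g\dd\mu>-\infty$. Together with $\int_A f\dd\mu\leq\int_A g\dd\mu$, this forces both integrals to be finite real numbers. Monotonicity (Theorem~\ref{theorem:int:basics}\eqref{properties:iii}) applied to $g\mathbbm{1}_A\leq f\mathbbm{1}_A$ gives the reverse inequality, hence $\int_A f\dd\mu=\int_A g\dd\mu\in\mathbb{R}$.

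By Theorem~\ref{theorem:int:basics}\eqref{properties:v}, both $f\mathbbm{1}_A$ and $g\mathbbm{1}_A$ are then $\mu$-a.e. finite. I would set $B:=A\cap\{f<\infty\}\cap\{g>-\infty\}$, which differs from $A$ by a $\mu$-null set, and note that on $B$ the difference $h:=(f-g)\mathbbm{1}_B$ is well-defined and $>0$. Additivity (Theorem~\ref{theorem:int:basics}\eqref{lebesgue:additivity}), applied to $f\mathbbm{1}_B=g\mathbbm{1}_B+h$ with all negative parts integrable, gives $\int h\dd\mu=\int_B f\dd\mu-\int_B g\dd\mu=0$. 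By \eqref{properties:v}, $h=0$ a.e., so $\mu(B)=0$, hence $\mu(A)=0$, which is the claim $f\leq g$ a.e.-$\mu$. The main care needed is the $\pm\infty$ bookkeeping, which is handled by discarding null sets via \eqref{properties:vi}.

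For (b) I would localize. Using $\sigma$-finiteness, fix a nondecreasing sequence $E_n\uparrow X$ with $\mu(E_n)<\infty$, and set
\[
A_{n}:=\{f>g\}\cap E_n\cap\{f\leq n\}\cap\{g\geq -n\}.
\]
On $A_{n}$ we have $f^+\leq n$ and $g^-\leq n$, so $\int_{A_{n}}f^+\dd\mu\lor\int_{A_{n}}g^-\dd\mu\leq n\mu(E_n)<\infty$. Hence part (a), applied with $\mu$ replaced by its restriction to $A_{n}$ (or directly with $A=A_{n}$, since the argument of (a) only used that one set), gives $\mu(A_{n})=0$. The union $\bigcup_n A_{n}$ is $\{f>g\}\cap\{f<\infty\}\cap\{g>-\infty\}$, so this set is $\mu$-null.

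It then remains to show that $\{f=\infty\}\cap\{f>g\}$ and $\{g=-\infty\}\cap\{f>g\}$ are null, and I expect this to be the main obstacle, since it needs the well-definedness hypothesis. Take $C_{n}:=\{f=\infty\}\cap\{g\leq n\}\cap E_n$. If $\mu(C_{n})>0$, then $\int_{C_{n}}f\dd\mu=\infty$ while $\int_{C_{n}}g\dd\mu\leq n\mu(E_n)<\infty$, contradicting the hypothesis. The integrals over $C_n$ are well-defined because the integral of $g$ is well-defined: either $\int g^-\dd\mu<\infty$ or $\int g^+\dd\mu<\infty$, and on $C_n$ we have $g^+\leq n$. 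The set $\{g=-\infty\}\cap\{f\geq -n\}\cap E_n$ is handled symmetrically. Taking the union over $n$ then gives $\mu(f>g)=0$.
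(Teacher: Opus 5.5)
Your proof is correct. Part (a) follows the paper's idea: the nonnegative function $(f-g)\mathbbm{1}_{\{f>g\}}$ has integral zero, hence vanishes a.e. You just do the $\pm\infty$ bookkeeping more carefully.

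One sentence there is premature. The facts $\int_A f\dd\mu<\infty$, $\int_A g\dd\mu>-\infty$ and $\int_A f\dd\mu\leq\int_A g\dd\mu$ do not by themselves force finiteness (e.g.\ $-\infty\leq 5$). What forces it is the reverse inequality from monotonicity in your next line, so nothing breaks.

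Part (b) takes a different route from the paper.
\begin{itemize}
\item The paper applies (a) as a black box to the truncated pair $(f\land n)\mathbbm{1}_{E_n}$ and $(g\lor(-n))\mathbbm{1}_{E_n}$, with $E_n$ your localizing sets. Its hypothesis comes from $\int_B (f\land n)\dd\mu\leq\int_B f\dd\mu\leq\int_B g\dd\mu\leq\int_B (g\lor(-n))\dd\mu$ for $B\subset E_n$. This gives $f\land n\leq g\lor(-n)$ a.e.\ on $E_n$, and the paper then lets $n\uparrow\infty$. The sets where $f=\infty$ or $g=-\infty$ are absorbed automatically in the limit.
\item You keep $f$ and $g$ untruncated. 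You run (a) on the pieces of $\{f>g\}$ where $f^+$ and $g^-$ are bounded by $n$, and you dispose of $\{f=\infty,\,g<\infty\}$ and $\{g=-\infty,\,f>-\infty\}$ by a separate direct contradiction.
\end{itemize}

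The paper's version is shorter and reuses (a) verbatim. Yours costs a case split but never compares integrals of truncated and untruncated functions. It also makes visible that the hypothesis is only ever tested on sets where both integrals are automatically well-defined.

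So, contrary to your remark, the $C_n$ step does not need the global well-definedness of $\int g\dd\mu$. On $C_n\subset E_n$ one has $f^-=0$ and $g^+\leq n$, which already make $\int_{C_n}f\dd\mu$ and $\int_{C_n}g\dd\mu$ well-defined.
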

\begin{example}
The finiteness condition in \eqref{lemma:a} of Proposition~\ref{lemma} cannot entirely be dispensed with (in favor of just demanding that the integrals be well-defined). Take for instance $X=\{0\}$, $\mu(\{0\})=\infty$. Then $\int_X c d\mu=(\mathbbm{1}_{(0,\infty]}(c)-\mathbbm{1}_{[-\infty,0)}(c))\infty$ and (of course) $\int_\emptyset cd\mu=0$ for all $c\in [-\infty,\infty]$. (In view of  Proposition~\ref{lemma}\eqref{lemma:b} this means that $\mu$ cannot be $\sigma$-finite (as it is not).)
\end{example}
 
 \begin{proof}
\eqref{lemma:a}. One has that $\int_{\{f>g\}} (f-g)\dd\mu$ is $\geq 0$ and $\leq 0$ hence $=0$ (by the assumed finiteness condition, $\int_{\{f>g\}} (f-g)\dd\mu=\int_{\{f>g\}} f\dd\mu+\int_{\{f>g\}} (-g)\dd\mu=\int_{\{f>g\}} f\dd\mu-\int_{\{f>g\}} g\dd\mu$). Then $(f-g)\mathbbm{1}_{\{f>g\}}=0$ a.e.-$\mu$ which forces $f\leq g$ a.e.-$\mu$. \eqref{lemma:b}. By the $\sigma$-finiteness assumption, there is a nondecreasing sequence $(A_n)_{n\in \mathbb{N}}$ in $\AA$ with $\mu(A_n)<\infty$ for each $n\in \mathbb{N}$ whose union covers $X$. Then by \eqref{lemma:a} we find that $(f\land n)\mathbbm{1}_{A_n}\leq (g\lor (-n))\mathbbm{1}_{A_n}$ a.e.-$\mu$ for each $n\in \mathbb{N}$. Letting $n\uparrow \infty$ yields the desired conclusion.
\end{proof}
 
\begin{corollary}\label{corollary}
Let $(X,\AA,\mu)$ be a measure space, $\{f,g\}\subset \AA/\mathcal{B}_{[-\infty,\infty]}$. Suppose $\int_A f\dd\mu= \int_Ag\dd\mu$ for every $A\in \AA$, all the integrals being well-defined. If  $f$ or $g$ is $\mu$-integrable (in which case both are) or if $\mu$ is $\sigma$-finite, then $f=g$ a.e.-$\mu$. Furthermore, in case $f$ and $g$ are $\mu$-integrable, then actually, ceteris paribus, it is enough to ask for  $\int_A f\dd\mu= \int_Ag\dd\mu$ to hold true merely for every  $A\in \BB\cup \{X\}$, where $\BB\subset \AA$ is a $\pi$-system that generates $\AA$ on $X$, and still  the same conclusion prevails.
\end{corollary}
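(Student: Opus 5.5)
The plan is to reduce everything to Proposition~\ref{lemma}, applied twice with the roles of $f$ and $g$ swapped. From $\int_A f\dd\mu\le\int_A g\dd\mu$ and $\int_A g\dd\mu\le\int_A f\dd\mu$ for all $A\in\AA$ we want $f\le g$ and $g\le f$ a.e.-$\mu$. The set where either inequality fails is the union of two negligible sets, so $f=g$ a.e.-$\mu$.

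In the $\sigma$-finite case Proposition~\ref{lemma}\eqref{lemma:b} applies directly. Its well-definedness hypothesis holds for both $f$ and $g$ because we assumed $\int_X f\dd\mu$ and $\int_X g\dd\mu$ (the case $A=X$) are well-defined.

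In the integrable case, first note that if $f$ is $\mu$-integrable then so is $g$. Taking $A=\{g\ge 0\}$ gives $\int g^+\dd\mu=\int_{\{g\ge0\}}f\dd\mu\le\int f^+\dd\mu<\infty$. Similarly $A=\{g<0\}$ gives $\int g^-\dd\mu=-\int_{\{g<0\}}f\dd\mu\le \int f^-\dd\mu<\infty$; this uses monotonicity and the triangle inequality of Theorem~\ref{theorem:int:basics}. With both functions integrable, the finiteness conditions $\int_A f^+\dd\mu\lor\int_A g^-\dd\mu<\infty$ (and the swapped version) hold for every $A$ by monotonicity. Proposition~\ref{lemma}\eqref{lemma:a} then applies in both directions.

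For the final assertion, assume $f,g$ are integrable and agree in integral on $\BB\cup\{X\}$. Let $\DD:=\{A\in\AA:\int_A f\dd\mu=\int_A g\dd\mu\}$; we show it is a Dynkin system.
\begin{itemize}
\item $X\in\DD$ by hypothesis.
\item For $A\subset B$ both in $\DD$, $\int_{B\backslash A}f\dd\mu=\int_B f\dd\mu-\int_A f\dd\mu$ by additivity (Theorem~\ref{theorem:int:basics}\eqref{lebesgue:additivity}) applied to $f\mathbbm{1}_B=f\mathbbm{1}_A+f\mathbbm{1}_{B\backslash A}$. All the quantities here are finite by integrability, so the subtraction is legitimate, and likewise for $g$.
\item For increasing $A_n\uparrow A$ in $\DD$, dominated convergence (Theorem~\ref{theorem:convergence}, with dominating function $|f|$, resp.\ $|g|$) gives $\int_{A_n}f\dd\mu\to\int_A f\dd\mu$, and the same for $g$.
\end{itemize}
Hence $\DD$ is a $\lambda$-system containing the $\pi$-system $\BB$, and Dynkin's lemma (Corollary~\ref{corollary:dynkin}) yields $\DD\supset\sigma_X(\BB)=\AA$. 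We are then back in the integrable case already treated.

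The only delicate point is bookkeeping with well-definedness and finiteness, so that no $\infty-\infty$ arises. This is exactly why integrability is required in the $\pi$-system refinement: it is what makes the difference step of the Dynkin system argument valid.
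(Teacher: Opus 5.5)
Your proof is correct and follows the paper's route. The first part is Proposition~\ref{lemma}\eqref{lemma:a}--\eqref{lemma:b} applied with the roles of $f$ and $g$ interchanged, and the $\pi$-system refinement is exactly the paper's one-line Dynkin's lemma argument, which you spell out in full (you also verify that integrability of one of $f,g$ forces integrability of the other, which the paper leaves implicit).
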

\begin{proof}
If $f$ and $g$ are $\mu$-integrable, then by Dynkin's lemma the equality  $\int_A f\dd\mu= \int_Ag\dd\mu$ extends from $A\in \BB\cup \{X\}$ at once to all $A\in \AA$. 
\end{proof}

\begin{remarks}
So, a measurable numerical integrable function, or a measurable numerical function whose integral against a $\sigma$-finite measure is well-defined, is determined a.e. by its indefinite integral. Of course in Corollary~\ref{corollary}, $\BB\cup \{X\}$ is itself a $\pi$-system, so it would have been equally well to ask for the property to hold on a $\pi$-system $\BB\in 2^\AA$ that has $X$ for an element. 
\end{remarks}
	\begin{theorem}[Radon-Nikodym]\label{thm:RN}
		Suppose $\mu\ll \nu$ are $\sigma$-finite measures defined on a measurable space $(X,\FF)$. Then there exists a $\nu$-a.e. unique $f\in  \FF/\mathcal{B}_{[0,\infty)}$  such that $\mu=f\cdot \nu$; furthermore, $f>0$ a.e.-$\mu$. $\blacktriangle$
	\end{theorem}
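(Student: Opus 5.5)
Uniqueness is immediate from Corollary~\ref{corollary}: if $\mu=f\cdot\nu=g\cdot\nu$ then $\int_A f\dd\nu=\int_A g\dd\nu$ for all $A\in\FF$, these integrals being well-defined because $f,g\geq 0$. Since $\nu$ is $\sigma$-finite, $f=g$ a.e.-$\nu$. The final claim is also quick once existence is known: $\mu(f=0)=\int_{\{f=0\}}f\dd\nu=0$, so $f>0$ a.e.-$\mu$. The work is therefore in existence.

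For existence I would first reduce to finite measures by localization. By $\sigma$-finiteness of both measures, intersecting the two localizing sequences gives a partition of $X$ into sets $X_n\in\FF$, $n\in\mathbb{N}$, pairwise disjoint, with $\mu(X_n)\lor\nu(X_n)<\infty$. If $f_n$ is a density of $\mu_{X_n}$ w.r.t. $\nu_{X_n}$, finite-valued, then $f:=\sum_n f_n\mathbbm{1}_{X_n}$ works by countable additivity (Corollary~\ref{corollary:integral-against-sum} style: $\mu(A)=\sum_n\mu(A\cap X_n)$). So assume $\mu,\nu$ finite.

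In the finite case I would use the von Neumann-free, ``maximal element'' argument. Let $\mathcal{H}:=\{h\in\FF/\BB_{[0,\infty]}:\int_A h\dd\nu\leq\mu(A)\ \forall A\in\FF\}$. It contains $0$ and is closed under $\lor$: for $A\in\FF$ split $A$ into $A\cap\{h_1\geq h_2\}$ and its complement. Put $s:=\sup_{h\in\mathcal{H}}\int h\dd\nu\leq\mu(X)<\infty$. Choose $h_n\in\mathcal{H}$ with $\int h_n\dd\nu\to s$, replace $h_n$ by $h_1\lor\dots\lor h_n$ to make the sequence $\uparrow$, and let $f:=\lim_n h_n$. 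Monotone convergence (Theorem~\ref{theorem:convergence}) gives $f\in\mathcal{H}$ and $\int f\dd\nu=s$. Since $\int f\dd\nu<\infty$, $f<\infty$ a.e.-$\nu$ by Theorem~\ref{theorem:int:basics}\eqref{properties:v}, and we may redefine $f$ to be $0$ where it is infinite.

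The main obstacle is showing that $\lambda:=\mu-f\cdot\nu$, a finite measure (subtraction is fine since the values are finite), vanishes. Suppose $\lambda(X)>0$. Pick $\epsilon>0$ with $\lambda(X)>\epsilon\nu(X)$. I then need a Hahn-type set $P$ for the signed measure $\lambda-\epsilon\nu$: a set with $\lambda(P)-\epsilon\nu(P)>0$ such that every measurable $B\subset P$ has $\lambda(B)\geq\epsilon\nu(B)$. I would construct $P$ by the standard exhaustion. Repeatedly remove from $X$ a set $B_k$ that is as negative as possible: at each step, take $B_k$ with $(\lambda-\epsilon\nu)(B_k)$ at most half the infimum over remaining subsets, or zero if that infimum is nonnegative. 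The removed sets are disjoint and their total negative mass is bounded by finiteness, so the infima tend to $0$. The remainder $P$ then has no subset with $(\lambda-\epsilon\nu)<0$, and $(\lambda-\epsilon\nu)(P)\geq(\lambda-\epsilon\nu)(X)>0$. In particular $\nu(P)>0$, because otherwise $\lambda(P)\leq\mu(P)=0$ by $\mu\ll\nu$. Then $f+\epsilon\mathbbm{1}_P\in\mathcal{H}$, since $\int_A(f+\epsilon\mathbbm{1}_P)\dd\nu\leq\int_A f\dd\nu+\lambda(A\cap P)\leq\mu(A)$. Its integral is $s+\epsilon\nu(P)>s$, contradicting maximality. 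Hence $\mu=f\cdot\nu$.
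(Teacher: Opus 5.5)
Your proof is correct and follows the same route as the paper. Both reduce to finite measures, take a $\nu$-integral-maximizing $f$ in $\{h: h\cdot\nu\leq\mu\}$ (using closure under $\lor$ and monotone convergence), and reach a contradiction by adding $\epsilon\mathbbm{1}_P$ on a set $P$ with $\nu(P)>0$ on which $\mu-(f+\epsilon)\cdot\nu$ is nonnegative on every measurable subset. The only difference is how $P$ is produced: you use a Hahn-style exhaustion of negative sets, whereas the paper takes a set (a $\limsup$ of near-maximizers) at which $\mu-(f+\epsilon)\cdot\nu$ attains its supremum, and such a set is automatically positive in this sense.
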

	\begin{proof}[Proof of uniqueness and last part] Apply Corollary~\ref{corollary}. Also, $\mu(f=0)=\nu[f;f=0]=0$.
	\end{proof}
	 
		\begin{proof}[Proof of existence] 
By $\sigma$-finiteness and ``patching together'' one reduces at once to the case when $\mu$ and $\nu$ are finite, which we henceforth assume is the case; also, it will be enough to find an $f\in  \FF/\mathcal{B}_{[0,\infty]}$  with $\mu=f\cdot \nu$.	
	
Let $F:=\{f\in \FF/\mathcal{B}_{[0,\infty]}:f\cdot\nu\leq \mu\}$. We note that $0\in F$, so $F$ is non-empty; also, if $\{f_1,f_2\}\subset F$, then $f_1\lor f_2\in F$: if $A\in \FF$, then $((f_1\lor f_2)\cdot \nu)(A)=((f_1\lor f_2)\cdot \nu)(A\cap \{f_1\leq f_2\})+((f_1\lor f_2)\cdot \nu)(A\cap \{f_1> f_2\})\leq (f_2\cdot \nu)(A\cap \{f_1\leq f_2\})+(f_1\cdot \nu)(A\cap \{f_1> f_2\})\leq \mu(A\cap \{f_1\leq f_2\})+\mu(A\cap \{f_1> f_2\})=\mu(A)$. Now let $(f_n)_{n\in\mathbb{N}}$ be a sequence in $F$ such that $\lim_{n\to\infty}\nu[f_n]=\sup_{f\in F}\nu[f]$, which we remark is $<\infty$, indeed it is $\leq \mu(X)$. We may assume the sequence $(f_n)_{n\in\mathbb{N}}$ is $\uparrow$; set $f:=\lim_{n\to\infty}f_n$. By monotone convergence $f\cdot \nu\leq \mu$ and $\nu[f]=\sup_{g\in F}\nu[g]$. Assume per abusurdum that $f\cdot \nu\ne \mu$, so that there is $A\in \FF$ with $(f\cdot \nu)(A)<\mu(A)$, hence also $(f\cdot \nu)(X)<\mu(X)$. By absolute continuity necessarily $\nu(X)>0$; let $\epsilon:=\frac{\mu(X)-(f\cdot\nu)(X)}{2\nu(X)}$. Consider the map $\gamma:=\mu-(f+\epsilon)\cdot \nu:\FF\to \mathbb{R}$. We see that $\gamma(X)=\epsilon>0$; let $\alpha:=\sup\{\gamma(A):A\in \FF\}\in (0,\infty)$. For each $n\in \mathbb{N}$, let $A_n\in \FF$ be such that $\gamma(A_n)\geq \alpha-2^{-n}$ and put $A:=\limsup_{n\to\infty}A_n$. We see that, for all $N\in \mathbb{N}$ and then all $n\in \mathbb{N}_{\geq N}$, one has $\gamma(A_n\backslash \cup_{m\in \mathbb{N}_{\geq N,<n}}A_m)\geq -2^{-n}$, since $\alpha-2^{-n}\leq \gamma(A_n)=\gamma((A_n\backslash \cup_{m\in \mathbb{N}_{\geq N,<n}}A_m)\cup \cup_{m\in \mathbb{N}_{\geq N,<n}}A_m)=\gamma(\cup_{m\in \mathbb{N}_{\geq N,<n}}A_m) +\gamma(A_n\backslash \cup_{m\in \mathbb{N}_{\geq N,<n}}A_m) \leq \alpha+\gamma(A_n\backslash \cup_{m\in \mathbb{N}_{\geq N,<n}}A_m)$. In consequence, by continuity from below and countable additivity, $\gamma(A)=\lim_{N\to\infty}\gamma(\cup_{n\in \mathbb{N}_{\geq N}}A_n)\geq \limsup_{N\to\infty} \alpha-2^{-N}-2^{-N}=\alpha$. This means that $\gamma(A)=\alpha>0$ and so: $\gamma(B)\geq 0$ for all $B\in \FF\cap 2^A$ (otherwise $\gamma(A\backslash B)=\gamma(A)-\gamma(B)>\alpha$); $\mu(A)>0$, hence by absolute continuity $\nu(A)>0$. Therefore $f+\epsilon\mathbbm{1}_A\in F$ and $\nu[f+\epsilon\mathbbm{1}_A]>\nu[f]$, which is a contradiction.
	\end{proof}
	 
	\begin{definition}
		The $f$ from the preceding theorem is denoted $\frac{\dd \mu}{\dd\nu}$ and called the Radon-Nikodym derivative (also, density) of $\mu$ w.r.t. $\nu$. Statements involving it are asserted for any of its versions (as a result they tend to attract the a.e. qualifier).\footnote{Alternatively, to circumvent the issue of the derivatives being defined only a.e. precisely, one could pass (passes) to their equivalence classes w.r.t. a.e. equality, but it seems an even bigger nuissance than having to live with a.e. qualifiers.}
		  Sometimes we write $\frac{\mu(\dd x)}{\nu(\dd x)}$ when we wish to provide an expression for $\frac{\dd\mu}{\dd \nu}$ at the point $x$. 
		 
	\end{definition}
\begin{example}\label{lemma:cts/diff/density-1}
In the context of Example~\ref{lemma:cts/diff/density} we have $\frac{\dd (\dd G)}{\dd \leb}=G'$ a.e.-$\leb$, provided $G$ is continuously differentiable.
\end{example}
\begin{example}
The $\sigma$-finiteness condition in Theorem~\ref{thm:RN} cannot be entirely dropped. For instance $\leb\ll c_\mathbb{R}\vert_{\mathcal{B}_\mathbb{R}}$, however there is no $f\in \FF/\mathcal{B}_{[0,\infty]}$  for which  $\leb=f\cdot c_\mathbb{R}\vert_{\mathcal{B}_\mathbb{R}}$ because the latter entails that $0=\leb(\{x\})=(f\cdot c_\mathbb{R}\vert_{\mathcal{B}_\mathbb{R}})(\{x\})=f(x)$ for all $x\in \mathbb{R}$. In particular, since $\leb$ is, so $c_\mathbb{R}\vert_{\mathcal{B}_\mathbb{R}}$ cannot be $\sigma$-finite (which is anyway obvious by other means, even $c_\mathbb{R}$ is not $\sigma$-finite).
\end{example}

\begin{corollary}
Suppose $\mu\ll \nu\ll\lambda$ are $\sigma$-finite measures on a $\sigma$-field $\FF$. Then $\frac{\dd \mu}{\dd \lambda}=\frac{\dd \mu}{\dd \nu}\frac{\dd \nu}{\dd \lambda}$ a.e.-$\lambda$. In particular, if $\mu\sim\nu$ then $1=\frac{\dd \mu}{\dd \nu}\frac{\dd \nu}{\dd \mu}$ a.e.-$\nu$ and a.e.-$\mu$.

  If further $\mu$ is finite, then we have also the following assertion: for every $\epsilon\in (0,\infty)$ there exists a $\delta\in (0,\infty)$ such that $\mu(A)\leq \epsilon$ whenever $\nu(A)\leq \delta$, $A\in \FF$. 
\end{corollary}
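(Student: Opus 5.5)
For the chain rule, I will identify $(\frac{\dd\mu}{\dd\nu}\frac{\dd\nu}{\dd\lambda})\cdot\lambda$ with $\mu$ and then invoke the uniqueness part of Theorem~\ref{thm:RN}. Write $f:=\frac{\dd\mu}{\dd\nu}$ and $g:=\frac{\dd\nu}{\dd\lambda}$, both in $\FF/\mathcal{B}_{[0,\infty)}$, so that $\mu=f\cdot\nu$ and $\nu=g\cdot\lambda$. By the associativity of indefinite integration in Proposition~\ref{proposition:density}, applied with $g$ as the density and $f$ as the nonnegative test function (so every integral is well-defined), we get $f\cdot(g\cdot\lambda)=(fg)\cdot\lambda$. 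Hence $\mu=(fg)\cdot\lambda$ with $fg\in\FF/\mathcal{B}_{[0,\infty)}$. Since $\mu\ll\lambda$ by transitivity and both are $\sigma$-finite, the $\lambda$-a.e. uniqueness in Theorem~\ref{thm:RN} (equivalently Corollary~\ref{corollary}, using $\sigma$-finiteness of $\lambda$) yields $\frac{\dd\mu}{\dd\lambda}=fg$ a.e.-$\lambda$. The only subtlety is that the statement concerns versions; any version differs from $f$ only on a $\nu$-null set, and hence from $fg$ only on a $\lambda$-null set. This holds because $\{f\neq f'\}$ is $\nu$-null, and $\lambda[g;\{f\ne f'\}]=0$ forces $g=0$ $\lambda$-a.e. there.

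For $\mu\sim\nu$, I will apply the chain rule with $\lambda:=\mu$ and the chain $\mu\ll\nu\ll\mu$. This gives $\frac{\dd\mu}{\dd\nu}\frac{\dd\nu}{\dd\mu}=\frac{\dd\mu}{\dd\mu}$ a.e.-$\mu$, and $\frac{\dd\mu}{\dd\mu}=1$ a.e.-$\mu$ by uniqueness, since $\mu=1\cdot\mu$. Because $\nu\ll\mu$, every $\mu$-null set is $\nu$-null, so the identity also holds a.e.-$\nu$.

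For the $\epsilon$-$\delta$ assertion with $\mu$ finite, I will argue by contradiction; I expect this step to take slightly more work. Suppose that for some $\epsilon>0$ there are $A_n\in\FF$ with $\nu(A_n)\le 2^{-n}$ and $\mu(A_n)>\epsilon$. Set $A:=\limsup_n A_n$. By Borel--Cantelli I (Example~\ref{Borel-Cantelli-I}), $\nu(A)=0$, so $\mu(A)=0$ by absolute continuity. On the other hand, $\mu$ is finite, so continuity from above (Proposition on measures, item (\ref{measures:iv})) gives
\[
\mu(A)=\lim_{N\to\infty}\mu\Bigl(\textstyle\bigcup_{n\ge N}A_n\Bigr)\ge\epsilon,
\]
a contradiction. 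Note that finiteness of $\mu$ is used exactly at the continuity-from-above step, and neither the Radon--Nikodym density nor $\sigma$-finiteness of $\nu$ is needed for this part.
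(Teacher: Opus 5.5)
Your proposal is correct. The chain rule and the $\mu\sim\nu$ specialization match the paper's argument: associativity from Proposition~\ref{proposition:density}, then Radon--Nikodym uniqueness, then $\lambda:=\mu$. Your remark about versions is fine but not really needed, since the associativity argument already applies to whichever versions of $\frac{\dd\mu}{\dd\nu}$ and $\frac{\dd\nu}{\dd\lambda}$ one picks.

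For the $\epsilon$-$\delta$ assertion you take a genuinely different route. The paper works with the density $h:=\frac{\dd\mu}{\dd\nu}$. Since $\nu[h]$ equals the finite total mass of $\mu$, dominated convergence gives an $N$ with $\mu(h>N)=\nu[h;h>N]<\epsilon/2$. Splitting $A$ along $\{h>N\}$ then yields $\mu(A)\leq\epsilon/2+N\nu(A)$, so $\delta:=\epsilon/(2N)$ works. Your contradiction argument uses only $\mu\ll\nu$, finiteness of $\mu$ (for continuity from above) and Borel--Cantelli I, which holds for arbitrary measures.

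What each route buys: the paper's argument gives an explicit $\delta$ controlled by the tail of the density (in effect, uniform integrability of $h$ under $\nu$). However, it presupposes the Radon--Nikodym theorem and hence $\sigma$-finiteness of $\nu$. Yours is non-constructive but strictly more general. In particular, it establishes the characterization of absolute continuity in the remark following the corollary for finite $\mu$ and completely arbitrary $\nu$.
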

\begin{remarks}
Clearly if the preceding epsilon-delta property holds of two measures $\mu$ and $\nu$ defined on a $\sigma$-field $\FF$, then $\mu\ll\nu$, so one has actually a characterization of absolute continuity (under the assumption that $\mu$ is finite). Also, the finiteness assumption cannot be altogether dispensed with: $\leb_{(1,\infty)}\ll ((1,\infty)\ni x\mapsto x^{-2})\cdot \leb\vert_{(1,\infty)}$ (there is even equivalence), however the epsilon-delta property fails for this pair of $\sigma$-finite measures (of which the second is even finite).

The remark immediately following Proposition~\ref{proposition:density} shows that for $\sigma$-finite measures $\mu\ll\nu$ defined on a $\sigma$-field $\AA$ and for an $\AA$/$\BB$-measurable injective $g$ such that $g^{-1}\in \BB/\AA$ (for a ``bimeasurable bijection'' $g$) the equality 
$$g_\star \left[\left(\frac{\dd g_\star\mu}{\dd g_\star \nu}\circ g\right)\cdot\nu\right]=\frac{\dd g_\star\mu}{ \dd g_\star\nu}\cdot g_\star\nu=g_\star\mu$$ holds true; pushing forward along $g^{-1}$ we get the (note-worthy) formula $$\frac{\dd g_\star\mu}{\dd g_\star \nu}\circ g=\frac{\dd \mu}{\dd\nu}\text{ a.e.-$\nu$}.$$
\end{remarks} 
\begin{proof}
Clearly $\mu\ll \lambda$. For the first part just note that by Proposition~\ref{proposition:density} $(\frac{\dd \mu}{\dd \nu}\frac{\dd \nu}{\dd \lambda})\cdot \lambda=\frac{\dd \mu}{\dd \nu}\cdot (\frac{\dd \nu}{\dd \lambda}\cdot \lambda)=\frac{\dd \mu}{\dd \nu}\cdot \nu=\mu$.  
By definition it follows that  $\frac{\dd \mu}{\dd \lambda}=\frac{\dd \mu}{\dd \nu}\frac{\dd \nu}{\dd \lambda}$. 

The second claim follows on specializing the first assertion to $\lambda=\mu$, noting that trivially  $\frac{\dd \mu}{\dd\mu}=1$ a.e.-$\mu$. 

  For the last part, there is an $N\in \mathbb{N}$ such that $\nu(\frac{\dd\mu}{\dd\nu}> N)<\epsilon/2$ (by dominated convergence in $\nu(\frac{\dd\mu}{\dd\nu}> N)=\int_{\{\frac{\dd\mu}{\dd\nu}> N\}}\frac{\dd\mu}{\dd\nu}\dd\nu$, because the $\nu$-integral of $\frac{\dd\mu}{\dd\nu}$ is finite). Take $\delta=\epsilon/(2N)$. If $A\in \FF$ and $\nu(A)\leq \delta$ it follows that $\mu(A)=\int_A\frac{\dd\mu}{\dd\nu}d\nu=\int_{A\cap \{\frac{\dd\mu}{\dd\nu}> N\}}\frac{\dd\mu}{\dd\nu}d\nu+\int_{A\cap \{\frac{\dd\mu}{\dd\nu}\leq N\}}\frac{\dd\mu}{\dd\nu}d\nu\leq \int_{\{\frac{\dd\mu}{\dd\nu}> N\}}\frac{\dd\mu}{\dd\nu}d\nu+\int_{A\cap \{\frac{\dd\mu}{\dd\nu}\leq N\}}Nd\nu= \nu(\frac{\dd\mu}{\dd\nu}> N)+N\nu(A\cap \{\frac{\dd\mu}{\dd\nu}\leq N\})\leq \frac{\epsilon}{2}+N\nu(A)\leq \epsilon$.  
\end{proof}

\begin{example}[Change of variables for integrals against Lebesgue measure]
	In the context of Examples~\ref{lemma:cts/diff/density}  and~\ref{lemma:cts/diff/density-1} assume $\dd G \ll     \leb$ and denote $G':=\frac{\dd (\dd G)}{\dd \leb}$ (whether or not $G$ is continuously differentiable); we say that $G$ is absolutely continuous.  Suppose furthermore that $G'>0$ a.e.-$\leb$ so that $G$ maps $\mathbb{R}$ continuously and strictly increasingly ($\therefore$ bijectively) onto  $(G(-\infty),G(\infty))$ (where $G(\infty):=\lim_\infty G$ and likewise for $G(-\infty)$). Changing $G'$ on an $\leb$-negligible set only we may and do assume that $G'$ is strictly positive.  Unraveling the definitions we then see, using \eqref{eq:image}, that $((G'\circ G^{-1})\cdot (G_\star \leb))((a,b])=\leb[G';(G^{-1}(a),G^{-1}(b)]]=\dd G((G^{-1}(a),G^{-1}(b)])=b-a=\leb_{(G(-\infty),G(\infty))}((a,b])$ for all real $a\leq b$ from $(G(-\infty),G(\infty))$, and we conclude via Proposition~\ref{proposition:equality of measures} (or directly from the definition of $\leb=\dd(\mathrm{id}_\mathbb{R})$, but the latter itself rests on Proposition~\ref{proposition:equality of measures}) that $(G'\circ G^{-1})\cdot (G_\star \leb)=\leb_{(G(-\infty),G(\infty))}$, i.e. $G_\star \leb\sim \leb_{(G(-\infty),G(\infty))}$ and 
	\begin{equation}
	\frac{\dd (G_\star \leb)}{\dd (\leb_{(G(-\infty),G(\infty))})}=\frac{1}{G'\circ G^{-1}}
	\end{equation} a.e.-$\leb_{(G(-\infty),G(\infty))}$. Here in taking the push-forward $G_\star \leb$ we have viewed $G$ as mapping into $(G(-\infty),G(\infty))$ endowed with its Borel $\sigma$-field. Extensions to the case when $G$ is defined only on some open interval are ``automatic''. 
\end{example}
\begin{remark}
Variants and extensions of the preceding in the context of higher-dimensional Lebesgue measures  involve the absolute value of the determinant of the Jacobian matrix of $G$ (in lieu of $G'$).

For instance, let $G:O\to V$ be a $C^1$ bijection between open subset $O$ and $V$ of $\mathbb{R}^n$, $n\in \mathbb{N}$, such that $\det J(G)\ne 0$ throughout $O$. Suppose $\mu$ is a $\sigma$-finite measure on $\mathcal{B}_O$ absolutely continuous w.r.t. $\leb^n_O$. Then $G_\star \mu$ is absolutely continuous w.r.t.  $\leb^n_V$ and $\frac{\dd (G_\star \mu)}{\dd \leb^n_V}=\frac{\frac{\dd\mu}{\dd \leb^n_O}\circ G^{-1}}{\vert \det J(G)\vert\circ G^{-1}}$ a.e.-$\leb^n_V$.  Indeed, for $A\in \mathcal{B}_V$, $(G_\star \mu)(A)=\mu(G^{-1}(A))=\int_{G^{-1}(A)} \frac{\dd\mu}{\dd \leb^n_O}\dd \leb^n=\int_A \frac{\dd\mu}{\dd \leb^n_O}\circ G^{-1}\vert \det J(G^{-1})\vert\dd\leb^n=\int_A \frac{\frac{\dd\mu}{\dd \leb^n_O}\circ G^{-1}}{\vert \det J(G)\vert\circ G^{-1}}\dd\leb^n$ by change of variables (maybe we know it from calculus only for $A=$ bounded rectangle  and for $1$ in lieu of $\frac{\dd\mu}{\dd \leb^n_O}$ but it extends at once by the usual arguments of monotone class).  We may remark that if $\mu$ is the law of an $O$-valued $X$ under a measure $\nu$, then $G_\star\mu=G_\star (X_\star \nu)=(G\circ X)_\star \nu$ is the law of $G( X)$ under $\nu$.
 
\end{remark}

\section{L-spaces and some integral inequalities}
\emph{the inequalities of Markov, Jensen,  H\"older, Cauchy-Schwartz and Minkowski; the spaces $\mathcal{L}^p$ for $p\in [1,\infty]$}

\begin{definition}
	Let $(\Omega,\FF,\mu)$ be a measure space, $p\in [1,\infty)$ and $f\in \FF/\mathcal{B}_{[-\infty,\infty]}$. We put $$\Vert f\Vert_{p_\mu}:=\left(\int \vert f\vert^p\dd \mu\right)^{1/p}$$ and set $\LL^p(\mu):=\{g\in \FF/\mathcal{B}_{\mathbb{R}}:\Vert g\Vert_{p_\mu}<\infty\}$ (this has already been defined for $p=1$); in addition we set $\Vert f\Vert_{\infty_\mu}:=\inf \{M\in [0,\infty]:\vert f\vert\leq M\text{ a.e.-$\mu$}\}$ and put $\LL^\infty(\mu):=\{g\in \FF/\mathcal{B}_{\mathbb{R}}:\Vert g\Vert_{\mu_\infty}<\infty\}$. For $q\in [1,\infty]$, a sequence  $(f_n)_{n\in \mathbb{N}}$ in $\LL^q(\mu)$ and $f_0\in \LL^q(\mu)$: $\lim_{n\to\infty}f_n=f_0$ in $\LL^q(\mu)$ \deff $\Vert f_n-f_0\Vert_{q_\mu}\to 0$ as $n\to\infty$, in which case $f_0$ is  called the $\LL^q(\mu)$-limit of $(f_n)_{n\in\mathbb{N}}$. One also defines  $\langle f,g\rangle_\mu:=\mu[fg]$ for $\{f,g\}\subset\LL^2(\mu)$. 
\end{definition}

\begin{remarks}
	One tends to omit $\mu$ in the notation if it can be gathered from context. We understand $\infty^p:=\infty$ and $\infty^{1/p}:=\infty$, of course ($p\in [1,\infty)$). It is equally possible to introduce the L-spaces for complex-valued functions (but we shall keep to the real case). Convergence in $\LL^\infty(\mu)$ corresponds to uniform convergence a.e.-$\mu$. The set of which the infimum is being taken in the definition of $\Vert f\Vert_{\infty_\mu}$ is closed and non-empty, i.e. $\vert f\vert\leq \Vert f\Vert_{\infty_\mu}$ a.e.-$\mu$.
	\end{remarks}
\begin{example}
Taking $\mu=c_\mathbb{N}$ in the preceding definition gives the sequence spaces $l^p(\mathbb{R}):=\LL^p(c_\mathbb{N})$, $p\in [1,\infty]$. 
\end{example}
\begin{proposition}
Let $\{p,q\}\subset [1,\infty]$, $p\leq q$, and  let $\mu$ be a finite measure. Then $\LL^q(\mu)\subset \LL^p(\mu)$.
\end{proposition}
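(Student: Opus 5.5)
The plan is to split into the cases $q=\infty$ and $q<\infty$; the case $p=q$ is trivial. Throughout, $g\in\LL^q(\mu)$ is fixed and, since $g\in \FF/\mathcal{B}_\mathbb{R}$ already, only $\Vert g\Vert_{p_\mu}<\infty$ needs checking.

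\emph{Case $q=\infty$, $p<\infty$.} By the remark following the definition, $\vert g\vert\leq \Vert g\Vert_{\infty_\mu}=:M<\infty$ a.e.-$\mu$. Hence $\vert g\vert^p\leq M^p$ a.e.-$\mu$. By monotonicity together with the invisibility of null sets (Theorem~\ref{theorem:int:basics}\eqref{properties:iii},\eqref{properties:vi}) and the integral of an indicator, we get $\int\vert g\vert^p\dd\mu\leq M^p\mu(\Omega)<\infty$.

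\emph{Case $1\leq p<q<\infty$.} I would avoid H\"older (it is not yet available in the text) and use a pointwise splitting instead. On $\{\vert g\vert\leq 1\}$ we have $\vert g\vert^p\leq 1$, and on $\{\vert g\vert>1\}$ we have $\vert g\vert^p\leq \vert g\vert^q$. So $\vert g\vert^p\leq 1+\vert g\vert^q$ everywhere. Additivity and monotonicity of the integral for nonnegative functions then give
\begin{equation*}
\int\vert g\vert^p\dd\mu\leq \mu(\Omega)+\int\vert g\vert^q\dd\mu<\infty,
\end{equation*}
by finiteness of $\mu$ and $g\in\LL^q(\mu)$.

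There is no real obstacle here. The only points to watch are that $\vert g\vert^p$ is measurable (a continuous function of a measurable one, by the earlier propositions) and that the inequality $\vert g\vert^p\leq 1+\vert g\vert^q$ holds pointwise on all of $\Omega$, which the elementary case split shows since $g$ is real-valued. If a quantitative version were wanted, Jensen or H\"older would give $\Vert g\Vert_p\leq \mu(\Omega)^{1/p-1/q}\Vert g\Vert_q$, but mere inclusion does not require it.
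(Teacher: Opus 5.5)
Your proof is correct and is essentially the paper's own argument. For $q<\infty$ the paper also splits the space into $\{\vert f\vert\geq 1\}$ and $\{\vert f\vert<1\}$, which is your pointwise bound $\vert g\vert^p\leq 1+\vert g\vert^q$. For $q=\infty$ it likewise integrates $\vert f\vert^p\leq \Vert f\Vert_\infty^p$ a.e.-$\mu$ against the finite measure $\mu$.
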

\begin{proof}
We note that $([1,\infty)\ni r\mapsto x^r)$ is nondecreasing for $x\in [1,\infty)$. For $q<\infty$, take $f\in \LL^q(\mu)$ and estimate: 
\begin{align*}
\int \vert f\vert^p\dd\mu&=\int_{\{\vert f\vert\geq 1\}} \vert f\vert^p\dd\mu+\int_{\{\vert f\vert< 1\}} \vert f\vert^p\dd\mu\\
&\leq \int_{\{\vert f\vert\geq 1\}} \vert f\vert^q\dd\mu+\int_{\{\vert f\vert< 1\}} 1\dd\mu\\
&\leq \int \vert f\vert^q\dd\mu+\mu(\vert f\vert< 1)<\infty,
\end{align*} 
by monotonicity of the integral and since $\mu$ is finite; therefore $f\in \LL^p(\mu)$. The case $q=\infty$ we treat separately. We may assume $p<\infty$. Then taking $f\in \LL^\infty(\mu)$ we get $\int  \vert f\vert^p\dd\mu\leq \int  \Vert f\Vert_\infty^p\dd\mu<\infty$, since $\vert f\vert\leq  \Vert f\Vert_\infty$ a.e.-$\mu$ and (again) since $\mu$ is finite; hence deduce that $f\in \LL^p(\mu)$.
\end{proof}

\begin{theorem}
	Let $(\Omega,\FF,\mu)$ be a measure space, $\{f,g\}\subset \FF/\mathcal{B}_{[-\infty,\infty]}$. The following inequalities hold true.
	\begin{enumerate}[(i)]
		\item Markov: $\mu[f;f\geq a]\geq a\mu(f\geq a)$ for all $a\in [-\infty,\infty]$; hence, for all $a\in [0,\infty]$, if $f\geq 0$, then  $\mu[f]\geq a\mu(f\geq a)$. 
		\item Minkowski: $\Vert f+g\Vert_p\leq \Vert f\Vert_p+\Vert g\Vert_p$ for all $p\in [1,\infty]$.
		\item H\"older: $\Vert fg\Vert_1\leq \Vert f\Vert_p\Vert g\Vert_q$ whenever $\{p,q\}\subset [1,\infty]$ and $p^{-1}+q^{-1}=1$ (we interpret $\infty^{-1}:=0$). (Special case: Cauchy-Schwartz, when $p=q=2$.)
		\item Jensen: Suppose $\mu$ is a probability, $f\in \LL^1(\mu)$ and $\phi:I\to \mathbb{R}$ is convex, with $I$ an open interval of $\mathbb{R}$ into which $f$ maps. Then $\phi\in \mathcal{B}_I/\mathcal{B}_\mathbb{R}$, $\int (\phi\circ f)^-\dd\mu<\infty$, $\int f\dd\mu\in I$ and 
		\begin{equation}\label{eq:jensen}
\int \phi\circ f\dd\mu\geq \phi\left(\int f\dd\mu\right);
		\end{equation}
in case $\phi$ is strictly convex there is equality in \eqref{eq:jensen} iff $f=\mu[f]$ a.s.-$\mu$.
				\end{enumerate}
	Furthermore, for each $p\in [1,\infty]$, $\Vert\cdot \Vert_p$ is a seminorm on the real linear 
	 space $\LL^p(\mu)$ and the  $\Vert\cdot \Vert_p$-limit of a sequence in $\LL^p$, if it exists, is $\mu$-a.e. unique; it exists iff the sequence is Cauchy in the seminorm $\Vert\cdot\Vert_p$. Finally, $\langle\cdot,\cdot\rangle$ is an inner semiproduct on $\LL^2(\mu)$. 
\end{theorem}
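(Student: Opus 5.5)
I would prove the four inequalities separately, in the order Markov, Jensen, Hölder, Minkowski, since Hölder feeds into Minkowski, and then derive the closing assertions about $\LL^p(\mu)$ and $\langle\cdot,\cdot\rangle$.

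\emph{Markov.} Use monotonicity (Theorem~\ref{theorem:int:basics}\eqref{properties:iii}): $f\mathbbm{1}_{\{f\geq a\}}\geq a\mathbbm{1}_{\{f\geq a\}}$, and $\mu[a\mathbbm{1}_{\{f\geq a\}}]=a\mu(f\geq a)$ by homogeneity. For $a\geq 0$ the minorant is nonnegative, so there is no well-definedness issue. For $a<0$ I would split into cases according to whether $\mu(f\geq a)$ is finite; if it is infinite the right-hand side is $-\infty$ and the claim is trivial. The second statement then follows from $f\geq f\mathbbm{1}_{\{f\geq a\}}$.

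\emph{Jensen.} Convex $\phi$ on open $I$ is continuous, hence Borel measurable. Since $f$ is integrable with values in the open interval $I$, we get $m:=\mu[f]\in I$. For example, if $m=\sup I<\infty$, then $\mu[\sup I-f]=0$ with $\sup I-f>0$, contradicting Theorem~\ref{theorem:int:basics}\eqref{properties:v}. Take a supporting line $\phi(x)\geq \phi(m)+c(x-m)$ for $x\in I$, with $c$ a one-sided derivative at $m$. Then $(\phi\circ f)^-\leq |\phi(m)|+|c||f-m|$ is integrable, and integrating via additivity and monotonicity gives \eqref{eq:jensen}. For strict convexity, the inequality $\phi(x)>\phi(m)+c(x-m)$ holds for $x\neq m$. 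Equality of the integrals then forces the nonnegative function $\phi\circ f-\phi(m)-c(f-m)$ to vanish a.e., hence $f=m$ a.s.

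\emph{Hölder.} First dispose of the degenerate cases: $\Vert f\Vert_p$ or $\Vert g\Vert_q$ equal to $0$, which makes $fg=0$ a.e., or equal to $\infty$; and the case $p=\infty$, which is immediate from $|fg|\leq \Vert f\Vert_\infty|g|$ a.e. Otherwise normalize $\Vert f\Vert_p=\Vert g\Vert_q=1$. Apply Young's inequality $ab\leq a^p/p+b^q/q$ for $a,b\in[0,\infty]$, which follows from concavity of $\log$, pointwise to $|f|,|g|$, and integrate. Cauchy-Schwarz is the case $p=q=2$.

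\emph{Minkowski.} The cases $p=1$ and $p=\infty$ are pointwise triangle inequalities; note that $|f+g|\leq|f|+|g|$ survives the convention $\infty-\infty=0$. For $1<p<\infty$, assume the right side is finite. Convexity of $x\mapsto x^p$ gives $|f+g|^p\leq 2^{p-1}(|f|^p+|g|^p)$, so $\Vert f+g\Vert_p<\infty$. Then write $|f+g|^p\leq |f||f+g|^{p-1}+|g||f+g|^{p-1}$ and apply Hölder with $q=p/(p-1)$ to get $\Vert f+g\Vert_p^p\leq(\Vert f\Vert_p+\Vert g\Vert_p)\Vert f+g\Vert_p^{p-1}$. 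Divide when $\Vert f+g\Vert_p\in(0,\infty)$.

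\emph{Closing assertions.}
\begin{itemize}
\item $\LL^p(\mu)$ is a linear space and $\Vert\cdot\Vert_p$ is a seminorm, by Minkowski and homogeneity.
\item Limits are a.e. unique, since $\Vert f-f'\Vert_p=0$ forces $f=f'$ a.e.
\item A convergent sequence is Cauchy by the triangle inequality.
\item $\langle\cdot,\cdot\rangle$ is finite by Cauchy-Schwarz; it is bilinear, symmetric and positive semidefinite by linearity of the integral.
\end{itemize}

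The main obstacle is completeness (Riesz-Fischer): a Cauchy sequence must have a limit. For $p<\infty$ I would extract a subsequence with $\Vert f_{n_{k+1}}-f_{n_k}\Vert_p\leq 2^{-k}$ and set $G:=|f_{n_1}|+\sum_k|f_{n_{k+1}}-f_{n_k}|$. By monotone convergence and Minkowski, $\Vert G\Vert_p<\infty$, so $G<\infty$ a.e. Hence $f_{n_k}$ converges a.e. Define $f_0$ as the limit on the measurable convergence set and $0$ elsewhere, which keeps it real-valued. Fatou applied to $|f_n-f_{n_k}|^p$ gives $\Vert f_n-f_0\Vert_p\leq\liminf_k\Vert f_n-f_{n_k}\Vert_p\to0$, and so $f_0\in\LL^p$. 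For $p=\infty$, take a single null set outside of which the sequence is uniformly Cauchy, and pass to the pointwise limit.
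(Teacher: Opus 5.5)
Your proposal is correct and follows the paper's proof essentially step for step. Markov comes from monotonicity and H\"older from normalization plus Young's inequality via concavity of $\log$. Jensen uses an affine minorant of $\phi$ touching it at $\mu[f]$; the paper phrases the main inequality as a supremum over all affine minorants, but uses exactly your supporting line for the strict-convexity case. Minkowski comes from $|f+g|^p\leq 2^{p-1}(|f|^p+|g|^p)$ plus H\"older. Completeness goes through a fast Cauchy subsequence, Minkowski with monotone convergence, and Fatou.

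Your explicit treatment of the $p=\infty$ cases and of well-definedness in Markov for $a<0$ is more careful than the paper, which calls these trivial. One small correction: for $a=-\infty$ you should split on whether $a\mu(f\geq a)=-\infty$, not on whether $\mu(f\geq a)$ is finite. When $0<\mu(\Omega)<\infty$ your ``finite'' branch would invoke monotonicity with $\int (a\mathbbm{1}_{\{f\geq a\}})^-\dd\mu=\infty$, although the claim is trivially true there.
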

\begin{remark}
	One gets only seminorms and  an inner semiproduct because $\int f\dd\mu=0$ \& $f\geq 0$ implies merely that $f=0$ a.e.-$\mu$, and not everywhere. To get norms and an inner product, one has to pass to the equivalence classes w.r.t. a.e. equality.  The quotient spaces of $\LL^p$ are then denoted $\LLL^p$ ($p\in [1,\infty]$), they become Banach spaces, and $\LLL^2$ a Hilbert space. One also introduces $\LLL^0(\mu)$, the quotient of $\LL^0(\mu):=\FF/\BB_\mathbb{R}$ by a.e.-$\mu$ equality. We do not pursue this further here.
	\end{remark}

\begin{example}\label{example:symmetric-measure-space}
Let $(S,\Sigma,\mu)$ be a $\sigma$-finite measure space, non-atomic in the sense that $\mu(\{x\})=0$ for all $x\in S$. Let $S_s:=\cup_{n\in \mathbb{N}_0}{S\choose n}$. For $n\in \mathbb{N}_0$ set $\Sigma^{\otimes n}:=\underbrace{\Sigma\otimes \cdots\otimes \Sigma}_{\text{$n$-times}}$ ($\Sigma^{\otimes 0}=2^{\{\emptyset\}}$), a $\sigma$-field on $S^n$ ($S^0=\{\emptyset\}$), also $\Delta_n:=\{x\in S^n:\text{the components of $x$ are pairwise distinct}\}$, which we assume belongs to $\Sigma^{\otimes n}$ (it suffices to assume the latter for $n=2$, it then follows automatically for all $n$). 

For $n\in \mathbb{N}_0$ and a  map $f\in \Sigma^{\otimes n}/\mathcal{B}_\mathbb{R}$ put next (the sum is over all bijections $\pi:[\vert\sigma\vert]\to\sigma$) $$f_s(\sigma):=\frac{1}{n!}\sum_{\pi}f(\pi_1,\ldots,\pi_n), \quad \sigma\in {S\choose n}$$ (you see, we symmetrize $f$, only the values of $f$ on $\Delta_n$ matter) and endow $S_s$ with the smallest $\sigma$-field $\Sigma_s$ that makes all such $f_s\mathbbm{1}_{{S\choose n}}$ Borel measurable.  Thus, for all $n\in \mathbb{N}_0$, ${S\choose n}\in \Sigma_s$ and $q_n:=(\Delta_n\ni (x_1,\ldots,x_n)\mapsto \{x_1,\ldots,x_n\}\in {S\choose n})\in \Sigma^{\otimes n}\vert_{\Delta_n}/\Sigma_s$. Also, $F\in \Sigma_s/\mathcal{B}_{[-\infty,\infty]}$ iff for each $n\in \mathbb{N}_0$ there is an $f_n\in\Sigma^n/\mathcal{B}_{[-\infty,\infty]}$ such that $F=\sum_{n\in \mathbb{N}_0}\mathbbm{1}_{{S\choose n}}(f_n)_s$. Another way to describe $\Sigma_s$ is by noting that $\Sigma_s\vert_{S\choose n}=\{q_n(E):E\in \Sigma^n\vert_{\Delta_n},\text{ $E$ symmetric}\}$ for all $n\in \mathbb{N}_0$, i.e. $\Sigma_s$ is the largest $\sigma$-field on $S_s$ w.r.t. which the $q_n$, $n\in \mathbb{N}_0$, are measurable. 

Next, remark that by Tonelli, for each $n\in \mathbb{N}_0$, $\mu^n(S^n\backslash \Delta_n)=0$; here $\mu^n=\underbrace{\mu\times\cdots\times \mu}_{\text{$n$-times}}$ and we interpret $\mu^0$ as $\delta_{\emptyset}$, of course. There is then clearly a unique measure $\mu_s$ on $\Sigma_s$ such that  $$\mu_s[F]=\sum_{n=0}^\infty \frac{1}{n!}\int_{\Delta_n} F(\{x_1,\ldots,x_n\})\mu^{n}(\dd x),\quad F\in \Sigma_s/\mathcal{B}_{[0,\infty]};$$ indeed, $\mu_s=\sum_{n\in \mathbb{N}_0}n!^{-1}(q_n)_\star [(\mu^n)_{\Delta_n}]$. $(S_s,\Sigma_s,\mu_s)$ is the so-called symmetric measure space over $(S,\Sigma,\mu)$.  Take $u\in \LL^2(\mu)$ and define $\mathsf{e}^u(\sigma):=\prod_{x\in \sigma}u(x)$ for $\sigma\in S_s$. Then $\mathsf{e}^u\in \LL^2(\mu_s)$ and $\Vert \mathsf{e}^u\Vert_{2_{\mu_s}}^2=e^{\Vert u\Vert_{2_\mu}^2}$. More generally, $\langle \mathsf{e}^u,\mathsf{e}^v\rangle_{\mu_s}=e^{\langle u,v\rangle_\mu}$ for $\{u,v\}\subset \LL^2(\mu)$. In a sense that can be made precise $L^2(\mu_s)$ is (naturally unitarily isomorphic to) the exponential (a.k.a. symmetric Fock) space of $L^2(\mu)$.
\end{example}

	\begin{proof}[Proof excluding completeness of $\LL^p(\mu)$] The instances corresponding to $\LL^\infty(\mu)$ are trivial and in the following we exclude these.
	
	Markov. $f\mathbbm{1}_{\{f\geq a\}}\geq a\mathbbm{1}_{\{f\geq a\}}$; apply monotonicity of the integral. 
	
	H\"older. If $\Vert f\Vert_p\lor \Vert g\Vert_p=\infty$ or if $\Vert f\Vert_p \Vert g\Vert_p=0$ it is trivial/immediate; assume the converse. Replacing $f$ with $f/\Vert f\Vert_p$ and $g$ with $g/\Vert g\Vert_p$ we may assume that $\Vert f\Vert_p=\Vert g\Vert_p=1$. Young's inequality tells us that for $\{a,b\}\subset (0,\infty)$, $ab\leq \frac{a^p}{p}+\frac{b^q}{q}$ (why is it true?: by concavity of the logarithm $a^\alpha b^\beta\leq \alpha a+\beta b$ if $\{\alpha,\beta\}\subset (0,1)$, $\alpha+\beta=1$; put $\alpha=p^{-1}$, $\beta=q^{-1}$ and substitute $a\to a^p$, $b\to b^p$); plainly it remains true if merely $\{a,b\}\subset [0,\infty]$. Apply monotonicity and additivity of the integral to $\vert f\vert \vert g\vert\leq \frac{\vert f\vert^p}{p}+\frac{\vert g\vert^q}{q}$.  
	
			Jensen. Being convex, $\phi$ is continuous, hence  $\phi\in\mathcal{B}_I/\mathcal{B}_\mathbb{R}$. Next, let $a_1:=\inf I\in [-\infty,\infty)$ and $a_2:=\sup I\in (-\infty,\infty]$ (note that, necessarily, $I\ne \emptyset$). If $a_1=-\infty$ (resp. $a_2=\infty$), clearly $\mu[f]>a_1$ (resp. $\mu[f]<a_2$), since, by assumption, $f\in \LL^1(\mu)$. Otherwise, observe that $\mu[ f]=a_1$ (resp. $\mu [ f]=a_2$) would imply $f=a_1$ (resp. $f=a_2$) $\mu$-a.s., which cannot be. It follows that $\mu[ f]\in I$. Further to this, note that for some $\{a,b\}\subset \mathbb{R}$, $\phi \circ f\geq af+b$, hence $(\phi\circ f)^-\leq (af+b)^-\leq \vert af+b\vert\leq \vert a\vert \vert f\vert+\vert b\vert$, so that $\mu[(\phi\circ f)^-]<\infty$.  Finally observe that $\phi$ is the pointwise supremum of the affine minorants of $\phi$.\footnote{This is a consequence of the fact that $\phi$ has ``nondecreasing difference quotients'', in the precise sense that $\frac{\phi(t)-\phi(s)}{t-s} \leq \frac{\phi(u)-\phi(s)}{u-s}\leq \frac{\phi(u)-\phi(t)}{u-t}$, whenever $\{s,t,u\}\subset I$ and $s<t<u$. Moreover, $\phi$ admits a finite left and right derivative function (because $I$ is open), the latter pointwise no smaller than the former.\label{foot:convex}} Let the set of the latter be denoted $\mathcal{A}$. Then $\phi\circ f\geq a\circ f$, and hence by linearity (and because $\mu$ is a probability) $\mu[\phi \circ f]\geq a(\mu[f])$ for all $a\in \mathcal{A}$; taking the supremum over $a\in \mathcal{A}$ yields $\mu[\phi\circ f]\geq \phi(\mu[f])$. Assume now $\phi$ is strictly convex. There is $l\in \mathcal{A}$ such that $\phi(\PP[f])=l(\PP[f])$ and (automatically by strict convexity) $l<\phi$ on $I\backslash \{\PP[f]\}$. Consequently $\phi(\PP[f])=\PP[\phi(f)]$ implies $\phi(\PP[f])=\PP[\phi(f)]\geq \PP[l(f)]=l(\PP[f])=\phi(\PP[f])$ rendering $\PP[\phi(f)]=\PP[l(f)]$ and so $\PP(f=\PP[f])=1$. The converse implication is essentially trivial.
	
	Minkowski. We may and do assume $f\land g\geq 0$, $\Vert f\Vert_p\lor \Vert g\Vert_p<\infty$ and $\Vert f+g\Vert_p>0$. For $p=1$ it is the triangle inequality; let $p>1$. Then an elementary inequality is that $(x+y)^p\leq 2^{p-1}(x^p+y^p)$ for $\{x,y\}\subset [0,\infty)$ (why is it true?: the $p$-th power is convex; therefore $((x+y)/2)^p\leq (x^p+y^p)/2$); by monotonicity and additivity of the integral it follows that $\Vert f+g\Vert_p<\infty$. We then compute and  estimate (via H\"older): $\Vert f+g\Vert_p^p=\mu[(f+g)^p]=\mu[(f+g)^{p-1}(f+g)]=\mu[f(f+g)^{p-1}]+\mu[g(f+g)^{p-1}]\leq (\Vert f\Vert_p+\Vert g\Vert_p)(\mu[(f+g)^{(p-1)(1-\frac{1}{p})^{-1}}])^{1-\frac{1}{p}}=(\Vert f\Vert_p+\Vert g\Vert_p)\frac{\Vert f+g\Vert_p^p}{\Vert f+g\Vert_p}$.
	
	The claims concerning the seminorm and inner semiproduct follow at once from the above. Uniqueness of limit in $\LL^p(\mu)$: use Markov's inequality. 
	\end{proof}
\begin{remark}
One can generalize H\"older to the case $p^{-1}+q^{-1}=r^{-1}$, $\{p,q,r\}\subset [1,\infty]$, in the obvious way.
\end{remark}

	\begin{proof}[Proof of completeness of $\LL^p(\mu)$] Again the case $p=\infty$ is elementary, so assume $p<\infty$.

 Let $(f_{n_k})_{k\in \mathbb{N}}$ be a subsequence such that $\Vert f_{n_k}-f_{n_{k+1}}\Vert_p\leq 2^{-k}$ for all $k\in\mathbb{N}$. By Minkowski and monotone convergence $\Vert \sum_{k\in \mathbb{N}}\vert f_{n_k}-f_{n_{k+1}}\vert\Vert_p\leq 1$. It follows in particular that the series $f_{n_1}+\sum_{k\in \mathbb{N}} (f_{n_{k+1}}-f_{n_k})$ converges absolutely in $\mathbb{R}$ a.e.-$\mu$; let $f_0$ be equal to its sum on the set on which the convergence is absolute in $\mathbb{R}$, and set $f_0$ equal to zero off this set. Then $f_0=\lim_{k\to\infty}f_{n_k}$ a.e.-$\mu$. We show that this convergence is also in $\LL^p(\mu)$. Let $\epsilon>0$. Then there is $N\in \mathbb{N}$ such that $\Vert f_n-f_m\Vert_p\leq \epsilon$ for all  $m$ and $n$ from $\mathbb{N}_{\geq N}$. By Fatou, for all $n\in \mathbb{N}_{\geq N}$, $\mu[\vert f-f_n\vert^p]\leq \liminf_{k\to\infty}\mu[\vert f_{n_k}-f_n\vert^p]\leq \epsilon^p$. In particular $f\in \LL^p(\mu)$ and $\lim_{n\to\infty}f_n= f$  in $\LL^p(\mu)$. 
	\end{proof}
\begin{remarks}
	The preceding proof shows in fact that if a sequence is Cauchy in $\LL^p(\mu)$, then it has a subsequence that converges a.e.-$\mu$ (to a limit, which is the $\LL^p(\mu)$ limit of the sequence). The following approximation result is sometimes useful.
	\end{remarks}
	\begin{proposition}\label{approx:lp}
		Let $(\Omega,\FF,\mu)$ be a measure space and $p\in [1,\infty]$. Then $\{f\in \LL^p(\mu):\mathrm{range}(f)\text{ is finite}\}$ is dense in $\LL^p(\mu)$. If furthermore $\mathcal{A}$ is an algebra generating $ \FF$ on $\Omega$ with  $\Omega$ being a union of a sequence of sets from $\mathcal{A}$ each of which has finite $\mu$-measure, then:  for any $F\in \FF$ of finite $\mu$-measure there is a sequence $(A_n)_{n\in \mathbb{N}}$ in $\mathcal{A}$ with $\lim_{n\to\infty}\mu(F\triangle A_n)=0$; in particular, if $p<\infty$, then finite linear (over $\mathbb{R}$) combinations of indicators of elements of $\mathcal{A}\cap \mu^{-1}([0,\infty))$ are  dense in $\LL^p(\mu)$.
	\end{proposition}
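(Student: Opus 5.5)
For the first claim I would treat $p<\infty$ and $p=\infty$ separately. For $p<\infty$, take $f\in\LL^p(\mu)$ and split $f=f^+-f^-$. By Proposition~\ref{proposition:approx} there are $\FF$-simple $s_n\uparrow f^+$ and $t_n\uparrow f^-$. Since $0\leq s_n\leq f^+$, each $s_n$ lies in $\LL^p(\mu)$ with finite range, and likewise $t_n$. Moreover $\vert f^+-s_n\vert^p\leq \vert f^+\vert^p$, which is $\mu$-integrable. Because $f$ is real-valued, $s_n\to f^+$ everywhere, so dominated convergence (Theorem~\ref{theorem:convergence}) gives $\Vert f^+-s_n\Vert_p\to0$. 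The same holds for $t_n$, and Minkowski then gives $\Vert f-(s_n-t_n)\Vert_p\to 0$. For $p=\infty$, I would first modify $f$ on a $\mu$-negligible set so that $\vert f\vert\leq \Vert f\Vert_\infty$ everywhere. A bounded $f$ is approximated uniformly by $2^{-n}\lfloor 2^n f\rfloor$, which has finite range and is measurable.

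For the second claim, fix $F\in\FF$ with $\mu(F)<\infty$. Let $(\Omega_k)_{k\in\mathbb{N}}$ be sets from $\mathcal{A}$ of finite measure covering $\Omega$; replacing them by $\cup_{j\leq k}\Omega_j\in\mathcal{A}$, I may assume they are $\uparrow$. My first attempt would be the Carath\'eodory route. By the $\sigma$-finiteness hypothesis and the uniqueness part of Theorem~\ref{theorem:caratheodory}, $\mu=\mu^*_\Omega\vert_{\FF}$, where $\mu^*_\Omega$ is built from $\mu\vert_{\mathcal{A}}$ (this $\mu\vert_{\mathcal{A}}$ is countably additive on $\mathcal{A}$). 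Hence $F$ has a countable cover $(B_j)$ from $\mathcal{A}$ with $\sum_j\mu(B_j)\leq\mu(F)+\epsilon$. Put $U:=\cup_j B_j$, so $\mu(U\backslash F)\leq \epsilon$. By continuity from below there is $N$ with $\mu(U\backslash \cup_{j\leq N}B_j)\leq\epsilon$. Then $A:=\cup_{j\leq N}B_j\in\mathcal{A}$ satisfies $\mu(F\triangle A)\leq 2\epsilon$.

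As a cleaner alternative that avoids outer measure, I would use a good-sets argument. Let $\DD$ be the class of $G\in\FF$ such that for every $k$ and every $\epsilon>0$ there is $A\in\mathcal{A}$ with $\mu((G\triangle A)\cap\Omega_k)<\epsilon$. The class $\DD$ contains $\mathcal{A}$, and $\mathcal{A}$ is a $\pi$-system. $\DD$ is closed under complements because $A^c\in\mathcal{A}$ and $G^c\triangle A^c=G\triangle A$. It is closed under increasing unions by continuity from below on the finite-measure set $\Omega_k$, taking a finite stage. By Dynkin's lemma (Corollary~\ref{corollary:dynkin}), $\DD=\FF$. Finally, for $F$ of finite measure, pick $k$ with $\mu(F\backslash\Omega_k)<\epsilon$, then $A\in\mathcal{A}$ with $\mu((F\triangle A)\cap\Omega_k)<\epsilon$. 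Then $A\cap\Omega_k\in\mathcal{A}$ and $\mu(F\triangle(A\cap\Omega_k))<2\epsilon$.

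For the density claim with $p<\infty$, I would use the first part to reduce to a finite-range $f\in\LL^p(\mu)$. Write it as $\sum_i c_i\mathbbm{1}_{F_i}$, where $F_i=\{f=c_i\}$ and $c_i\neq 0$. Markov gives $\mu(F_i)<\infty$, since $\vert c_i\vert^p\mu(F_i)\leq\mu[\vert f\vert^p]$. Choose $A_i\in\mathcal{A}$ with $\mu(F_i\triangle A_i)$ small; this forces $\mu(A_i)<\infty$. Since $\Vert \mathbbm{1}_{F_i}-\mathbbm{1}_{A_i}\Vert_p=\mu(F_i\triangle A_i)^{1/p}$, Minkowski concludes. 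I expect the main obstacle to be the approximation of $F$ by sets of $\mathcal{A}$, specifically making the Dynkin-class verification respect the localization to finite-measure pieces. Everything else is routine.
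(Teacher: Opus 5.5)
Your argument is correct. For the first claim and the final density claim it is the paper's argument: simple-function approximation with dominated convergence for the first, and for the last, Markov to give the level sets finite measure, followed by Minkowski.

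For the set-approximation step your primary route is genuinely different. The paper localizes to a finite-measure member of $\mathcal{A}$. It then checks directly that the sets $F$ with $\inf_{A\in\mathcal{A}}\mu(F\triangle A)=0$ form a $\sigma$-algebra containing $\mathcal{A}$. You instead identify $\mu$ with $\mu^*_\Omega\vert_{\mathcal{F}}$ through the uniqueness half of Carath\'eodory's theorem. From this you read off an outer approximation of $F$ by a countable union of members of $\mathcal{A}$, which you truncate by continuity from below. Your route is more explicit: it yields outer approximation, not just approximation in symmetric difference. However, it rests on the outer-measure machinery and on extension uniqueness, whereas the paper's good-sets argument uses only elementary properties of $\mu$.

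Your ``cleaner alternative'' is essentially the paper's route, but as written its Dynkin verification is incomplete. Containing $\Omega$ and being closed under complements and increasing unions does not by itself make a class a Dynkin system; closure under disjoint unions (equivalently, comparable differences) is also needed. The repair is immediate. The inclusion $(G_1\cup G_2)\triangle(A_1\cup A_2)\subset(G_1\triangle A_1)\cup(G_2\triangle A_2)$, together with $A_1\cup A_2\in\mathcal{A}$, shows $\mathcal{D}$ is closed under finite unions. So $\mathcal{D}$ is an algebra closed under increasing unions, hence a $\sigma$-algebra outright. That is exactly the paper's check, and it makes Dynkin's lemma unnecessary.
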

	\begin{proof}
Let $f\in\LL^p(\mu)$. For the first part we want to show that $f$ can be approximated arbitrarily well in the $\LL^p(\mu)$-seminorm by an element of  $\LL^p(\mu)$ with finite range. By Minkowski and $f=f^+-f^-$ we reduce to the case when $f\geq 0$. For $p=\infty$ the matter is then immediate by Proposition~\ref{proposition:approx}. Assume $p<\infty$. Again approximate via Proposition~\ref{proposition:approx}, applying (monotonicity of the integral and) dominated convergence.  For the second part, let $(A_n)_{n\in \mathbb{N}}$ be a sequence in $\mathcal{A}$ such that  $\Omega=\cup_{n\in\mathbb{N}}A_n$ is a nondecreasing union of sets of finite $\mu$-measure. Because $\mu(F)<\infty$ it will be enough to argue that $\inf_{A\in \mathcal{A}\vert_{A_n}}\mu((F\cap A_n)\triangle A)=0$ for each $n\in \mathbb{N}$, so 
we may just as well assume that $\mu$ is finite and show that  $\inf_{A\in \mathcal{A}}\mu(F\triangle A)=0$ for each $F\in  \FF$. The latter is immediate because one checks  easily that the class of sets $F\in\FF$ for which the claim holds true is a $\sigma$-algebra on $\Omega$, which contains $\AA$. The final part follows as a corollary to the first two; the fact that $p<\infty$ makes sure that when approximating $f\in \LL^p(\mu)$ with an element $g\in \LL^p(\mu)$ with finite range, the preimages $g^{-1}(x)$, $x\in \mathrm{range}(g)$, have finite $\mu$-measure.
	\end{proof}
\begin{example}\label{example:density-counter}
The case $p= \infty$ is conspicuously missing in the last part of the preeding proposition. This is beause in general it fails. Consider for instance $\AA$, which consists of finite disjoint unions of intervals of the form $(a,b]\cap \mathbb{R}$ where $a\leq b$ are from the extended real line, $\mu=\leb$ Lebesgue measure. Then no finite linear combination of indicators of elements of $\AA$ is closer than $1$ to $\mathbbm{1}_{\cup_{n\in\mathbb{N}}(2n,2n+1]}$ in  the $\LL^\infty(\leb)$-seminorm. The same example shows why the assumption that $F$ have finite measure cannot be dropped entirely.
\end{example}

\begin{example}[Riemann-Lebesgue lemma]
Retain the notation for $\AA$ of Example~\ref{example:density-counter}. If $A\in \AA$ has finite Lebesgue measure (i.e. is bounded), then $\lim_{\vert t \vert\to\infty}\int e^{\mathrm{i} t x}\mathbbm{1}_A\dd \leb=0$ follows by a straightforward computation. By approximation (so applying Proposition~\ref{approx:lp}) and an ``epsilon-limsup'' argument  it follows that $\lim_{\vert t \vert\to\infty}\int e^{\mathrm{i} t x}f\dd \leb=0$ prevails for all $f\in \LL^1(\leb)$.
\end{example}

\begin{example}\label{example:approximate-indicators}
Let $(\Omega,\FF,\mu)$ be a measure space and $\AA$ an algebra generating $\FF$ on $\Omega$ with  $\Omega$ being a union of a sequence $(A_n)_{n\in \mathbb{N}}$ of sets from $\mathcal{A}$ each of which has finite $\mu$-measure. Let $F\in \FF$. We assert existence of  a sequence $(C_l)_{l\in \mathbb{N}}$ in $\AA$ converging to $F$ a.e.-$\mu$; or in other words such that $F=\liminf_{l\to\infty}C_l=\limsup_{l\to\infty}C_l$ a.e.-$\mu$. Since $(A_n)_{n\in \mathbb{N}}$  is assumed without loss of generality to consist of pairwise disjoint sets we reduce indeed at once to the case when $\mu$ is finite to begin with. But the latter  is contained in Proposition~\ref{approx:lp}. 
\end{example}

\begin{corollary}
	Let $(\Omega,\FF,\mu)$ be a measure space, $\mu$ finite, $p\in [1,\infty)$. Let $\Pi$ be a $\pi$-system generating $ \FF$ on $\Omega$ with $\Omega\in\Pi$. Then finite linear (over $\mathbb{R}$) combinations of indicators of elements of $\Pi$ are  dense in $\LL^p(\mu)$.
\end{corollary}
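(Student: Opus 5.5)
The plan is to reduce to the second part of Proposition~\ref{approx:lp} by replacing $\Pi$ with the algebra it generates. Let $\mathcal{A}$ denote the algebra on $\Omega$ generated by $\Pi$, and let $V$ denote the real linear span of $\{\mathbbm{1}_P:P\in \Pi\}$. Since $\mu$ is finite, every element of $\mathcal{A}$ has finite measure, and $\Omega\in \mathcal{A}$ trivially covers $\Omega$. Moreover $\sigma_\Omega(\mathcal{A})=\sigma_\Omega(\Pi)=\FF$. Proposition~\ref{approx:lp} therefore tells us that finite real linear combinations of indicators of members of $\mathcal{A}$ are dense in $\LL^p(\mu)$ (here $p<\infty$ is used). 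Because $V$ is a linear space and the closure of a linear subspace in the seminorm $\Vert\cdot\Vert_p$ is again a linear subspace, it suffices to show $\mathbbm{1}_A\in V$ for every $A\in\mathcal{A}$. In fact one can aim for exact membership, with no approximation needed.

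The key step is an explicit description of $\mathcal{A}$. Let $\mathcal{C}$ be the collection of finite disjoint unions of sets of the form $P_0\cap (\Omega\backslash P_1)\cap\cdots\cap(\Omega\backslash P_k)$ with $k\in\mathbb{N}_0$ and $P_0,\ldots,P_k\in\Pi$. Since $\Omega\in\Pi$, both $\Pi$ and the complements $\Omega\backslash P$ lie in $\mathcal{C}$. I would check directly that $\mathcal{C}$ is closed under finite intersections, because the basic sets form a $\pi$-system using $\Pi$'s closure under $\cap$. I would also check that the complement of a basic set is a finite disjoint union of basic sets, namely $\Omega\backslash(P_0\cap\bigcap_i P_i^c)=P_0^c\cup\bigcup_i(P_0\cap P_1^c\cap\cdots\cap P_{i-1}^c\cap P_i)$, disjointified in the usual telescoping manner. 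Complements of finite disjoint unions are then intersections of such sets, hence lie in $\mathcal{C}$. So $\mathcal{C}$ is an algebra containing $\Pi$, and $\mathcal{C}=\mathcal{A}$.

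It then remains to see that each indicator of a basic set lies in $V$. Expanding
\[
\mathbbm{1}_{P_0}\prod_{i=1}^k(1-\mathbbm{1}_{P_i})=\sum_{J\subset[k]}(-1)^{\vert J\vert}\mathbbm{1}_{P_0\cap\bigcap_{j\in J}P_j}
\]
gives exactly such a combination, since each intersection lies in $\Pi$. Indicators of finite disjoint unions are sums of indicators, so they lie in $V$ as well, and we conclude. I expect the only fiddly step to be the verification that $\mathcal{C}$ is an algebra, specifically the disjointification of complements. A slicker alternative, if that gets messy, is to let $\mathcal{G}$ be the set of $A\in\FF$ with $\mathbbm{1}_A\in\overline{V}$. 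Using finiteness of $\mu$, dominated convergence and $\Omega\in\Pi$, one checks that $\mathcal{G}$ is a Dynkin system containing $\Pi$. Dynkin's lemma (Corollary~\ref{corollary:dynkin}) then gives $\mathcal{G}=\FF$, after which one applies the first part of Proposition~\ref{approx:lp}.
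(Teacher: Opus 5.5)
Your proposal is correct and follows the paper's route: pass to the algebra generated by $\Pi$, show that every indicator of a member of that algebra lies in the span $V$, and then invoke Proposition~\ref{approx:lp}. The paper skips your explicit description of the generated algebra: since $V$ is closed under products and contains $\mathbbm{1}_\Omega$, the class $\{A:\mathbbm{1}_A\in V\}$ is already an algebra containing $\Pi$; your Dynkin-system alternative via the closure $\overline{V}$ is also valid.
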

\begin{proof}
Because $\Pi$ is a $\pi$-system, the product of two  finite linear combinations of indicators of elements of $\Pi$ is again a finite linear combination of indicators of elements of $\Pi\cup\{\Omega\}$. It follows easily ($\because$ $\Omega\in \Pi$) that such linear combinations exhaust the finite linear combinations of indicators of elements of the algebra generated by $\Pi$. Apply Proposition~\ref{approx:lp}.
\end{proof}
\part{Probability}

\chapter{Probability as a normalized measure}

\begin{remark}
In some sense the content of this chapter is nothing but a  ceaseless application of the first part.
\end{remark}
\section{Basic notions}
\emph{probability spaces, random elements, distribution functions and laws, absolutely continuous and discrete random variables; expectations of functions of random elements; quantile coupling of random variables of all possible laws on the real line; convergence in probability}

\begin{definition}
	A probability space is a measure space $(\Omega,\FF,\PP)$ where $\PP$ is a probability measure; let $(\Omega,\FF,\PP)$ be so. $\Omega$ is called the sample space and to the elements of $\FF$ one refers to as events.  $A$ is $\PP$-almost sure (abbreviated $\PP$-a.s.) \deff $A\in \FF$ and $\PP(A)=1$. For a measurable space $(E,\EE)$, elements of $\FF/\EE$ are called $(E,\EE)$-valued random elements; in the particular case when $(E,\EE)=(\mathbb{R},\mathcal{B}_{\mathbb{R}})$ they are called random variables. 
	For a random element $X$: $X\sim_\PP \QQ$ \deff $X$ has law $\QQ$ under $\PP$. Two random elements  valued in the same measurable space (and possibly defined on different probability spaces) are identically distributed \deff they have the same law. The distribution function of a random variable $X$ is the map $F_X:\mathbb{R}\to [0,1]$ given by $F_X(u):=\PP(X\leq u)$ for $u\in \mathbb{R}$. A random variable $X$ is said to be discrete (resp. absolutely continuous, continuous (also, diffuse)) \deff there is a countable $C\subset \mathbb{R}$ such that $\PP(X\in C)=1$ (resp. \deff $\PP_X\ll \leb$, \deff $F_X$ is continuous). A bivariate random vector is an element $(X,Y)$ of $\FF/\mathcal{B}_{\mathbb{R}^2}$ (a general random vector is defined analogously), absolutely continuous \deff  $\PP_{(X,Y)}\ll \leb^2$, and so on. 
\end{definition}
\begin{remarks}
	For the expectation of an $X\in \FF/\mathcal{B}_{[-\infty,\infty]}$ one tends to write $\mathsf{E}_\PP[X]:=\PP[X]$ (and just $\mathsf{E}[X]$ if $\PP$ can be deduced from context), but we shall prefer to keep $\PP[X]=\int X\dd \PP$. The notation $F_X$ is in principle unsatisfactory; it fails to reference $\PP$. In deference to standard practice we will allow this (for all occurrences of $F_X$ below, only one probability measure on the space on which $X$ is defined will ever be ``in sight'', so in principle no confusion should arise). Not every continuous random variable is absolutely continuous (non-trivial examples attest to this); the converse is true as we shall soon see.   To say that a random variable $X$ is discrete is (not without irony) to say that its law is absolutely continuous w.r.t. the  measure $\mathbbm{1}_C\cdot c_\mathbb{R}\vert_{\mathcal{B}_\mathbb{R}}$ for some countable $C\subset \mathbb{R}$. (Thus) much of what follows could be unified to the case when the laws of the random elements are absolutely continuous w.r.t. some ($\sigma$-finite) reference measures (indeed, like $\leb$, so too $\mathbbm{1}_C\cdot c_\mathbb{R}\vert_{\mathcal{B}_\mathbb{R}}$ is $\sigma$-finite, for $\{\{c\}:c\in C\}\cup \{\mathbb{R}\backslash C\}$ constitutes a countable Borel measurable cover  of the real line consisting of sets of finite $\mathbbm{1}_C\cdot c_\mathbb{R}$-measure). But enough is enough. 
\end{remarks}

\begin{example}
	Let $p:D\to [0,1]$ satisfy $\sum_{k\in D}p_k=1$ (a probability mass function). Necessarily $$\{p>0\}=\cup_{n\in \mathbb{N}}\underbrace{\left\{p\geq \frac{1}{n}\right\}}_{\mathrm{finite}}$$ is countable. Then $(D,2^D,p\cdot c_D)$ is a probability space. If further $D\subset \mathbb{R}$, then $\id_D$ is a discrete random variable  thereon: just because  $(p\cdot c_D)(\id_D\in \{p>0\})=1$. 
\end{example}

\begin{example}\label{example:fund}
	$([0,1],\mathcal{B}_{[0,1]},\leb_{[0,1]})$ is a probability space and $\id_{[0,1]}$ is an absolutely continuous random variable thereon, indeed its law (assuming we view it is an $(\mathbb{R},\mathcal{B}_\mathbb{R})$-valued random element, i.e. as a random variable) is $\mathbbm{1}_{[0,1]}\cdot \leb$: just because $\leb_{[0,1]}(\id_{[0,1]}\in A)=\leb([0,1]\cap A)=(\mathbbm{1}_{[0,1]}\cdot \leb)(A)$ for $A\in \mathcal{B}_\mathbb{R}$.
	 \end{example}
\begin{example}
Let $F:\mathbb{R}\to [0,1]$ be nondecreasing, right-continuous, $\lim_{-\infty}F=0$, $\lim_\infty F=1$ (a distribution function). Then $(\mathbb{R},\mathcal{B}_\mathbb{R},\dd F)$ is a probability space and $\id_\mathbb{R}$ is a random variable thereon whose distribution function is  $F$: just because $\dd F(\id_\mathbb{R}\leq u)=\dd F((-\infty,u])=F(u)$ for $u\in \mathbb{R}$.
\end{example}

\begin{example}
Let $f\in \mathcal{B}_\mathbb{R}/\mathcal{B}_{[0,\infty)}$ with $\leb[f]=1$ (a density). Then $(\mathbb{R},\mathcal{B}_\mathbb{R},f\cdot \leb)$ is a probability space and $\id_\mathbb{R}$ is an absolutely continuous random variable thereon, indeed its law is just the underlying probability, $f\cdot \leb$. 
\end{example}
\begin{proposition}
Let  $(\Omega,\FF,\PP)$ be a probability space. For an $(E,\EE)$-valued random element $X$ and for $f\in \EE/\mathcal{B}_{[-\infty,\infty]}$,
 \begin{equation}
\PP[f(X)]=\PP_X[f],
\end{equation}
the expectation on the left-hand side being well-defined iff the expectation on the right-hand side is well-defined.

For a random variable $X$, $F_X$ is nondecreasing, right-continuous, $\lim_{-\infty}F_X=0$, $\lim_\infty F_X=1$ and $\PP_X=\dd F_X$. 

If $X$ is  a discrete random variable, then there is a smallest countable $C\subset \mathbb{R}$, denoted $\supp(X)$ and called the support of $X$, for which $\PP(X\in C)=1$, namely $\supp(X)=\{x\in \mathbb{R}:\PP(X=x)>0\}$; further, for any $f\in \mathcal{B}_\mathbb{R}/\mathcal{B}_{[-\infty,\infty]}$,
\begin{equation}
\PP[f(X)]=\sum_{a\in \supp(X)}f(a)\PP(X=a)
\end{equation}
provided at least one of the series $\sum_{a\in \supp(X)}f^+(a)\PP(X=a)$ and $ \sum_{a\in \supp(X)}f^-(a)\PP(X=a)$ is $<\infty$ (and then, and only then, is $\PP[f(X)]$  well-defined).

If $X$ is an absolutely continuous random variable, then $X$ is continuous and there is an a.e.-$\leb$ unique $f\in \mathcal{B}_\mathbb{R}/\mathcal{B}_{[0,\infty)}$ for which $\PP_X=f\cdot \leb$, denoted $f_X$, and called the density of $X$, namely $f_X=\frac{\dd\PP_X}{\dd\leb}$ a.e.-$\leb$; further, for any $g\in \mathcal{B}_\mathbb{R}/\mathcal{B}_{[-\infty,\infty]}$,
\begin{equation}
\PP[g(X)]=\int gf_X\dd\leb,
\end{equation}
the expectation being well-defined iff $\int g^+f_X\dd\leb\land  \int g^-f_X\dd\leb<\infty$.

Finally, in order for a random variable $X$ to be absolutely continuous, it is equivalent that there exists an $f\in \mathcal{B}_\mathbb{R}/\mathcal{B}_{[0,\infty)}$ such that 
\begin{equation}\label{eq:enough}
\PP(X\leq x)=\int_{(-\infty,x]}f\dd\leb \text{ for all $x\in \mathbb{R}$}, 
\end{equation}
in which case $f$ is a density for $X$ (more generally it is equivalent to have $\PP(X\in A)=\int_{A}f\dd\leb$ for all $A\in\Pi\cup\{\mathbb{R}\}$, where $\Pi$ is a $\pi$-system generating $\mathcal{B}_\mathbb{R}$ on $\mathbb{R}$).
\end{proposition}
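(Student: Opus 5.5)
The proposition bundles several facts, and I would prove them in the order stated, each by direct appeal to earlier results.

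The first display, $\PP[f(X)]=\PP_X[f]$ together with the equivalence of well-definedness, is exactly the image measure theorem (Corollary~\ref{proposition:image-measure-theorem}) applied with $\mu=\PP$, $f\mapsto X$, $g\mapsto f$.

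For the distribution function I would argue as follows.
\begin{itemize}
\item Monotonicity of $F_X$ comes from monotonicity of $\PP$.
\item Right-continuity comes from continuity from above, since $\{X\leq u+1/n\}\downarrow\{X\leq u\}$ and $\PP$ is finite.
\item The limits at $\mp\infty$ come from continuity from above, resp.\ below, along $\{X\leq -n\}\downarrow\emptyset$ and $\{X\leq n\}\uparrow\Omega$. By monotonicity, limits along integer sequences suffice.
\item For $\PP_X=\dd F_X$, both are measures on $\BB_\mathbb{R}$ with $\PP_X((a,b])=F_X(b)-F_X(a)$. The uniqueness part of Theorem~\ref{theorem:lebesgue-stieltjes} (i.e.\ Proposition~\ref{proposition:equality of measures}) gives equality.
\end{itemize}

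For discrete $X$, put $S:=\{x:\PP(X=x)>0\}$.
\begin{itemize}
\item If $C$ is countable with $\PP(X\in C)=1$, then every $x\notin C$ has $\PP(X=x)\le\PP(X\notin C)=0$, so $S\subset C$. Hence $S$ is countable.
\item Countable additivity gives $\PP(X\in C)=\sum_{c\in C}\PP(X=c)=\sum_{c\in S}\PP(X=c)=\PP(X\in S)$, so $\PP(X\in S)=1$. Thus $S$ is the smallest such set.
\item For the sum formula, $\PP_X=\sum_{a\in S}\PP(X=a)\delta_a$ as measures, since both agree on every Borel $A$ by countable additivity restricted to $A\cap S$ together with $\PP_X(\mathbb{R}\setminus S)=0$.
\item Corollary~\ref{corollary:integral-against-sum}, with $\int f\dd\delta_a=f(a)$ and the remark on scalar multiples $\alpha\mu$, yields the formula and the well-definedness criterion. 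Care is needed only for terms with $f(a)=\pm\infty$, which the conventions handle.
\end{itemize}

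For absolutely continuous $X$, Radon--Nikodym (Theorem~\ref{thm:RN}) applies to $\PP_X\ll\leb$: both are $\sigma$-finite (Proposition~\ref{proposition:lebesgue-stieltjes}). This gives an a.e.-unique density $f_X\in\BB_\mathbb{R}/\BB_{[0,\infty)}$. Continuity of $F_X$ follows from $\PP_X(\{x\})=0$ because $\leb(\{x\})=0$, together with $\dd F_X(\{x\})=F_X(x)-F_X(x-)$. The integral formula is the image measure theorem combined with Proposition~\ref{proposition:density}.

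For the final equivalence, necessity is immediate from $\PP_X=f_X\cdot\leb$. For sufficiency, $f\cdot\leb$ is a measure agreeing with $\PP_X$ on the $\pi$-system $\{(-\infty,x]\}$ and on $\mathbb{R}$. Letting $x\to\infty$ via monotone convergence gives total mass $1$, so both measures are probabilities. By the remark after Proposition~\ref{proposition:equality of measures} they are equal, whence $\PP_X\ll\leb$ and $f$ is a density. The general $\Pi\cup\{\mathbb{R}\}$ version is identical.

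The main obstacle is only the bookkeeping in the discrete case: identifying $\PP_X$ as a countable sum of Dirac masses and transferring well-definedness correctly. The rest is direct citation.
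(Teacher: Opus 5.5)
Your proposal is correct and follows essentially the paper's route: each part is a direct citation of part-one results. As in the paper, Proposition~\ref{proposition:equality of measures} on a generating $\pi$-system gives $\PP_X=\dd F_X$, and, once monotone convergence shows $\leb[f]=1$, it also gives $\PP_X=f\cdot\leb$ in the final claim. The only small difference is that you obtain continuity of $F_X$ from $\PP_X(\{x\})=0$ together with $\dd F_X(\{x\})=F_X(x)-F_X(x-)$, whereas the paper gets left-continuity by monotone convergence in \eqref{eq:enough}; both arguments are valid.
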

\begin{remark}
There are ``obvious'' analogues of the preceding in case $X$ is not a random variable but rather a bivariate (or even higher-dimensional) random vector.
\end{remark}
\begin{definition}
	We keep the notation for densities developed in the preceding proposition. Statements involving them are  asserted for any of the versions of the density. As a result they tend to be inflicted with an a.e.-$\leb$ qualifier. For a discrete random variable $X$ we keep also the notation $\supp(X)$ and call the map $p_X:=(\supp(X)\ni a\mapsto \PP(X=a))$ the probability mass function of $X$.
\end{definition}
\begin{remark}
The notations $\supp(X)$, $p_X$ and $f_X$ are incomplete for they fail to reference $\PP$ on which they depend; we transgress. They will be used  sparingly and only when there is only one probability ``in sight''. 
\end{remark}
\begin{proof}
	The results are easy consequences and particular cases of those from part one. It is perhaps only non-trivial to note (1) why $\PP_X=\dd F_X$ for a random variable $X$: there is equality on the $\pi$-system $\{(-\infty,a]:a\in \mathbb{R}\}$ so one can apply Proposition~\ref{proposition:equality of measures}; 
	(2) why the last claim holds true: by monotone convergence in \eqref{eq:enough} $\leb[f]=1$, then \eqref{eq:enough} and Proposition~\ref{proposition:equality of measures} again yield  $\PP_X=f\cdot \leb$; and (3)  why absolute continuity implies continuity of $X$: left-continuity of $F_X$ follows by mononotone convergence in \eqref{eq:enough} (while right-continuity of $F_X$ is automatic). 
	\end{proof}
	
	\begin{example}
Let $p\in (0,1]$ and define $\geom_\mathbb{N}(p):=(\mathbb{N}\ni k\mapsto p(1-p)^{k-1})\cdot c_\mathbb{N}$ [we interpret $0^0:=1$]. Then  $\geom_\mathbb{N}(p)$ is a probability measure on $2^\mathbb{N}$ called the geometric law on $\mathbb{N}$ with success probability $p$. If $X\sim_\PP\geom_\mathbb{N}(p)$, then $X$ is a discrete random variable with probability mass function $(\mathbb{N}\ni k\mapsto p(1-p)^{k-1})$ or $(\{1\}\ni k\mapsto 1)$ according as to whether $p<1$ or $p=1$, and  $$\PP[X]=\geom_\mathbb{N}[\id_\mathbb{N}]=\sum_{k\in \mathbb{N}}k p(1-p)^{k-1}=p^{-1};$$
incidentally, for $p<1$ the preceding sum may be evaluated via differentiation under the summation (= integral against $c_\mathbb{N}$) sign, while for $p=1$ the sum is trivial.
	\end{example}

\begin{example}
	Let $\lambda\in (0,\infty)$ and define $\Exp(\lambda):=((0,\infty)\ni x\mapsto \lambda e^{-\lambda x})\cdot \leb_{(0,\infty)}$. Then  $\Exp(\lambda)$ is a probability measure on $\mathcal{B}_{(0,\infty)}$ called the exponential law of rate $\lambda$. If $X\sim_\PP\Exp(\lambda)$, then $X$ is an absolutely continuous random variable with density $(\mathbb{R}\ni x\mapsto \mathbbm{1}_{(0,\infty)}(x)\lambda e^{-\lambda x})$ and for $\mu\in [0,\infty)$, $$\PP[e^{-\mu X}]=\Exp(\lambda)[e^{-\mu\cdot}]=\int_{(0,\infty)}e^{-\mu x}\lambda e^{-\lambda x}\leb(\dd x)=\frac{\lambda}{\lambda+\mu}.$$
\end{example}

\begin{proposition}\label{example:quantile}	
	Let $(\Omega,\FF,\PP)$ be a probability space.
	If $X$ is a  random variable, then $F_X\in \mathcal{B}_\mathbb{R}/\mathcal{B}_{[0,1]}$, and $F_X(X)\sim_\PP\leb_{[0,1]}$ iff $F_X$ is continuous. 
	Conversely, let $U\sim_\PP \leb_{(0,1)}$. If $F:\mathbb{R}\to [0,1]$ is a distribution function (right-continuous, nondecreasing, $\lim_{-\infty}F=0$, $\lim_\infty F=1$),  putting $$F^\leftarrow(u):=\inf\{v\in \mathbb{R}:F(v)>u\},\quad u\in (0,1),$$ for its quantile function (a.k.a. right-continuous inverse), then $F^\leftarrow\in \mathcal{B}_{(0,1)}/\mathcal{B}_\mathbb{R}$ and $F^\leftarrow(U)\sim_\PP\dd F$. 
\end{proposition}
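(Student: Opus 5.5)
Measurability of $F_X$ and of $F^\leftarrow$ comes first, and both follow from monotonicity. For a nondecreasing $G$ on an interval, each set $\{G\leq a\}$ (resp.\ $\{G>a\}$) is an interval. Intervals are Borel, so Proposition~\ref{proposition:maps}\eqref{maps:iv} together with the fact that $\{(-\infty,a]:a\in\mathbb{R}\}$ generates $\BB_\mathbb{R}$ gives the claim. For $F^\leftarrow$ I would also check that it is real-valued on $(0,1)$. The set $\{v:F(v)>u\}$ is nonempty because $\lim_\infty F=1>u$, and it is bounded below because $\lim_{-\infty}F=0<u$.

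For the second part, the key step is the switching relation: for $u\in(0,1)$ and $x\in\mathbb{R}$,
\[
F^\leftarrow(u)\leq x\iff \text{for every } \epsilon>0 \text{ there is } v\leq x+\epsilon \text{ with } F(v)>u \iff F(x)\geq u ,
\]
where the last equivalence uses right-continuity and monotonicity of $F$. To verify it: if $F(x)\geq u$, then for each $\epsilon>0$ either $F(x+\epsilon)>u$, or $F=u$ on $[x,x+\epsilon]$. In the latter case one has to be careful, because the infimum could exceed $x$. So I will not insist on this exact form. Instead I aim only to show $\{u:F^\leftarrow(u)\leq x\}$ differs from $(0,F(x)]\cap(0,1)$ by at most a countable set. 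The discrepancy occurs only at values $u$ at which $F$ is flat on a nondegenerate interval. There are countably many such $u$, since the flat intervals are disjoint and each contains a rational. Precisely: $F(x)>u\Rightarrow F^\leftarrow(u)\leq x$, and $F^\leftarrow(u)\leq x\Rightarrow F(x)\geq u$. The latter holds because if $F^\leftarrow(u)\le x$ then $F(v)>u$ for $v$ arbitrarily close to and above $x$, and right-continuity gives $F(x)\geq u$. Hence $(0,F(x))\subset\{F^\leftarrow\leq x\}\subset(0,F(x)]$. Then $\PP(F^\leftarrow(U)\leq x)=\leb_{(0,1)}(\{F^\leftarrow\le x\})=F(x)$, because $\leb$ of a point is $0$. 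Finally, $\PP_{F^\leftarrow(U)}$ and $\dd F$ agree on the $\pi$-system $\{(-\infty,x]\}$ and are probabilities, so Proposition~\ref{proposition:equality of measures} yields equality. This sandwich argument neatly bypasses the obstacle above.

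For the first part, set $Y:=F_X(X)$; it is measurable by composition. Assume first that $F_X$ is continuous, and fix $u\in(0,1)$. I would show $\PP(Y\geq u)=1-u$, or equivalently $\PP(Y<u)=u$. Put $x_u:=\inf\{x:F_X(x)\geq u\}$. By continuity and the limits at $\pm\infty$, $x_u\in\mathbb{R}$ and $F_X(x_u)=u$. Monotonicity gives $\{Y<u\}=\{X<x_u\}$. Continuity of $F_X$ gives $\PP(X<x_u)=F_X(x_u-)=F_X(x_u)=u$. Since also $Y\in[0,1]$, the law of $Y$ agrees with $\leb_{[0,1]}$ on the $\pi$-system $\{[0,u):u\in(0,1]\}\cup\{[0,1]\}$, which generates $\BB_{[0,1]}$, and Proposition~\ref{proposition:equality of measures} concludes. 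Conversely, suppose $F_X$ jumps at some $x$, with $p:=F_X(x)-F_X(x-)>0$. Then $Y$ takes no values in the open interval $(F_X(x-),F_X(x))$, because $F_X(y)\leq F_X(x-)$ for $y<x$ and $F_X(y)\geq F_X(x)$ for $y\geq x$. That interval has $\leb$-measure $p>0$, so the law of $Y$ is not $\leb_{[0,1]}$.

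I expect the main obstacle to be the careful handling of the generalized inverse at flat stretches and jumps, that is, the switching relations. The sandwich inclusions above are designed to avoid needing the exact equivalence.
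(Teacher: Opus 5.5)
Your proof is correct and follows the paper. The second part is exactly the paper's observation $u<F(x)\Rightarrow F^\leftarrow(u)\leq x\Rightarrow u\leq F(x)$, and your aside about countably many flat values is unnecessary, since the sandwich already determines the set up to the single point $F(x)$. For the first part, the paper uses, under continuity, the exact switching $F(x)\leq u\iff x\leq F^\leftarrow(u)$ together with $F\circ F^\leftarrow=\mathrm{id}_{(0,1)}$; your computation via $x_u$ with strict inequalities is a minor variant of this. Your jump argument additionally supplies the necessity of continuity, which the paper leaves implicit. One small fix: the case $u=1$ of your $\pi$-system follows from $\PP(Y<u)=u$ for $u<1$ by continuity from below, or you can simply drop it.
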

\begin{proof} 
Borel measurabilities follow from the functions being nondecreasing. Then a main observation is that  $u< F(x)\Rightarrow F^\leftarrow(u)\leq x\Rightarrow u\leq  F(x)$, which is valid for all $u\in (0,1)$ and $x\in \mathbb{R}$; this gives the second claim. If $F$ is continuous, then $F(x) \leq u$ iff $x\leq F^\leftarrow( u)$  for all $u\in (0,1)$ and $x\in \mathbb{R}$, while $F\circ F^\leftarrow =\mathrm{id}_{(0,1)}$, which yields the first claim. 
\end{proof}

\begin{definition}\label{definition:convergence-in-probability}
Let  $(\Omega,\FF,\PP)$  be a probability space, $(X)_{n\in \mathbb{N}}$ a sequence in $\FF/\mathcal{B}_{\mathbb{R}}$ and $X\in \FF/\mathcal{B}_{\mathbb{R}}$. This sequence is said to converge to $X$ in $\PP$-probability \deff $\PP(\vert X_n-X\vert\geq \epsilon)\to 0$ as $n\to\infty$ for all $\epsilon\in (0,\infty)$.
\end{definition}
\begin{remarks}
The qualifying ``$\PP$'' in ``$\PP$-probability'' is dropped if it can be gathered from context.

Convergence of a sequence of indicators to another indicator in probability corresponds to convergence in the \underline{\href{https://drive.google.com/file/d/0B0Mmq8RrwCHDeTRreWE2MWxsRE0/view}{measure of symmetric difference pseudometric}}. 

Suppose $X_n\to X$ as $n\to\infty$ in $\PP$-probability. Then inductively we find $\mathsf{n}:\mathbb{N}\to \mathbb{N}$ that is $\uparrow\uparrow$ (a subsequence),  such that $\PP(\vert X_{\mathsf{n}_k}-X\vert \geq 2^{-k})\leq 2^{-k}$ for all $k\in \mathbb{N}$ and therefore (by Borel-Cantelli I,  Example~\ref{Borel-Cantelli-I}) $\PP(\lim_{k\to\infty}X_{\mathsf{n}_k}=X)=1$. In words, convergence in probability implies convergence a.s. along a subsequence.    
 
\end{remarks}

\begin{proposition}\label{proposition:types-of-convergence}
Let  $(\Omega,\FF,\PP)$  be a probability space. If $(X)_{n\in \mathbb{N}}$ is a sequence in $\FF/\mathcal{B}_{\mathbb{R}}$ that converges to an $X\in \FF/\mathcal{B}_{\mathbb{R}}$ $\PP$-a.s. or in $\LL^p(\PP)$ for some $p\in [1,\infty]$, then it does so also in $\PP$-probability.
\end{proposition}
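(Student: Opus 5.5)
The statement has two separate hypotheses, and I would treat them one after the other, fixing $\epsilon\in(0,\infty)$ throughout.

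\emph{Convergence in $\LL^p(\PP)$ for $p<\infty$.} This is a one-line application of Markov's inequality. The map $|X_n-X|^p$ is nonnegative and measurable. Markov with $a=\epsilon^p$ gives
$$\PP(|X_n-X|\geq\epsilon)=\PP(|X_n-X|^p\geq\epsilon^p)\leq \epsilon^{-p}\,\PP[|X_n-X|^p]=\epsilon^{-p}\Vert X_n-X\Vert_p^p,$$
and the right-hand side tends to $0$.

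\emph{Convergence in $\LL^\infty(\PP)$.} Here $|X_n-X|\leq\Vert X_n-X\Vert_\infty$ a.s., as noted in the remarks after the definition of $\LL^p$. Once $\Vert X_n-X\Vert_\infty<\epsilon$, the event $\{|X_n-X|\geq\epsilon\}$ is $\PP$-negligible, so the probability is eventually $0$. One could instead use that $\PP$ is finite, so $\LL^\infty(\PP)\subset\LL^1(\PP)$ with $\Vert\cdot\Vert_1\leq\Vert\cdot\Vert_\infty$, and reduce to the case $p=1$; the direct argument is simpler.

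\emph{Almost sure convergence.} Set $A_n:=\{|X_n-X|\geq\epsilon\}\in\FF$. On $\{\lim_n X_n=X\}$, which is measurable by the Corollary following the sup/inf proposition and has probability one, only finitely many $A_n$ occur. Hence $\limsup_n A_n\subset\Omega\backslash\{\lim_nX_n=X\}$, so $\PP(\limsup_nA_n)=0$. Put $B_N:=\cup_{n\geq N}A_n$; these sets decrease to $\limsup_nA_n$. Since $\PP$ is finite, continuity from above gives $\PP(B_N)\downarrow\PP(\limsup_nA_n)=0$. Then $\PP(A_N)\leq\PP(B_N)\to0$ by monotonicity. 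Alternatively, bounded convergence applied to $\mathbbm{1}_{A_n}\to0$ a.s. gives the same conclusion.

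The only point needing care, and it is a small one, is the a.s. case: one must use finiteness of $\PP$ for continuity from above (the result fails for infinite measures), and one must work with the measurable set $\{\lim_nX_n=X\}$ rather than an arbitrary null set. Everything else is immediate from Markov's inequality.
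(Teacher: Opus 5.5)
Your proof is correct and follows essentially the same route as the paper: Markov's inequality for $p<\infty$, and for $p=\infty$ the a.s. bound $\vert X_n-X\vert\leq\Vert X_n-X\Vert_\infty$, which you use to conclude directly, whereas the paper uses it to reduce to the a.s. case. For a.s. convergence, your continuity-from-above argument on $\limsup_n A_n$ is just the unpacked form of the bounded convergence that the paper invokes, and you mention that alternative yourself.
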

\begin{proof}
For $p<\infty$: Markov's inequality. For a.s. convergence: bounded convergence. For $p=\infty$: use the fact that $\vert X_n-X\vert \leq \Vert X_n-X\Vert_\infty$ a.s-$\PP$ for all  $n\in \mathbb{N}$, to reduce to a.s. convergence.
\end{proof}
\begin{remark}
Convergence in probability is perhaps not really a ``basic'' notion; we have included it in the preceding ever so briefly due to its fundamental importance, and because we have nowhere else to put it. On the other hand, many concepts that indeed are ``basic'', like $\var_\PP(X):=\PP[(X-\PP[X])^2]$ for $X\in \LL^2(\PP)$ and $\cov_\PP(X,Y):=\PP[(X-\PP[X])(Y-\PP[Y])]$ for $\{X,Y\}\subset \LL^2(\PP)$ have been omitted, but one is anyway typically able to ``guess'' the measure-theoretic counterparts from knowing the corresponding notions from elementary probability.
\end{remark}
\section{Independence}
\emph{definition of stochastic independence and first properties; independent fair coin tosses as a fundamental probabilistic model; product of arbitrary family of probability measures}

\begin{remark}
	Independence is a (and is perhaps the only) truly fundamental notion specific to probability (as opposed to any, or even just finite) measures. The role that this notion has to play in all matters stochastic cannot be overemphasized. Remarkably, independence (of pairs) of events, to be introduced presently, \underline{\href{https://www.ams.org/journals/proc/1997-125-12/S0002-9939-97-03994-4/S0002-9939-97-03994-4.pdf}{determines a non-atomic probability}}.
\end{remark}

\begin{definition}
	Let  $(\Omega,\FF,\PP)$  be a probability space. For a collection $\CC=(\CC_\lambda)_{\lambda\in \Lambda}$ of subsets of $\FF$, we say $\CC$ is an independency (under $\PP$)  \deff for any (non-empty) finite $I\subset \Lambda$, and then for any choices of $C_i\in \CC_i$, $i\in I$, we have $$\PP(\cap_{i\in I}C_i)=\prod_{i\in I}\PP(C_i).$$ Subsets $\BB$ and $\CC$ of $\FF$ are independent (under $\PP$) \deff the family $(\BB,\CC)$ consisting of them alone, is an independency (under $\PP$); $B$ and $C$ from $\FF$ are independent (under $\PP$) \deff $\{B\}$ and $\{C\}$ are independent. Further, given a measurable space $(E,\Sigma)$, $Z\in \FF/\Sigma$, and $\BB \subset\FF$,  $Z$ is independent of $\BB$ (relative to $\Sigma$ under $\PP$) \deff $\sigma^\Sigma(Z)$ is independent of $\BB$ (independence for random elements means independence of their initial structures). And so on, and so forth.
	\end{definition}

	\begin{remarks}
A sub-collection of an independency is an independency. A collection is an independency if and only if every finite sub-collection thereof is so. 
\end{remarks}
\begin{example}
If $Y$ and $Z$ are random elements on a probability space $(\Omega,\FF,\PP)$ with values in measurable spaces $(G,\GG)$ and $(E,\EE)$, respectively, then $Y$ and $Z$ are independent, by definition, iff $\sigma^\GG(Y)$ is independent of $\sigma^\EE(Z)$, i.e. iff $\PP(\{Y\in A\}\cap \{Z\in B\})=\PP(Y\in A)\PP(Z\in B)$ for all $A\in\GG$ and $B\in\EE$.\footnote{  You may complain that it would be more natural to insist that $\PP(\{Y\in A\}\cap \{Z\in B\})=\PP(Y\in A)\PP(Z\in B)$, whenever $A\subset G$ and $B\subset E$ are such that it happens to be the case that $\{\{Y\in A\},\{Z\in B\}\}\subset\FF$ i.e. all the probabilities involved are defined (making the notion of independence of random elements independent (so to speak) of the measurable structures on the codomains). It would be ``independence relative to the final structures''. For random variables (and the Borel $\sigma$-fields) the two concepts of independence   \underline{\href{https://www.impan.pl/en/publishing-house/journals-and-series/colloquium-mathematicum/all/1/3/93612/on-two-notions-of-independent-functions}{may indeed disagree}}, but \underline{\href{https://projecteuclid.org/euclid.bsmsp/1200502002}{only if the underlying probability space is not ``nice enough''}}. }
\end{example}

\begin{example}\label{example:binary}
	Define  a sequence $X=(X_n)_{n\in \mathbb{N}}$ in $\{0,1\}^{[0,1]}$ as follows: for $\omega\in [0,1]$ and $n\in \mathbb{N}$,  $X_n(\omega)$ is the $n$-th digit (after the decimal (binal?) point) in the binary expansion of $\omega$ (with some convention when there is ambiguity; it is so only on a countable set (the dyadic numbers of $[0,1]$), which is of Lebesgue measure zero: for this reason the particulars of the convention will not be relevant). Thus $\omega=\sum_{n\in \mathbb{N}}\frac{X_n(\omega)}{2^n}$ for $\omega\in [0,1]$ (using the correct convention for $\omega=1$, namely writing $1$ as $(0.1111\ldots)_2$ rather than $(1.0)_2$). Then $(X_n)_{n\in \mathbb{N}}$ is an independency (relative to $2^{\{0,1\}}$) under $\leb_{[0,1]}$; furthermore, $\leb_{[0,1]}(X_n=0)=\frac{1}{2}$ for all $n\in \mathbb{N}$ (so we have ``independent fair coin tosses''). Conversely, if on a given probability space $(\Omega,\FF,\PP)$, $X=(X_n)_{n\in \mathbb{N}}$ is a sequence of independent fair coin tosses, i.e. if $X$ an independency of $(\{0,1\},2^{\{0,1\}})$-valued random elements with $\PP(X_n=0)=\frac{1}{2}$ for all $n\in \mathbb{N}$, then $\sum_{n\in \mathbb{N}}\frac{X_n}{2^n}\sim_\PP\leb_{[0,1]}$.
\end{example}

\begin{remarks}
	Combining Example~\ref{example:binary} and Proposition~\ref{example:quantile} (and using a bijection $\mathbb{N}\times \mathbb{N}\to\mathbb{N}$) we see (modulo details involving the ``(dis)aggregation'' of independence) that already the probability measure $\leb_{[0,1]}$ is rich enough to support a countable independency of random variables, each of which has any probability law that we may like (the law may vary across the random variables).  This is sufficient for almost all situations one encounters in probabilistic practice. We see also that starting with an independency of equiprobable coin tosses one can construct any Lebesgue-Stieltjes measure. The latter is not quite already tantamount to an outright construction of these measures though, because one still needs to show existence of such a sequence of coin tosses (Proposition~\ref{proposition:arbitrary-products}\eqref{product--arbit:iii}); nevertheless, this ``probabilistic'' route to Lebesgue-Stieltjes measures is quite appealing. 
\end{remarks}
	\begin{proposition}
		Let $(\Omega,\FF,\PP)$  be a probability space and let $X$ and $Y$ be random elements valued in $(E,\EE)$ and $(A,\AA)$, respectively. Then $(X,Y)\in \FF/(\EE\otimes \AA)$. Furthermore, $X$ and $Y$ are independent under $\PP$ iff $\PP_{(X,Y)}=\PP_X\times \PP_Y$, in which case:  for any $f\in (\EE\otimes \AA)/\mathcal{B}_{[-\infty,\infty]}$,
		\begin{equation}\label{eq:independent-separration}
		\PP[f(X,Y)]=\PP_{(X,Y)}[f]=\int \PP[f(x,Y)]\PP_X(\dd x),
		\end{equation}
		provided $\PP[f^+(X,Y)]\land \PP[f^-(X,Y)]<\infty$; in particular $\PP[g(X)h(Y)]=\PP[g(X)]\PP[h(Y)]$ for $g\in \EE/\mathcal{B}_{[-\infty,\infty]}$ and $h\in \AA/\mathcal{B}_{[-\infty,\infty]}$ satisfying $\PP[(g(X)h(Y))^+]\land \PP[(g(X)h(Y))^-]<\infty$. \qed
	\end{proposition}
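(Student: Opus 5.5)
Measurability of $(X,Y)$ is immediate from the third claim of the proposition on product $\sigma$-fields: $(f,f')\in \GG/(\FF\otimes\FF')$ iff both components are measurable, and here $X\in \FF/\EE$, $Y\in\FF/\AA$. Hence $\PP_{(X,Y)}$ is a probability on $\EE\otimes\AA$. Both $\PP_X$ and $\PP_Y$ are probabilities, hence $\sigma$-finite, so $\PP_X\times\PP_Y$ exists by Theorem~\ref{thm:tonelli-fubini}\eqref{product:i}.

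For the equivalence, I will compare $\PP_{(X,Y)}$ and $\PP_X\times\PP_Y$ on the $\pi$-system $\Pi:=\{A\times B:(A,B)\in\EE\times\AA\}$, which generates $\EE\otimes\AA$ and contains $E\times A$. Since $\{(X,Y)\in A\times B\}=\{X\in A\}\cap\{Y\in B\}$, we have
\[
\PP_{(X,Y)}(A\times B)=\PP(\{X\in A\}\cap\{Y\in B\}),\qquad (\PP_X\times\PP_Y)(A\times B)=\PP(X\in A)\PP(Y\in B).
\]
Independence of $X$ and $Y$ is, by the Example following the definition of independence, exactly the equality of these two quantities for all $(A,B)\in\EE\times\AA$. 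So if the two measures agree, $X$ and $Y$ are independent. Conversely, under independence they agree on $\Pi$; both are probabilities, so Proposition~\ref{proposition:equality of measures} (with the trivial localizing sequence, per the remark after it) gives $\PP_{(X,Y)}=\PP_X\times\PP_Y$.

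For \eqref{eq:independent-separration}, the first equality is the image measure theorem (Corollary~\ref{proposition:image-measure-theorem}) applied to $(X,Y)$ and $f$, including the well-definedness equivalence. For the second, I replace $\PP_{(X,Y)}$ by $\PP_X\times\PP_Y$ and apply Theorem~\ref{thm:tonelli-fubini}\eqref{product:ii} to get $\int\int f(x,y)\PP_Y(\dd y)\PP_X(\dd x)$. I then rewrite the inner integral as $\PP[f(x,Y)]$, again by the image measure theorem, using that $f(x,\cdot)\in\AA/\mathcal{B}_{[-\infty,\infty]}$ (the section-measurability part of the product proposition).

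\textbf{The main obstacle} is matching hypotheses: we only know $\PP_{(X,Y)}[f^+]\land\PP_{(X,Y)}[f^-]<\infty$. The plan is to assume wlog $\PP_{(X,Y)}[f^-]<\infty$ (otherwise replace $f$ by $-f$ and use homogeneity). Tonelli applied to $f^-$ then shows that the iterated integral of $f^-$ equals the product integral, hence is finite. This is precisely condition \eqref{product:ii:c}, so the Tonelli--Fubini variant applies and yields the equality with all outer integrals well-defined.

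The particular case follows by taking $f(x,y)=g(x)h(y)$, which is product-measurable since products of measurable numerical maps composed with the projections are measurable. The iterated integral then factors, because the inner integral is $g(x)\PP[h(Y)]$ by homogeneity and the outer one is $\PP[g(X)]\PP[h(Y)]$. This step needs care with $\pm\infty$ and the $0\cdot\infty$ convention. I would handle it by first treating $g,h\ge0$ via Tonelli, then splitting $g=g^+-g^-$ and $h=h^+-h^-$ and checking that the resulting combination is not of the undefined form.
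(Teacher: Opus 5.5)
Your proposal is correct and is essentially the argument the paper intends; the paper gives no written proof and marks the statement as an immediate consequence of Part~I. The steps match: joint measurability comes from the product-$\sigma$-field proposition, and $\PP_{(X,Y)}=\PP_X\times\PP_Y$ follows by comparing the two on measurable rectangles and invoking uniqueness for probabilities agreeing on a generating $\pi$-system. The formula then follows from the image measure theorem plus Tonelli--Fubini, where your reduction to $\PP_{(X,Y)}[f^-]<\infty$ correctly verifies hypothesis (c). For the product case, your splitting into $g^\pm,h^\pm$ works. So does the direct homogeneity step, because Tonelli--Fubini ensures the inner integral $\PP[g(x)h(Y)]$ is well defined for $\PP_X$-a.e.\ $x$, which is all the outer integral sees.
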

\begin{remark}
	``Independence can be raised from $\pi$-systems to the $\sigma$-fields they generate" in the precise sense of the proposition that follows.
\end{remark}
	\begin{proposition}\label{corollary:raising-independence}
		Let $(\Omega,\FF,\PP)$  be a probability space, $(\CC_\lambda)_{\lambda\in \Lambda}$ an independency under $\PP$ consisting of $\pi$-systems alone. Then $(\sigma_\Omega(\CC_\lambda))_{\lambda\in \Lambda}$ also is an independency under $\PP$. 
	\end{proposition}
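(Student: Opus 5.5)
Since being an independency is a property of finite sub-collections, it suffices to treat a finite index set, say $\Lambda=[n]$ after relabelling (an infinite $\Lambda$ is handled by applying the finite case to every finite $I\subset\Lambda$). It is convenient first to replace each $\CC_i$ by $\CC_i':=\CC_i\cup\{\Omega\}$. This is still a $\pi$-system, it generates the same $\sigma$-field, and $(\CC_i')_{i\in[n]}$ is still an independency. Indeed, inserting $\Omega$ for some indices only drops those indices from the finite intersection, and $\PP(\Omega)=1$ leaves the product unchanged. With $\Omega$ available in every family, the product formula for an arbitrary finite $I\subset[n]$ reduces to the single identity
$$\PP(C_1\cap\cdots\cap C_n)=\prod_{i=1}^n\PP(C_i)\quad\text{for all }C_i\in\CC_i',\ i\in[n],$$
since the missing indices can be filled with $\Omega$.

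We then upgrade one family at a time. Fix $C_2\in\CC_2',\ldots,C_n\in\CC_n'$ and let
$$\DD:=\Big\{A\in\FF:\PP(A\cap C_2\cap\cdots\cap C_n)=\PP(A)\prod_{i=2}^n\PP(C_i)\Big\}.$$
We check that $\DD$ is a Dynkin system on $\Omega$. It contains $\Omega$ by the above (choose $C_1=\Omega$). For comparable differences $A\subset B$ in $\DD$, we subtract the two identities; all quantities are finite because $\PP$ is a probability. Nondecreasing unions are handled by continuity from below on both sides. Since $\CC_1'\subset\DD$ and $\CC_1'$ is a $\pi$-system, Corollary~\ref{corollary:dynkin} gives $\sigma_\Omega(\CC_1)=\sigma_\Omega(\CC_1')\subset\DD$. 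Thus $(\sigma_\Omega(\CC_1),\CC_2',\ldots,\CC_n')$ satisfies the displayed identity.

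Now $\sigma_\Omega(\CC_1)$ is itself a $\pi$-system containing $\Omega$, so the same argument applies with index $2$ in the role of index $1$, keeping the already enlarged first family fixed. Inducting over $i=1,\ldots,n$ replaces every $\CC_i'$ by $\sigma_\Omega(\CC_i)$, and by the first paragraph the resulting identity gives the independency. The only step needing care is the reduction to the full $n$-fold identity, which is why we adjoin $\Omega$ to each family. Everything else is the routine $\pi$-$\lambda$ check, where finiteness of $\PP$ makes the difference step valid.
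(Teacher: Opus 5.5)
Your proposal is correct and follows essentially the same route as the paper. Both arguments reduce to finite subfamilies, adjoin $\Omega$ so that only the full $n$-fold product identity needs checking, show the relevant class is a $\lambda$-system containing the first $\pi$-system, apply Dynkin's lemma, and then induct over the indices; the only cosmetic difference is that you fix $C_2,\ldots,C_n$ before defining $\DD$, whereas the paper quantifies over them inside the definition of the class.
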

	\begin{proof}
		It will suffice to show that if the finite collection of $\pi$-systems $(\AA_1,\ldots,\AA_n)$ ($n\geq 2$ an integer) on $\Omega$ is an independency, each containing $\Omega$, then $(\sigma_\Omega(\AA_1),\AA_2,\ldots,\AA_n)$ is one also. (For thereafter one can apply mathematical induction to conclude that $(\sigma_\Omega(\AA_1),\sigma_\Omega(\AA_2),\ldots,\sigma_\Omega(\AA_n))$ is an independency. It is enough because independence of $(\CC_\lambda)_{\lambda\in \Lambda}$ implies trivially independence of $(\CC_\lambda\cup \{\Omega\})_{\lambda\in \Lambda}$ and because checking independence reduces to checking it on finite subfamilies.) Consider then  the collection of all elements $L\in \FF$, such that for all $A_2\in \AA_2$, \ldots, $A_n\in \AA_n$,  one has $\PP(L\cap A_2\cap\cdots \cap A_n)=\PP(L)\PP(A_2)\cdots \PP(A_n)$. It is, from the hypothesis and by properties of probability measures, a $\lambda$-system, containing the $\pi$-system $\AA_1$. Apply Dynkin's lemma to deduce that $(\sigma_\Omega(\AA_1),\AA_2,\ldots,\AA_n)$ is an independency (since the systems $\AA_k$, $k\in [n]$, contain $\Omega$ one does not have to check the condition on finite subcollections, it is automatic).
	\end{proof}
	
			\begin{example}
	Let $(\Omega,\FF,\PP)$ be a probability space and let  $\CC=(\CC_\lambda)_{\lambda\in \Lambda}$ be an independency of sub-$\sigma$-fields of $\FF$. Then also $(\CC_\lambda\lor \PP^{-1}(\{0,1\}))_{\lambda\in \Lambda}$ is an independency. Indeed one has only to note that  $(\CC_\lambda \cup \PP^{-1}(\{0\}))_{\lambda\in \Lambda}$ is an independency of $\pi$-systems with $\sigma_\Omega(\CC_\lambda \cup \PP^{-1}(\{0\}))=\CC_\lambda\lor \PP^{-1}(\{0,1\})$ for all $\lambda\in \Lambda$.
	\end{example}
	\begin{remark}
	We might say that independece does not see trivial sets.
	\end{remark}
	\begin{example}
	Let $(\Omega,\FF,\PP)$ be a probability space and let $X$ and $Y$ two random elements with values in the countable sets $E$ and $F$, respectively (endowed with their discrete measurable structures). Then $X$ is independent of $Y$ iff $\PP(\{X=x\}\cap \{Y=y\})=\PP(X=x)\PP(Y=y)$ for all $x\in E$ and $y\in F$. To see it, apply Proposition~\ref{corollary:raising-independence} to the generating $\pi$-systems $\{\{X=x\}:x\in E\}\cup\{\emptyset\}$ and $\{\{Y=y\}:y\in F\}\cup\{\emptyset\}$.  One could generalize this to arbitrary families of ``discrete'' random elements in a clear way. Another extension would be to drop the countability assumptions on $E$ and $F$, replacing the power sets with the countable--co-countable $\sigma$-fields.
	\end{example}
\begin{proposition}
Let $(\Omega,\FF,\PP)$  be a probability space and let $(X,Y)$ be a bivariate absolutely continuous random vector. Denote $f_{(X,Y)}:=\frac{\dd\PP_{(X,Y)}}{\dd \leb^2}$. Then $X$ and $Y$ are both absolutely continuous, 
\begin{equation}\label{eq:abs:density} 
f_X(x)=\int f_{(X,Y)}(x,y)\leb(dy)\text{ for $\leb$-a.e. }x\in \mathbb{R}\text{ and } f_Y(y)=\int f_{(X,Y)}(x,y)\leb(dx)\text{ for $\leb$-a.e. }y\in \mathbb{R},
\end{equation}
 and $X$ is independent of $Y$ iff 
\begin{equation*}
f_{(X,Y)}(x,y)=f_X(x)f_Y(y) \text{ for $\leb^2$-a.e. $(x,y)\in \mathbb{R}^2$}.
\end{equation*}
When $X$ and $Y$ are independent, then, for all $g\in \mathcal{B}_{\mathbb{R}^2}/\mathcal{B}_{[-\infty,\infty]}$,
\begin{equation}
\PP[g(X,Y)]=\int\int g(x,y)f_X(x)f_Y(y)\leb(\dd x)\leb(\dd y),
\end{equation}
provided $\PP[g^-(X,Y)]\land \PP[g^+(X,Y)]<\infty$. 
\end{proposition}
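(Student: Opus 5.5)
The plan is to reduce everything to Tonelli--Fubini (Theorem~\ref{thm:tonelli-fubini}), the image measure theorem (Corollary~\ref{proposition:image-measure-theorem}), the associativity formula of Proposition~\ref{proposition:density}, and the a.e.-uniqueness of Radon--Nikodym derivatives (Theorem~\ref{thm:RN}, Corollary~\ref{corollary}). Fix a version $f:=f_{(X,Y)}\in\mathcal{B}_{\mathbb{R}^2}/\mathcal{B}_{[0,\infty)}$ with $\PP_{(X,Y)}=f\cdot\leb^2$.

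\emph{Marginals.} For $A\in\mathcal{B}_\mathbb{R}$, since $X=\pr_1\circ(X,Y)$, we have $\PP(X\in A)=\PP_{(X,Y)}(A\times\mathbb{R})=\int \mathbbm{1}_{A\times \mathbb{R}} f\dd\leb^2$. By Tonelli this equals $\int_A\bigl(\int f(x,y)\leb(\dd y)\bigr)\leb(\dd x)$, and the inner map $h(x):=\int f(x,y)\leb(\dd y)$ is $\mathcal{B}_\mathbb{R}/\mathcal{B}_{[0,\infty]}$-measurable, again by Tonelli. Thus $\PP_X=h\cdot\leb$, so $\PP_X\ll\leb$. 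Since $\int h\dd\leb=1$, part \eqref{properties:v} of Theorem~\ref{theorem:int:basics} gives $h<\infty$ a.e.-$\leb$. Modifying $h$ on a null set we get a $[0,\infty)$-valued density, and Radon--Nikodym uniqueness yields \eqref{eq:abs:density} for $X$. The argument for $Y$ is symmetric, with the order of integration swapped.

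\emph{Independence criterion.} By the preceding proposition, $X$ and $Y$ are independent iff $\PP_{(X,Y)}=\PP_X\times\PP_Y$. The key step is to identify $\PP_X\times \PP_Y=(f_X\otimes f_Y)\cdot\leb^2$, where $(f_X\otimes f_Y)(x,y):=f_X(x)f_Y(y)$. This map is Borel measurable, being a product of compositions of the projections with Borel maps. On rectangles, Tonelli gives $\int_{A\times B} f_X(x)f_Y(y)\leb^2(\dd(x,y))=\PP_X(A)\PP_Y(B)$. Both measures are finite, so the uniqueness clause of Theorem~\ref{thm:tonelli-fubini}\eqref{product:i}, or Proposition~\ref{proposition:equality of measures} applied to the $\pi$-system of rectangles, gives equality. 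Hence independence holds iff $f\cdot\leb^2=(f_X\otimes f_Y)\cdot\leb^2$. Since $\leb^2$ is $\sigma$-finite and both densities are integrable, Corollary~\ref{corollary} turns this into $f=f_X\otimes f_Y$ a.e.-$\leb^2$. The converse direction is immediate from Theorem~\ref{theorem:int:basics}\eqref{properties:vi}.

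\emph{Expectation formula.} For independent $X,Y$, the image measure theorem gives $\PP[g(X,Y)]=\PP_{(X,Y)}[g]$, well-defined iff the left side is. Then Proposition~\ref{proposition:density} gives $(f_X\otimes f_Y)\cdot\leb^2[g]=\leb^2[g\,(f_X\otimes f_Y)]$, with the same well-definedness equivalence. Finally, Tonelli--Fubini in the form \eqref{product:ii:c} splits this as the iterated integral. Here the finiteness of $\PP[g^-(X,Y)]$ or $\PP[g^+(X,Y)]$ is exactly the needed hypothesis: apply Tonelli separately to $g^\pm f_X\otimes f_Y$, then subtract; if only $g^+$ has finite integral, pass to $-g$.

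I expect no deep obstacle. The only delicate points are bookkeeping: the $\infty$-valued version of $h$ has to be corrected on a null set, and the well-definedness conditions have to be tracked through the chain of equalities in the last step.
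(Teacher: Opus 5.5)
Your proposal is correct and follows essentially the paper's argument. You use Tonelli for the marginals, then reduce independence to the equality of $f_{(X,Y)}\cdot\leb^2$ and $(f_X\otimes f_Y)\cdot\leb^2$ on rectangles, concluding with Corollary~\ref{corollary}. The only differences are cosmetic: you pass from rectangles to all Borel sets via uniqueness of the product measure rather than the $\pi$-system clause of Corollary~\ref{corollary}, and you spell out the expectation formula (image measure theorem, Proposition~\ref{proposition:density}, Tonelli--Fubini), which the paper's proof leaves implicit.
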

\begin{definition}
We retain in what follows the notation $f_{(X,Y)}$ for the density of a bivariate absolutely continuous random vector $(X,Y)$. 
\end{definition}
\begin{remark}
One could generalize to higher-dimensional random vectors (whose laws are absolutely continuous w.r.t. some product of $\sigma$-finite reference measures) in  a clear way. 
\end{remark}
\begin{proof} 
 For $A\in \mathcal{B}_\mathbb{R}$, by Tonelli, $$\PP_X(A)=\PP(X\in A)=\PP((X,Y)\in A\times\mathbb{R})=\int_{A\times \mathbb{R}}f_{(X,Y)}(x,y)\leb^2(\dd(x,y))=\int_A\int f_{(X,Y)}(x,y)\leb(\dd y)\leb(\dd x).$$ Formula~\eqref{eq:abs:density} follows. Now, $X$ is independent of $Y$ iff for all $(A,B)\in \mathcal{B}_\mathbb{R}\times \mathcal{B}_\mathbb{R}$, $\PP(X\in A,Y\in B)=\PP(X\in A)\PP(Y\in B)$, i.e. (via Tonelli, again) $$\int_{A\times B} f_{(X,Y)}(x,y)\leb^2(\dd (x,y))=\int_{A\times B} f_X(x)f_Y(y)\leb^2(\dd (x,y)),$$
	which in turn is the same as $f_{(X,Y)}(x,y)=f_X(x)f_Y(y)$ for $\leb^2$-a.e. $(x,y)\in \mathbb{R}^2$ (recall Corollary~\ref{corollary}). 	\end{proof}

\begin{definition}
	Let $((\Omega_\lambda,\FF_\lambda))_{\lambda\in \Lambda}$ be a family of measurable spaces; we set $$\otimes_{\lambda\in \Lambda}\FF_\lambda:=\lor_{\lambda\in\Lambda}\sigma^{\FF_\lambda}(\pr_\lambda),$$ where $\pr_\lambda:\prod_{\mu\in \Lambda}\Omega_\mu\to \Omega_\lambda$, $\lambda\in \Lambda$, are the canonical projections, and call it the product (tensor) $\sigma$-algebra. For a $\sigma$-field  $\FF$ and any set $\Lambda$ we put $\FF^{\otimes \Lambda}:=\otimes_{\lambda\in\Lambda}\FF$.
\end{definition}
\begin{remark}
For a $\sigma$-field $\FF$, $\FF^{\otimes \emptyset}$ is to be understood as the trivial (indeed it is the only) $\sigma$-algebra on $\{\emptyset\}$.
\end{remark}
\begin{proposition}\label{proposition:arbitrary-products}
		Let $((\Omega_\lambda,\FF_\lambda))_{\lambda\in \Lambda}$ be a family of measurable spaces. 
		\begin{enumerate}[(i)]
		\item\label{product--arbit:i} $\otimes_{\lambda\in \Lambda}\FF_\lambda$ is the smallest $\sigma$-field $\GG$ on $\prod_{\lambda\in \Lambda}\Omega_\lambda$ for which  $\pr_\lambda\in \GG/\FF_\lambda$ for all $\lambda\in \Lambda$ (i.e. such as makes all the canonical projections   measurable). 
		\item\label{product--arbit:ii} For a measurable space $(\Omega,\FF)$ and a family of functions $f_\lambda:\Omega\to \Omega_\lambda$, as $\lambda$ ranges over $\Lambda$, we have that $f_\lambda\in \FF/\FF_\lambda$ for all $\lambda\in\Lambda$ iff $(f_\lambda)_{\lambda\in\Lambda}\in \FF/\otimes_{\lambda\in \Lambda}\FF_\lambda$. 
		\item\label{product--arbit:iii} Let there be given a probability $\mu_\lambda$  on $\FF_\lambda$ for each $\lambda\in \Lambda$. There exists a unique probability measure $\mu$ on $\otimes_{\lambda\in \Lambda}\FF_\lambda$ such that $\mu\circ\pr_\lambda^{-1}=\mu_\lambda$ for all $\lambda\in \Lambda$ and rendering $(\pr_\lambda)_{\lambda\in \Lambda}$ an independency under $\mu$. 
		\end{enumerate}
\end{proposition}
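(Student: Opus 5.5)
Items (i) and (ii) are formal, and I will dispatch them first. For (i), $\otimes_{\lambda}\FF_\lambda=\sigma(\cup_\lambda \pr_\lambda^{-1}(\FF_\lambda))$. Any $\GG$ making all $\pr_\lambda$ measurable contains every $\sigma^{\FF_\lambda}(\pr_\lambda)$ by Proposition~\ref{proposition:maps}\eqref{maps:i}, hence contains their join. Conversely, the join does make every projection measurable. For (ii), necessity holds because $f_\lambda=\pr_\lambda\circ (f_\mu)_\mu$ and compositions of measurable maps are measurable (Proposition~\ref{proposition:compositions}). Sufficiency follows from Proposition~\ref{proposition:maps}\eqref{maps:iv} applied to the generating class $\cup_\lambda\pr_\lambda^{-1}(\FF_\lambda)$: the preimage of $\pr_\lambda^{-1}(F)$ under $(f_\mu)_\mu$ is $f_\lambda^{-1}(F)\in\FF$.

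For (iii), \emph{uniqueness} comes first. The cylinder sets $\prod_{\lambda\in I}^{}$-type events $\cap_{\lambda\in I}\pr_\lambda^{-1}(F_\lambda)$, with $I$ finite and $F_\lambda\in\FF_\lambda$, form a $\pi$-system that contains $\Omega:=\prod\Omega_\lambda$ and generates $\otimes\FF_\lambda$. The two requirements (marginals $\mu_\lambda$, independence of the projections) force $\mu(\cap_{\lambda\in I}\pr_\lambda^{-1}(F_\lambda))=\prod_{\lambda\in I}\mu_\lambda(F_\lambda)$. Proposition~\ref{proposition:equality of measures} (probabilities, so no localizing sequence is needed) then gives uniqueness.

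For \emph{existence} I would use Carath\'eodory. Let $\AA$ be the family of cylinder sets $\pr_I^{-1}(B)$, with $I\subset\Lambda$ finite, $B\in\otimes_{\lambda\in I}\FF_\lambda$, and $\pr_I$ the projection onto $\prod_{\lambda\in I}\Omega_\lambda$. This is an algebra, since two cylinders can be lifted to a common finite $I$. Define $\mu(\pr_I^{-1}(B)):=(\times_{\lambda\in I}\mu_\lambda)(B)$, using the finite products of Theorem~\ref{thm:tonelli-fubini} extended by induction. Well-definedness, i.e. independence of the representation, follows from enlarging $I$ and noting that the product measure of $B\times\prod_{J\setminus I}\Omega_\lambda$ equals that of $B$, because the extra factors have mass $1$ (check on rectangles, then use uniqueness in Theorem~\ref{thm:tonelli-fubini}). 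Finite additivity is then immediate. Once $\mu$ is shown to be countably additive on $\AA$, Theorem~\ref{theorem:caratheodory} extends it to $\sigma(\AA)=\otimes\FF_\lambda$. The marginal and independence properties hold on rectangles by construction, so the extension has them.

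The main obstacle is countable additivity on $\AA$, equivalently continuity at $\emptyset$. Let $C_n\in\AA$ decrease and suppose $\mu(C_n)\ge\epsilon>0$; I must show $\cap_n C_n\neq\emptyset$. Only countably many coordinates $\lambda_1,\lambda_2,\ldots$ are involved, and I may assume $C_n$ depends on $\lambda_1,\ldots,\lambda_n$. Compactness as in Proposition~\ref{prop:tsirelson} is not available, since there is no topology here, so I would use the standard Tonelli-sectioning argument. Set $g_n^{(1)}(x_1):=\mu(\text{section of }C_n\text{ at }x_1)$, computed as a product-measure integral over the remaining coordinates. By Tonelli these are measurable and nonincreasing in $n$, and $\int g_n^{(1)}\dd\mu_{\lambda_1}=\mu(C_n)\ge\epsilon$. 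Bounded convergence then gives a point $x_1^*$ with $\inf_n g^{(1)}_n(x_1^*)\ge\epsilon$. Iterating on the sections yields $x_2^*,x_3^*,\ldots$ such that every $n$-fold section at $(x_1^*,\ldots,x_k^*)$ has measure $\ge\epsilon$. Because $C_n$ depends only on the first $n$ coordinates, this forces $(x_1^*,\ldots,x_n^*)$ to lie in the base of $C_n$. Extending $(x_k^*)$ arbitrarily to the other coordinates produces a point of $\cap_n C_n$; this extension uses the axiom of choice, assuming each $\Omega_\lambda\neq\emptyset$, which is guaranteed because probability spaces are nonempty.
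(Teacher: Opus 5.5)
Your proposal is correct and follows essentially the paper's route: uniqueness via the $\pi$-system of finite-dimensional rectangles and Proposition~\ref{proposition:equality of measures}, and existence via Carath\'eodory on the cylinder algebra, with continuity at $\emptyset$ obtained by the same Tonelli-sectioning argument that selects the coordinates of a point one at a time. The only organizational difference is that the paper first reduces to countable $\Lambda$, using that $\otimes_{\lambda}\FF_\lambda$ is the union of the $\sigma$-fields generated by countable subfamilies of projections, whereas you work with the full cylinder algebra and pass to countably many coordinates only inside the continuity argument, completing the point by choice; note that the same nonemptiness of $\prod_\lambda\Omega_\lambda$ is also what makes each $\pr_I$ onto, and hence your definition of $\mu$ on $\AA$ unambiguous.
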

\begin{definition}
	The $\mu$ of Proposition~\ref{proposition:arbitrary-products}\eqref{product--arbit:iii} is called the product of $(\mu_\lambda)_{\lambda\in\Lambda}$ and denoted $\times_{\lambda\in \Lambda}\mu_\lambda$. For a probability $\nu$ and any set $\Lambda$ we put $\nu^{\times\Lambda}:=\times_{\lambda\in \Lambda}\nu$ (of course $\nu^{\times \emptyset}=\delta_\emptyset$, a probability on $(\{\emptyset\},2^{\{\emptyset\}})$, no matter what the $\nu$).
	\end{definition}
	\begin{remark}
For $n\in \mathbb{N}_0$, $\nu^n=\nu^{\times [n]}$  up to the natural identification of $\Omega^n$ with $\Omega^{[n]}$, $\Omega$ being the sample space of the probability $\nu$.
	\end{remark} 
\begin{proof}[Proof of \eqref{product--arbit:i}, \eqref{product--arbit:ii} and uniqueness in \eqref{product--arbit:iii}]  \eqref{product--arbit:i} is clear. For  \eqref{product--arbit:ii} recall Propositions~\ref{proposition:compositions} and~\ref{proposition:maps}\eqref{maps:iv}. The uniqueness in \eqref{product--arbit:iii} follows from Proposition~\ref{proposition:equality of measures} taking for the $\pi$-system the collection of sets of the form $\cap_{\lambda\in G}\pr_\lambda^{-1}(F_\lambda)$, where $F_\lambda\in\FF_\lambda$ for $\lambda\in G$, and where $G$ runs over all the (non-empty) finite subsets of $\Lambda$. 
	\end{proof}	
	\begin{proof}[Proof of existence in \eqref{product--arbit:iii}] 
	Because $\otimes_{\lambda\in \Lambda}\FF_\lambda$ is the union of the $\sigma$-fields generated by countable subfamilies of $(\pr_\lambda)_{\lambda\in \Lambda}$ one reduces at once to the case when $\Lambda$ is countable. The finite case being clear by Theorem~\ref{thm:tonelli-fubini} or in any case, we may just as well assume $\Lambda=\mathbb{N}$. For each $n\in \mathbb{N}$, Theorem~\ref{thm:tonelli-fubini} gives us a unique measure $\gamma_n$ on the $\sigma$-field $\GG_n$ generated by the projections $\pr_i$, $i\in [n]$, such that $\gamma_n\circ (\pr_{[n]})^{-1}=\mu_1\times \cdots \times \mu_n$. We see that $(\gamma_n)_{n\in \mathbb{N}}$ is an increasing sequence of functions; the map $\gamma_\infty:=\cup_{n\in \mathbb{N}}\gamma_n$ is defined on the algebra $\GG_\infty:=\cup_{n\in \mathbb{N}}\GG_n$, which generates $\otimes_{n\in \mathbb{N}}\FF_n$. By the theorem of Carath\'eodory it suffices to check that $\gamma_\infty$ is countably additive; it being clearly additive, it will further be enough to take an arbitrary $\downarrow$ sequence $(A_n)_{n\in \mathbb{N}}$ in $\GG_\infty$ with $\epsilon:=\inf_{n\in \mathbb{N}}\gamma_\infty(A_n)>0$ and check that $A:=\cap_{n\in \mathbb{N}}A_n\ne \emptyset$. Without loss of generality we may assume that for each $n\in \mathbb{N}$ we have $\mathbbm{1}_{A_n}=g_n(\pr_{[n]})$ for some $g_n\in  (\otimes_{i\in [n]}\FF_i)/2^{\{0,1\}}$. By Tonelli $\mu_1^{\omega_1}[\cdots \mu^{\omega_n}_n[g_n(\omega_1,\ldots,\omega_n)]\cdots]=\gamma_n(A_n)=\gamma_\infty(A_n)\geq \epsilon$ and $g_{n+1}(\pr_{[n+1]})=\mathbbm{1}_{A_{n+1}}\leq \mathbbm{1}_{A_n}=g_{n}(\pr_{[n]})$ for each $n\in \mathbb{N}$. (We have written everything with functions rather than sets for ease of notation, but only the sets matter, of course.) 
We construct an $\tilde\omega\in A \subset \prod_{n\in \mathbb{N}}\Omega_n$ by inductively defining its coordinates as follows. For  $n\in \mathbb{N}$ define $f_n:\Omega_1\to [0,1]$ by setting $f_n(\omega_1):=\mu_2^{\omega_2}[\cdots \mu_n^{\omega_n}[g_n(\omega_1,\omega_2,\ldots,\omega_n)]\cdots]$ for $\omega_1\in \Omega_1$; then $f_n(\omega_1)$ is $\downarrow$ in $n\in \mathbb{N}$ for each $\omega_1\in\Omega_1$. By dominated convergence $\mu_1^{\omega_1}[\inf_{n\in \mathbb{N}}f_n(\omega_1)]\geq \epsilon$. Therefore there exists at least one $\tilde\omega_1\in \Omega_1$ such that $f_n(\tilde\omega_1)\geq \epsilon$ for each $n\in \mathbb{N}$. In particular $g_1(\tilde\omega_1)=f_1(\tilde\omega_1)\geq \epsilon$. Now repeat the preceding for $(\mu_{i+1})_{i\in \mathbb{N}}$ in lieu of $(\mu_{i})_{i\in \mathbb{N}}$ and for $(g_{i+1}(\tilde\omega_1,\cdot))_{i\in \mathbb{N}}$ in lieu of  $(g_{i})_{i\in \mathbb{N}}$. You get $\tilde \omega_2\in \Omega_2$ such that $g_2(\tilde{\omega}_1,\tilde{\omega}_2)\geq \epsilon$. ``Und so weiter und so fort.'' Then $g_n(\tilde{\omega})\geq \epsilon$ for all $n\in \mathbb{N}$. Therefore $\mathbbm{1}_A(\tilde{\omega})=\lim_{n\to\infty}g_n(\pr_{[n]}(\tilde\omega))\geq \epsilon$, which means that $A\ne \emptyset$.
	\end{proof}
	 
\begin{proposition}
Let $(\Omega,\FF,\PP)$ be a probability space and let $f_\lambda$ be an $(E_\lambda,\EE_\lambda)$-valued random element, as $\lambda$ ranges over some index set $\Lambda$. Then $(f_\lambda)_{\lambda\in\Lambda}$ is an independency under $\PP$ iff $((f_\lambda)_{\lambda\in\Lambda})_\star\PP=\times_{\lambda\in\Lambda}{f_\lambda}_\star \PP$. 
\end{proposition}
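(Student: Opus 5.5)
Write $F:=(f_\lambda)_{\lambda\in\Lambda}:\Omega\to\prod_{\lambda\in\Lambda}E_\lambda$. By Proposition~\ref{proposition:arbitrary-products}\eqref{product--arbit:ii}, $F\in \FF/\otimes_{\lambda\in\Lambda}\EE_\lambda$, so $F_\star\PP$ is a probability on $\otimes_{\lambda\in\Lambda}\EE_\lambda$. The plan is to reduce both directions to the characterization of $\times_{\lambda\in\Lambda}{f_\lambda}_\star\PP$ in Proposition~\ref{proposition:arbitrary-products}\eqref{product--arbit:iii}: it is the \emph{unique} probability $\mu$ on $\otimes_{\lambda\in\Lambda}\EE_\lambda$ such that $\mu\circ\pr_\lambda^{-1}={f_\lambda}_\star\PP$ for all $\lambda$ and $(\pr_\lambda)_{\lambda\in\Lambda}$ is an independency under $\mu$.

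The key identity is $\pr_\lambda\circ F=f_\lambda$ for each $\lambda\in\Lambda$. Consequently, for any finite non-empty $I\subset\Lambda$ and $A_i\in\EE_i$, $i\in I$,
$$(F_\star\PP)\Big(\cap_{i\in I}\pr_i^{-1}(A_i)\Big)=\PP\Big(\cap_{i\in I}f_i^{-1}(A_i)\Big),\qquad (F_\star\PP)\big(\pr_i^{-1}(A_i)\big)=\PP(f_i\in A_i).$$
In particular $(F_\star\PP)\circ\pr_\lambda^{-1}=(\pr_\lambda\circ F)_\star\PP={f_\lambda}_\star\PP$ for every $\lambda$, by the composition rule for push-forwards.

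Moreover $\sigma^{\EE_\lambda}(\pr_\lambda)=\{\pr_\lambda^{-1}(A):A\in\EE_\lambda\}$ and $\sigma^{\EE_\lambda}(f_\lambda)=\{f_\lambda^{-1}(A):A\in\EE_\lambda\}$. The displayed identity therefore shows that $(\pr_\lambda)_{\lambda\in\Lambda}$ is an independency under $F_\star\PP$ if and only if $(f_\lambda)_{\lambda\in\Lambda}$ is an independency under $\PP$. The two product conditions are literally the same equations, read through $F$.

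Both directions now follow. If $(f_\lambda)_{\lambda\in\Lambda}$ is an independency under $\PP$, then $F_\star\PP$ has the right marginals and makes the projections independent, so by the uniqueness in Proposition~\ref{proposition:arbitrary-products}\eqref{product--arbit:iii} it equals $\times_{\lambda\in\Lambda}{f_\lambda}_\star\PP$. Conversely, if $F_\star\PP=\times_{\lambda\in\Lambda}{f_\lambda}_\star\PP$, then the projections are an independency under $F_\star\PP$ by the defining property of the product, and by the equivalence above $(f_\lambda)_{\lambda\in\Lambda}$ is an independency under $\PP$.

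There is no real obstacle, since the heavy lifting (existence and uniqueness of the product measure) is already contained in Proposition~\ref{proposition:arbitrary-products}\eqref{product--arbit:iii}. The only point needing care is to check that independence of random elements means independence of their initial structures $\sigma^{\EE_\lambda}(\cdot)$, so that both notions of independency are tested on exactly the same finite intersections of cylinder sets.
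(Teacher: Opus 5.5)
Your proof is correct and is essentially the paper's argument: both rest on the identity $F^{-1}(\cap_{i\in I}\pr_i^{-1}(A_i))=\cap_{i\in I}f_i^{-1}(A_i)$ on the cylinder $\pi$-system. The only difference is that the paper applies Proposition~\ref{proposition:equality of measures} directly on that $\pi$-system, while you invoke the uniqueness clause of Proposition~\ref{proposition:arbitrary-products}\eqref{product--arbit:iii}, which the paper itself proves by exactly that $\pi$-system argument.
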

\begin{proof}
Let $\pr_\lambda$, $\lambda\in \Lambda$, be the canonical projections on $\prod_{\lambda\in \Lambda}E_\lambda$. For finite $G\subset \Lambda$ and $A\in \prod_{\lambda\in  G}\EE_\lambda$ one has, from the definitions,
\begin{align*}
(((f_\lambda)_{\lambda\in\Lambda})_\star\PP)(\cap_{\lambda\in G}\pr_\lambda^{-1}(A_\lambda))&=\PP(\cap_{\lambda\in G}\{f_\lambda\in A_\lambda\})\text{ on the one hand},\\
(\times_{\lambda\in\Lambda}{f_\lambda}_\star \PP)(\cap_{\lambda\in G}\pr_\lambda^{-1}(A_\lambda))&=\prod_{\lambda\in G}({f_\lambda}_\star \PP)(A_\lambda)\quad (\because\, (\pr_\lambda)_{\lambda\in \Lambda} \text{ is an independency under }\times_{\lambda\in\Lambda}{f_\lambda}_\star \PP)\\
&=\prod_{\lambda\in G}\PP(f_\lambda\in A_\lambda)\text{ on the other ($\because$ $(\pr_\lambda)_\star (\times_{\mu\in\Lambda}{f_\mu}_\star \PP)={f_\lambda}_\star \PP$ for all $\lambda\in \Lambda$};
\end{align*}
it remains to note that sets of the form $\cap_{\lambda\in G}\pr_\lambda^{-1}(A_\lambda)$ form a $\pi$-system generating $\otimes_{\lambda\in \Lambda}\EE_\lambda$ on $\prod_{\lambda \in\Lambda}E_\lambda$ and to apply Proposition~\ref{proposition:equality of measures}. 
\end{proof}
	
\section{Conditioning}
\emph{conditioning on an event; conditional expectations w.r.t. a sub-$\sigma$-field: definition, properties, and means of computation}
\begin{proposition}
If $(\Omega,\AA,\PP)$ is a probability space and $A\in\AA$ with $\PP(A)>0$, then $(A,\AA\vert_A,\frac{1}{\PP(A)}\PP_A)$ is also a probability space, furthermore $\left(\frac{1}{\PP(A)}\PP_A\right)[f]=\PP[f;A]/\PP(A)$ for all $f\in(\AA\vert_A)/\mathcal{B}_{[-\infty,\infty]}$. \qed
\end{proposition}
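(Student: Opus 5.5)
The claim has two parts. First, $(A,\AA\vert_A,\frac{1}{\PP(A)}\PP_A)$ is a probability space. Second, the integral formula holds. I would handle each by reducing to results from Part~I, with the usual indicators/monotone class/positive-negative parts routine for the second.

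\emph{Probability space.} By Corollary~\ref{proposition:restrictions-of-fields}, $\AA\vert_A$ is a $\sigma$-field on $A$. Since $A\in\AA$, we have $\AA\vert_A=\AA\cap 2^A$. By item~\eqref{measures:v} of the proposition on first properties of measures, $\PP_A$ is a measure on $\AA\vert_A$. Scaling a measure by $\alpha:=1/\PP(A)\in(0,\infty)$ gives a measure again (remark after Corollary~\ref{corollary:integral-against-sum}). Its total mass is $\PP_A(A)/\PP(A)=1$, so it is a probability.

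\emph{Integral formula.} The same remark gives $\big(\frac{1}{\PP(A)}\PP_A\big)[f]=\frac{1}{\PP(A)}\PP_A[f]$, where one side is well-defined iff the other is. It therefore suffices to show $\PP_A[f]=\PP[f;A]$ for $f\in(\AA\vert_A)/\mathcal{B}_{[-\infty,\infty]}$. The remarks after the definition of the Lebesgue integral record $\mu[g;A]=\mu_A[g\vert_A]$ for $g\in\AA/\mathcal{B}_{[-\infty,\infty]}$. To apply this, extend $f$ by $0$ off $A$ and call the result $\tilde f$, so that $\PP[f;A]$ is read as $\PP[\tilde f\mathbbm{1}_A]$.

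This $\tilde f$ is $\AA$-measurable. Indeed, for $B\in\mathcal{B}_{[-\infty,\infty]}$, $\tilde f^{-1}(B)$ equals $f^{-1}(B)\in\AA\vert_A\subset\AA$, possibly together with $\Omega\backslash A\in\AA$. This is Proposition~\ref{proposition:restrictions}\eqref{restrict:c} with the cover $\{A,\Omega\backslash A\}$. Alternatively, Doob--Dynkin supplies an $\AA$-measurable extension.

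If one wants to verify the restriction identity directly rather than cite it, I would use the monotone class theorem (Corollary~\ref{corollary:monotone-class}) on $A$. For $f=\mathbbm{1}_C$ with $C\in\AA\vert_A$, both sides equal $\PP(C)$. Both sides are additive and positively homogeneous in $f\geq0$ by Theorem~\ref{theorem:int:basics}, and both are continuous under nondecreasing limits by monotone convergence (Theorem~\ref{theorem:convergence}). Hence the equality holds for all $f\geq 0$. For general $f$, apply this to $f^+$ and $f^-$; both integrals are well-defined under the same condition, and they agree.

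The only point needing any care is the bookkeeping that $f$, defined only on $A$, is interpreted as $0$ off $A$ in $\PP[f;A]$, together with the measurability of that extension. Everything else is a direct citation.
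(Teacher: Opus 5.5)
Your proposal is correct and matches what the paper intends by leaving this with a bare \qed. It combines three facts from Part~I: item~(v) that restrictions of measures are measures, the scaling remark after Corollary~\ref{corollary:integral-against-sum}, and the remark $\mu[f;A]=\mu_A[f\vert_A]$ applied to the zero extension of $f$ (which is exactly the paper's convention for reading $f\mathbbm{1}_A$). Your optional monotone-class check of that last remark is also sound.
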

\begin{definition}
Let $(\Omega,\AA,\PP)$ be a probability space and $A\in \AA$, $\PP(A)>0$. Then for $B\in \AA$, we put $\PP(B\vert A):=\PP(A\cap B)/\PP(A)$, the conditional  probability of $B$ given $A$ under $\PP$. Write $\PP(\cdot\vert A):=\frac{1}{\PP(A)}\PP_A$. For further $f\in \AA/\mathcal{B}_{[-\infty,\infty]}$ or for $f:\Omega\to \mathbb{C}$ with $\{\Re f,\Im f\}\subset \LL^1(\PP)$, $\PP[f\vert A]:=\PP[f;A]/\PP(A)$, the conditional expectation of $f$ given  $A$ under $\PP$.
\end{definition}
\begin{proposition}\label{proposition:conditional-expectation}
Let $(\Omega,\AA,\PP)$ be a probability space, $\BB$ a sub-$\sigma$-field of $\AA$,  $f\in \AA/\mathcal{B}_{[-\infty,\infty]}$, $\PP[f^+]\land \PP[ f^-]<\infty$.  Then there exists a up to $\PP$-a.s. equality unique $g\in \BB/\mathcal{B}_{[-\infty,\infty]}$ with $\PP [g^+]\land \PP [g^-]<\infty$ such that 
\begin{equation}\label{definingg}
\PP[f;B]=\PP[g;B]\text{ for all }B\in \BB,\text{ i.e. }(f\cdot \PP)\vert_\BB=g\cdot (\PP\vert_\BB);
\end{equation}
this $g$ is $\PP$-a.s. $=\frac{\dd ((f\cdot \PP)\vert_\BB)}{\dd (\PP\vert_\BB)}$  provided $f\geq 0$ and $\PP[f]<\infty$.
\end{proposition}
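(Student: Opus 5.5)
The plan is to obtain existence from the Radon--Nikodym theorem (Theorem~\ref{thm:RN}), first for nonnegative integrable $f$, then to remove integrability by monotone convergence, and finally to handle signs by the usual splitting. Uniqueness will come from Corollary~\ref{corollary}.

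\emph{Existence for $f\geq 0$ with $\PP[f]<\infty$.} By Proposition~\ref{proposition:density}, $f\cdot\PP$ is a finite measure on $\AA$, so its restriction $(f\cdot\PP)\vert_\BB$ is a finite measure on $\BB$. It is absolutely continuous w.r.t. $\PP\vert_\BB$, because a $B\in\BB$ with $\PP(B)=0$ has $\PP[f;B]=0$. Both measures are finite, hence $\sigma$-finite. Theorem~\ref{thm:RN} therefore gives $g:=\frac{\dd ((f\cdot \PP)\vert_\BB)}{\dd (\PP\vert_\BB)}\in\BB/\mathcal{B}_{[0,\infty)}$ with $(f\cdot\PP)\vert_\BB=g\cdot(\PP\vert_\BB)$. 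Here $(\PP\vert_\BB)[g;B]=\PP[g;B]$ for $\BB$-measurable integrands; this is the restriction remark after the definition of the integral, proved via monotone class. Since $\PP[g]=\PP[f]<\infty$, the identity \eqref{definingg} holds, and this $g$ is the asserted derivative.

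\emph{Existence for $f\geq 0$ in general.} Apply the previous case to $f\wedge n$, obtaining $g_n$. Corollary~\ref{corollary}, applied on $(\Omega,\BB,\PP\vert_\BB)$, gives $g_n\leq g_{n+1}$ a.s.; this is really Proposition~\ref{lemma}\eqref{lemma:a}, since integrals over $B$ compare in this way. Modify the $g_n$ on a $\PP$-null set in $\BB$ so that they are nondecreasing everywhere, and put $g:=\lim_n g_n$. Monotone convergence on both sides of $\PP[f\wedge n;B]=\PP[g_n;B]$ then yields $\PP[f;B]=\PP[g;B]$.

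\emph{General $f$.} Suppose, say, $\PP[f^-]<\infty$. Take $g_+$ for $f^+$ and $g_-$ for $f^-$; then $\PP[g_-]=\PP[f^-]<\infty$, so $g_-<\infty$ a.s. and may be taken finite. Set $g:=g_+-g_-$, which lies in $\BB/\mathcal{B}_{[-\infty,\infty]}$. We have $g^-\leq g_-$, so $\PP[g^-]<\infty$. Additivity of the integral (Theorem~\ref{theorem:int:basics}) gives $\PP[g;B]=\PP[g_+;B]-\PP[g_-;B]=\PP[f^+;B]-\PP[f^-;B]=\PP[f;B]$.

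\emph{Uniqueness.} If $g,g'$ both satisfy \eqref{definingg}, then they are $\BB$-measurable with well-defined integrals and equal indefinite integrals on $\BB$ w.r.t. the probability (hence $\sigma$-finite) measure $\PP\vert_\BB$. Corollary~\ref{corollary} then gives $g=g'$ $\PP\vert_\BB$-a.s., hence $\PP$-a.s.

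The main obstacle is bookkeeping rather than any idea. One must make sure that integrals against $\PP\vert_\BB$ agree with integrals against $\PP$ for $\BB$-measurable integrands. One must also handle the case of $f$ that is not integrable, via the truncation step, with a.s. modifications made inside $\BB$ so that measurability is preserved.
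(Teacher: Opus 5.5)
Your proposal is correct and takes the same route as the paper. You apply Radon--Nikodym on $\PP\vert_\BB$ for nonnegative integrable $f$, then truncate to $f\wedge n$ with a.s.\ monotone $g_n$ and use monotone convergence, then split $f=f^+-f^-$, and get uniqueness from Corollary~\ref{corollary} for the $\sigma$-finite $\PP\vert_\BB$. The differences are cosmetic: for $g_n\le g_{n+1}$ you use Proposition~\ref{lemma}\eqref{lemma:a} where the paper uses \eqref{lemma:b} (Corollary~\ref{corollary} itself only gives equality, so your self-correction is right), and you modify on a null set in $\BB$ instead of taking $\limsup_n g_n$.
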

\begin{remark}
Condition \eqref{definingg} may be summarized informally as follows: testing (averaging) $g$ on a member of $\BB$ I get the same thing as if I test (average) $f$. Indeed for $B\in \BB$ for which $\PP(B)=0$ the condition in \eqref{definingg} holds trivially, while for $B\in \BB$ for which $\PP(B)>0$ it may be rewritten as the equality of the conditional expectations given the event $B$: $\PP[f\vert B]=\PP[g\vert B]$.
\end{remark}
\begin{proof}
Uniqueness.  Corollary~\ref{corollary} for $\PP\vert_\BB$. Existence. Suppose first $f \geq 0$ and $\PP[f]<\infty$. Then $(f\cdot \PP)\vert_\BB\ll \PP\vert_\BB$ are finite measures and we can (indeed) take  $g:=\frac{\dd ((f\cdot \PP)\vert_\BB)}{\dd (\PP\vert_\BB)}$. If (merely) $f\geq 0$, for each $n\in \mathbb{N}$, take a $g_n$ corresponding to $f_n:=f\land n$: by \eqref{definingg} (for the pair $(f_n,g_n)$) and Proposition~\ref{lemma}\eqref{lemma:b} $0\leq g_n\leq g_{n+1}$ for all $n\in \mathbb{N}$ a.s.-$\PP$; put $g:=\limsup_{n\to\infty}g_n\in \BB/\mathcal{B}_{[-\infty,\infty]}$, by monotone convergence it satisfies \eqref{definingg}. Finally, for a general $f$ take $g_+$ corresponding to $f^+$ and $g_-$ corresponding to $f^-$, set $g:=g_+-g_-\in \BB/\mathcal{B}_{[-\infty,\infty]}$ and check that it verifies \eqref{definingg}.
\end{proof}
\begin{definition}
	We denote the $g$ from the previous proposition variously with\footnote{It cannot be confused with $\PP[f\vert \BB]$ for $\BB\in \FF$ with $\PP(\BB)>0$; for, a sub-$\sigma$-field $\BB$ of $\AA$ cannot also be an element of $\AA$ (it would entail that $\Omega\in \BB\subset \Omega$, which is disallowed).} $\PP[f\vert\BB]:=\PP_\BB(f):=\PP_\BB f$ and call it the conditional expectation of $f$ w.r.t. $\BB$ under $\PP$, insisting in addition (as we may) that $\PP[f\vert \BB]$ is  $\geq 0$ when $f$ is so, also finite-valued when $\PP[\vert f\vert]<\infty$ (for in such case \eqref{definingg} [with $B=\Omega$] entails $\PP[\vert g\vert]<\infty$, a fortiori $\PP(\vert g\vert=\infty)=0$, and we may replace $g$ with $g\mathbbm{1}_{\{g\in \mathbb{R}\}}$). For $A\in \AA$ we set $\PP_\BB A:=\PP_\BB(A):=\PP(A\vert \BB):=\PP[\mathbbm{1}_A\vert\BB]$ and call it the conditional probability of $A$ w.r.t $\BB$ under $\PP$. Such conditional expectation (probability) is only defined uniquely up to a.s. equality; when a statement involving it appears, it means that it is being asserted for any of its versions. Parallel to the notation $\PP_\BB f$ we also write $\PP f:=\PP[f]$ for short. 
\end{definition} 
\begin{remarks}
Because conditional expectations are only defined uniquely  up to a.s. equality, most statements involving conditional expectations are subject to the a.s. qualifier.  As long as at most denumerably many conditional expectations are under inspection this carries little significance; conversely, if one considers more than denumerably many conditional expectations at once, then usually some choice of the versions is imposed to achieve sufficient regularity of such a family.  Another way around the issue would be (is) to use equivalence classes w.r.t. a.s. equality, but it seems less ``forgiving''. In literature one tends to find the notation $\mathsf{E}_\PP[f\vert\BB]:=\PP[f\vert\BB]$ (or $\mathsf{E}[f\vert\BB]$ if no ambiguity can arise), but we shall stick with (the simpler) $\PP[f\vert\BB]=\PP_\BB(f)=\PP_\BB f$. Intuitively $\PP[f\vert \BB]$ is supposed to be the formalization of the notion of an ``expectation of $f$ given the information contained in $\BB$''; here the ``information'' of $\BB$ may be interpreted as ``knowing of each event in $\BB$ whether or not it has occured''. This will gradually become clear as we explore its properties. Conditioning is of paramount importance in probability because it formalizes the idea of ``taking into account what is known''. The proper treatment of such objects as Markov processes and martingales, both exceptionally important classes of stochastic processes, hinges crucially on the notion of conditional expectations. If the need arises, one extends, by linearity, $\PP[f\vert \BB]$ to include complex-valued $f$ for which $\{\Re f,\Im f\}\subset \LL^1(\PP)$. We will keep to the real case, but the complex extensions are usually straightforward (because of linearity).
 
Sometimes the following notion of a ``generalized'' conditional expectation is also introduced (and is useful): dropping in Proposition~\ref{proposition:conditional-expectation} the requirement that $\PP[f^+]\land\PP[f^-]<\infty$, but asking instead that $\PP[f^+\vert \BB]<\infty$ a.s.-$\PP$ and $\PP[f^-\vert \BB]<\infty$ a.s.-$\PP$ (the latter can only happen when $\vert f\vert<\infty$ a.s.-$\PP$, but it may happen even if $\PP[f^+]=\PP[f^-]=\infty$, and it is equivalent to $f^+\cdot \PP\vert_{\BB}$ and $f^-\cdot \PP\vert_\BB$ being $\sigma$-finite), then the difference $\PP[f^+\vert\BB]-\PP[f^-\vert\BB]$ is meaningfully defined to be $\PP[f\vert \BB]$. In fact, it should presumably be enough to ask even just for $\PP[f^+\vert \BB]\land \PP[f^-\vert \BB]<\infty$ a.s.-$\PP$.
 
\end{remarks}

\begin{proposition}\label{proposition}
	Let $(\Omega,\AA,\PP)$ be a probability space, $\BB$ be a sub-$\sigma$-field of $\AA$ and $f\in \AA/\mathcal{B}_{[-\infty,\infty]}$ with $\PP[ f^+]\land \PP [f^-]<\infty$. The conditional expectation enjoys the following properties.
		\begin{enumerate}[(1)]
			\item\label{stability} (Stability.) If $f\in \BB/\mathcal{B}_{[-\infty,\infty]}$, then $\PP[ f\vert \BB]=f$ a.s.-$\PP$.
			\item\label{tower} (Law of total expectation/tower property.) $\PP[\PP[f\vert \BB]]=\PP[ f]$.
			\item Let $\CC$ be another sub-$\sigma$-field of $\AA$.
			
			\begin{enumerate}[(a)]
				\item \label{repeated} (Repeated conditioning/tower property (bis).) If $\BB$ and $\CC$ are comparable with respect to inclusion, then $ \PP_\BB\PP_\CC f=\PP_{\BB\cap\CC}f$ a.s.-$\PP$.
				\item \label{negligible} (Irrelevance of trivial events.) If $\BB\lor \PP^{-1}(\{0,1\})=\CC\lor \PP^{-1}(\{0,1\})$, then $\PP_\BB f=\PP_\CC f$ a.s.-$\PP$.
			\end{enumerate}
			\item \label{ae-equal} (Irrelevance of negligible events.)  If  also $g\in  \AA/\mathcal{B}_{[-\infty,\infty]}$  with $f=g$ a.s.-$\PP$, then $\PP g^-\land \PP g^+<\infty$ and $\PP_\BB f=\PP_\BB g$ a.s.-$\PP$. 
			\item\label{trivial} (Conditioning w.r.t. a trivial $\sigma$-field.) If $\BB\subset \PP^{-1}(\{0,1\})$, then  $\PP$-a.s. $\PP_\BB f=\PP f$. In particular $\PP_{\{\emptyset,\Omega\}} f=\PP f$.
			\item\label{linearity} (Additivity.) If also $g\in  \AA/\mathcal{B}_{[-\infty,\infty]}$ and $\PP f^-\lor \PP g^-<\infty$, then $\PP_\BB(f+g)=\PP_\BB f+\PP_\BB g$ a.s.-$\PP$. 
			\item\label{limited} (Sufficient condition for defining property.) Assume $\PP\vert f\vert<\infty$. Let also $g\in \mathcal{B}/\mathcal{B}_{[-\infty,\infty]}$, $\PP\vert g\vert< \infty$. In order that $\PP_\BB f=g$ a.s.-$\PP$ it is sufficient that $\PP[f;B]=\PP[g;B]$ for all $B\in \Pi\cup \{\Omega\}$, where $\Pi\subset 2^\Omega$ is some $\pi$-system  with $\sigma_\Omega(\Pi)=\BB$.
			\item\label{defining} (Defining property extended.) For $h\in \BB/\mathcal{B}_{[-\infty,\infty]}$ with $\PP(hf)^+\land \PP(hf)^-<\infty$ one has $\PP(h\PP_\BB f)^+\land \PP(h\PP_\BB f)^-<\infty$ and $\PP[hf]=\PP[h\PP_\BB f]$. 
			\item\label{conditional-determinism} (Conditional determinism/``taking out what is known''. Homogeneity.) If $g\in \BB/\mathcal{B}_{[-\infty,\infty]}$ with $\PP (fg)^+\land \PP (fg)^-<\infty$, then $\PP_\BB(gf)=g\PP_\BB f$ a.s.-$\PP$. In particular if $c\in \mathbb{R}$, then $\PP_\BB(cf)=c\PP_\BB f$ a.s.-$\PP$.
			\item \label{monotonicity} (Monotonicity.) If also $g\in \AA/\mathcal{B}_{[-\infty,\infty]}$ with $\PP g^-\land \PP g^+<\infty$ and if $f\leq g$ a.s.-$\PP$ (remember: the latter is equivalent to $\PP[f;B]\leq \PP[g;B]$ for every $B\in \AA$), then $\PP_\BB f\leq \PP_\BB g$ a.s.-$\PP$.
			\item\label{triangle} (Triangle inequality.) $\vert \PP_\BB f\vert\leq \PP_\BB\vert f\vert$ a.s.-$\PP$. 
			
			\item In this item only we drop the a priori assumption that $\PP f^-\land \PP f^+<\infty$.  Let $(f_n)_{n\in \mathbb{N}}$ be a sequence in $\AA/\mathcal{B}_{[-\infty,\infty]}$ and $g$ an element of $\AA/\mathcal{B}_{[0,\infty]}$, such that $\PP g<\infty$ and $f_n^-\leq g$ a.s.-$\PP$ for all $n\in \mathbb{N}$. 
			
			\begin{enumerate}[(a)]
				\item \label{monotone-convergence} (Monotone convergence.) If $f_n\uparrow f$ as $n\to \infty$ a.s.-$\PP$, then $\PP f^-<\infty$ and $\PP_\BB f_n\uparrow \PP_\BB f$ a.s.-$\PP$.
				\item \label{fatou} (Fatou's lemma.) $\PP(\liminf_{n\to\infty}f_n)^-<\infty$ and $\PP_\BB\liminf_{n\to\infty} f_n\leq \liminf_{n\to\infty}\PP_\BB f_n$ a.s.-$\PP$.
				\item \label{dominated} (Dominated convergence.) If $\vert f_n\vert\leq g$ a.s.-$\PP$ for all $n\in \mathbb{N}$ and $f_n\to f$ as $n\to\infty$ a.s.-$\PP$, then $\PP \vert f\vert<\infty$ and $\PP_\BB f=\lim_{n\to\infty}\PP_\BB f_n$ a.s.-$\PP$ and also in $\LL^1(\PP)$.
			\end{enumerate}
			\item
			\begin{enumerate}[(a)]
				\item\label{independent-conditioning} (Independent conditioning.) If $\BB'$, another sub-$\sigma$-field of $\AA$, and $f'\in \AA/\mathcal{B}_{[-\infty,\infty]}$ with $\PP (f')^+\land \PP (f')^-<\infty$, are such that $\BB\lor \sigma(f)$ is independent of $\BB'\lor \sigma(f')$ under $\PP$, and if furthermore $\PP(ff')^+\land \PP(ff')^-<\infty$, then $\PP_{\BB\lor \BB'}(ff')=(\PP_{\BB}f)(\PP_{\BB'}f')$ a.s.-$\PP$.
				\item\label{independent-enlargement} (Irrelevance of independent events.) If $\CC$ is a sub-$\sigma$-field of $\AA$ that is independent of $\sigma(f)\lor \BB$ under $\PP$, then $\PP_{\BB\lor \CC}f=\PP_\BB f$ a.s.-$\PP$. 
				\item\label{independent} (Conditioning w.r.t. an independent $\sigma$-field.) If  $\BB$ is independent of $\sigma(f)$ under $\PP$, then $\PP_\BB f=\PP f$ a.s.-$\PP$. 
			\end{enumerate}
						 \item\label{jensen} (Jensen's inequality.) If $\phi:I\to\mathbb{R}$ is a convex function, $I$ an interval of $\mathbb{R}$, and if $f$ takes values in $I$ with $\PP [\vert f\vert]<\infty$, then $\PP(\PP[f\vert\BB ]\in I)=1$,  $\phi\circ f\in \AA/\mathcal{B}_\mathbb{R}$, $\PP [\phi^-\circ f]<\infty$, $\PP[ \phi\circ f\vert \BB]\geq \phi\circ \PP[ f\vert \BB]$ a.s.-$\PP$, and $f=\PP[f\vert \BB ]$ a.s.-$\PP$ on $\{\PP[f\vert \BB ]\in \partial I\}$ (where, on the $\PP$-negligible set on which this is relevant, we set e.g. $ \phi(x):=0$ for $x\in\mathbb{R}\backslash I$; $\partial I=\overline{I}\backslash \overset{\circ}{I}$ is the boundary of $I$ in $\mathbb{R}$). 
		\end{enumerate}
\end{proposition}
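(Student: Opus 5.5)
The whole proposition reduces to three tools: the defining relation \eqref{definingg} with the uniqueness half of Proposition~\ref{proposition:conditional-expectation}, the comparison result Proposition~\ref{lemma}\eqref{lemma:b} applied to the $\sigma$-finite (indeed finite) measure $\PP\vert_\BB$, and $\pi$-$\lambda$/monotone class arguments (Corollary~\ref{corollary:dynkin}, Corollary~\ref{corollary:monotone-class}). In each item I will exhibit a $\BB$-measurable candidate and show that it satisfies \eqref{definingg}, after which uniqueness identifies it with the conditional expectation.

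I will do the items in the following order. Stability \eqref{stability} is immediate, since $f$ itself satisfies \eqref{definingg}. The tower property \eqref{tower} is \eqref{definingg} with $B=\Omega$. Item \eqref{ae-equal} follows because integrals do not see null sets (Theorem~\ref{theorem:int:basics}\eqref{properties:vi}). For \eqref{trivial}, the constant $\PP f$ satisfies \eqref{definingg} because every $B\in\BB$ has $\PP(B)\in\{0,1\}$. For \eqref{negligible}, a candidate for $\PP_\BB f$ satisfies \eqref{definingg} on the $\pi$-system $\CC\cup\PP^{-1}(\{0\})$; extending by Dynkin (first for $f\geq 0$ integrable, then by truncation and monotone convergence) shows it is a version of $\PP_{\BB\lor\PP^{-1}(\{0,1\})}f$, and the same holds for $\CC$. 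Additivity \eqref{linearity} follows from linearity of the integral, and \eqref{limited} is the last part of Corollary~\ref{corollary}. For \eqref{defining}, the identity holds for $h=\mathbbm{1}_B$; I extend it to $h\geq 0$ and $f\geq 0$ by monotone class, then to the general case by splitting into $h^\pm$, $f^\pm$ and checking that the well-definedness conditions transfer. Here I use that $\PP_\BB f^\pm\geq0$ and that $(h\PP_\BB f)^\pm\le h^+\PP_\BB f^\pm+h^-\PP_\BB f^\mp$, whose integrals equal $\PP[h^+f^\pm]+\PP[h^-f^\mp]$. Item \eqref{conditional-determinism} then follows by applying \eqref{defining} with $h=g\mathbbm{1}_B$. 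Monotonicity \eqref{monotonicity} comes from Proposition~\ref{lemma}\eqref{lemma:b} together with $\PP[\PP_\BB f;B]=\PP[f;B]\le\PP[g;B]=\PP[\PP_\BB g;B]$. The triangle inequality \eqref{triangle} follows from monotonicity applied to $\pm f\le |f|$. For \eqref{repeated}, suppose $\BB\subset\CC$. Then $\PP_\CC f$ tested on $B\in\BB\subset\CC$ gives $\PP[f;B]$, so $\PP_\BB\PP_\CC f=\PP_\BB f$; in the other case $\PP_\BB f$ is already $\CC$-measurable, and stability applies.

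For the convergence statements \eqref{monotone-convergence}--\eqref{dominated}, I first replace $f_n$ by $f_n+g$ using additivity, which lets me assume $f_n\geq0$. By monotonicity, $\PP_\BB f_n$ is a.s. nondecreasing, so $h:=\limsup_n\PP_\BB f_n$ is $\BB$-measurable. Monotone convergence applied on both sides of \eqref{definingg} shows that $h$ satisfies the defining relation for $f$. Fatou \eqref{fatou} then follows from \eqref{monotone-convergence} applied to $\inf_{m\ge n}f_m$, combined with monotonicity. Dominated convergence \eqref{dominated} follows from Fatou applied to $g\pm f_n$; the $\LL^1$ convergence comes from $|\PP_\BB f_n-\PP_\BB f|\le\PP_\BB|f_n-f|$ together with the tower property and ordinary dominated convergence.

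For \eqref{independent-conditioning}, I first take $f,f'\geq0$. The product $(\PP_\BB f)(\PP_{\BB'}f')$ is $\BB\lor\BB'$-measurable, and on the $\pi$-system $\{B\cap B'\}$ independence gives $\PP[ff';B\cap B']=\PP[f;B]\PP[f';B']$. I also need $\PP[(\PP_\BB f)(\PP_{\BB'}f');B\cap B']=\PP[\PP_\BB f;B]\,\PP[\PP_{\BB'}f';B']$, which requires that $\BB$ be independent of $\BB'$; this is inherited from the hypothesis. The two sides agree after applying \eqref{definingg} to each factor. I then extend to all of $\BB\lor\BB'$ by Dynkin, after truncating $f,f'$ to make the measures finite, and pass back by monotone convergence; the signed case follows by splitting into positive and negative parts. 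Taking $f'=1$ gives \eqref{independent-enlargement}, and taking $\BB$ trivial there gives \eqref{independent}. Finally, for Jensen \eqref{jensen}, I write $\phi$ as the supremum of a \emph{countable} family of affine minorants $a_kx+b_k$. This is possible by footnote~\ref{foot:convex} on the interior of $I$, using rational points and one-sided derivatives, with care needed at endpoints of $I$ that belong to $I$. Then $\PP_\BB\phi(f)\ge a_k\PP_\BB f+b_k$ a.s. for every $k$, and taking the supremum over the countably many $k$ gives the inequality. That $\PP_\BB f\in I$ a.s. and that $f=\PP_\BB f$ on $\{\PP_\BB f\in\partial I\}$ both come from the following observation: on $\{\PP_\BB f=\sup I\}$, the nonnegative function $\sup I-f$ has conditional expectation zero there, so by \eqref{defining} it vanishes a.s. on that set; the same argument applies at the lower endpoint.

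The main obstacle I expect is bookkeeping of well-definedness rather than any real mathematics, especially in \eqref{defining}, \eqref{conditional-determinism} and \eqref{independent-conditioning}, where $\pm\infty$ values and merely quasi-integrable functions force the truncation-plus-localization arguments. A second delicate point is the endpoint behaviour in Jensen.
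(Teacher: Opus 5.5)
\paragraph{The gap: Jensen, item (14).} You propose to write $\phi$ on all of $I$ as the supremum of countably many affine minorants, ``with care at endpoints of $I$ that belong to $I$''. This cannot be done in general.

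\begin{itemize}
\item A convex $\phi$ may jump upwards at a closed endpoint $c$ of $I$; for example $I=[0,1]$, $\phi=\mathbbm{1}_{\{0\}}$.
\item At such a point every affine minorant $\ell$ satisfies $\ell(c)=\lim_{x\to c}\ell(x)\le\lim_{x\to c,\,x\in\overset{\circ}{I}}\phi(x)<\phi(c)$. So any supremum of minorants stays strictly below $\phi(c)$.
\item The event $\{\PP_\BB f=c\}$ need not be negligible: take $\BB=\AA$ and $\PP(f=0)>0$. On it, your supremum does not give $\PP_\BB(\phi\circ f)\ge\phi(\PP_\BB f)$.
\end{itemize}

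The repair is already in your proposal. You show $f=\PP_\BB f$ a.s.\ on $\{\PP_\BB f\in\partial I\}\in\BB$. Hence $\phi\circ f=\phi\circ\PP_\BB f$ a.s.\ there, and stability plus taking out what is known give $\PP_\BB(\phi\circ f)=\phi\circ\PP_\BB f$ a.s.\ on that event. The countable family of minorants is then needed only on $\{\PP_\BB f\in\overset{\circ}{I}\}$, which is exactly how the paper organizes the argument.

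You should also state that each of your tangent lines $\ell$ satisfies $\phi\ge\ell$ on all of $I$. At an endpoint $c\in I$ this holds because convexity gives $\phi(c)\ge\lim_{x\to c,\,x\in\overset{\circ}{I}}\phi(x)$. This yields $\PP[\phi^-\circ f]\le\PP|\ell\circ f|<\infty$, which is needed for $\PP_\BB(\phi\circ f)$ to be defined.

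\paragraph{The rest of the plan.} Everything else is sound and mostly tracks the paper. The differences are mild.
\begin{itemize}
\item For (3)(b), you run Dynkin on the $\pi$-system $\BB\cup\PP^{-1}(\{0\})$ with truncation. (You wrote $\CC$; it should be $\BB$.) The paper instead observes that $\BB\lor\PP^{-1}(\{0,1\})=\{A\in\AA:\PP(A\triangle B)=0\text{ for some }B\in\BB\}$. This transfers the defining relation immediately and needs no integrability.
\item For (12)(c), you apply conditional Fatou to $g\pm f_n$. The paper bounds $|\PP_\BB f_n-\PP_\BB f|$ by $\PP_\BB\sup_{k\ge n}|f_k-f|$ and shows this tends to $0$ via unconditional Fatou.
\item For (13)(a), you verify the identity directly for nonnegative $f,f'$ and truncate, rather than reducing to indicators.
\item In Jensen, your choice of minorants (tangent lines at rational interior points, with one-sided-derivative slopes) is more robust than the paper's. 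The paper's family of minorants with rational slope and intercept is empty for, e.g., $\phi(x)=\sqrt{2}\,x$ on $I=\mathbb{R}$.
\end{itemize}

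Finally, in (3)(a) the roles are swapped. When $\CC\subset\BB$, it is $\PP_\CC f$ that is $\BB$-measurable, so stability gives $\PP_\BB\PP_\CC f=\PP_\CC f$. By contrast, $\PP_\BB f$ need not be $\CC$-measurable.
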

 
\begin{proof} \eqref{stability} and \eqref{repeated} are immediate from the definition. \eqref{tower}. Take $B=\Omega$ in \eqref{definingg}. \eqref{negligible}.  Note that $\GG:=\BB\lor \PP^{-1}(\{0,1\})= \{A\in \AA:\exists B\in \BB\text{ such that }\PP(A\triangle B)=0\}$. Then check that $\PP$-a.s. $\PP_\GG  f=\PP_\BB f$  and (hence) $\PP_\GG  f=\PP_\CC f$. \eqref{ae-equal} and \eqref{trivial}. Immediate from the definition, because (unconditional) integrals do not see sets of measure zero. \eqref{linearity}. One checks the defining property for $\PP_\BB(f+g)$, taking into account additivity of the (unconditional) integral (apart from the more trivial observations). \eqref{limited}. $\Pi\cup \{\Omega\}$ is still a $\pi$-system; apply Dynkin's lemma. \eqref{defining}. For $h$ indicators it is the defining property of $\PP_\BB f$. The general case follows by the usual approximation, first for $f\geq 0$, $h\geq 0$. Then  for $f\geq 0$, $h$ general, by taking differences. Finally for a general $f$, again by taking differences (note that $(hf)^+=h^+f^++h^-f^-$, $(hf)^-=h^-f^++h^+f^-$, $(hf^+) ^+=h^+f^+$, $(hf^+)^-=h^-f^+$ etc.). \eqref{conditional-determinism}. Follows from the definition of the conditional expectation, and from the previous item. \eqref{monotonicity}. 
Follows directly from the definition and Lemma~\ref{lemma}. \eqref{triangle}. Follows from the previous item and $-\vert f\vert\leq f\leq \vert f\vert$. \eqref{monotone-convergence}. Follows from the penultimate item, the parallel property for the (unconditional) integral and the definition of the conditional expectation. \eqref{fatou}. Follows from the previous item and monotonicity precisely as in the unconditional case. \eqref{dominated}. Let $n\in \mathbb{N}$; because $\PP \vert f_n\vert<\infty$ we may and do take  $f_n$ real-valued (while finite values are automatic for $\PP_\BB f_n$ by definition). Put $Z_n:=\sup_{k\in \mathbb{N}_{\geq n}}\vert f_k-f\vert$ for $n\in \mathbb{N}$. Then  $0\leq Z_n\leq 2 g$ a.s.-$\PP$ for all $n\in \mathbb{N}$ and $Z_n\downarrow 0$ as $n\to \infty$ a.s.-$\PP$. By (unconditional) dominated convergence, $\PP Z_n\to 0$ as $n\to\infty$. Then  by linearity, the triangle inequality and the tower property $\PP\vert \PP_\BB f_n-\PP_\BB f\vert=\PP\vert \PP_\BB(f_n-f)\vert\leq \PP \PP_\BB \vert f_n-f\vert=\PP \vert f_n-f\vert\leq \PP Z_n\to 0$ as $n\to \infty$ proving convergence in $\LL^1(\PP)$. Set $Z:=\liminf_{n\to\infty}\PP_\BB Z_n$ (note that by monotonicity the $\liminf$ is equal to the $\limsup$ a.s.-$\PP$). By (unconditional) Fatou and the tower property $\PP Z\leq \liminf_{n\to\infty}\PP \PP_\BB Z_n=\lim_{n\to\infty}\PP Z_n=0$, so that $Z=0$ a.s.-$\PP$. Finally, $\vert \PP_\BB f_n-\PP_\BB f\vert\leq \PP_\BB \vert f_n-f\vert\leq \PP_\BB Z_n$ a.s.-$\PP$ for all $n\in \mathbb{N}$, which yields the a.s.-$\PP$ convergence. \eqref{independent-conditioning}. By the usual reduction techniques it will suffice to handle the case when $f$ and $f'$ are indicator functions, say of $F$ and $F'$, respectively. Then it will suffice to check that $\PP(F\cap F'\cap G)=\PP[\PP_\BB(F)\PP_{\BB'}(F');G]$ for $G$ belonging to the $\pi$-system $\{B\cap B':(B,B')\in \BB\times \BB'\}$ that generates $\BB\lor \BB'$ on $\Omega$. But this follows at once from the assumed independence.  [It is easy to check that if $\{h,m\}\subset \AA/\mathcal{B}_{[-\infty,\infty]}$ with $\sigma(h)$ independent of $\sigma(m)$, then $\PP[hm]=\PP[h]\PP[m]$, whenever all the integrals are well-defined.]
\eqref{independent-enlargement}. Use the previous item with $\BB'=\CC$ and $f'=1$. 
\eqref{independent}. Follows from the previous item upon taking $\BB=\{\emptyset,\Omega\}$ therein. 
\eqref{jensen}. Thanks to $\PP\vert f\vert<\infty$ the quantity $\PP_\BB f$ is real-valued (by definition).  By monotonicity $\PP(\PP_\BB f\in \overline{I})=1$. 
The conclusions $\phi\circ f\in \AA/\mathcal{B}_\mathbb{R}$ and $\PP [\phi^-\circ f]<\infty$ are part of the unconditional version of Jensen's inequality. Indeed $\phi\vert_{\overset{\circ}{I}}$ is continuous and in particular $\phi\in \mathcal{B}_I/\mathcal{B}_\mathbb{R}$. 
Next, if say $0$ is a left-endpoint of $I$, then $\PP[f;\{\PP_\BB f=0\}]=\PP[\PP_\BB f;\{\PP_\BB f=0\}]=0$ and so $f=0$ a.s.-$\PP$ on $\{\PP_\BB f=0\}$. The case when the left endpoint is some other real number or there is a finite right-endpoint is treated analogously (by a suitable translation and/or reflection). Hence on the event $\{\PP_\BB f\in \partial I\}$ one has $f=\PP_\BB f$ a.s.-$\PP$, we see that $\PP(\PP_\BB f\in I)=1$ and in particular on $\{\PP_\BB f\in \partial I\}$ the target inequality is trivial. To prove it on $A:=\{\PP_\BB f\in \overset{\circ}{I}\}\in \BB$, let $\mathfrak{A}$ be the set of the affine minorants (with real slope, intercept) of $\phi$ and note that, \emph{on} $\overset{\circ}{I}$, $\phi$ is the pointwise supremum  of the members of $\mathfrak{A}$.\footnoteref{foot:convex} 
Then $\phi\circ f\geq a\circ f$, and hence by linearity and since $\PP$ is a probability measure, $\PP[\PP_\BB(\phi \circ f);B]=\PP[\phi \circ f;B]\geq \PP[a\circ f;B]=\PP[\PP_\BB(a\circ f);B]=\PP[a\circ \PP_\BB f;B]$ for all $a\in \mathfrak{A}$, $B\in \mathcal{B}$. By Lemma~\ref{lemma}, it follows that $\PP_\BB (\phi\circ f)\geq a\circ (\PP_\BB f)$ a.s.-$\PP$. Now take the supremum over those $a\in \mathfrak{A}$ with rational slope and intercept (of which there are denumerably many) to obtain the desired conclusion. 
\end{proof}
 
\begin{remarks}
	In essence all of the properties of the expectation carry over to the corresponding properties of conditional expectations, mutatis mutandis. We have listed in the preceding proposition some, but not nearly all, and not even all  the interesting/useful instances of this phenomenon\footnote{A more complete account can be found \underline{\href{https://drive.google.com/file/d/12d0uQj_IJk882UGiAzByl5DZtO9S7tBG/view?usp=sharing}{here}}.}. Because of \eqref{trivial} statements involving conditional expectations w.r.t. arbitrary sub-$\sigma$-fields always have as special cases statements involving the ordinary (unconditional) expectation, for instance \eqref{jensen} tells us that $f=\PP[f]$ a.s. if $\PP[f]\in \partial I$. As always, additivity \eqref{linearity} and homogeneity \eqref{conditional-determinism} combine to give linearity.
	   Just like expectations are only a special case of integrals, so too conditional expectations may be generalized to ``conditional integrals'' (modulo some technical caveats). 
\end{remarks}

\begin{proposition}\label{proposition:methods}
	Let $(\Omega,\AA,\PP)$ be a probability space. Some methods for directly computing the conditional expectation are as follows. 
	\begin{enumerate}[(I)]
		\item\label{means:0} (Discrete conditioning $\sigma$-field.) Let $\II\in 2^\AA$ be a countable partition of $\Omega$. Then for $X\in \AA/\mathcal{B}_{[-\infty,\infty]}$ with $\PP [X^+]\land \PP [X^-]<\infty$, $\PP$-a.s. 
		\begin{equation}
\PP[X\vert \sigma_\Omega(\II)]=\sum_{I\in \II\cap \PP^{-1}((0,1])}\PP[X\vert I]\mathbbm{1}_I
		\end{equation}
(so $\PP[X\vert \sigma_\Omega(\II)]=\PP[X\vert I]$ on\footnote{No a.s. qualifier is needed here. True, conditional expectations are in general defined only up to a.s. equality. However, in this case the measurability requirement forces the constancy of $\PP_{\sigma_\Omega(\II)}(X)$ on every member of $I\in\II$ and then the ``testing condition'' determines this value provided $\PP(I)>0$.} $I$, for all $I\in \II$ with $\PP(I)>0$).
		\item Let $(F,\FF)$ and  $(E,\EE)$  be measurable spaces, $X\in \AA/\FF$, $Y\in \AA/\EE$ two random elements. Let also $h\in (\FF\otimes \EE)/\mathcal{B}_{[-\infty,\infty]}$ be such that $\PP[(h\circ (X,Y))^+]\land \PP[(h\circ (X,Y))^-]<\infty$. 
		\begin{enumerate}[(1)]
			\item\label{cond.b} (Absolutely continuous random elements.) Suppose $\ee$ is a $\sigma$-finite measure on $(E,\EE)$ and $\ff$ a $\sigma$-finite measure on $(F,\FF)$ and suppose that $\PP_{(X,Y)}\ll\ff\times \ee$ (which implies $\PP_X\ll \ff$ and $\PP_Y\ll\ee$); denote $f_{12}:=\frac{\dd \PP_{(X,Y)}}{\dd(\ff\times \ee)}$ and $f_2:=\frac{\dd \PP_Y}{\dd \ee}$. Then $\PP$-a.s. 
			\begin{equation}
\PP [h (X,Y)\vert \sigma(Y)]=c( Y),
			\end{equation}  where $$c(y):=\int h(x,y)\frac{f_{12}(x,y)}{f_2(y)}\ff(\dd x)\mathbbm{1}_{\{f_2>0\}}(y)\text{ for }y\in E.$$ Furthermore, it is the case that $c\in \EE/\mathcal{B}_{[-\infty,\infty]}$.
			\item\label{cond.a} (Independent maps.) Let $\GG$ be a sub-$\sigma$-field of $\AA$ such that $Y\in \GG/\mathcal{E}$ and $X$ is independent of $\GG$ under $\PP$. Then $\PP$-a.s. 			
			\begin{equation}
			\PP [h (X,Y)\vert \GG]=d(Y),
			\end{equation} where $$d(y):=\PP[h (X,y)]\text{ for } y\in E.$$ Furthermore, it is the case that $d\in \EE/\mathcal{B}_{[-\infty,\infty]}$. 
		\end{enumerate}
	\end{enumerate}
\end{proposition}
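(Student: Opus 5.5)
In each of the three cases I would exhibit a candidate that is measurable with respect to the conditioning $\sigma$-field and has a well-defined integral, and then check the testing condition \eqref{definingg}. Uniqueness up to a.s. equality (Proposition~\ref{proposition:conditional-expectation}) then identifies the candidate with the conditional expectation. To handle integrability I would always first treat $h\geq 0$ (resp. $X\geq 0$). The general case follows by applying this to the positive and negative parts and using additivity (Proposition~\ref{proposition}\eqref{linearity}) together with the assumption $\PP[(\cdot)^+]\land\PP[(\cdot)^-]<\infty$. Along the way one must note that differences of the candidates are a.s. defined, the infinite parts being a.s. absent on the relevant side.

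\textbf{Case (I).} Every member of $\sigma_\Omega(\II)$ is a countable union of members of $\II$. The candidate $g:=\sum_{I\in\II,\PP(I)>0}\PP[X\vert I]\mathbbm{1}_I$ is constant on each $I$, hence $\sigma_\Omega(\II)$-measurable. For $X\geq 0$ and $B=\cup_{I\in P}I$ with $P\subset\II$, countable additivity of $X\cdot\PP$ gives
\[
\PP[X;B]=\sum_{I\in P}\PP[X;I]=\sum_{I\in P,\PP(I)>0}\PP[X\vert I]\PP(I)=\PP[g;B],
\]
where the members with $\PP(I)=0$ contribute nothing to either side.

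\textbf{Case (II.2).} First I would check that $d\in\EE/\BB_{[-\infty,\infty]}$. Since $d(y)=\int h(x,y)\PP_X(\dd x)$, this follows from the measurability part of Tonelli (Theorem~\ref{thm:tonelli-fubini}) for $h\geq0$, with $\PP_X$ a probability. Then $d(Y)\in\GG/\BB_{[-\infty,\infty]}$. For $G\in\GG$, the pair $(X,(Y,\mathbbm{1}_G))$ is independent, so the law of $(X,Y,\mathbbm{1}_G)$ is the product $\PP_X\times\PP_{(Y,\mathbbm{1}_G)}$. Applying \eqref{eq:independent-separration} to the function $(x,(y,i))\mapsto h(x,y)i$ yields $\PP[h(X,Y);G]=\PP[d(Y);G]$, which is the testing condition.

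\textbf{Case (II.1).} Measurability of $c$ again comes from Tonelli, applied to $hf_{12}\geq0$ and then multiplied by the measurable factor $\mathbbm{1}_{\{f_2>0\}}/f_2$. For the stated implication $\PP_{(X,Y)}\ll\ff\times\ee\Rightarrow\PP_Y\ll\ee$, Tonelli shows that $\int f_{12}(x,\cdot)\ff(\dd x)$ is a density of $\PP_Y$. By uniqueness of Radon--Nikodym derivatives this gives $f_2=\int f_{12}(x,\cdot)\,\ff(\dd x)$ a.e.-$\ee$.

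For $B=\{Y\in A\}$ with $A\in\EE$, I would compute
\[
\PP[h(X,Y);Y\in A]=\int\!\!\int \mathbbm{1}_A(y)h(x,y)f_{12}(x,y)\,\ff(\dd x)\,\ee(\dd y).
\]
On $\{f_2=0\}$ the inner integral of $f_{12}$ vanishes $\ee$-a.e., so $f_{12}(\cdot,y)=0$ $\ff$-a.e. for $\ee$-a.e. such $y$. Hence one may restrict to $\{f_2>0\}$ and rewrite the inner integral as $c(y)f_2(y)$. The right-hand side then becomes $\int_A c f_2\,\dd\ee=\PP[c(Y);Y\in A]$ by Proposition~\ref{proposition:density} and the image measure theorem.

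The main obstacle is exactly this null-set bookkeeping on $\{f_2=0\}$ when $h$ may be infinite. I would handle it by doing everything for $h\geq0$, where the convention $0\cdot\infty=0$ makes every step legitimate, and only then pass to general $h$.
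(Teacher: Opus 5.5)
Your proof is correct. Case (I) is the paper's argument, (II.1) differs only in how the set $\{f_2=0\}$ is handled, and (II.2) takes a genuinely different route.

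\textbf{Case (II.1).} The paper avoids your null-set bookkeeping altogether. Since $\PP(f_2(Y)>0)=\int_{\{f_2>0\}}f_2\,\dd\ee=1$, one has $f_2\circ\pr_2>0$ a.e.-$\PP_{(X,Y)}$. So one may insert $\mathbbm{1}_{\{f_2>0\}}(y)$ into $(\ff\times\ee)[\mathbbm{1}_A(y)hf_{12}]$ for free and factor $f_{12}=\frac{f_{12}}{f_2}f_2$ there. This needs neither the identification of $f_2$ with the $\ff$-marginal of $f_{12}$ nor a look at the sections over $\{f_2=0\}$. Your detour through the marginal-density formula and Radon--Nikodym uniqueness is longer but sound. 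It also actually proves the parenthetical claim $\PP_Y\ll\ee$, which the paper takes for granted.

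\textbf{Case (II.2).} The paper reduces by a $\pi$-$\lambda$--monotone-class argument to $h=\mathbbm{1}_{A\times B}$. It then uses the conditional-expectation calculus: taking out what is known, and conditioning on an independent $\sigma$-field. You instead verify the testing condition for all $h\geq 0$ at once from the product law of $(X,(Y,\mathbbm{1}_G))$, so the monotone-class work is absorbed into Tonelli.

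One small precision: \eqref{eq:independent-separration} as stated integrates the second argument on the inside, whereas $\PP[d(Y);G]$ is the other iterated integral. So cite Tonelli for $\PP_X\times\PP_{(Y,\mathbbm{1}_G)}$ directly, or swap the roles of the two random elements.

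\textbf{The ``no a.s. qualifier'' claim in (I).} This follows from what you proved, since each $I\in\II$ is an atom of $\sigma_\Omega(\II)$. Any version is therefore constant on $I$ and a.s. equal to your $g$. When $\PP(I)>0$ it must then equal $\PP[X\vert I]$ everywhere on $I$.
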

\begin{remarks}
(i) An important special case of \eqref{cond.b} is when  $(E,\EE,\ee)=(F,\FF,\ff)=(\mathbb{R},\mathcal{B}_\mathbb{R},\leb)$, in which case $f_2=f_Y$ and $f_{12}=f_{(X,Y)}$ are the densities that we have already introduced before.   (ii) Another case of \eqref{cond.b} of some interest is when $F$ is countable, $\FF=2^F$ and $\ff$ is (equivalent to) counting measure, in which case $\PP_{(X,Y)}\ll \ff\times \ee$ is equivalent to $\PP_Y\ll \ee$. When so, then $f_{12}(x,y)=\frac{\dd \PP(Y\in \cdot\vert X=x)}{\dd \ee}(y)\frac{\PP(X=x)}{\ff(\{x\})}$ a.e.-$\ee$ in $y\in E$ for all $x\in \tilde{F}:=\{f\in F:\PP(X=f)>0\}$ and $f_{12}(x,\cdot)=0$ a.e.-$\ee$ for $x\in F\backslash \tilde{F}$, accordingly $f_2(y)=\sum_{x\in\tilde{F}}f_{12}(x,y)\ff(\{x\})=\sum_{x\in\tilde{F}}\frac{\dd \PP(Y\in \cdot\vert X=x)}{\dd \ee}(y)\PP(X=x)$ a.e.-$\ee$ in $y\in E$, whence $$\frac{f_{12}(x,y)}{f_2(y)}=\frac{\frac{\dd \PP(Y\in \cdot\vert X=x)}{\dd \ee}(y)\frac{\PP(X=x)}{\ff(\{x\})}}{\sum_{f\in\tilde{F}}\frac{\dd \PP(Y\in \cdot\vert X=f)}{\dd \ee}(y)\PP(X=f)}\text{ for $\ee$-a.e. $y\in \{f_2>0\}$},\quad x\in \tilde{F},$$ which reminds us of Bayes' rule (it is indeed one of its many incarnations). Plugging it into the expression for $c$ we get $$c(y)=\frac{\sum_{x\in \tilde{F}}h(x,y)\frac{\dd \PP(Y\in \cdot\vert X=x)}{\dd \ee}(y)\PP(X=x)}{\sum_{x\in\tilde{F}}\frac{\dd \PP(Y\in \cdot\vert X=x)}{\dd \ee}(y)\PP(X=x)}\text{ for $\ee$-a.e. }y\in \{f_2>0\}$$($\therefore$ for $\PP_Y$-a.e. $y\in E$, of course); the intervention of $\ff$ has disappeared.  $\ee=\PP_Y$ is allowed, for $\PP_Y\ll\PP_Y$ trivially. (iii) Still with regard to \eqref{cond.b}: it may happen (but it is by no means automatic) that $\PP_{(X,Y)}\ll \PP_X\times \PP_Y$, in which case one can take  $\ee=\PP_Y$ and $\ff=\PP_X$, so that $f_2=1$ a.e.-$\ee$; though, it is not particularly useful unless one can actually compute $f_{12}=\frac{\dd\PP_{(X,Y)}}{\dd(\PP_X\times \PP_Y)}$. If $X$ and $Y$ are independent, then $\PP_{(X,Y)}=\PP_X\times\PP_Y$, and hence, by the preceding, a special case of \eqref{cond.a} (namely, when $\GG=\sigma(Y)$) is recovered from \eqref{cond.b}. 
\end{remarks}
\begin{proof} 
Considering positive and negative parts separately one reduces at once to the case when the conditional expectations are those of nonnegative functions. The claims \eqref{means:0} and \eqref{cond.b} are then a simple matter of checking the defining property of a conditional expectation once one recognizes: in \eqref{means:0}, that $\sigma_\Omega(\mathcal{I})$ consists precisely of all the unions of the members of $\mathcal{I}$; in \eqref{cond.b}, that $f_2(y)>0$ for $\PP_{(X,Y)}$-a.e. $(x,y)\in F\times E$, because $\PP_{(X,Y)}(f_2\circ \pr_2>0)=\PP(f_2(Y)>0)=\PP(Y\in \{f_2>0\})=\int_{\{f_2>0\}}f_2\dd\ee=\int f_2\dd \ee=1$. For \eqref{cond.a} by a $\pi$-$\lambda$--monotone class argument one reduces to the case when $h=\mathbbm{1}_{A\times B}$ for an $A\in \FF$ and a $B\in \EE$. Then one takes into account the basic properties of conditional expectations (taking out what is known, conditioning w.r.t. an independent $\sigma$-field). 
\end{proof}

\begin{definition}
If $(\Omega,\AA,\PP)$ is a probability space and $Z$ is a random element valued in a measurable space $(E,\EE)$, then we write $\PP[f\vert Z]:=\PP[f\vert \sigma(Z)]=\PP[f\vert Z^{-1}(\EE)]$ for $f\in\AA/\mathcal{B}_{[-\infty,\infty]}$ verifying $\PP[f^+]\land \PP[ f^-]<\infty$. 
\end{definition}
\begin{remarks}
(i) The notation should reference $\EE$ (it is hidden in $\sigma(Z)=\sigma^\EE(Z)$), but it does not. (ii)	Proposition~\ref{proposition:methods}\eqref{means:0} provides a means of computing the  conditional expectation  $\PP[f\vert Z]$ given the  $(E,\EE)$-valued random element $Z$ whenever $\EE$ is generated by a countable partition (use Proposition~\ref{proposition:maps}\eqref{maps:iv}), so in particular for a discrete random variable $Z$. (iii)   Condtioning w.r.t. a sub-$\sigma$-field $\BB$ is conditioning w.r.t. $\mathrm{id}_\Omega$ viewed as a random element of $(\Omega,\BB)$. Thus conditioning w.r.t. random elements is actually not less general than conditioning w.r.t. sub-$\sigma$-fields. 
\end{remarks}

\begin{proposition}\label{proposition:regular}
Let $(\Omega,\AA,\PP)$ be a probability space and let $Z$ be a random element valued in a measurable space $(E,\EE)$. For all $f\in \AA/\mathcal{B}_{[-\infty,\infty]}$ for which $\PP[f^+]\land \PP[f^-]<\infty$ there is a $\PP_Z$-a.s. unique   $g\in \EE/\mathcal{B}_{[-\infty,\infty]}$ such that $\PP[f\vert Z]=g(Z)$ a.s.-$\PP$.

Furthermore, let $X$ be any random variable under $\PP$.  Then there exists a map $\mu:E\times \mathcal{B}_\mathbb{R}\to [0,1]$ such that: (i) $\mu(\cdot,A)\in \EE/\mathcal{B}_{[0,1]}$ for each $A\in \mathcal{B}_\mathbb{R}$; (ii) $\mu(e,\cdot)$ is a probability on $\mathcal{B}_\mathbb{R}$ for each $e\in E$; and (iii) $\PP[f(X,Z)\vert Z]=\int f(x,Z)\mu(Z,\dd x)$ a.s.-$\PP$ for all $f\in (\mathcal{B}_\mathbb{R}\otimes \EE)/\mathcal{B}_{[-\infty,\infty]}$ for which $\PP[f(X,Z)^+]\land \PP[f(X,Z)^-]<\infty$. This $\mu$ is $Z_\star\PP$ unique in the sense that if $\mu'$ also satisfies the preceding (in fact  (iii) we need only for $f=\mathbbm{1}_{Q\times E}$, $Q$ running through some countable $\pi$-system that generates $\mathcal{B}_\mathbb{R}$; e.g. $\{(-\infty,r]:r\in \mathbb{Q}\}$ is such a system), then $\mu(\cdot,A)=\mu'(\cdot,A)$ for all $A\in \mathcal{B}_\mathbb{R}$ a.s.-$Z_\star\PP$. Finally, if $X$ takes values in a Borel set $W\in \mathcal{B}_\mathbb{R}$, then $\mathcal{B}_W$ may replace $\mathcal{B}_\mathbb{R}$ in the preceding.
 
\end{proposition}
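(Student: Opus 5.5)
The first claim comes straight from Doob--Dynkin (Proposition~\ref{prop:doob-dynkin}). By definition $\PP[f\vert Z]$ is $\sigma^\EE(Z)/\mathcal{B}_{[-\infty,\infty]}$-measurable, so it equals $g(Z)$ for some $g\in\EE/\mathcal{B}_{[-\infty,\infty]}$. For uniqueness, suppose $g(Z)=g'(Z)$ a.s.-$\PP$ with both $g,g'$ measurable. Then Proposition~\ref{proposition:uniquness-image-meausure} gives $g=g'$ a.e.-$Z_\star\PP$. It applies because the diagonal of $[-\infty,\infty]$ lies in $\mathcal{B}_{[-\infty,\infty]}\otimes\mathcal{B}_{[-\infty,\infty]}$, as shown in the example preceding that proposition.

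For the kernel I will follow the usual route through conditional distribution functions on the rationals.

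\emph{Step 1: choose versions.} For each $r\in\mathbb{Q}$, the first part lets me pick $G_r\in\EE/\mathcal{B}_{[0,1]}$ with $\PP(X\leq r\vert Z)=G_r(Z)$ a.s. By monotonicity and monotone convergence for conditional expectations (Proposition~\ref{proposition}), together with the uniqueness just proved, the following properties hold $Z_\star\PP$-a.e., namely outside a $Z_\star\PP$-null set $N\in\EE$; only countably many conditions are involved, so $N$ is a countable union of null sets:
\begin{itemize}
\item $G_r\leq G_s$ for all rationals $r<s$;
\item $G_{r+1/n}\to G_r$ as $n\to\infty$;
\item $G_{-n}\to0$ and $G_n\to1$ as $n\to\infty$.
\end{itemize}

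\emph{Step 2: define the kernel.} For $e\notin N$ put $F_e(x):=\inf_{r>x,\,r\in\mathbb{Q}}G_r(e)$. This is a distribution function: it is nondecreasing, right-continuous by construction, and has the right limits at $\pm\infty$. Set $\mu(e,\cdot):=\dd F_e$ using Theorem~\ref{theorem:lebesgue-stieltjes}, and for $e\in N$ set $\mu(e,\cdot):=\delta_0$. This gives (ii). For (i), the class of $A\in\mathcal{B}_\mathbb{R}$ with $\mu(\cdot,A)\in\EE/\mathcal{B}_{[0,1]}$ is a Dynkin system. It contains the $\pi$-system $\{(-\infty,x]:x\in\mathbb{R}\}$, because $\mu(e,(-\infty,x])=\mathbbm{1}_{N^c}(e)F_e(x)+\mathbbm{1}_N(e)\mathbbm{1}_{[0,\infty)}(x)$ and $F_e(x)$ is a countable infimum of measurable functions of $e$. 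Corollary~\ref{corollary:dynkin} then yields (i).

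\emph{Step 3: prove (iii).} First I check it for $f=\mathbbm{1}_{A\times B}$ with $A\in\mathcal{B}_\mathbb{R}$ and $B\in\EE$. Taking out what is known reduces this to $\PP(X\in A\vert Z)=\mu(Z,A)$ a.s. For $A=(-\infty,x]$, approximate $x$ from above by rationals: conditional monotone convergence gives $\PP(X\leq x\vert Z)=\lim_{r\downarrow x}G_r(Z)=F_Z(x)$ a.s. The class of $A$ for which the identity holds is a $\lambda$-system (conditional monotone convergence and additivity), so Dynkin extends it to all of $\mathcal{B}_\mathbb{R}$. Next, the class of $M\in\mathcal{B}_\mathbb{R}\otimes\EE$ satisfying (iii) for $f=\mathbbm{1}_M$ is again a $\lambda$-system, so Dynkin extends to all such $M$. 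Measurability of $e\mapsto\int f(x,e)\mu(e,\dd x)$ is part of this and is checked by the same $\pi$-$\lambda$ argument. From indicators, the monotone class theorem (Corollary~\ref{corollary:monotone-class}) with conditional monotone convergence gives all nonnegative $f$. For general $f$, split into $f^+$ and $f^-$ and subtract, which is legitimate under the stated integrability condition.

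\emph{Step 4: uniqueness and the restriction to $W$.} If $\mu'$ satisfies (iii) for $\mathbbm{1}_{Q\times E}$ with $Q$ ranging over a countable generating $\pi$-system, then by the first part $\mu(\cdot,Q)=\mu'(\cdot,Q)$ a.e.-$Z_\star\PP$ for each $Q$. Taking the union of these countably many null sets, Proposition~\ref{proposition:equality of measures} gives $\mu(e,\cdot)=\mu'(e,\cdot)$ off one null set. For the final statement, $\mu(Z,W)=\PP(X\in W\vert Z)=1$ a.s., so $\{e:\mu(e,W)=1\}$ is a $Z_\star\PP$-full set in $\EE$. Off that set I replace $\mu(e,\cdot)$ by $\delta_{w_0}$ for a fixed $w_0\in W$, and then restrict the kernel to $\mathcal{B}_W$.

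I expect the main obstacle to be Step 1: choosing the versions $G_r$ so that monotonicity, right-continuity and the limits at $\pm\infty$ all hold simultaneously outside a single null set. Working only with rational indices is what keeps the number of exceptional sets countable.
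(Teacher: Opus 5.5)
Your proposal is correct and follows the paper's proof essentially step for step. Both use Doob--Dynkin together with Proposition~\ref{proposition:uniquness-image-meausure} for the first part. For the kernel, both take rational versions $G_r$ with a single $Z_\star\PP$-null exceptional set, define $F_e(x)=\inf_{r\in\mathbb{Q}\cap(x,\infty)}G_r(e)$ with a Dirac fallback on that set, and apply Dynkin's lemma for the measurability of $\mu(\cdot,A)$ and for (iii). Uniqueness comes from agreement on a countable generating $\pi$-system, and the case of a Borel set $W$ is handled by a Dirac patch.

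The only difference is a harmless redundancy. Your right-continuity condition $G_{r+1/n}\to G_r$ in Step 1 is not needed: the infimum over $r>x$ is automatically right-continuous, and the identification $F_Z(x)=\PP(X\leq x\vert Z)$ is obtained from conditional dominated convergence in Step 3 anyway.
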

\begin{proof}
Existence of $g$ is by the Doob-Dynkin lemma (Proposition~\ref{prop:doob-dynkin}); uniqueness is from  Proposition~\ref{proposition:uniquness-image-meausure}.

Let us prove existence of $\mu$ in the second part. For each $r\in \mathbb{Q}$ there is a $g_r\in \EE/\mathcal{B}_{[0,1]}$ such that $\PP(X\leq r\vert Z)=g_r(Z)$ a.s.-$\PP$. There is an exceptional set $E'\in \EE$ of $\PP_Z$ probability zero, such that for $e\in E\backslash E'$, $g_r(e)$ is $\uparrow$ in $r\in \mathbb{Q}$, $\lim_{r\to\infty} g_r(e)=1$ and $\lim_{r\to-\infty}g_r(e)=0$ (due to monotonicity, monotone and dominated convergence, in this order). Define $$m_e(x):=\mathbbm{1}_{E\backslash E'}(e)\left(\inf_{r\in \mathbb{Q}\cap (x,\infty)}g_r(e)\right)+\mathbbm{1}_{E'}(e)\mathbbm{1}_{[0,\infty)}(x),\quad (e,x)\in E\times \mathbb{R}.$$ For each $e\in E$, $m_e$ is a distribution function on $\mathbb{R}$; for each $x\in \mathbb{R}$, $m_\cdot(x)\in \EE/\mathcal{B}_{[0,1]}$. Then put $\mu(e,\cdot):=\dd m_e$, a probability measure on $\mathcal{B}_\mathbb{R}$  for each  $e\in E$, i.e. we have (i). For each $x\in \mathbb{R}$, $\mu(\cdot,(-\infty,x])=m_\cdot(x)\in \EE/\mathcal{B}_{[0,1]}$; by Dynkin's lemma $\mu(\cdot,A)\in \EE/\mathcal{B}_{[0,1]}$ for each $A\in \mathcal{B}_\mathbb{R}$, which is (ii). Finally, for each $x\in \mathbb{R}$, by dominated convergence, $\mu(Z,(-\infty,x])=\PP(X\leq x\vert Z)$ a.s.-$\PP$; by Dynkin and monotone class we get  $\PP[f(X,Z)\vert Z]=\int f(x,Z)\mu(Z,\dd x)$ a.s.-$\PP$ for all $f\in \mathcal{B}_\mathbb{R}\otimes \EE/\mathcal{B}_{[0,\infty]}$, the final extension to get (iii) being immediate. In particular, if $X$ takes values in $W\in \mathcal{B}_\mathbb{R}$, then we get $\mu(Z,W)=\PP[X\in W\vert Z]=1$ a.s.-$\PP$ and one can replace (for some fixed $w\in W$) $\mu(e,\cdot)$ with $\delta_w\vert_{\mathcal{B}_W}$ for $e$ from the $\PP_Z$-negligible set from $\EE$ on which this fails, restricting $\mu(e,\cdot)$ to $\mathcal{B}_W$ elsewhere. Properties (i)-(ii)-(iii) then remain unaffacted with this new version of $\mu$ on transposing  $\mathcal{B}_\mathbb{R}$  to  $\mathcal{B}_W$.

Let us prove $Z_\star\PP$-uniqueness of $\mu$. We have $\mu(Z,Q)=\PP(X\in Q\vert Z)=\mu'(Z,Q)$ a.s.-$\PP$ for all $Q$ belonging to some countable generating $\pi$-system for $\mathcal{B}_\mathbb{R}$ ($\therefore$ for all such $Q$ a.s.-$\PP$); by Dynkin it extends to all Borel subsets of $\mathbb{R}$ a.s.-$\PP$. Then use Proposition~\ref{proposition:uniquness-image-meausure} (again).
\end{proof}
\begin{remarks}
One calls the $\mu$ from the preceding proposition a regular conditional probability for $X$ given $Z$ (relative to $\EE$). 

\noindent If $\BB$ is a sub-$\sigma$-field of $\AA$ then $\BB=\sigma^\BB(\id_\Omega)$,\footnote{In this vein it is worth mentioning in passing that if $\BB$ is countably generated in the sense that $\BB=\sigma_\Omega(\CC)$ for a countable $\CC\subset 2^\Omega$, then there is even a random variable $X$ such that $\BB=\sigma^{\mathcal{B}_\mathbb{R}}(X)$, i.e. $\BB$ is generated by a ``nice'' random element (clearly it can in fact happen only if $\BB$ is countably generated, for $\mathcal{B}_\mathbb{R}$ is so).}  i.e. $\BB$ can be viewed as the generated $\sigma$-field for the identity taking values in $(\Omega,\BB)$ and we get (with $Z=\id_\Omega$) $\mu:\Omega\times \BB\to [0,1]$ such that $\PP[f(X)\vert \BB](\omega)=\int f(x)\mu(\omega,\dd x)$ for $\PP$-a.e. $\omega$ for all $f$ etc. 

\noindent  Another special case  is when $\Omega=\mathbb{R}\times E$, $\AA= \mathcal{B}_{\mathbb{R}}\otimes \EE$, $X=\pr_1$ and $Z=\pr_2$. Then $\PP[f\vert Z]=\int f(x,Z)\mu(Z,\dd x)$ a.s.-$\PP$, in particular $\PP[f]=\int \int f(x,z)\mu(z,\dd x)(Z_\star\PP)(\dd z)$ for all $f\in \AA/\mathcal{B}_{[0,\infty]}$, say; we have a disintegration of the joint law $\PP$ against the second marginal.

\noindent The final particular case of the second part of Proposition~\ref{proposition:regular} that we find worth highlighting is as follows. We ask that $(\Omega,\AA)=(\mathbb{R},\mathcal{B}_\mathbb{R})$, $X=\mathrm{id}_\Omega$ and that $(E\times E,\EE\otimes \EE)$ has a measurable diagonal. Taking in that case $f=\mathbbm{1}_{\{Z(\pr_1)=\pr_2\}}$ we see that for $Z_\star \PP$-a.e. $e$ the probability $\mu(e,\cdot)$ is carried by the ``fiber'' $\{Z=e\}$. Together with $\PP[f\vert Z]=\int f(\omega)\mu(Z,\dd \omega)$ holding true a.s.-$\PP$, in particular $\PP[f]=\int \int f(\omega)\mu(e,\dd\omega)(Z_\star\PP)(\dd e)$ for all $f\in \mathcal{B}_\mathbb{R}/\mathcal{B}_{[0,\infty]}$, we may say that in a sense $\mu(e,\cdot)=\PP(\cdot\vert Z=e)$ (for $\PP_Z$-a.e. $e$, though the conditional probability $\PP(\cdot\vert Z=e)$ is ill-defined unless $\PP(Z=e)>0$).

\noindent One can produce the regular conditional probability for random elements $X$ taking values in more general spaces other than  a Borel subset of $\mathbb{R}$ with its Borel $\sigma$-field, but not in complete generality. How so? In particular, why can't we just get the $\mu(\cdot,A)$, $A\in \mathcal{B}_\mathbb{R}$, by  taking arbitrary versions $\tilde{\mu}(\cdot,A)$ of the $g_A$ for which $\PP(X\in A\vert Z)=g_A(Z)$ a.s.-$\PP$? Well, you see, true, for given pairwise disjoint $A_k$, $k\in \mathbb{N}$, we would get $\tilde{\mu}(\cdot,\cup_{k\in \mathbb{N}}A_k)=\sum_{k\in\mathbb{N}}\tilde{\mu}(\cdot,A_k)$ a.s.-$\PP$ (by additivity, dominated convergence), however the exceptional set on which this fails may depend on the choice of the sequence $(A_k)_{k\in \mathbb{N}}$, of which there are uncountably many choices  to be made, while we want $\mu(\cdot,\cup_{k\in \mathbb{N}}A_k)=\sum_{k\in \mathbb{N}}\mu(\cdot,A_k)$ for all such sequences everywhere on $E$. For this reason it becomes a delicate matter how to choose the representatives of the $g_A$, $A\in \mathcal{B}_\mathbb{R}$, so that a ``regular'' $\mu$ results. In the case of the real line we are saved by the a.s. monotonicity of $\PP(X\leq r\vert Z)$ in $r\in \mathbb{Q}$.
\end{remarks}

\begin{example}
Let $U$ and $V$ be independent random variables under a probability $\PP$, $U$ having the law $\mathrm{N}(0,1):=\left(\mathbb{R}\ni x\mapsto \frac{1}{\sqrt{2\pi}}e^{-x^2/2}\right)\cdot \leb$. Then $\PP[\cos(UV)\vert V]=e^{-V^2/2}$ a.s.-$\PP$.
\end{example}

\begin{example}
Recall the notation of Example~\ref{example:cc}. $\leb_{[0,1]}[f\vert \sigma^{\mathrm{ccc}}_{[0,1]}]=\leb_{[0,1]}[f]$ a.s.-$\leb_{[0,1]}$ for all $f\in \mathcal{B}_{[0,1]}/\mathcal{B}_{[-\infty,\infty]}$ having $\leb_{[0,1]}[f^+]\land \leb_{[0,1]}[f^-]<\infty$.
\end{example}

\begin{proposition}[Conditional image-measure theorem]
Let $(\Omega,\FF,\PP)$ be a probability space, $X$ an $(E,\EE)$-valued random element, $f\in \EE/\mathcal{B}_{[-\infty,\infty]}$ with $\PP[f^+(X)]\land \PP[f^-(X)]<\infty$, finally let $\AA$ be a sub-$\sigma$-field of $\EE$. Then 
$$\PP[f(X)\vert X^{-1}(\AA)]=\left((X_\star\PP)[f\vert \AA]\right)( X)\text{ a.s.-}\PP.$$
\end{proposition}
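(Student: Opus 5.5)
Write $\QQ:=X_\star\PP$, a probability on $(E,\EE)$, and let $k:=\QQ[f\vert\AA]\in\AA/\mathcal{B}_{[-\infty,\infty]}$. The plan is to verify directly that $k(X)$ meets the defining property \eqref{definingg} of $\PP[f(X)\vert X^{-1}(\AA)]$, and then to invoke the a.s.\ uniqueness in Proposition~\ref{proposition:conditional-expectation}.

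First, $k$ is well-defined. By the image measure theorem (Corollary~\ref{proposition:image-measure-theorem}), $\QQ[f^\pm]=\PP[f^\pm(X)]$, so the hypothesis gives $\QQ[f^+]\land\QQ[f^-]<\infty$. Hence Proposition~\ref{proposition:conditional-expectation} applies to $f$ under $\QQ$ with the sub-$\sigma$-field $\AA$ of $\EE$.

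Second, $k(X)$ has the right measurability. Since $X\in X^{-1}(\AA)/\AA$ trivially and $k\in\AA/\mathcal{B}_{[-\infty,\infty]}$, Proposition~\ref{proposition:compositions} gives $k(X)\in X^{-1}(\AA)/\mathcal{B}_{[-\infty,\infty]}$. The same theorem gives $\PP[k(X)^\pm]=\QQ[k^\pm]$, and $\QQ[k^+]\land\QQ[k^-]<\infty$ by the construction of $k$. So the integrability requirement on $g$ in Proposition~\ref{proposition:conditional-expectation} holds.

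Third, the testing property. Every $B\in X^{-1}(\AA)$ has the form $B=X^{-1}(A)$ with $A\in\AA$, so $\mathbbm{1}_B=\mathbbm{1}_A(X)$. Applying \eqref{eq:image} twice, with the defining property of $k$ in between:
\begin{equation*}
\PP[f(X);B]=\PP[(f\mathbbm{1}_A)(X)]=\QQ[f;A]=\QQ[k;A]=\PP[(k\mathbbm{1}_A)(X)]=\PP[k(X);B].
\end{equation*}

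The main point needing care is well-definedness in these applications of \eqref{eq:image}. The integrals of $f\mathbbm{1}_A$ and $k\mathbbm{1}_A$ are well-defined because their positive and negative parts are dominated by those of $f$ and $k$, which already have one finite side. Corollary~\ref{proposition:image-measure-theorem} transfers that well-definedness across the equalities. Uniqueness in Proposition~\ref{proposition:conditional-expectation}, applied under $\PP$ with the sub-$\sigma$-field $X^{-1}(\AA)$, then identifies $k(X)$ with $\PP[f(X)\vert X^{-1}(\AA)]$ a.s.-$\PP$.

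Finally, the statement must hold for every version. If $k'$ is another version of $\QQ[f\vert\AA]$, then $k=k'$ a.s.-$\QQ$. Hence $\PP(k(X)\ne k'(X))=\QQ(k\ne k')=0$, using $\{k\ne k'\}\in\AA\subset\EE$. So the choice of version does not matter.
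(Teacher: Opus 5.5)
Your proof is correct and is the paper's argument: you check measurability of $k(X)$ and finiteness of one of $\PP[k(X)^\pm]$ via the image measure theorem, then carry the testing identity on the sets $X^{-1}(A)$, $A\in\AA$, through \eqref{eq:image} and finish with uniqueness. The only difference is minor: you use the defining property $\QQ[f;A]=\QQ[k;A]$ directly, whereas the paper passes through the tower property and ``taking out what is known''. Your final paragraph on versions is harmless but redundant, since the argument already works for an arbitrary version $k$.
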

\begin{proof}
Remark that  $\PP[f^+(X)]\land \PP[f^-(X)]<\infty$ implies (is even equivalent to) $(X_\star\PP)[f^+]\land (X_\star \PP)[f^-]<\infty$ (by the ``unconditional'' image-measure theorem); besides, $X^{-1}(\EE)\subset \FF$ and $\AA\subset \EE$ render $X^{-1}(\AA)\subset \FF$, i.e. $X^{-1}(\AA)$ is a sub-$\sigma$-field of $\FF$. Next, $\left((X_\star\PP)[f\vert \AA]\right)( X)\in \sigma^\AA(X)/\mathcal{B}_{[-\infty,\infty]}$ as a composition of the measurable maps $X\in \sigma^\AA(X)/\AA$ and $(X_\star\PP)[f\vert \AA]\in \AA/\mathcal{B}_{[-\infty,\infty]}$. Besides, $\PP[\left(\left((X_\star\PP)[f\vert \AA]\right)( X)\right)^\pm]=(X_\star\PP)[\left((X_\star\PP)[f\vert \AA]\right)^\pm]$ and one of these ($+$ or $-$) is finite. Finally, for all $A\in \AA$,
$\PP[f(X);X^{-1}(A)]=(X_\star\PP)[f;A]=(X_\star\PP)[(X_\star\PP)[f\mathbbm{1}_A\vert\AA]]=(X_\star\PP)[(X_\star\PP)[f\vert\AA];A]=\PP[\left((X_\star\PP)[f\vert \AA]\right)( X);X^{-1}(A)]$. By definition of $\PP[f(X)\vert X^{-1}(\AA)]$ we get the stipulated equality.
\end{proof}

\section{Kolmogorov's extension theorem}
\emph{projective/inverse limits of probabilities; extending a consistent family of laws on products}

\begin{theorem}[Projective/inverse limits]\label{thm:projective-limit}
Let $F$ be a set on which is defined a partial (reflexive, transitive, antisymmetric) order $\leq$, directed upwards in the sense that for arbitrary $\{\alpha,\beta\}\subset F$ there is $\gamma\in F$ such that $\gamma\geq\alpha$ and $\gamma\geq\beta$. For each $\alpha\in F$ let there be given a probability space $(\Omega_\alpha,\FF_\alpha,\mu_\alpha)$, $\Omega_\alpha$ being a Hausdorff\footnote{or, more generally, one in which every compact subset is closed} topological space, $\FF_\alpha$ including all the compact sets of $\Omega_\alpha$, $\mu_\alpha$ being inner regular w.r.t. the compact sets in the sense that $$\mu_\alpha(A)=\sup\{\mu(K):K\text{ a compact subset of }A\},\quad A\in \FF_\alpha.$$

Let maps $\pr_{\beta\downarrow \alpha}$ as $(\alpha,\beta)$ runs over $\leq$ verify the following properties:
\begin{enumerate}[(a)]
 \item  $\pr_{\alpha\downarrow \alpha}=\mathrm{id}_{\Omega_\alpha}$ for all $\alpha\in F$;
 \item $\pr_{\beta\downarrow \alpha}\circ \pr_{\gamma\downarrow \beta}=\pr_{\gamma\downarrow \alpha}$ for all $\alpha\leq\beta\leq \gamma$ from $F$;
 \item $\pr_{\beta\downarrow \alpha}:\Omega_\beta\to \Omega_\alpha$ is continuous, also measurable in the sense that $\pr_{\beta\downarrow \alpha}\in \FF_\beta/\FF_\alpha$ for all $\alpha\leq \beta$ from $F$;
 \item $\pr_{\beta\downarrow \alpha}$ maps onto $\Omega_\alpha$ for all $\alpha\leq \beta$ from $F$.
 \end{enumerate}
 Suppose furthermore the consistency condition
\begin{enumerate}[(e)]
\item  $(\pr_{\beta\downarrow \alpha})_\star \mu_\beta=\mu_\alpha\text{ for all $\alpha\leq \beta$ from $F$}$
\end{enumerate} holds true. 

Next, set $$\Omega:=\left\{\omega\in \prod_{\gamma\in F}\Omega_\gamma: \pr_{\beta\downarrow \alpha}\omega_\beta=\omega_\alpha\text{ for all }\alpha\leq \beta\text{ from }F\right\},$$ introduce $\pr_\alpha:=(\Omega\ni\omega\mapsto \omega_\alpha)$ for $ \alpha\in F$, and assume 
\begin{enumerate}[(f)]
\item $\pr_\alpha$ maps $\Omega$ onto $\Omega_\alpha$ for each $\alpha\in F $. 
\end{enumerate}
Suppose finally that 
 \begin{enumerate}[(g)]
 \item for each non-empty countable $F'\subset F$ that is not bounded above (no $\gamma\in F$ such that $\alpha\leq\gamma$ for all $\alpha\in F'$) admitting an enumeration in increasing order (there is a $\uparrow$ map from $\mathbb{N}$ onto $F'$): if $\omega'\in \prod_{\alpha\in F'}\Omega_\alpha$ is such that $\pr_{\beta\downarrow \alpha}\omega_\beta'=\omega'_\alpha\text{ for all }\alpha\leq \beta\text{ from }F'$, then there is $\omega\in \Omega$ such that $\omega'=\omega\vert_{F'}$.
 \end{enumerate}
Then there exists a unique probability $\mu$ on $(\Omega,\lor_{\alpha\in F}(\pr_\alpha)^{-1}(\FF_\alpha))$ satisfying ${\pr_\alpha}_\star\mu=\mu_\alpha$ for all $\alpha\in F$.
\end{theorem}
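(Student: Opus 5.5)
The plan is the usual one: Carathéodory on the algebra of cylinder sets, with countable additivity obtained from inner regularity and compactness, in the spirit of the proof of Proposition~\ref{prop:tsirelson}.

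Put $\GG_\alpha:=\pr_\alpha^{-1}(\FF_\alpha)$ for $\alpha\in F$. For $\alpha\leq\beta$ we have $\pr_\alpha=\pr_{\beta\downarrow\alpha}\circ\pr_\beta$ on $\Omega$, so $\GG_\alpha\subset\GG_\beta$ by (c). Since $F$ is directed, $\GG_\infty:=\cup_{\alpha\in F}\GG_\alpha$ is an algebra on $\Omega$, and it generates $\lor_{\alpha\in F}\pr_\alpha^{-1}(\FF_\alpha)$. Define $\gamma(\pr_\alpha^{-1}(A)):=\mu_\alpha(A)$ for $A\in\FF_\alpha$.

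\textbf{Step 1: $\gamma$ is well-defined and additive.} Suppose $\pr_\alpha^{-1}(A)=\pr_\beta^{-1}(B)$. Pick $\delta\geq\alpha,\beta$. By (f), $\pr_\delta$ is onto, so $\pr_{\delta\downarrow\alpha}^{-1}(A)=\pr_{\delta\downarrow\beta}^{-1}(B)$. The consistency condition (e) then gives $\mu_\alpha(A)=\mu_\delta(\cdot)=\mu_\beta(B)$. Additivity follows by the same device: move two sets to a common level $\delta$ and use additivity of $\mu_\delta$. Uniqueness of $\mu$ is then immediate from Proposition~\ref{proposition:equality of measures}, since $\GG_\infty$ is a $\pi$-system and all the measures involved are probabilities. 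So by Theorem~\ref{theorem:caratheodory} it remains to show that $\gamma(A_n)\downarrow0$ whenever $A_n\downarrow\emptyset$ in $\GG_\infty$.

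\textbf{Step 2: reduction to an increasing countable chain.} Write $A_n=\pr_{\alpha_n}^{-1}(B_n)$. Using directedness, replace the $\alpha_n$ by an increasing sequence (take upper bounds inductively). Suppose, for contradiction, that $\gamma(A_n)\geq\epsilon>0$ for all $n$. If $\{\alpha_n\}$ is bounded above by some $\gamma_0$, everything lives at the single level $\gamma_0$, and countable additivity of $\mu_{\gamma_0}$ together with surjectivity (f) contradicts $\cap_n A_n=\emptyset$. Otherwise $F':=\{\alpha_n\}$ is unbounded and increasingly enumerated, which is exactly the situation of (g).

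\textbf{Step 3: compactness argument.} By inner regularity, choose compact $K_n\subset B_n$ with $\mu_{\alpha_n}(B_n\backslash K_n)\leq 2^{-n-1}\epsilon$. Set $L_n:=\cap_{k\leq n}\pr_{\alpha_n\downarrow\alpha_k}^{-1}(K_k)\subset B_n$. Each $L_n$ is closed, by continuity in (c) and because compact sets are closed. Moreover $\mu_{\alpha_n}(B_n\backslash L_n)\leq\epsilon/2$, using (e) and $B_n\subset\pr^{-1}_{\alpha_n\downarrow\alpha_k}(B_k)$, so $\mu_{\alpha_n}(L_n)\geq\epsilon/2$ and $L_n\neq\emptyset$. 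Since $L_n\subset K_n$ is closed in a compact set, $L_n$ is compact, and $\pr_{\alpha_{n+1}\downarrow\alpha_n}(L_{n+1})\subset L_n$.

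\textbf{Main obstacle: a consistent thread through the $L_n$.} For fixed $k$, the sets $\pr_{\alpha_m\downarrow\alpha_k}(L_m)$, $m\geq k$, form a decreasing sequence of nonempty compact subsets of $L_k$ (continuous images of compacts, hence also closed). Their intersection $M_k$ is therefore nonempty. I then need $\pr_{\alpha_{k+1}\downarrow\alpha_k}(M_{k+1})=M_k$. The inclusion $\subset$ is clear. For $\supset$, given $x\in M_k$, the preimages $\pr^{-1}_{\alpha_{k+1}\downarrow\alpha_k}(\{x\})\cap\pr_{\alpha_m\downarrow\alpha_{k+1}}(L_m)$ are nonempty decreasing compacts, and their intersection supplies the required point of $M_{k+1}$. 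Choosing $\omega'_{\alpha_k}\in M_k$ inductively gives a consistent element $\omega'\in\prod_{\alpha\in F'}\Omega_\alpha$ with $\omega'_{\alpha_n}\in L_n\subset B_n$.

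By (g) this extends to some $\omega\in\Omega$, and then $\omega\in\cap_n A_n$, contradicting $\cap_n A_n=\emptyset$. Hence $\gamma$ is countably additive on $\GG_\infty$, and Theorem~\ref{theorem:caratheodory} produces $\mu$.
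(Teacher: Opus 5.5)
Your proposal is correct and is essentially the paper's proof. Both use the cylinder algebra with Carath\'eodory, the bounded/unbounded dichotomy for the increasing indices $\alpha_n$, the compacts $\cap_{k\le n}\pr^{-1}_{\alpha_n\downarrow\alpha_k}(K_k)$ of measure at least $\epsilon/2$, and an inductive choice of a consistent thread through nested nonempty compacts, closed off by (g); routing the induction through the sets $M_k$ is only a cosmetic reorganization of the paper's direct induction.
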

\begin{proof}
For $\alpha\in F$ put $\FF'_\alpha:=(\pr_\alpha)^{-1}(\FF_\alpha)$ and define the probability $\mu'_\alpha:\FF_\alpha'\to [0,1]$ by insisting that $\mu_\alpha=(\pr_\alpha)_\star \mu'_\alpha$ (the definition is without ambiguity because for each $E'\in \FF'_\alpha$ there is a unique $E\in \FF_\alpha$ such that $E'=(\pr_\alpha)^{-1}(E)$ /$\because$ $\pr_\alpha$ maps onto $\Omega_\alpha$/). As $$\pr_\alpha=\pr_{\beta\downarrow \alpha}\circ \pr_\beta$$ we deduce that $$\FF'_\alpha\subset \FF'_\beta,$$ moreover $$\mu'_\beta\vert_{\FF'_\alpha}=\mu'_\alpha$$ for all $\alpha\leq \beta$ from $F$.  Thus $\mu':=\cup_{\alpha\in F}\mu'_\alpha$ is a well-defined vanishing-at-$\emptyset$ and finitely additive map on the algebra $\FF':=\cup_{\alpha\in F}\FF_\alpha'$, which generates $\lor_{\alpha\in F}(\pr_\alpha)^{-1}(\FF_\alpha)$ on $\Omega$.  In particular it follows at once from Proposition~\ref{proposition:equality of measures} that $\mu$, which must extend $\mu'$, is unique, if it exist.

In order to conclude the proof, by Carath\'eodory's extension theorem~\ref{theorem:caratheodory}  it remains to show  that $\mu'$ is countably additive. To that end let now $(E_n')_{n\in \mathbb{N}}$ be any $\downarrow$ sequence in $\FF'$ and $\epsilon\in (0,\infty)$ be such that $\mu'(E_n') \geq \epsilon$ for all $n\in \mathbb{N}$; it will suffice to check that then $\cap_{n\in \mathbb{N}}E_n'$ is not empty. 

Now, for each $n\in \mathbb{N}$ there is $\alpha_n\in F$ such that $E_n'\in \FF'_{\alpha_n}$; we may and do ask that $(\alpha_m)_{m\in \mathbb{N}}$ is $\uparrow$ relative to the partial order $\leq$ ($\because$ $\leq$ is upwards directed). If $F':=\{\alpha_n':n\in \mathbb{N}\}$ is bounded above by a $\gamma\in F$, then trivially $\mu_\gamma'(\cap_{n\in \mathbb{N}}E_n')\geq \epsilon$, a fortiori $\cap_{n\in \mathbb{N}}E_n'\ne \emptyset$. Henceforth $F'$ is not bounded above.

Proceeding onwards, $E_n'=\pr_\alpha^{-1}(E_n)$ for a unique $E_n\in \FF_{\alpha_n}$, namely $E_n=\pr_{\alpha_n}(E_n')$ and $$E_n\supset \pr_{\alpha_m\downarrow \alpha_n}(E_m)$$ for all $n\leq m$ from $\mathbb{N}$. Further, using the inner regularity w.r.t. the compact sets, for each $n\in \mathbb{N}$ there is a compact $D_n\subset E_n$ for which $$\mu_{\alpha_n}(E_n\backslash D_n)\leq\frac{\epsilon}{2^{n+1}};$$  then set $$C_n:=\cap_{k=1}^n\pr_{\alpha_n\downarrow \alpha_k}^{-1}(D_k),$$ which is a compact subset of $D_n$. We observe that $C_m\subset (\pr_{\alpha_m\downarrow \alpha_n})^{-1}(C_n)$ for all $n\leq m$ from $\mathbb{N}$. Also, for each $n\in \mathbb{N}$, $$E_n\backslash C_n\subset \cup_{k=1}^n(\pr_{\alpha_n\downarrow \alpha_k})^{-1}(E_k\backslash D_k),$$ hence $$\mu_{\alpha_n}(E_n\backslash C_n)\leq \frac{\epsilon}{2}$$ and therefore $$\mu_{\alpha_n}(C_n)\geq \frac{\epsilon}{2},$$ in particular $$C_n\ne \emptyset.$$ 

Now, $$(\pr_{\alpha_n\downarrow \alpha_1}(C_n))_{n\in \mathbb{N}}$$ is a $\downarrow$ sequence of non-empty compact subsets of $\Omega_{\alpha_1}$, its intersection is therefore also non-empty; pick arbitrary $\omega_1'$ therefrom. Then $$\left((\pr_{\alpha_2\downarrow \alpha_1})^{-1}(\{\omega_1'\})\cap \pr_{\alpha_n\downarrow \alpha_2}(C_n)\right)_{n\in \mathbb{N}_{\geq 2}}$$ is a $\downarrow$ sequence of non-empty compact subsets of $\Omega_{\alpha_2}$, its intersection being therefore again non-empty; pick arbitrary $\omega_2'$ therefrom. Proceeding inductively we deduce existence of $\omega_n'\in C_n$ for each $n\in \mathbb{N}$, such that $\pr_{\alpha_{n+1}\downarrow \alpha_n}(\omega_{n+1}')=\omega_n'$ for all $n\in \mathbb{N}$. From the last hypothesis of the theorem we avail ourselves finally of an $\omega\in \Omega$ having $\pr_{\alpha_n}(\omega)=\omega_n'$ for all $n\in \mathbb{N}$, which entails $\omega\in \cap_{n\in \mathbb{N}}E_n'$. The proof is now complete.
 \end{proof}

\begin{example}
Set $t_0:=\{\emptyset\}$ and then inductively $t_{n+1}:=t_n\times \{0,1\}$ for $n\in \mathbb{N}_0$. For $n\in \mathbb{N}_0$ let $T_n:=\cup_{k\in [n]\cup \{0\}}t_k$ be the rooted binary tree up to level $n$ and let $T_\infty:=\cup_{n\in \mathbb{N}_0}t_n$ be the infinite rooted binary tree\footnote{We trust here  the student of these notes can guess at the obvious rooted tree structures.}. Then, for each $n\in \mathbb{N}_0$ we have a unique probability $\PP_n$ on $(\Omega_n,\FF_n):=(\{-1,1\}^{T_n},2^{\{-1,1\}^{T_n}})$ satisfying the following with $\xi_v^n$, $v\in T_n$, the canonical projections on $\Omega_n$: (1) the $\xi_v^n$, $v\in t_n$, are independent and $\PP(\xi_v^n=1)=\frac{1}{2}$ for all $v\in t_n$;  also (2) $\xi_w^n=\xi_a^n\xi_b^n$ a.s.-$\PP$ with $a$ and $b$ the two immediate descendants of $w$, for all $w\in T_{n-1}$. Denote finally  by $\xi_v$, $v\in T_\infty$, the canonical projections on $\Omega_\infty:=\{-1,1\}^{T_\infty}$. Then, by Theorem~\ref{thm:projective-limit}, we deduce at once the existence of a unique probability $\PP$ on $(\Omega_\infty,\lor_{v\in T_\infty}\sigma(\xi_v))$ such that : (1) for all $n\in \mathbb{N}_0$ the $\xi_v$, $v\in t_n$, are independent; (2) $\PP(\xi_v=1)=\frac{1}{2}$ for all $v\in T_\infty$; (3) $\xi_w=\xi_a\xi_b$ a.s.-$\PP$ with $a$ and $b$ the two immediate descendants of $w$, for all $w\in T_\infty$.
\end{example}

\begin{corollary}
Let $\Lambda$ be an arbitrary set, let $X$ be a locally compact second countable Hausdorff space (e.g. the real line, or more generally any Euclidean space) and let there be given for each $F\in (2^\Lambda)_{\mathrm{fin}}$ a probability $\mu_F$ on $(X^F,{\mathcal{B}_X}^{\otimes F})$, the family $(\mu_F)_{F\in (2^\Lambda)_{\mathrm{fin}}}$ being consistent in the sense that ${\pr_{G\downarrow F}}_\star\mu_G=\mu_F$ for all $G\supset F$ from $(2^\Lambda)_{\mathrm{fin}}$, $\pr_{G\downarrow F}:=(X^G\ni x\mapsto x\vert_F\in X^F)$ being the canonical projection from $X^G$ to $X^F$. Then there exists a unique probability $\mu$ on $(X^\Lambda,{\mathcal{B}_X}^{\otimes \Lambda})$ such that ${\pr_F}_\star\mu=\mu_F$ for all $F\in (2^\Lambda)_{\mathrm{fin}}$, $\pr_F:=(X^\Lambda\ni x\mapsto x\vert_F\in X^F)$ being the canonical projection from $X^\Lambda$ to $X^F$.
\end{corollary}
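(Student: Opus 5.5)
This is a direct application of Theorem~\ref{thm:projective-limit}. For the index set take $F:=(2^\Lambda)_{\mathrm{fin}}$, ordered by inclusion, which is directed upwards via unions. For each finite $F\subset\Lambda$ set $\Omega_F:=X^F$ with the product topology, $\FF_F:={\mathcal{B}_X}^{\otimes F}$, and $\mu_F$ as given. The connecting maps are $\pr_{G\downarrow F}$.

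Properties (a), (b) and (d) are immediate: restriction of functions is functorial, and it is surjective as long as $X\neq\emptyset$. The case $X=\emptyset$ is degenerate, since for $F\ne\emptyset$ there is no probability on the empty set. Property (c) holds because coordinate projections are continuous, and measurability follows from Proposition~\ref{proposition:arbitrary-products}\eqref{product--arbit:ii}. Property (e) is exactly the assumed consistency. Next, the set $\Omega$ of coherent families $(\omega_F)_F$ is in canonical bijection with $X^\Lambda$ via $x\mapsto(x\vert_F)_F$. The inverse glues coherent restrictions into one function, which is well defined because of coherence on singletons. Under this bijection $\pr_F$ becomes restriction, so (f) holds. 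For (g), any coherent family indexed by a countable subfamily $F'$ defines a partial function on $\cup F'$; extend it arbitrarily (using $X\ne\emptyset$) to obtain an element of $X^\Lambda$. Finally, $\lor_F\pr_F^{-1}({\mathcal{B}_X}^{\otimes F})={\mathcal{B}_X}^{\otimes\Lambda}$, since both are generated by the one-dimensional coordinate projections. Theorem~\ref{thm:projective-limit} then yields existence and uniqueness.

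The main obstacle is the topological hypotheses on each $(X^F,{\mathcal{B}_X}^{\otimes F},\mu_F)$. First, $X^F$ is Hausdorff, being a finite product of Hausdorff spaces, so compact sets are closed. Second, we need ${\mathcal{B}_X}^{\otimes F}$ to contain the compact sets. Here second countability of $X$ gives ${\mathcal{B}_X}^{\otimes F}=\mathcal{B}_{X^F}$: a countable base of $X$ yields a countable base of open rectangles for $X^F$, so every open set of $X^F$ is a countable union of measurable rectangles. Closed sets, and in particular compact sets, are therefore product-measurable.

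Third, we need inner regularity of $\mu_F$ with respect to compacts. I would first note that $X^F$ is locally compact, second countable and Hausdorff, hence metrizable (by Urysohn) and $\sigma$-compact. By the standard argument, any finite Borel measure on a metric space is closed-regular: the class of sets $A$ such that for every $\epsilon$ there are closed $C\subset A\subset U$ open with $\mu(U\setminus C)<\epsilon$ contains the open sets (open sets are $F_\sigma$ in a metric space) and is a $\sigma$-field. To pass from closed to compact sets, take an exhaustion $X^F=\cup_n K_n$ by compacts with $K_n\uparrow$, and replace $C$ by $C\cap K_n$ for large $n$, using continuity from below. This regularity step is the only one needing genuine work, and I would present it as a short lemma.
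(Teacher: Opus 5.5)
Your proof is correct and follows the paper's route: apply Theorem~\ref{thm:projective-limit}, using that second countability gives ${\mathcal{B}_X}^{\otimes F}=\mathcal{B}_{X^F}$ and that every Borel probability on $X^F$ is inner regular with respect to compact sets. The only minor difference is in the regularity lemma, and it adds detail rather than changing the route. The paper shows directly that the sets $A$ admitting, for every $\epsilon>0$, a compact $K\subset A\subset U$ with $U$ open and $\mu(U\backslash K)<\epsilon$ form a $\sigma$-field containing the open sets, since open subsets of $X^F$ are themselves $\sigma$-compact; this avoids your detour through Urysohn metrization, closed regularity and a compact exhaustion. Your explicit verification of (a)--(g), of the transfer from the space of coherent families to $X^\Lambda$, and of the degenerate case $X=\emptyset$ spells out what the paper leaves implicit.
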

\begin{proof}
We have only to apply Theorem~\ref{thm:projective-limit} and the fact that, for each $n\in \mathbb{N}_0$,  $\BB_{X^n}={\BB_X}^{\otimes [n]}$ (by second countability), while  any probability $\nu$ on $(X^n,\mathcal{B}_{X^n})$ is automatically inner regular, in fact regular: the class of sets $$\{A\in \mathcal{B}_{X^n}:\forall\epsilon>0\, \exists \text{ compact $K\subset X^n$ and open $U\subset X^n$ such that $K\subset A\subset U$ and $\mu(U\backslash K)<\epsilon$}\}$$ is a $\sigma$-algebra on $X^n$ containing all the open sets of $X^n$ (which is not so difficult to check, because the properties of Hausdorfness, local compactness and second countability are preserved under finite products, they are also inherited by open sets, and together imply $\sigma$-compactness); therefore it is just the class of all Borel sets in $X^n$.
\end{proof}

\vfill
\begin{center}
He puzzled and puzzled till his puzzler was sore. (How the Grinch Stole Christmas.)
\end{center}

\end{document}